\documentclass[1 leqno,11pt,psfig]{amsart}
\usepackage{amssymb, amsmath,amsmath,latexsym,amssymb,amsfonts,amsbsy, amsthm,mathtools,graphicx,color}
\usepackage{graphicx,epsfig}
\usepackage{graphicx}
\usepackage{amssymb}
\usepackage{graphicx}
\usepackage{graphicx,epsfig}
\usepackage{amsmath}
\usepackage{mathrsfs}
\usepackage{amssymb}
\usepackage{longtable}

\usepackage{amssymb,mathrsfs,amsfonts,amsmath,amsbsy,tikz}\usepackage{fontenc}\usepackage{textcomp}


\setlength{\oddsidemargin}{0mm}
\setlength{\evensidemargin}{0mm} \setlength{\topmargin}{0mm}
\setlength{\textheight}{220mm} \setlength{\textwidth}{155mm}

\numberwithin{equation}{section}

\allowdisplaybreaks





\newcommand{\beq}{\begin{equation}}
\newcommand{\eeq}{\end{equation}}
\newcommand{\ben}{\begin{eqnarray}}
\newcommand{\een}{\end{eqnarray}}
\newcommand{\beno}{\begin{eqnarray*}}
\newcommand{\eeno}{\end{eqnarray*}}

\newtheorem{theorem}{Theorem}[section]
\newtheorem{definition}{Definition}[section]
\newtheorem{lemma}{Lemma}[section]
\newtheorem{hypothesis}{Hypothesis}[section]
\newtheorem{proposition}{Proposition}[section]
\newtheorem{corollary}{Corollary}[section]
\newtheorem{remark}{Remark}[section]


\begin{document}
\title[Barotropic instability]{Barotropic instability of shear flows}

\author{Zhiwu Lin}
\address{School of Mathematics, Georgia Institute of Technology, 30332, Atlanta, GA, USA}
\email{zlin@math.gatech.edu}

\author{Jincheng Yang}
\address{Department of Mathematics, The University of Texas at Austin, 78712, Austin, TX, USA}
\email{jcyang@math.utexas.edu}

\author{Hao Zhu}
\address{Chern Institute of Mathematics, Nankai University, 300071, Tianjin, P. R. China}
\email{haozhu@nankai.edu.cn}

\date{\today}

\maketitle

\begin{abstract}
We consider barotropic instability of shear flows for incompressible fluids
with Coriolis effects. For a class of shear flows, we develop a new method to
find the sharp stability conditions. We study the flow with Sinus profile in
details and obtain the sharp stability boundary in the whole parameter space,
which corrects previous results in the fluid literature. Our new results are
confirmed by more accurate numerical computation. The addition of the Coriolis
force is found to bring fundamental changes to the stability of shear flows.
Moreover, we study dynamical behaviors near the shear flows, including the
bifurcation of nontrivial traveling wave solutions and the linear inviscid
damping. The first ingredient of our proof is a careful classification of the
neutral modes. The second one is to write the linearized fluid equation in a
Hamiltonian form and then use an instability index theory for general
Hamiltonian PDEs. The last one is to study the singular and non-resonant
neutral modes using Sturm-Liouville theory and hypergeometric functions.
\end{abstract}

{\bf \it Keywords}:\ shear flow; barotropic instability; fluid dynamics;  Hamiltonian structure.

{2010 {\bf \it Mathematics Subject Classification}}: 76E05; 76E09.

\section{Introduction}

When studying the large-scale motion of ocean and atmosphere, the rotation of
the earth may affect the dynamics of the fluids significantly and therefore,
Coriolis effects must be taken into account (\cite{Pedlosky1987}). In this
paper, we study stability and instability of shear flows under Coriolis
forces. We consider the fluids in a strip or channel denoted by
\[
D=\{(x,y)\ |\ y\in\lbrack y_{1},y_{2}]\},
\]
where $x$ is periodic. The fluid motion is modeled by the two-dimensional
inviscid incompressible Euler equation with rotation
\begin{equation}
\partial_{t}\vec{u}+(\vec{u}\cdot\nabla)\vec{u}=-\nabla P-\beta yJ\vec
{u},\;\;t\times(x,y)\in\lbrack0,+\infty)\times D, \label{11}%
\end{equation}
where $\vec{u}=(u_{1},u_{2})$ is the fluid velocity, $P$ is the pressure,
\[
J=%
\begin{pmatrix}
0 & -1\\
1 & 0
\end{pmatrix}
\]
is the rotation matrix, and $\beta\ $is the Rossby number. Here, the term
$-\beta yJ\vec{u}$ denotes the Coriolis force under the beta-plane
approximation. We assume the incompressible condition $\nabla\cdot\vec{u}=0$
and the non-permeable boundary condition
\begin{equation}
u_{2}=0,\ \text{on }\partial D=\left\{  y=y_{1},y_{2}\right\}  .
\label{bc-Euler}%
\end{equation}
The vorticity $\omega$ is defined as $\omega:=\operatorname{curl}\vec
{u}=\partial_{x}u_{2}-\partial_{y}u_{1}$, and the stream function $\psi$ is
introduced such that $\vec{u}=\nabla^{\perp}\psi=(\partial_{y}\psi
,-\partial_{x}\psi)$. The vorticity form of (\ref{11}) is
\begin{equation}
\partial_{t}\omega+(\vec{u}\cdot\nabla)\omega+\beta u_{2}=0,
\label{vorticity-eqn}%
\end{equation}
which is also called the quasi-geostrophic equation in geophysical fluids
(\cite{Pedlosky1987}). Consider a shear flow $\vec{u}_{0}=(U(y),0)$, $U\in
C^{2}([y_{1},y_{2}])$, which is a steady solution of (\ref{vorticity-eqn}).
The linearized equation of (\ref{vorticity-eqn}) around the shear flow
$\vec{u}_{0}$ is
\begin{equation}
\partial_{t}\omega+U\partial_{x}\omega-(\beta-U^{\prime\prime})\partial
_{x}\psi=0. \label{13}%
\end{equation}
To study the linear instability, it suffices to consider the normal mode
solution $\psi(x,y,t)=\phi(y)e^{i\alpha(x-ct)}$, where $\alpha>0$ is the wave
number in the $x$-direction and $c=c_{r}+ic_{i}$ is the complex wave speed.
Then (\ref{13}) is reduced to the Rayleigh-Kuo equation
\begin{equation}
-\phi^{\prime\prime}+\alpha^{2}\phi-{\frac{\beta-U^{\prime\prime}}{U-c}}%
\phi=0, \label{15}%
\end{equation}
with the boundary conditions
\begin{equation}
\phi(y_{1})=\phi(y_{2})=0. \label{16}%
\end{equation}
When $\beta=0$, (\ref{15}) becomes the classical Rayleigh Equation
(\cite{Rayleigh1880}), which has been studied extensively (cf.
\cite{Drazin-Reid1981,Howard1964,CLin1955,Lin2003,Lin2005,Tollmien1935}).

The shear flow $U$ is linear unstable if there exists a nontrivial solution to
(\ref{15})--(\ref{16}) with $\operatorname{Im}c>0$. This so called barotropic
instability is important for the dynamics of atmosphere and oceans. It has
been a classical problem in geophysical fluid dynamics
(\cite{Kuo1949,Kuo1974,Pedlosky1987}) since 1940s. Rossby first recognized the
nature of barotropic instability and derived the linearized vorticity equation
in \cite{Rossby1939}. Later, Kuo formulated the equation (\ref{15}%
)--(\ref{16}), and did some early studies in \cite{Kuo1949}. In particular, he
gave a necessary condition for instability that $\beta-U^{\prime\prime}$ must
change sign in the domain $[y_{1},y_{2}]$, which generalized the classical
Rayleigh criterion (\cite{Rayleigh1880}) for $\beta=0$. In \cite{Pedlosky1963}%
, Pedlosky showed that any unstable wave speed $c=c_{r}+ic_{i}$ $\left(
c_{i}>0\right)  $ must lie in the following semicircle
\begin{equation}
\left(  c_{r}-(U_{\min}+U_{\max})/{2}\right)  ^{2}+c_{i}^{2}\leq\left(
({U_{\max}-U_{\min})/{2}}+{|\beta|}/{2\alpha^{2}}\right)  ^{2},
\label{semi-circle}%
\end{equation}
which is a generalization of Howard's semicircle theorem \cite{Howard1961} for
$\beta=0$. Here, $U_{\min}=\min U$ and $U_{\max}=\max U$. Additionally, the
following characterization for the unstable wave speeds is given in
\cite{Kuo1949, Miles1964, Pedlosky1964}.

\begin{lemma}
\label{lemma unstable modes c range}If $\beta>0$, then there are no nontrivial
solutions of (\ref{15})--(\ref{16}) for $c_{r}>U_{\max}$; if $\beta<0$, then
there are no nontrivial solutions of (\ref{15})--(\ref{16}) for $c_{r}%
<U_{\min}$.
\end{lemma}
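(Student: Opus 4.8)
The plan is to eliminate the singular coefficient in (\ref{15}) by a Howard-type substitution and then extract a sign-definite integral identity, generalizing the argument behind Howard's semicircle theorem. Since we seek unstable modes with $c_i := \operatorname{Im} c > 0$, the factor $U(y) - c$ never vanishes on $[y_1,y_2]$; the same holds for any real $c$ with $c_r > U_{\max}$ or $c_r < U_{\min}$, because then $U - c_r$ keeps a constant sign. Hence I would introduce $g := \phi/(U-c)$, which is smooth and, by (\ref{16}), satisfies $g(y_1) = g(y_2) = 0$.

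First I would substitute $\phi = (U-c)g$ into (\ref{15}) and multiply through by $U-c$. Expanding $\phi'' = U''g + 2U'g' + (U-c)g''$, the two terms carrying $U''$ cancel and the first-order terms assemble into a total derivative, producing the self-adjoint equation
\[
\big[(U-c)^2 g'\big]' - \alpha^2 (U-c)^2 g + \beta (U-c) g = 0.
\]
The decisive gain is that the Rossby term now enters \emph{linearly} in $U-c$, rather than through the quotient $(\beta - U'')/(U-c)$; this is exactly what will later yield a definite sign.

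Next I would multiply by $\bar g$ and integrate over $[y_1,y_2]$. The boundary contributions vanish because $g$ vanishes at both endpoints, so integration by parts gives
\[
\int_{y_1}^{y_2} (U-c)^2 \big(|g'|^2 + \alpha^2 |g|^2\big)\, dy = \beta \int_{y_1}^{y_2} (U-c)\,|g|^2\, dy.
\]
Taking the imaginary part and using $\operatorname{Im}(U-c)^2 = -2c_i(U-c_r)$ together with $\operatorname{Im}(U-c) = -c_i$, then dividing by $-2c_i$ in the unstable case $c_i>0$, leaves
\[
\int_{y_1}^{y_2} (U - c_r)\big(|g'|^2 + \alpha^2 |g|^2\big)\, dy = \frac{\beta}{2}\int_{y_1}^{y_2} |g|^2\, dy.
\]

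Finally I would read off the contradiction from signs. If $\beta > 0$ and $c_r > U_{\max}$, then $U - c_r < 0$ throughout, so the left-hand side is strictly negative—since $g \not\equiv 0$ forces $\alpha^2|g|^2$ to be positive somewhere—while the right-hand side is strictly positive, which is impossible. The case $\beta < 0$, $c_r < U_{\min}$ is symmetric: $U - c_r > 0$ makes the left side positive and the right side negative. For real $c$ outside $[U_{\min},U_{\max}]$ one simply uses the displayed real identity directly rather than its imaginary part. I expect the one genuinely delicate point to be motivating and justifying the substitution—confirming that $U-c$ stays bounded away from zero, so that $g$ and the integrations by parts are legitimate—and tracking the sign correctly when passing to imaginary parts; once the self-adjoint form is obtained, the conclusion is immediate.
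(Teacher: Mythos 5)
Your proof is correct. The paper itself offers no proof of this lemma --- it is quoted from \cite{Kuo1949, Miles1964, Pedlosky1964} --- so there is nothing internal to compare against; what you have written is precisely the classical Howard--Pedlosky argument that underlies those references and the semicircle bound (\ref{semi-circle}). I checked the key algebra: with $\phi=(U-c)g$ the $U''$ terms do cancel after multiplying (\ref{15}) by $U-c$, the first-order terms combine into $\left[(U-c)^2 g'\right]'$, the boundary terms vanish because $g(y_1)=g(y_2)=0$ (legitimate since $U-c$ is bounded away from zero whenever $c_i\neq 0$ or $c_r\notin[U_{\min},U_{\max}]$), and the imaginary part yields $\int_{y_1}^{y_2}(U-c_r)\bigl(|g'|^2+\alpha^2|g|^2\bigr)\,dy=\tfrac{\beta}{2}\int_{y_1}^{y_2}|g|^2\,dy$, whose two sides have opposite strict signs in each of the two cases. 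The only cosmetic remark: you phrase the division by $-2c_i$ for $c_i>0$, but the same identity holds verbatim for $c_i<0$ (or one invokes the conjugation symmetry $c\mapsto\bar c$), so together with your real-$c$ case the statement is covered for every $c$ with $c_r>U_{\max}$ (resp. $c_r<U_{\min}$), not only for unstable modes.
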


Although there are several necessary conditions as indicated above, there has
been very few sufficient conditions for the barotropic instability of shear
flows. In the fluid literature, the linear instability was studied for some
special shear flows. The barotropic instability of Bickley jet ($U\left(
y\right)  =\operatorname{sech}^{2}y$)$\ $was studied by numerical computations
and asymptotic analysis (cf.
\cite{Balmforth-Piccolo2001,Burns-Maslowe-Brown2002,Engevik2004,Howard-Drazin1964,Lipps1962,Kuo1974,Maslowe1991}%
). The stability boundary of hyperbolic-tangent shear flow was studied in
\cite{Dickinson-Clare1973,Howard-Drazin1964,Kuo1974}. Other references on the
barotropic instability include \cite{Drazin-Beaumont-Coaker1982,
Drazin-Howard1966, lindzen1988, lindzen1978, SHS1993}. In this paper, we
consider the barotropic instability of the following class of shear flows.

\begin{definition}
\label{de11} The flow $U$ is in class $\mathcal{K}$ if $U\in{C}^{3}%
([y_{1},y_{2}])$, $U$ is not a constant function on $[y_{1},y_{2}]$, and for
each $\beta\in {\rm Ran}\,(U^{\prime\prime})$, there exists $U_{\beta}\in {\rm Ran}\, (U)$
such that
\[
K_{\beta}(y):={\frac{\beta-U^{\prime\prime}(y)}{U(y)-U_{\beta}}}%
\]
is non-negative and bounded on $[y_{1},y_{2}]$. Furthermore, $U$ is said to be
in class $\mathcal{K}^{+}$ if $U$ is in class $\mathcal{K}$ and $K_{\beta}$ is
positive on $[y_{1},y_{2}]$ for each $\beta\in {\rm Ran}\, (U^{\prime\prime})$.
\end{definition}

Flows in class $\mathcal{K}^{+}\ $include $U\left(  y\right)  =\sin y,\ \tanh
y$, and more generally any $U\left(  y\right)  $ satisfying the ODE
$U^{\prime\prime}=g\left(  U\right)  $ with $g\in C^{1}({\rm Ran}\,(U))$ and
$g^{\prime}<0$ on ${\rm Ran}\,(U)$. One important property for flows in class
$\mathcal{K}^{+}$ is that there is a uniform $H^{2}$ bound for the unstable
solutions of (\ref{15})--(\ref{16}), see Lemma \ref{le22}. Neutral modes are
the solutions of (\ref{15})--(\ref{16}) with $c\in\mathbf{R}$. In the study of
stability of a shear flow $U\left(  y\right)  $, it is often important to
locate the neutral modes which are limits of a sequence of unstable modes.
These so called neutral limiting modes determine the boundary from
instability to stability. In Theorems \ref{th-neutral modes}%
--\ref{th-neutral limiting modes}, all $H^{2}\ $neutral modes for a general
shear flow, and consequently, all neutral limiting modes for a flow in class
$\mathcal{K}^{+}$, are classified into four types by their phase speed $c$: 1)
$c=U\left(  z\right)  $ such that $\beta=U^{\prime\prime}\left(  z\right)  $;
2) $c=U\left(  y_{1}\right)  $ or $U\left(  y_{2}\right)  $; 3) $c$ is a
critical value of $U$; 4) $c$ is outside the range of $U$. Here, the neutral
modes of types 2) and 3) might be singular, and type 4) is called non-resonant
since the phase speed $c$ causes no interaction with the basic flow $U\left(
y\right)  $. This contrasts greatly with the non-rotating case $\beta=0$,
where it was shown in \cite{lin-xu-2017} that for neutral modes in $H^{2}$,
$c$ must be an inflection value of $U$.

In the literature, it is common to look for unstable modes near neutral modes.
A useful approach to determine the stability boundary is to study the local
bifurcation of unstable modes near all possible neutral limiting wave numbers
and then combine these information to detect the stability/instability at any
wave number. In \cite{Lin2003}, this approach was used to show that when
$\beta=0$, any flow $U\left(  y\right)  $ in class $\mathcal{K}^{+}$ is
linearly stable if and only if $\alpha\geq\alpha_{\max}$, where $-\alpha
_{\max}^{2}$ is the principal eigenvalue of the operator $-\frac{d^{2}}%
{dy^{2}}-K_{0}(y)$. However, when $\beta\neq0$, there are several difficulties
in this approach. First, we need to deal with the subtle perturbation problem
near singular neutral modes. Second, for non-resonant neutral modes, the phase
speed $c$ is to be determined. Moreover, near these non-resonant neutral
modes, the bifurcation of unstable modes is usually non-smooth (see Remark
\ref{remark tung}). In some literature (e.g. \cite{Tung1981}), it was believed
that these non-resonant neutral modes are not adjacent to unstable modes. This
turns out to be not true from our study of the Sinus flow in Section
\ref{section-sinus}.

In this paper, we develop a new approach to study the barotropic instability
of shear flows. First, we write the linearized equation in a Hamiltonian form
$\partial_{t}\omega=JL\omega$, where $J$ is anti-self-adjoint and $L$ is
self-adjoint as defined in (\ref{defn-J-L}). For a fixed wave number $\alpha$,
by taking the ansatz $\omega=\omega_{\alpha}\left(  y,t\right)  e^{i\alpha x}%
$, the linearized equation can be written in a Hamiltonian form $\partial
_{t}\omega_{\alpha}=J_{\alpha}L_{\alpha}\omega_{\alpha}$, where $J_{\alpha}$
and $L_{\alpha}$ are defined in (\ref{defn-J-L-alpha}). Then by the
instability index theorem recently developed in \cite{Lin2017} for general
Hamiltonian PDEs, we get the index formula (\ref{index-formula}). This formula
implies that to determine the instability at any $\alpha>0$, it suffices to
count the number of neutral modes with a non-positive signature (i.e.
$\left\langle L_{\alpha}\omega_{\alpha},\omega_{\alpha}\right\rangle \leq0$).
The four types of neutral modes in $H^{2}$ are counted separately. In
particular, the counting of non-resonant neutral modes can be reduced to study
$\lambda_{n}\left(  \beta,c\right)  $, the $n$-th eigenvalue of the
Sturm-Liouville operator $-\frac{d^{2}}{dy^{2}}-{\frac{\beta-U^{\prime\prime}%
}{U-c}}$ for $c\notin {\rm Ran}\,(U)$. An important observation is that for a
non-resonant neutral mode $\left(  c,\alpha,\beta,\phi\right)  $, the sign
$\left\langle L_{\alpha}\omega_{\alpha},\omega_{\alpha}\right\rangle $ is
determined by $\partial_{c}\lambda_{n}\left(  \beta,c\right)  $, where
$\alpha^{2}=-\lambda_{n}\left(  \beta,c\right)  >0$ and $\omega_{\alpha}%
=-\phi^{\prime\prime}+\alpha^{2}\phi$. Therefore, by studying the shape of the
graph of $\lambda_{n}\left(  \beta,c\right)  $, we are able to count the
non-resonant neutral modes with a non-positive signature. Combining with the
index count for the other three types of neutral modes, we can find the
stability boundary in the whole parameter space $\left(  \alpha
,\beta\right)  $. See Subsection \ref{subsection-stability-index} for more
detailed discussions about this approach. In this approach, we avoid the study
of the bifurcation of unstable modes near neutral modes, which is particularly
tricky for singular and non-resonant neutral modes.

In Section \ref{section-sinus}, we study in details the classical Sinus flow
\[
U\left(  y\right)  =\left(  1+\cos\left(  \pi y\right)  \right)
/2,\ \ \ \ \ y\in\left[  -1,1\right]  .
\]
For the Sinus flow, $L_{\alpha}$ has at most one negative eigenvalue.
Moreover, the singular neutral wave speeds exist only at the endpoints of
${\rm Ran}\,(U)=\left[  0,1\right]  $. The set of all the eigenvalues of  corresponding singular Sturm-Liouville
operator $-\frac{d^{2}}{dy^{2}}-{\frac{\beta-U^{\prime\prime}}{U-c}}$ is bounded from below and can be computed by
using hypergeometric functions. The spectral continuity of the operators
$-\frac{d^{2}}{dy^{2}}-{\frac{\beta-U^{\prime\prime}}{U-c}\ }${can\ be shown
}at the end points $c=0,1\ $by studying the singular limits when
$c\rightarrow1^+$ and $c\rightarrow0^-$. Based on these properties and the above
approach, we obtain a simple characterization of the stability boundary in the
parameter space. By the index formula and the relation of $\partial_{c}%
\lambda_{1}\left(  \beta,c\right)  $ with the $\left\langle L_{\alpha}%
\cdot,\cdot\right\rangle $ sign for a fixed $\beta$, the graph of $\lambda_{1}^-\left(
\beta,c\right)  $ has at most one hump, more precisely, monotone or single
humped respectively for negative or positive sign of $\left\langle L_{\alpha
}\cdot,\cdot\right\rangle $ at singular neutral modes. Here $\lambda^-_{1}$ is  the negative part of $\lambda_{1}$. The lower part of the
stability boundary is given exactly by  $\sup_{c\in(-\infty,0]\cup[1,+\infty)}\lambda^-_{1}\left(
\beta,c\right)$. In \cite{Kuo1974} and \cite{Pedlosky1987}, it was concluded
from numerical computations that the lower stability boundary is the curve of
singular neutral modes ($\beta>0$) and the modes with zero wave number ($\beta<0$). Our results give a correction to this commonly
accepted picture. In fact, only part of the lower stability boundary consists
of singular neutral modes with negative $\left\langle L_{\alpha}\cdot
,\cdot\right\rangle $ and the modes with zero wave number, while the other part consists of non-resonant neutral
modes. The new stability boundary is confirmed by more accurate numerical
results. Same results on the stability boundary can be obtained for more
general flows similar to Sinus flow. Moreover, we count the exact number of
non-resonant neutral modes in each stability region. As we discuss below, this
has important implication on the nonlinear dynamics near shear flows.

Lastly, we study some dynamical behaviors near the shear flows. First, the
existence of nontrivial traveling wave solutions is shown near shear flows
with non-resonant neutral modes. These traveling waves, which have fluid
trajectories moving in one direction, do not exist when there is no rotation
(i.e. $\beta=0$) and therefore are purely due to rotating effects. We expect
the nonlinear dynamics is much richer due to the existence of these traveling
waves. Second, the Hamiltonian structure of the linearized equation is used to
prove the linear inviscid damping for stable shears with no neutral modes
(Theorem \ref{thm damping stable}) and in the center space for the unstable
shears (Theorem \ref{thm-damping-center}). These results are useful for the
further study of nonlinear dynamics near the shear flows, such as nonlinear
inviscid damping (for stable flows without neutral modes) and the construction
of invariant manifolds (for unstable flows).

This paper is organized as follows. In Section \ref{section-neutral mode}, we
classify all the neutral modes in $H^{2}$ for general shear flows. For shear
flows in class $\mathcal{K}^{+}$, by proving a uniform $H^{2}$ bound for
unstable modes, we obtain a classification of neutral limiting modes. In
Section 3, for flows in class $\mathcal{K}^{+}$, we derive an instability
index formula by using the Hamiltonian structure of the linearized fluid
equation. Then a general approach is developed to find the stability boundary
for flows in class $\mathcal{K}^{+}$. In Section \ref{section-sinus}, we find
the stability boundary for the Sinus flow in details. In Sections
\ref{section bifurcation} and \ref{section-damping}, the bifurcation of
nontrivial traveling waves and the linear inviscid damping are studied,
respectively. Section \ref{section discussions} contains the summary and
discussion of the results for Sinus flow.

\section{Neutral modes in $H^{2}$}

\label{section-neutral mode}

In this section, we first classify neutral modes in $H^{2}$ for a general
shear flow. For flows in class $\mathcal{K}^{+}$, we prove that unstable modes
have a uniform $H^{2}$ bound and therefore any neutral limiting mode is in
$H^{2}$. As a result, a classification of neutral limiting modes is obtained.

\subsection{Classification of neutral modes in $H^{2}$}

In this subsection, we give a classification of $H^{2}\ $neutral modes for a
general shear flow. First, we give the precise definition of neutral modes.

\begin{definition}
\label{defn neutral mode} $(c_{s},\alpha_{s},\beta_{s},\phi_{s})$ is said to
be a neutral mode if $c_{s}\in\mathbf{R}$, $\alpha_{s}>0$, $\beta_{s}%
\in\mathbf{R}$, and $\phi_{s}$ is a nontrivial solution to the Sturm-Liouville
equation
\begin{align}\label{Sturm-Liouville equation-def-neutral mode}
-\phi_{s}^{\prime\prime}+\alpha_{s}^{2}\phi_{s}-{\frac{\beta_{s}%
-U^{\prime\prime}}{U-c_{s}}}\phi_{s}=0,\;\;on\;\;(y_{1},y_{2}),
\end{align}
with the boundary conditions $\phi_{s}(y_{1})=\phi_{s}(y_{2})=0$. If $\phi
_{s}\in H^{2}(y_{1},y_{2})$, we call $(c_{s},\alpha_{s},\beta_{s},\phi_{s})$
to be a $H^{2}\ $neutral mode.
\end{definition}

If the equation (\ref{Sturm-Liouville equation-def-neutral mode}) is singular, by a solution $\phi$ we mean $\phi$ solves  (\ref{Sturm-Liouville equation-def-neutral mode}) on $(y_1,y_2)\setminus\{U=c_s\}$.
For convenience, we make the following assumption:
\begin{hypothesis}\label{hypothesis-U}
Let $U\in C^{2}([y_{1},y_{2}])$. Assume that
for any
$c\in( U_{\min},U_{\max})$, $\{U-c=0\}$ is a
finite set.
\end{hypothesis}

\begin{remark}
\label{remark-hypothesis-U} Hypothesis \ref{hypothesis-U} is true for generic
$C^{2}\ $flows. Suppose there exists $c_{0}\in(U_{\min},$ $U_{\max})$ such that
$\{U-c_{0}=0\}$ is an infinite set. Then for any accumulation point $x_{0}$ of
$\{U-c_{0}=0\}$, we have $U^{(n)}(x_{0})=0$ for all $1\leq n\leq k$ if $U\in
C^{k}$. This implies that Hypothesis \ref{hypothesis-U} is satisfied for
analytic flows and for flows in class $\mathcal{K}^{+}$. In fact, it is true
for any flow $U\left(y\right)  $ satisfying the 2nd order ODE $U^{\prime
\prime}=k\left(  y\right)  g\left(  U\right)  $, where \thinspace$k>0$ is bounded and
$g\in C^{1}$ by the uniqueness of ODE solutions.
\end{remark}

Assume that $U$ satisfies Hypothesis \ref{hypothesis-U}. Then for any
$c\in(U_{\min},U_{\max})$, the set $\{U-c=0\}\cap(y_{1},y_{2})$ is non-empty,
which we denote by%
\begin{equation}
\{z_{i}\ |\ 1\leq i\leq k_{c},\ z_{1}<z_{2}<\cdots<z_{k_{c}}%
\}.\label{arrange-z}%
\end{equation}
Set $z_{0}:=y_{1}$ and $z_{k_{c}+1}:=y_{2}$.

\begin{lemma}
\label{le23} Assume that $U$ satisfies Hypothesis \ref{hypothesis-U}. Let
$\phi$ be a solution of (\ref{15})--(\ref{16}) with $\alpha>0$, $\beta
\in\mathbf{R}$ and $c\in(U_{\min},U_{\max})$. If there exists $1\leq i_{0}\leq
k_{c}$ such that $\phi\in H^{1}(z_{i_{0}-1},\ z_{i_{0}+1})$, and $(U(y)-c)(U(z)-c)<0$ for all $y\in(z_{i_{0}-1},\ z_{i_{0}})$ and all $z\in(z_{i_{0}},\ z_{i_{0}+1})$, then $\phi$ can
not vanish at $z_{i_{0}-1},\ z_{i_{0}}$ and $z_{i_{0}+1}$ simultaneously
unless it vanishes identically on at least one of the intervals $(z_{i_{0}%
-1},z_{i_{0}})$ and $(z_{i_{0}},z_{i_{0}+1})$.
\end{lemma}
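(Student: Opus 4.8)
The plan is to argue by contradiction: suppose $\phi$ vanishes at all three points $z_{i_0-1},z_{i_0},z_{i_0+1}$ and yet $\phi\not\equiv0$ on \emph{both} subintervals $L:=(z_{i_0-1},z_{i_0})$ and $R:=(z_{i_0},z_{i_0+1})$. The engine is the classical substitution $w:=\phi/(U-c)$. A direct computation turns \eqref{15} into the divergence form
\[
\big[(U-c)^2 w'\big]' = \big[\alpha^2(U-c)^2-\beta(U-c)\big]w
\]
on each open subinterval (recall $U-c$ has no interior zero on $L$ or $R$, so the coefficients are smooth there). The virtue of this substitution is that it eliminates $U''$ and isolates the single term $\beta/(U-c)$, whose sign is controlled by the sign of $U-c$.

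Next I would multiply this identity by $\bar w$, integrate over $L$, and integrate by parts once. The bulk reassembles as $\int_L(U-c)^2|w'|^2$, and using $(U-c)w=\phi$ the right-hand side becomes $\alpha^2\int_L|\phi|^2-\beta\int_L|\phi|^2/(U-c)$. \emph{Provided the boundary contributions vanish} (the crux, discussed below), this yields
\[
\beta\int_L\frac{|\phi|^2}{U-c}=\int_L(U-c)^2|w'|^2+\alpha^2\int_L|\phi|^2>0,
\]
the strict positivity using $\phi\not\equiv0$ on $L$; the same computation on $R$ gives the analogous identity. Now I invoke the hypothesis $(U(y)-c)(U(z)-c)<0$: it says $U-c$ keeps a constant sign on each of $L,R$ and that these two signs are opposite. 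Hence $\int_L|\phi|^2/(U-c)$ and $\int_R|\phi|^2/(U-c)$ carry opposite signs, so the two identities force $\beta>0$ and $\beta<0$ at once, a contradiction (the case $\beta=0$ is excluded even faster, since then the right-hand side alone is already strictly positive). Therefore $\phi$ must vanish identically on $L$ or on $R$.

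The main obstacle is justifying the integration by parts, that is, showing the boundary term $\big[(U-c)^2 w'\,\bar w\big]=\big[\phi'\bar\phi-U'|\phi|^2/(U-c)\big]$ tends to $0$ at each endpoint and that the integrals converge. A useful first observation is that the flux $F:=(U-c)^2w'=\phi'(U-c)-U'\phi$ satisfies $F'=[\alpha^2(U-c)-\beta]\phi\in L^\infty(L)$, so $F$ extends continuously to $\overline{L}$ with a finite limit at each endpoint; together with $\phi\to0$ one checks this limit is $0$ whenever the endpoint is a zero of $U-c$. At a regular endpoint (where $U\neq c$, e.g.\ $y_1$ or $y_2$) the boundary term vanishes simply because $w=\phi/(U-c)\to0$. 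At the sign-changing middle point $z_{i_0}$ the zero of $U-c$ is simple ($U'(z_{i_0})\neq0$), the indicial exponents of \eqref{15} are $0$ and $1$, and since $\phi\in H^1\hookrightarrow C^0$ vanishes at $z_{i_0}$ the nonvanishing local branch is excluded; hence $\phi\sim A_\pm(y-z_{i_0})$ from each side and every integrand decays.

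The genuinely delicate case is when an outer point $z_{i_0\pm1}$ is a higher-order zero of $U-c$. There I would run the Frobenius analysis at the corresponding regular singular point and use the $H^1$ hypothesis to force the admissible exponent to exceed $1/2$, which is exactly what makes $|\phi|^2/(U-c)$ and $(U-c)^2|w'|^2$ integrable and the boundary term $F\,\bar w$ null. I expect verifying these asymptotics uniformly across the possible orders of vanishing of $U-c$ to be the most technical step; once the boundary terms are controlled, the remainder is just bookkeeping around the two energy identities and the sign comparison.
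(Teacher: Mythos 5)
Your energy identity is the paper's identity in disguise: since $(U-c)w'=\phi'-U'\phi/(U-c)$, your relation
\[
\beta\int\frac{|\phi|^{2}}{U-c}\,dy=\int(U-c)^{2}|w'|^{2}\,dy+\alpha^{2}\int|\phi|^{2}\,dy
\]
is exactly the paper's
\[
\int\Big|\phi'-\frac{U'\phi}{U-c}\Big|^{2}dy+\int\Big(\alpha^{2}-\frac{\beta}{U-c}\Big)|\phi|^{2}\,dy=0 ,
\]
and the sign argument is also the same, except that the paper works only on the \emph{one} subinterval on which $\beta/(U-c)\le 0$ (chosen according to the sign of $\beta$ and the opposite signs of $U-c$ on the two sides). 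There the identity is a sum of non-negative terms equal to zero, which kills $\phi$ on that subinterval directly; you do not need the second subinterval, and restricting to the favorable one also makes the convergence of all integrals automatic by monotone convergence, since every integrand is non-negative.

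The genuine gap is exactly where you locate it, in the boundary terms, and your proposed route to them does not work. First, the claim that the sign change of $U-c$ at $z_{i_0}$ forces $U'(z_{i_0})\neq 0$ is false: Hypothesis \ref{hypothesis-U} only makes the zero set finite, and an odd-order zero such as $U-c\sim(y-z_{i_0})^{3}$ changes sign while $U'(z_{i_0})=0$, so the indicial exponents $0,1$ and the asymptotics $\phi\sim A_{\pm}(y-z_{i_0})$ are not available. Second, the Frobenius analysis you defer to at higher-order zeros of the outer endpoints is not only unexecuted but problematic: when $U-c$ vanishes to order $m\ge 3$, the term $\beta/(U-c)\sim(y-z)^{-m}$ dominates $(y-z)^{-2}$ and the point is an irregular singular point, so the indicial-exponent argument does not apply as stated. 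None of this machinery is needed. Since $\phi\in H^{1}$ and $\phi(x_{1})=0$, Cauchy--Schwarz gives $|\phi(y)|^{2}\le(y-x_{1})\|\phi'\|_{L^{2}(x_{1},y)}^{2}$, and the mean value theorem gives $|U(y)-c|=|U'(\xi_{y})|(y-x_{1})$ for some $\xi_{y}\in(x_{1},y)$; hence
\[
\Big|\frac{U'(y)|\phi(y)|^{2}}{U(y)-c}\Big|\le\frac{|U'(y)|}{|U'(\xi_{y})|}\,\|\phi'\|_{L^{2}(x_{1},y)}^{2}\to 0,\qquad y\to x_{1}^{+},
\]
which is how the paper disposes of the boundary contribution in one line, with no case analysis on the order of the zero. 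With this estimate in place of the asymptotic analysis, and the restriction to the favorable subinterval, your argument closes.
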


\begin{remark}
We refer the readers to Lemma \ref{lemma neutral c range} for the cases
$c=U_{\min}$ or $c=U_{\max}$.
\end{remark}

\begin{proof}
Suppose $\phi(z_{j})=0$ for $j=i_{0}-1,i_{0},i_{0}+1$. If $\beta\geq0$ and
$U-c<0$ on $(z_{i_{0}-1},z_{i_{0}})$, or $\beta<0$ and $U-c>0$ on
$(z_{i_{0}-1},z_{i_{0}})$, we define $x_{1}=z_{i_{0}-1}$ and $x_{2}=z_{i_{0}}%
$. If $\beta\geq0$ and $U-c<0$ on $(z_{i_{0}},z_{i_{0}+1})$, or $\beta<0$ and
$U-c>0$ on $(z_{i_{0}},z_{i_{0}+1})$, we define $x_{1}=z_{i_{0}}$ and
$x_{2}=z_{i_{0}+1}$. Then we get
\[
\int_{x_{1}}^{x_{2}}\left(  |\phi^\prime|^{2}+\alpha^{2}|\phi|^{2}-{\frac
{\beta-U^{\prime\prime}}{U-c}}|\phi|^{2}\right)  dy=0.
\]
Note that
\[\left|{U'(y)|\phi(y)|^2\over U(y)-c}\right|\leq {|U'(y)|\|\phi'\|_{L^2(x_1,y)}^2(y-x_1)\over |U'(\xi_y)|(y-x_1)}\to0, \;\;\text{as} \;\;y\to x_1^+,
\]
where $\xi_y\in(x_1,y)$. Similarly, ${U'(y)|\phi(y)|^2\over U(y)-c}\to0$ as $y\to x_2^-$.
Thus by integration by parts, we obtain
\[
\int_{x_{1}}^{x_{2}}\Big|\phi^{\prime}-\frac{U^{\prime}\phi}{U-c}%
\Big|^{2}dy+\int_{x_{1}}^{x_{2}}\left(  \alpha^{2}-{\frac{\beta}{U-c}}\right)
|\phi|^{2}dy=0.
\]
Note that $\alpha^{2}-{\beta/(U-c)}>0$ on $(x_{1},x_{2})$ in all cases. Thus,
$\phi\equiv0$ on $(x_{1},x_{2})$.
\end{proof}

Note that $(U(y)-c)(U(z)-c)<0$ for all $y\in(z_{i_{0}-1},\ z_{i_{0}})$ and all $z\in(z_{i_{0}},\ z_{i_{0}+1})$  holds true unless $U'(z_{i_0})=0$.
The following lemma will be used later in the classification of neutral modes.

\begin{lemma}
\label{le24} Let $\phi$ be a solution of (\ref{15}) with $\alpha>0$, $\beta
\in\mathbf{R}$ and $c\in {\rm Ran}\, (U)$. Assume that $y_{1}\leq x_{0}<x_{1}%
<x_{2}\leq y_{2}$ satisfy $U(x_{1})-c=0$ and $U-c\neq0$ on $(x_{0},x_{1}%
)\cup(x_{1},x_{2})$. If $U^{\prime}(x_{1})\neq0$ and $\phi\in C^{1}%
(x_{0},x_{2})$ satisfies the initial conditions $\phi(x_{1})=a_{1}$,
$\phi^{\prime}(x_{1})=a_{2}$ for some $a_{1},a_{2}\in\mathbf{R}$, then $\phi$
is unique on the interval $(x_{0},x_{2})$.
\end{lemma}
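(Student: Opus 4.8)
The plan is to exploit that $x_1$ is the only singular point of equation (\ref{15}) in $(x_0,x_2)$ and that $U'(x_1)\neq0$ makes this singularity mild. Writing the equation as $\phi''=Q\phi$ with $Q(y):=\alpha^2-\frac{\beta-U''(y)}{U(y)-c}$, the coefficient $Q$ has a \emph{simple} pole at $x_1$: since $U(x_1)=c$ and $U'(x_1)\neq0$, the factor $U-c$ has a simple zero there, so with $t:=y-x_1$ one has $U-c=U'(x_1)\,t\,(1+O(t))$ and hence $\frac{\beta-U''}{U-c}=\frac{g(t)}{t}$ for a bounded continuous $g$ with $g(0)=\frac{\beta-U''(x_1)}{U'(x_1)}$. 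By linearity of (\ref{15}), two $C^1$ solutions sharing the data $\phi(x_1)=a_1,\ \phi'(x_1)=a_2$ differ by a solution $w\in C^1(x_0,x_2)$ of (\ref{15}) on $(x_0,x_1)\cup(x_1,x_2)$ with $w(x_1)=0$ and $w'(x_1)=0$; it therefore suffices to prove $w\equiv0$.

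The heart of the argument is a local estimate at $x_1$. Because $w(x_1)=w'(x_1)=0$ and $w\in C^1$, we have $w(t)=o(t)$, so $\frac{g(t)}{t}w(t)\to0$ and the product $Qw$ is integrable up to $x_1$; thus one may integrate $w''=Qw$ twice from $x_1$, using $w(x_1)=w'(x_1)=0$, to obtain the Volterra-type identity $w(y)=\int_{x_1}^{y}\!\!\int_{x_1}^{s}Q(\tau)w(\tau)\,d\tau\,ds$ on $(x_1,x_2)$. Introducing $M_\delta:=\sup_{0<s\le\delta}|w(x_1+s)|/s$ (finite, and in fact $M_\delta\to0$ as $\delta\to0$ since $w(t)/t\to w'(x_1)=0$) and inserting the bounds $|w(\tau)|\le M_\delta\,|\tau-x_1|$ and $|Q(\tau)|\le\alpha^2+\|g\|_\infty/|\tau-x_1|$, the singular factor is exactly cancelled and a direct computation yields $M_\delta\le\bigl(\tfrac{\alpha^2}{6}\delta^2+\tfrac{\|g\|_\infty}{2}\delta\bigr)M_\delta$. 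For $\delta$ small the bracket is $<1$, forcing $M_\delta=0$, i.e.\ $w\equiv0$ on $(x_1,x_1+\delta]$; the symmetric estimate on the left gives $w\equiv0$ on $[x_1-\delta',x_1)$.

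It remains to propagate this away from $x_1$. On each of the open intervals $(x_0,x_1)$ and $(x_1,x_2)$ the coefficient $Q$ is continuous (here $U-c\neq0$), so (\ref{15}) is a regular linear second-order ODE and the classical uniqueness theorem applies; since $w$ already vanishes on a one-sided neighborhood of $x_1$, it vanishes on all of $(x_0,x_1)$ and $(x_1,x_2)$, hence on $(x_0,x_2)$. The main obstacle is precisely the singularity at $x_1$, where standard existence--uniqueness fails; the argument succeeds because the double vanishing of $w$ at $x_1$ beats the first-order pole of $Q$. This is exactly where $U'(x_1)\neq0$ is essential: it guarantees the pole is simple (equivalently, that $x_1$ is a regular singular point with indicial exponents $0$ and $1$, one Frobenius solution behaving like $t$ and the other like a constant), so that $C^1$ Cauchy data determine the solution; were $U'(x_1)=0$, the pole could be of higher order and the conclusion would fail.
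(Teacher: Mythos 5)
Your argument is correct and is essentially the paper's own proof in slightly different clothing: both reduce to showing that a solution with zero Cauchy data at $x_1$ vanishes, and both exploit $U'(x_1)\neq0$ so that the first-order vanishing of $U-c$ is cancelled by the first-order vanishing of the solution, concluding locally near $x_1$ and then propagating by regular ODE uniqueness. The only cosmetic difference is that the paper runs a Gronwall estimate on $\sup|\phi'|$ for the first-order system, while you run a small-interval contraction on the weighted norm $\sup|w(y)|/|y-x_1|$ via the twice-integrated Volterra identity; both are valid implementations of the same idea.
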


\begin{proof}
It suffices to show that $\phi(x_{1})=\phi^{\prime}(x_{1})=0$ implies
$\phi\equiv0$ on $(x_{1},x_{2})$, and similarly $\phi\equiv0$ on $(x_{0}%
,x_{1})$.

Let $v:=\phi\in C^{1}([x_{1},x_{2}))$ and $u:=\phi^{\prime}\in C^{0}%
([x_{1},x_{2}))$. Then (\ref{15}) can be written as a first order ODE system
\begin{equation}
\left\{
\begin{array}
[c]{l}%
{\frac{dv}{dt}}=u,\\
{\frac{du}{dt}}=\alpha^{2}v-{\frac{\beta-U^{\prime\prime}}{U-c}}v,
\end{array}
\right.  \label{29}%
\end{equation}
with the initial data $v(x_{1})=u(x_{1})=0$. For a fixed $z\in\lbrack
{x_{1},x_{2})}$ and any $s\in\lbrack x_{1},z]$,
\begin{equation}
|v(s)|\leq\int_{x_{1}}^{s}|u(\tau)|d\tau\leq(z-x_{1})|u|_{L^{\infty}}\left(
z\right)  , \label{210}%
\end{equation}
where $|u|_{L_{\infty}}\left(  z\right)  :=\sup\limits_{x_{1}\leq s\leq
z}|u(s)|$. Thus $|v|_{L^{\infty}}(z)\leq(z-x_{1})|u|_{L^{\infty}}(z)$. Since
$U^{\prime}(x_{1})\neq0$, there exists $\delta_{0}>0$ such that $U^{\prime
}(s)\neq0$ for $s\in\lbrack x_{1},x_{1}+\delta_{0}]$. Let
\[
\delta_{1}:=\min\limits_{x_{1}\leq s\leq x_{1}+\delta_{0}}\left\vert
U^{\prime}(s)\right\vert ,\ \ \ \ \ \ \ \ C_{0}:=\max\limits_{y_{1}\leq s\leq
y_{2}}\left\vert \beta-U^{\prime\prime}(s)\right\vert .
\]
Then for each $z\in\lbrack x_{1},x_{1}+\delta_{0}]$,
\begin{equation}
\left\vert {\frac{[(\beta-U^{\prime\prime})v](z)}{U(z)-c}}\right\vert
=\left\vert (\beta-U^{\prime\prime})(z){\frac{v(z)-v(x_{1})}{U(z)-U(x_{1})}%
}\right\vert \leq{\frac{C_{0}|u|_{L^{\infty}}(z)}{\delta_{1}}}, \label{212}%
\end{equation}
and thus by (\ref{29})--(\ref{212})
\[
|u|_{L^{\infty}}(z)\leq\int_{x_{1}}^{z}\left(  |\alpha^{2}v(\tau)|+\left\vert
{\frac{[(\beta-U^{\prime\prime})v](\tau)}{U(\tau)-c}}\right\vert \right)
d\tau\leq\left(  \alpha^{2}(x_{2}-x_{1})+{\frac{C_{0}}{\delta_{1}}}\right)
\int_{x_{1}}^{z}|u|_{L^{\infty}}(\tau)d\tau.
\]
Therefore by Gronwall inequality, we have $u\equiv0$ and thus $v=\phi
\equiv0\ $on $[x_{1},x_{1}+\delta_{0}]$. This implies that $\phi\equiv0$ on
$\left(  x_{1},x_{2}\right)  $, since the ODE (\ref{15}) is regular in
$\left(  x_{1},x_{2}\right)  $. This completes the proof.
\end{proof}

Now we classify all the wave speeds of neutral modes in $H^{2}$ for a general
shear flow.

\begin{theorem}
\label{th-neutral modes} Assume that $U$ satisfies Hypothesis
\ref{hypothesis-U}. Let $(\phi_{s},\alpha_{s},\beta,c_{s})$ be a neutral mode
in $H^{2}$. Then the wave speed $c_{s}$ must be one of the following:

(i) $c_{s}=U(z)$ such that $\beta=U^{\prime\prime}(z)$;

(ii) $c_{s}=U(y_{1})$ or $c_{s}=U(y_{2})$;

(iii) $c_{s}$ is a critical value of $U$;

(iv) $c_{s}\notin {\rm Ran}\, (U)$.
\end{theorem}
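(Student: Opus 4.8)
\emph{Proof proposal.} The plan is to assume $c_s$ satisfies none of (ii), (iii), (iv) and deduce (i). First I would reduce to the interior of the range: "$c_s\notin$(iv)" gives $c_s\in {\rm Ran}\,(U)=[U_{\min},U_{\max}]$, and I claim in fact $c_s\in(U_{\min},U_{\max})$. Indeed, if $c_s=U_{\max}$ then the maximum of $U$ is attained either at an interior point, where $U'=0$ so that $c_s$ is a critical value (case (iii)), or only at $y_1$ or $y_2$, so that $c_s=U(y_1)$ or $U(y_2)$ (case (ii)); both are excluded, and symmetrically for $U_{\min}$. Hence $c_s\in(U_{\min},U_{\max})$, so by Hypothesis \ref{hypothesis-U} the set $\{U=c_s\}\cap(y_1,y_2)=\{z_1<\cdots<z_{k_c}\}$ is finite and nonempty, with $U'(z_i)\neq0$ for every $i$ (since $c_s$ is not a critical value) and $U(y_1),U(y_2)\neq c_s$.

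The heart of the argument is a local $L^2$ analysis at each singular point $z_i$. Since $\phi_s\in H^2(y_1,y_2)\hookrightarrow C^1([y_1,y_2])$, the function $g:=\phi_s''-\alpha_s^2\phi_s$ lies in $L^2$, and on a punctured neighbourhood of $z_i$ the equation (\ref{15}) gives $g=-\tfrac{\beta-U''}{U-c_s}\phi_s$. Using $U(y)-c_s=U'(z_i)(y-z_i)+o(y-z_i)$ with $U'(z_i)\neq0$, together with continuity of $(\beta-U'')\phi_s$, the leading behaviour of $g$ near $z_i$ is
\[
\frac{(\beta-U''(z_i))\,\phi_s(z_i)}{U'(z_i)}\cdot\frac{1}{y-z_i}.
\]
Because $\int|y-z_i|^{-2}\,dy$ diverges near $z_i$, the requirement $g\in L^2$ forces $(\beta-U''(z_i))\,\phi_s(z_i)=0$ for every $i=1,\dots,k_c$. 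If some $\phi_s(z_i)\neq0$, this yields $\beta=U''(z_i)$ with $c_s=U(z_i)$, i.e. case (i) holds with $z=z_i$, and we are done.

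It then remains to rule out the alternative $\phi_s(z_i)=0$ for all $i$. In that case $\phi_s$ vanishes at all the points $z_0=y_1,z_1,\dots,z_{k_c},z_{k_c+1}=y_2$. Applying Lemma \ref{le23} to the triple $z_0,z_1,z_2$ (whose sign condition holds since $U'(z_1)\neq0$) shows $\phi_s\equiv0$ on at least one adjacent subinterval. Once $\phi_s\equiv0$ on some $(z_{j-1},z_j)$ with $z_j$ interior, the $C^1$ regularity gives $\phi_s(z_j)=\phi_s'(z_j)=0$, so Lemma \ref{le24} with $x_1=z_j$ propagates the vanishing across $z_j$ into $(z_j,z_{j+1})$, and symmetrically into $(z_{j-2},z_{j-1})$. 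Iterating over the finitely many subintervals yields $\phi_s\equiv0$ on $(y_1,y_2)$, contradicting nontriviality; hence this alternative is impossible and (i) must hold. I expect the delicate step to be the displayed local estimate — the precise singular cancellation that separates the $H^2$ classification from the regular case — whereas the final propagation is routine bookkeeping built on Lemmas \ref{le23}--\ref{le24}.
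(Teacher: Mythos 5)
Your proposal is correct and follows essentially the same route as the paper: the same local $L^2$ singularity analysis at each zero of $U-c_s$ forcing $(\beta-U''(z_i))\phi_s(z_i)=0$, and the same propagation argument via Lemmas \ref{le23} and \ref{le24} to exclude the case where $\phi_s$ vanishes at every $z_i$. The only difference is organizational (you argue by excluding (ii)--(iv) and deducing (i), while the paper splits into cases on whether $\beta=U''(z)$ somewhere on $\{U=c_s\}$), which does not change the mathematical content.
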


\begin{proof}
It suffices to show that if $c_{s}\in {\rm Ran}\, (U)$, then one of cases (i)-(iii)
is true. Suppose that $c_{s}\in {\rm Ran}\, (U)$ and $\{U(z)-c_{s}=0\}\cap\left\{
y_{1},y_{2}\right\}  \neq\emptyset. $ Then $c_{s}=U(y_{1})$ or $c_{s}%
=U(y_{2})$, that is, case (ii) is true. Otherwise, $c_{s}\neq U(y_{i}),i=1,2$.
We consider two cases below.

\textbf{Case 1.} There exists $z_{s}\in\{U-c_{s}=0\}$ such that $\beta
=U^{\prime\prime}(z_{s})$. Then (i) is true.

\textbf{Case 2.} $\beta\neq U^{\prime\prime}(z)$ for all $z\in\{U-c_{s}=0\}$.
We divide it into two subcases.

\textbf{Case 2.1.} $U^{\prime}(z)\neq0$ for all $z\in\{U-c_{s}=0\}$. Then
${c_{s}}\in(U_{\min},U_{\max})$.

In this subcase, $\{U-c_{s}=0\}$ is non-empty and finite, so we use the
notation in (\ref{arrange-z}). We claim that there exists $1\leq i_{1}\leq
k_{c_{s}}$ such that $\phi_{s}(z_{i_{1}})\neq0$. Suppose otherwise, $\phi
_{s}(z_{i})=0$ for any $1\leq i\leq k_{c_{s}}$. For any fixed $1\leq i_{0}\leq
k_{c_{s}}$, by the fact that $U^{\prime}(z_{i_{0}})\neq0$ and by Lemma
\ref{le23}, $\phi_{s}\equiv0$ on at least one of the intervals $[z_{i_{0}%
-1},z_{i_{0}}]$ and $[z_{i_{0}},z_{i_{0}+1}]$. Since $\phi_{s}\in H^{2}%
(y_{1},y_{2})$, it follows that $\phi_{s}\in C^{1}([y_{1},y_{2}])$ and by
Lemma \ref{le24}, $\phi\equiv0$ on $[z_{i_{0}-1},z_{i_{0}+1}]$ and hence on
$[y_{1},y_{2}]$. Thus, there exists $1\leq i_{1}\leq k_{c_{s}}$ such that
$\phi(z_{i_{1}})\neq0$. Then near $z_{i_{1}}$,
\[
\phi_{s}^{\prime\prime}=\alpha_{s}^{2}\phi_{s}-{\frac{\beta-U^{\prime\prime}%
}{U-c_{s}}}\phi_{s}\notin L_{{\rm loc}}^{2}(y_{1},y_{2}),
\]
which is a contradiction to $\phi_{s}\in H^{2}(y_{1},y_{2})$.

\textbf{Case 2.2.} There exists $z_{0}\in\{U-c_{s}=0\}$ such that $U^{\prime
}(z_{0})=0$. In this subcase, $c_{s}=U(z_{0})$ is a critical value of $U$.
This finishes the proof of Theorem \ref{th-neutral modes}.
\end{proof}

\begin{remark}
For the neutral modes in Theorem \ref{th-neutral modes}, we call \textrm{(i)}
to be regular, \textrm{(ii)}--\textrm{(iii)} to be singular, and \textrm{(iv)}
to be non-resonant. For $\beta=0$, it can be shown that only \textrm{(i)} is
true, that is, for all neutral modes in $H^{2}$, the phase speed must be an
inflection value of $U$ (see Appendix in \cite{lin-xu-2017}).
\end{remark}

\subsection{Neutral limiting modes for flows in class $\mathcal{K^{+}}$}

First, we prove the uniform $H^{2}\ $bound for unstable solutions for flows in
class $\mathcal{K^{+}}$. First, we need the following two identities about
unstable modes, of which (\ref{integral-identity-2}) was used in
\cite{Kuo1949} to show Rayleigh's criterion for barotropic instability.

\begin{lemma}
\label{le21} Let $\phi$ be a solution of (\ref{15})--(\ref{16}) with
$c=c_{r}+ic_{i}$ $(c_{i}>0)$. Then
\begin{equation}
\int_{y_{1}}^{y_{2}}{\frac{(\beta-U^{\prime\prime})}{|U-c|^{2}}}|\phi
|^{2}dy=0, \label{integral-identity-2}%
\end{equation}
and
\begin{equation}
\int_{y_{1}}^{y_{2}}\left[  |\phi^{\prime}|^{2}+\alpha^{2}|\phi|^{2}%
-{\frac{(\beta-U^{\prime\prime})(U-q)}{|U-c|^{2}}}|\phi|^{2}\right]  dy=0
\label{integral-identity-1}%
\end{equation}
for any $q\in\mathbf{R}$.
\end{lemma}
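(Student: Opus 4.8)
The plan is to derive both identities from the Rayleigh-Kuo equation \eqref{15} by multiplying with the complex conjugate $\bar\phi$, integrating over $(y_1,y_2)$, applying integration by parts with the boundary conditions \eqref{16}, and then separating the resulting equation into its real and imaginary parts. The crucial point that makes all manipulations legitimate is that when $c_i>0$ the coefficient $\tfrac{\beta-U''}{U-c}$ is bounded on $[y_1,y_2]$, since $|U-c|^2=(U-c_r)^2+c_i^2\geq c_i^2>0$; hence $\phi$ is a genuine $H^2$ solution with no singularities, and every integral below converges absolutely.

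First I would multiply \eqref{15} by $\bar\phi$ and integrate. Integration by parts on the $-\phi''\bar\phi$ term yields $\int |\phi'|^2\,dy$ with no boundary contribution because $\phi(y_1)=\phi(y_2)=0$. This gives
\[
\int_{y_1}^{y_2}\Big(|\phi'|^2+\alpha^2|\phi|^2-\frac{\beta-U''}{U-c}|\phi|^2\Big)\,dy=0.
\]
Next I would rationalize the singular-looking coefficient by writing $\tfrac{1}{U-c}=\tfrac{U-\bar c}{|U-c|^2}=\tfrac{(U-c_r)+ic_i}{|U-c|^2}$, so that
\[
\frac{\beta-U''}{U-c}=(\beta-U'')\frac{(U-c_r)+ic_i}{|U-c|^2}.
\]
Since $\beta-U''$, $U$, $c_r$, $c_i$ and $|\phi|^2$ are all real, the imaginary part of the integral identity reads
\[
c_i\int_{y_1}^{y_2}\frac{\beta-U''}{|U-c|^2}|\phi|^2\,dy=0,
\]
and dividing by $c_i>0$ gives \eqref{integral-identity-2} immediately.

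For \eqref{integral-identity-1} I would take the real part of the same identity, obtaining
\[
\int_{y_1}^{y_2}\Big(|\phi'|^2+\alpha^2|\phi|^2-\frac{(\beta-U'')(U-c_r)}{|U-c|^2}|\phi|^2\Big)\,dy=0.
\]
Finally, to replace $c_r$ by an arbitrary $q\in\mathbf{R}$, I would write $U-c_r=(U-q)+(q-c_r)$ and split the last term; the piece carrying the constant factor $(q-c_r)$ is exactly $(q-c_r)\int \tfrac{\beta-U''}{|U-c|^2}|\phi|^2\,dy$, which vanishes by \eqref{integral-identity-2}. Thus $(U-c_r)$ may be freely shifted to $(U-q)$ for any real $q$, yielding \eqref{integral-identity-1}. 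The whole argument is essentially routine; the only point requiring genuine care, rather than a real obstacle, is confirming absolute convergence and the vanishing boundary terms, which both follow from $c_i>0$ and the Dirichlet conditions \eqref{16}. The structural insight worth emphasizing is that \eqref{integral-identity-2}, the imaginary part, is precisely what decouples $q$ from $c_r$ and grants the freedom in \eqref{integral-identity-1}.
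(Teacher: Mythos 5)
Your proposal is correct and matches the paper's own proof: both multiply \eqref{15} by $\bar\phi$, integrate by parts using the Dirichlet conditions, and split into real and imaginary parts, with the imaginary part giving \eqref{integral-identity-2} and the real part (plus the shift $U-c_r=(U-q)+(q-c_r)$, absorbed via \eqref{integral-identity-2}) giving \eqref{integral-identity-1}. Your write-up simply makes explicit the rationalization and the $q$-shift that the paper leaves implicit.
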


\begin{proof}
Thanks to (\ref{15})--(\ref{16}), we get by integration by parts that
\[
\int_{y_{1}}^{y_{2}}\left[  |\phi^{\prime}|^{2}+\alpha^{2}|\phi|^{2}%
-{\frac{(\beta-U^{\prime\prime})(U-c_{r})}{|U-c|^{2}}}|\phi|^{2}\right]
dy=0,\;\; \int_{y_{1}}^{y_{2}}{\frac{(\beta-U^{\prime\prime})c_{i}}{|U-c|^{2}%
}}|\phi|^{2}dy=0,
\]
which yields (\ref{integral-identity-2})--(\ref{integral-identity-1}).
\end{proof}

\begin{lemma}
\label{le22} Let $U$ be in class $\mathcal{K}^{+}$ and $\beta\in(\min
U^{\prime\prime},\max U^{\prime\prime})$. If $\phi$ is a solution of
(\ref{15})--(\ref{16}) with $c=c_{r}+ic_{i}$ $(c_{i}>0)$, then
\begin{equation}
\int_{y_{1}}^{y_{2}}(|\phi^{\prime}|^{2}+\alpha^{2}|\phi|^{2})dy\leq
\int_{y_{1}}^{y_{2}}K_{\beta}|\phi|^{2}dy, \label{H1 bound}%
\end{equation}%
\begin{equation}
\int_{y_{1}}^{y_{2}}(|\phi^{\prime\prime}|^{2}+2\alpha^{2}|\phi^{\prime}%
|^{2}+\alpha^{4}|\phi|^{2})dy\leq\Vert K_{\beta}\Vert_{L^{\infty}}\int_{y_{1}%
}^{y_{2}}K_{\beta}|\phi|^{2}dy. \label{H2 bound}%
\end{equation}

\end{lemma}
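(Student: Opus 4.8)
The plan is to obtain both estimates directly from the integral identities of Lemma \ref{le21}, exploiting the factorization $\beta-U''=K_\beta\,(U-U_\beta)$ that is available for flows in class $\mathcal{K}^{+}$ (note $\beta\in(\min U'',\max U'')\subset {\rm Ran}\,(U'')$, so $U_\beta$ and $K_\beta$ are defined, with $K_\beta\ge 0$ and $\|K_\beta\|_{L^\infty}<\infty$). The only genuinely clever ingredient is the choice of the free parameter $q$ in \eqref{integral-identity-1}; everything else is algebra together with one integration by parts.

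First I would prove \eqref{H1 bound}. Taking $q=2c_r-U_\beta$ in \eqref{integral-identity-1} gives
\[\int_{y_1}^{y_2}\big(|\phi'|^2+\alpha^2|\phi|^2\big)\,dy=\int_{y_1}^{y_2}\frac{(\beta-U'')(U-2c_r+U_\beta)}{|U-c|^2}|\phi|^2\,dy.\]
Substituting $\beta-U''=K_\beta(U-U_\beta)$ and writing $s=U-c_r$, $d=U_\beta-c_r$, the numerator becomes $K_\beta\,(s-d)(s+d)=K_\beta\big[(U-c_r)^2-(U_\beta-c_r)^2\big]$, so the integrand equals $K_\beta\,\dfrac{(U-c_r)^2-(U_\beta-c_r)^2}{(U-c_r)^2+c_i^2}\,|\phi|^2$. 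Since $K_\beta\ge 0$ and $-(U_\beta-c_r)^2\le c_i^2$ forces the fraction to be at most $1$, the right-hand side is bounded by $\int_{y_1}^{y_2}K_\beta|\phi|^2\,dy$, which is exactly \eqref{H1 bound}.

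For \eqref{H2 bound} I would first note that, by integration by parts using the boundary conditions \eqref{16},
\[\int_{y_1}^{y_2}\big(|\phi''|^2+2\alpha^2|\phi'|^2+\alpha^4|\phi|^2\big)\,dy=\int_{y_1}^{y_2}|\phi''-\alpha^2\phi|^2\,dy.\]
Using \eqref{15} in the form $\phi''-\alpha^2\phi=-\tfrac{\beta-U''}{U-c}\phi$ and the reality of $\beta-U''$, the right-hand side equals $\int_{y_1}^{y_2}(\beta-U'')^2|U-c|^{-2}|\phi|^2\,dy$. Factoring out one copy of $K_\beta$ via $\beta-U''=K_\beta(U-U_\beta)$ and bounding it by $\|K_\beta\|_{L^\infty}$ — which is legitimate since the remaining integrand $(\beta-U'')(U-U_\beta)|U-c|^{-2}|\phi|^2=K_\beta(U-U_\beta)^2|U-c|^{-2}|\phi|^2$ is non-negative — yields
\[\int_{y_1}^{y_2}|\phi''-\alpha^2\phi|^2\,dy\le \|K_\beta\|_{L^\infty}\int_{y_1}^{y_2}\frac{(\beta-U'')(U-U_\beta)}{|U-c|^2}|\phi|^2\,dy.\]
The last integral is precisely the right-hand side of \eqref{integral-identity-1} with $q=U_\beta$, hence equals $\int_{y_1}^{y_2}(|\phi'|^2+\alpha^2|\phi|^2)\,dy$; applying the already-established bound \eqref{H1 bound} then finishes the proof.

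The main obstacle is purely the bookkeeping in the first step: one must recognize that $q=2c_r-U_\beta$ is the value that cancels the linear-in-$U$ term and turns the integrand into a manifestly sub-unit multiple of $K_\beta$. Once that choice is made the $H^{1}$ bound is immediate, and the $H^{2}$ bound collapses onto it after observing that the $q=U_\beta$ specialization of \eqref{integral-identity-1} reproduces the Dirichlet energy $\int(|\phi'|^2+\alpha^2|\phi|^2)\,dy$.
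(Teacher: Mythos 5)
Your proposal is correct and follows essentially the same route as the paper: the choice $q=2c_{r}-U_{\beta}$ is exactly the paper's $q=U_{\beta}-2(U_{\beta}-c_{r})$, and the factorization $\beta-U''=K_{\beta}(U-U_{\beta})$ together with the bound $\bigl((U-c_{r})^{2}-(U_{\beta}-c_{r})^{2}\bigr)/|U-c|^{2}\le 1$ is precisely how \eqref{H1 bound} is obtained there. For \eqref{H2 bound} the paper likewise reduces to $\int(\beta-U'')^{2}|U-c|^{-2}|\phi|^{2}\,dy$ (by multiplying \eqref{15} by $\bar\phi''$ rather than by squaring $\phi''-\alpha^{2}\phi$, an equivalent computation) and then peels off one factor $K_{\beta}$ and invokes \eqref{integral-identity-1} with $q=U_{\beta}$ and the $H^{1}$ bound, just as you do.
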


\begin{proof} Since $\beta\in(\min
U^{\prime\prime},\max U^{\prime\prime})$, $K_{\beta}$ is 
positive and bounded on $[y_{1},y_{2}]$. 
Let $q=U_{\beta}-2(U_{\beta}-c_{r})$ in (\ref{integral-identity-1}). Then
\begin{align*}
\int_{y_{1}}^{y_{2}}(|\phi^{\prime}|^{2}+\alpha^{2}|\phi|^{2})dy =  &
\int_{y_{1}}^{y_{2}}{\frac{(\beta-U^{\prime\prime})[(U-U_{\beta})+2(U_{\beta
}-c_{r})]}{|U-c|^{2}}}|\phi|^{2}dy\\
=  &  \int_{y_{1}}^{y_{2}}K_{\beta}{\frac{(U-c_{r})^{2}-(U_{\beta}-c_{r})^{2}%
}{|U-c|^{2}}}|\phi|^{2}dy\\
\leq &  \int_{y_{1}}^{y_{2}}K_{\beta}{\frac{(U-c_{r})^{2}}{|U-c|^{2}}}%
|\phi|^{2}dy \leq\int_{y_{1}}^{y_{2}}K_{\beta}|\phi|^{2}dy.
\end{align*}
This proves (\ref{H1 bound}).

Now we consider (\ref{H2 bound}). Multiplying (\ref{15}) by $\bar{\phi
}^{\prime\prime}$ and integrating it, we have
\begin{align*}
\int_{y_{1}}^{y_{2}}(|\phi^{\prime\prime}|^{2}+\alpha^{2}|\phi^{\prime}%
|^{2})dy = -\alpha^{2}\int_{y_{1}}^{y_{2}}{\frac{(\beta-U^{\prime\prime
})(U-c_{r}+ic_{i})}{|U-c|^{2}}}|\phi|^{2}dy+\int_{y_{1}}^{y_{2}}{\frac
{(\beta-U^{\prime\prime})^{2}}{|U-c|^{2}}}|\phi|^{2}dy.
\end{align*}
Taking the real part in the above equality and using Lemma \ref{le21}, we get
\begin{align*}
\int_{y_{1}}^{y_{2}}(|\phi^{\prime\prime}|^{2}+\alpha^{2}|\phi^{\prime}%
|^{2})dy =  &  -\alpha^{2}\int_{y_{1}}^{y_{2}}{\frac{(\beta-U^{\prime\prime
})(U-c_{r})}{|U-c|^{2}}}|\phi|^{2}dy+\int_{y_{1}}^{y_{2}}{\frac{(\beta
-U^{\prime\prime})^{2}}{|U-c|^{2}}}|\phi|^{2}dy\\
=  &  -\alpha^{2}\int_{y_{1}}^{y_{2}}(|\phi^{\prime}|^{2}+\alpha^{2}|\phi
|^{2})dy+\int_{y_{1}}^{y_{2}}{\frac{(\beta-U^{\prime\prime})^{2}}{|U-c|^{2}}%
}|\phi|^{2}dy.
\end{align*}
Thus, by Lemma \ref{le21} and (\ref{H1 bound}) we obtain
\begin{align*}
&  \int_{y_{1}}^{y_{2}}(|\phi^{\prime\prime}|^{2}+2\alpha^{2}|\phi^{\prime
}|^{2}+\alpha^{4}|\phi|^{2})dy \leq\Vert K_{\beta}\Vert_{L^{\infty}}%
\int_{y_{1}}^{y_{2}}{\frac{(\beta-U^{\prime\prime})(U-U_{\beta})}{|U-c|^{2}}%
}|\phi|^{2}dy\\
=  &  \Vert K_{\beta}\Vert_{L^{\infty}}\int_{y_{1}}^{y_{2}}(|\phi^{\prime
}|^{2}+\alpha^{2}|\phi|^{2})dy \leq\Vert K_{\beta}\Vert_{L^{\infty}}%
\int_{y_{1}}^{y_{2}}K_{\beta}|\phi|^{2}dy.
\end{align*}
This completes the proof of (\ref{H2 bound}).
\end{proof}

Next, we consider neutral limiting modes defined below.

\begin{definition}
\label{defn limiting neutral} Let $\beta\in(\min U^{\prime\prime},\max
U^{\prime\prime})$. We call $(c_{s},\alpha_{s},\beta,\phi_{s})$ to be a
neutral limiting mode if $c_{s}\in\mathbf{R}$, $\alpha_{s}>0$ and there exists
a sequence of unstable modes $\{(c_{k},\alpha_{k},\beta,\phi_{k})\}$ (with
$\operatorname{Im}(c_{k})=c_{k}^{i}>0$ and $\Vert\phi_{k}\Vert_{L^{2}}=1$) to
(\ref{15})--(\ref{16}) such that $c_{k}\rightarrow c_{s},\alpha_{k}%
\rightarrow\alpha_{s}$, $\phi_{k}$ converges uniformly to $\phi_{s}$ on any
compact subset of $S_{0}$ as $k\rightarrow\infty$, $\phi_{s}^{\prime\prime}$
exists on $S_{0}$, and $\phi_{s}$ satisfies
\begin{equation}
(U-c_{s})(-\phi_{s}^{\prime\prime}+\alpha_{s}^{2}\phi_{s})-(\beta
-U^{\prime\prime})\phi_{s}=0\label{Rayleigh-Kuo-2}%
\end{equation}
on $S_{0}$, where $S_{0}$ denotes the complement of the set $\{U-c_{s}=0\}$ in
the interval $[y_{1},y_{2}]$. Here $c_{s}$ is called the neutral limiting
phase speed and $\alpha_{s}$ is called the neutral limiting wave number.
\end{definition}

Then we prove that any neutral limiting mode is in $H^{2}$ for flows in class
$\mathcal{K}^{+}$.

\begin{lemma}
\label{le26} Let $U$ be in class $\mathcal{K}^{+}$ and $\beta\in(\min
U^{\prime\prime},\max U^{\prime\prime})$. Suppose that $(\phi_{s},\alpha
_{s},\beta,c_{s})$ is a neutral limiting mode. Then $\phi_{s}\in H^{2}%
(y_{1},y_{2})$.
\end{lemma}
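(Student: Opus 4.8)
The plan is to obtain the $H^{2}$ regularity of $\phi_{s}$ by combining the uniform $H^{2}$ bound of Lemma \ref{le22} with a compactness argument, and then to identify the resulting limit with $\phi_{s}$ using the convergence already built into Definition \ref{defn limiting neutral}. First I would note that, since $\beta\in(\min U^{\prime\prime},\max U^{\prime\prime})$ and $U\in\mathcal{K}^{+}$, the weight $K_{\beta}$ is positive and bounded, so the normalization $\Vert\phi_{k}\Vert_{L^{2}}=1$ yields
\[
\int_{y_{1}}^{y_{2}}K_{\beta}|\phi_{k}|^{2}\,dy\leq\Vert K_{\beta}\Vert_{L^{\infty}}.
\]
Feeding this into (\ref{H1 bound}) and (\ref{H2 bound}) bounds $\Vert\phi_{k}^{\prime}\Vert_{L^{2}}$ and $\Vert\phi_{k}^{\prime\prime}\Vert_{L^{2}}$ by a constant independent of $k$, since the left-hand sides of both inequalities are sums of nonnegative terms and hence each term is controlled. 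Therefore $\{\phi_{k}\}$ is bounded in $H^{2}(y_{1},y_{2})$.

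Next I would invoke weak compactness: a subsequence $\phi_{k_{j}}$ converges weakly in $H^{2}(y_{1},y_{2})$ to some $\tilde{\phi}\in H^{2}(y_{1},y_{2})$, and by the compact embedding $H^{2}(y_{1},y_{2})\hookrightarrow C^{1}([y_{1},y_{2}])$ this convergence is strong in $C^{1}([y_{1},y_{2}])$. It then remains to identify $\tilde{\phi}$ with $\phi_{s}$. Since strong $C^{1}$ convergence on the whole interval implies uniform convergence on every compact subset of $S_{0}$, and since Definition \ref{defn limiting neutral} already provides $\phi_{k}\to\phi_{s}$ uniformly on such subsets, uniqueness of limits forces $\tilde{\phi}=\phi_{s}$ on $S_{0}$. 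As the critical set $\{U-c_{s}=0\}$ has empty interior (it is finite for $c_{s}$ in the interior of ${\rm Ran}\,(U)$ by Hypothesis \ref{hypothesis-U}, which holds for flows in class $\mathcal{K}^{+}$ by Remark \ref{remark-hypothesis-U}), the set $S_{0}$ is dense in $[y_{1},y_{2}]$, so $\tilde{\phi}$ is the continuous $H^{2}$ extension of $\phi_{s}$ across the critical points. Taking $\tilde{\phi}$ as the representative of $\phi_{s}$ gives $\phi_{s}\in H^{2}(y_{1},y_{2})$.

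The one genuinely delicate point is this last identification across the layer $\{U-c_{s}=0\}$: a priori $\phi_{s}$ is only given on $S_{0}$, so one must guarantee that the $C^{1}$ limit does not decouple from $\phi_{s}$ at the singular points. This is resolved precisely because the bound coming from Lemma \ref{le22} is \emph{global} on $[y_{1},y_{2}]$ rather than merely interior to $S_{0}$; the limit therefore inherits $H^{2}$ regularity \emph{through} the critical points instead of only away from them. I expect the extraction of the uniform bound from (\ref{H2 bound}) and the application of the compact embedding to be entirely routine; the main conceptual step is recognizing that the global estimate of Lemma \ref{le22} is exactly what upgrades the pointwise-on-$S_{0}$ limit into a bona fide $H^{2}$ function on the full interval.
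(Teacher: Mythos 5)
Your proof is correct and follows essentially the same route as the paper's: extract the uniform $H^{2}$ bound from Lemma \ref{le22} using the normalization $\Vert\phi_{k}\Vert_{L^{2}}=1$ and the boundedness of $K_{\beta}$, pass to a weak $H^{2}$ (hence strong $C^{1}$) limit, and identify that limit with $\phi_{s}$ via the uniform convergence on compact subsets of $S_{0}$ built into Definition \ref{defn limiting neutral}. Your extra remarks on the density of $S_{0}$ and the extension across the critical layer make explicit a step the paper leaves implicit, but the argument is the same.
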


\begin{proof}
Let $\{(\phi_{k},\alpha_{k},\beta,c_{k})\}$ be a sequence of unstable modes
converging to $(\phi_{s},\alpha_{s},\beta,c_{s})$ in the sense of Definition
\ref{defn limiting neutral}. Note that $\phi_{k}$ has been normalized such
that $\Vert\phi_{k}\Vert_{L^{2}}=1$. Since $U$ is in class $\mathcal{K}^{+}$,
by Lemma \ref{le22}, there exists $C>0$ such that $\Vert\phi_{k}\Vert_{H^{2}%
}\leq C$ for all $k\geq1$. Thus there exists $\phi_{0}\in H^{2}(y_{1},y_{2})$
such that, up to a subsequence, $\phi_{k}\rightharpoonup\phi_{0}$ in
$H^{2}(y_{1},y_{2})$, $\phi_{k}\to\phi_{0}$ in $C^{1}([y_{1},y_{2}])$ and
$\left\Vert \phi_{0}\right\Vert _{H^{2}}\leq C$. Recall that $S_{0}$ denotes
the complement of the set $\{U-c_{s}=0\}$ in the interval $[y_{1},y_{2}]$. For
any compact subset $S_{1}\subset S_{0}$, $\phi_{0}$ solves
(\ref{Rayleigh-Kuo-2}) on $S_{1}$ and thus by Definition
\ref{defn limiting neutral}, $\phi_{0}\equiv\phi_{s}\in H^{2}(y_{1},y_{2})$.
This completes the proof.
\end{proof}

Combining Remark \ref{remark-hypothesis-U} (ii), Theorem
\ref{th-neutral modes}, and Lemma \ref{le26}, we get the classification of
neutral limiting modes for flows in class $\mathcal{K}^{+}$.

\begin{theorem}
\label{th-neutral limiting modes} Assume that $U$ is in class $\mathcal{K}%
^{+}$. Let $(\phi_{s},\alpha_{s},\beta,c_{s})$ be a neutral limiting mode.
Then the neutral limiting phase speed $c_{s}$ must be one of the following:

(i) $c_{s}=U_{\beta}$;

(ii) $c_{s}=U(y_{1})$ or $c_{s}=U(y_{2})$;

(iii) $c_{s}$ is a critical value of $U$;

(iv) $c_{s}\notin {\rm Ran}\, (U)$.
\end{theorem}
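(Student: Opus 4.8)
The goal is to classify the neutral limiting phase speeds for flows in class $\mathcal{K}^+$ into four types. The strategy is to combine the general classification already obtained in Theorem \ref{th-neutral modes} with the $H^2$-regularity of neutral limiting modes proved in Lemma \ref{le26}. The overarching observation is that a neutral limiting mode is, a priori, only known to be a solution of the Rayleigh--Kuo equation \eqref{Rayleigh-Kuo-2} on the complement $S_0$ of the singular set $\{U-c_s=0\}$, and nothing guarantees it is a genuine $H^2$ neutral mode in the sense of Definition \ref{defn neutral mode} unless we upgrade its regularity. That upgrade is exactly the content of Lemma \ref{le26}.

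First I would verify that Hypothesis \ref{hypothesis-U} applies, so that Theorem \ref{th-neutral modes} is available. By Remark \ref{remark-hypothesis-U}, every flow in class $\mathcal{K}^+$ satisfies Hypothesis \ref{hypothesis-U}, since such flows obey a second-order ODE of the form $U''=k(y)g(U)$ with the requisite structure, ruling out infinite level sets in the interior. Next I would invoke Lemma \ref{le26}: since $U$ is in class $\mathcal{K}^+$ and $\beta\in(\min U'',\max U'')$, any neutral limiting mode $(\phi_s,\alpha_s,\beta,c_s)$ automatically has $\phi_s\in H^2(y_1,y_2)$. Combined with the fact that $\phi_s$ solves \eqref{Rayleigh-Kuo-2} on $S_0$ and, by the boundary conditions inherited in the limit, vanishes at $y_1,y_2$, this shows $(\phi_s,\alpha_s,\beta,c_s)$ is precisely an $H^2$ neutral mode as in Definition \ref{defn neutral mode}.

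With the $H^2$ neutral-mode property in hand, Theorem \ref{th-neutral modes} applies verbatim and yields that $c_s$ falls into one of its four cases: (i) $c_s=U(z)$ with $\beta=U''(z)$, (ii) $c_s=U(y_1)$ or $U(y_2)$, (iii) $c_s$ a critical value of $U$, or (iv) $c_s\notin{\rm Ran}\,(U)$. The only remaining discrepancy is cosmetic: case (i) of the target theorem is phrased as $c_s=U_\beta$, whereas Theorem \ref{th-neutral modes}(i) reads $c_s=U(z)$ with $\beta=U''(z)$. So the final step is to reconcile these two descriptions. I would argue that for flows in class $\mathcal{K}^+$ the condition $\beta=U''(z)$ forces $U(z)=U_\beta$: indeed, if $\beta=U''(z)$ then the numerator $\beta-U''(z)$ of $K_\beta$ vanishes at $z$, and since $K_\beta$ is required to be \emph{positive} everywhere on $[y_1,y_2]$ (the defining feature of $\mathcal{K}^+$ as opposed to $\mathcal{K}$), the denominator $U(z)-U_\beta$ must also vanish there, i.e. $U(z)=U_\beta$. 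This pins down $c_s=U(z)=U_\beta$, turning case (i) of Theorem \ref{th-neutral modes} into case (i) of the present theorem.

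The main obstacle, conceptually, is the regularity upgrade in Lemma \ref{le26}, which rests on the uniform $H^2$ bound of Lemma \ref{le22}; once that is granted the present theorem is essentially a bookkeeping assembly of three prior results plus the short $\mathcal{K}^+$-specific identification of case (i). I expect no genuine difficulty here, only the need to state the reconciliation of case (i) carefully and to confirm that the limiting $\phi_s$ indeed inherits the Dirichlet boundary conditions so that Definition \ref{defn neutral mode} is met in full.
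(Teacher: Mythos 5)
Your proposal is correct and follows essentially the same route as the paper, which proves this theorem precisely by combining Remark \ref{remark-hypothesis-U} (to verify Hypothesis \ref{hypothesis-U} for class $\mathcal{K}^+$), Lemma \ref{le26} (the $H^2$ upgrade via the uniform bound of Lemma \ref{le22}), and Theorem \ref{th-neutral modes}. Your explicit reconciliation of case (i) — that $\beta=U''(z)$ forces $U(z)=U_\beta$ because positivity of $K_\beta$ would otherwise fail at $z$ — is a detail the paper leaves implicit, and it is argued correctly.
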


\begin{remark}
\label{remark tung}In Theorem IV of \cite{Tung1981}, Tung showed that for a
general $C^{2}$ shear flow $U\left(  y\right)  $, the phase speed $c_{s}\ $of
any neutral limiting mode $\left(  c_{s},\alpha_{s},\beta,\phi_{s}\right)  $
must lie in ${\rm Ran}\,(U)$. His proof is under the assumption that for fixed
$\beta$, the dispersion relation $c\left(  \alpha\right)  $ is an analytic
function of $\alpha$ near $\alpha_{s}$ when $c\left(  \alpha_{s}\right)
=c_{s}\notin {\rm Ran}\,(U)$. However, as suggested in \cite{Maslowe1991}, the
analytic assumption might not always hold and it is possible that $c_{s}\notin
{\rm Ran}\,(U)$. In Theorem \ref{transition}, we give the sharp stability boundary
for the Sinus flow, part of which consists of non-resonant neutral modes. This
shows that the phase speed of neutral limiting modes can indeed lie outside
the range of $U$.

Below we give some explanation why the analytic assumption of $c\left(
\alpha\right)  \ $could fail. Assume that $(\phi_{s},\alpha_{s},\beta,c_{s})$
is a neutral mode and $c_{s}\notin {\rm Ran}\, (U)$. From the Rayleigh-Kuo equation
(\ref{15})--(\ref{16}), the perturbation of the eigenvalue $c$ near $c_{s}%
\ $appears to be analytic in $\alpha\ $when $c_{s}$ is not in the range of
$U$. However, we should consider the operator associated with the linearized
equation (\ref{13}) with the wave number $\alpha$ ($\beta$ is fixed):
\[
B_{\alpha}\omega:=U\omega-(\beta-U^{\prime\prime})(-{\frac{d^{2}}{dy^{2}}%
}+\alpha^{2})^{-1}\omega.
\]
Then $c_{s}$ is an isolated eigenvalue of $B_{\alpha_{s}}$. Define the Riesz
projection operator
\[
P_{\alpha_{s}}:=-{\frac{1}{2\pi i}}\int_{\Gamma}(B_{\alpha_{s}}-\zeta
)^{-1}d\zeta,
\]
where $\Gamma$ is a circle in $\rho(B_{\alpha_{s}})$ enclosing $c_{s}$ and no
other spectral points of $B_{\alpha_{s}}$. Note that ${\rm Ran}\,(P_{\alpha_{s}})$
is the generalized eigenspace of the eigenvalue $c_{s}$ and $\dim
({\rm Ran}\,(P_{\alpha_{s}}))$ is the algebraic multiplicity of $c_{s}$ (see P. 181
in \cite{Kato1980}). Although the geometric multiplicity of $c_{s}$ is $1$,
the algebraic multiplicity of $c_{s}$ may be larger than $1$. In such case,
there might be more than one branches of eigenvalues emanating from $c_{s}$
when we perturb the parameter $\alpha$ in a neighborhood of $\alpha_{s}$. As a
consequence, the expansion of $c\left(  \alpha\right)  -c\left(  \alpha
_{s}\right)  $ near $\alpha=\alpha_{s}$ is given by the Puiseux series (see P.
65 in \cite{Kato1980}) instead of the power series in the analytic case. This
suggests that we can not exclude the possibility that for a neutral limiting
mode, $c_{s}\ $is outside ${\rm Ran}\, (U)$.
\end{remark}

Similar to the proof of Theorem 4.1 in \cite{Lin2003}, we get the existence of
unstable modes when the wave number is slightly to the left of a regular
neutral wave number.

\begin{lemma}
\label{lemma bifurcation regular neutral} Let $U$ be in class $\mathcal{K}%
^{+}$, $\beta\in(\min U^{\prime\prime},\max U^{\prime\prime})$, and
$(c_{s},\alpha_{s},\beta,\phi_{s})$ be a regular neutral mode with
$c_{s}=U_{\beta}$. Then there exists $\varepsilon_{0}<0$ such that if
$\varepsilon_{0}<\varepsilon<0$, there is a nontrivial solution $\phi
_{\varepsilon}$ to the equation
\[
(U-U_{\beta}-c(\varepsilon))(\phi_{\varepsilon}^{\prime\prime}-\alpha
(\varepsilon)^{2}\phi_{\varepsilon})+(\beta-U^{\prime\prime})\phi
_{\varepsilon}=0
\]
with $\phi_{\varepsilon}(y_{1})=\phi_{\varepsilon}(y_{2})=0$. Here
$\alpha(\varepsilon)=\sqrt{\varepsilon+\alpha_{s}^{2}}$ is the perturbed wave
number and $U_{\beta}+c(\varepsilon)$ is an unstable wave speed with
$\operatorname{Im}(c(\varepsilon))>0$.
\end{lemma}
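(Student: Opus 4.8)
The plan is to recast the bifurcation as the continuation of a simple Sturm--Liouville eigenvalue into the complex $c$-plane, following the scheme of Theorem~4.1 in \cite{Lin2003}. Since $U\in\mathcal{K}^{+}$ and $c_{s}=U_{\beta}$, the coefficient $(\beta-U^{\prime\prime})/(U-c_{s})=K_{\beta}$ is bounded and positive, so the neutral mode equation is the \emph{regular} problem $-\phi_{s}^{\prime\prime}+\alpha_{s}^{2}\phi_{s}-K_{\beta}\phi_{s}=0$ with $\phi_{s}(y_{1})=\phi_{s}(y_{2})=0$; equivalently, $-\alpha_{s}^{2}$ is a Dirichlet eigenvalue of $\mathcal{S}_{c_{s}}:=-d^{2}/dy^{2}-K_{\beta}$ with eigenfunction $\phi_{s}$. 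As one-dimensional Dirichlet eigenvalues are simple, $-\alpha_{s}^{2}=\lambda(c_{s})$ is a simple eigenvalue, and I would track the branch $\lambda(c)$ issuing from it.

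For $\operatorname{Im}c>0$ the potential $(\beta-U^{\prime\prime})/(U-c)$ is a bounded, $C([y_{1},y_{2}])$-valued holomorphic function of $c$, so $\mathcal{S}_{c}:=-d^{2}/dy^{2}-(\beta-U^{\prime\prime})/(U-c)$ is an analytic family of type~(A) \cite{Kato1980}, and the simple eigenvalue persists as an analytic function $\lambda(c)$ with analytic eigenfunction $\phi_{c}$. Writing $c=c_{s}+i\tau$ and $(\beta-U^{\prime\prime})/(U-c)=K_{\beta}\,(U-c_{s})/(U-c_{s}-i\tau)$ gives a uniform bound by $\|K_{\beta}\|_{L^{\infty}}$ and pointwise convergence to $K_{\beta}$ as $\tau\to0^{+}$, so $\lambda(c)\to\lambda(c_{s})$ and $\phi_{c}\to\phi_{s}$; thus $\lambda$ extends continuously up to $c_{s}$. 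The goal then becomes to solve $\lambda(c)=-\alpha(\varepsilon)^{2}=\lambda(c_{s})-\varepsilon$ for $c$ near $c_{s}$ and to show this root lies in the open upper half-plane when $\varepsilon<0$.

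The decisive step is the one-sided derivative $\lambda^{\prime}(c_{s}^{+})$. By the Feynman--Hellmann formula for the complex-symmetric family (using the non-conjugated bilinear pairing), $\lambda^{\prime}(c)=-\frac{\int_{y_{1}}^{y_{2}}\frac{\beta-U^{\prime\prime}}{(U-c)^{2}}\phi_{c}^{2}\,dy}{\int_{y_{1}}^{y_{2}}\phi_{c}^{2}\,dy}$. The regularity of the mode is exactly what makes this tractable: since $\beta-U^{\prime\prime}=K_{\beta}\,(U-c_{s})$, the integrand equals $\frac{K_{\beta}(U-c_{s})}{(U-c)^{2}}\phi_{c}^{2}$, a \emph{simple} pole at the critical layer $\{U=c_{s}\}$ rather than a double one. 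Letting $\tau\to0^{+}$ and applying the Sokhotski--Plemelj formula localizes the imaginary part at the points $z$ with $U(z)=c_{s}$ (where $U^{\prime}(z)\neq0$ for a regular mode), giving $\operatorname{Im}\lambda^{\prime}(c_{s}^{+})=-\pi\,\|\phi_{s}\|_{L^{2}}^{-2}\sum_{z}\frac{K_{\beta}(z)\,|\phi_{s}(z)|^{2}}{|U^{\prime}(z)|}<0$, the strict sign coming from $K_{\beta}>0$ (with $\phi_{s}(z)\neq0$ on the relevant principal branch). In particular $\lambda^{\prime}(c_{s}^{+})\neq0$.

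Since $\lambda^{\prime}(c_{s}^{+})\neq0$, the holomorphic map $c\mapsto\lambda(c)$ is locally invertible up to the boundary (equivalently, its real Jacobian in $(\operatorname{Re}c,\operatorname{Im}c)$ is nonsingular), so there is a unique root $c(\varepsilon)$ of $\lambda(c)=\lambda(c_{s})-\varepsilon$ near $c_{s}$ with $c(\varepsilon)-c_{s}=-\varepsilon/\lambda^{\prime}(c_{s}^{+})+o(\varepsilon)$; one may equally obtain it by an argument-principle count. Because $\operatorname{Im}(1/\lambda^{\prime}(c_{s}^{+}))=-\operatorname{Im}\lambda^{\prime}(c_{s}^{+})/|\lambda^{\prime}(c_{s}^{+})|^{2}>0$ and $-\varepsilon>0$ for $\varepsilon\in(\varepsilon_{0},0)$ with $|\varepsilon_{0}|$ small, we obtain $\operatorname{Im}c(\varepsilon)>0$. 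Setting $\phi_{\varepsilon}:=\phi_{c(\varepsilon)}$ and multiplying $-\phi_{\varepsilon}^{\prime\prime}+\alpha(\varepsilon)^{2}\phi_{\varepsilon}-\frac{\beta-U^{\prime\prime}}{U-U_{\beta}-c(\varepsilon)}\phi_{\varepsilon}=0$ by $U-U_{\beta}-c(\varepsilon)$ yields the stated equation with Dirichlet data, and $U_{\beta}+c(\varepsilon)$ is an unstable wave speed. I expect the main obstacle to be the third step: because $\lambda^{\prime}$ is a singular critical-layer integral, one must justify that the derivative of the analytic branch extends continuously to the real boundary and take the one-sided Plemelj limit carefully. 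This is the delicate perturbation near the critical layer that the conditions $\beta=U^{\prime\prime}(z)$ (regularity) and $K_{\beta}>0$ (class $\mathcal{K}^{+}$) are tailored to control, and it is precisely what pins the bifurcation into the upper half-plane.
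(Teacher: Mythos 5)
Your proposal is correct and follows essentially the same route as the paper, which proves this lemma only by reference to Theorem 4.1 of \cite{Lin2003}: analytic continuation of the simple Sturm--Liouville eigenvalue $\lambda(c)$ into the upper half-plane, a Sokhotski--Plemelj computation of $\operatorname{Im}\lambda'(c_s^{+})$ made single-pole (hence tractable) by $\beta-U''=K_\beta(U-U_\beta)$ with $K_\beta>0$, and local inversion to place $c(\varepsilon)$ in the open upper half-plane for $\varepsilon<0$. The delicate point you flag — continuity of the branch and of its derivative up to the real boundary — is exactly where the cited proof does the work (via compactness of the resolvent, which upgrades the bounded pointwise convergence of the potential to norm convergence of the Riesz projections), and the nonvanishing of $\phi_s$ at some critical point is supplied by Lemma \ref{le23}.
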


The next lemma comes from \cite{Kuo1949, Tung1981}.

\begin{lemma}
\label{lemma neutral c range}Let $U\in C^{2}(\left[  y_{1},y_{2}\right]  ) $.
When $\beta\geq0$ and $c_{s}\geq U_{\max}$ (or $\beta\leq0$ and $c_{s}\leq
U_{\min}$), for any $\alpha>0\ $there exist no neutral modes in $H^{2}$.
\end{lemma}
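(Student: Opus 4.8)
The plan is to run a Rayleigh-type energy identity on the Sturm--Liouville equation (\ref{Sturm-Liouville equation-def-neutral mode}) and extract a strictly signed quadratic form that forces $\phi_s\equiv0$, contradicting non-triviality. Since $U$, $\alpha_s$, $\beta$, $c_s$ and all coefficients are real, I may split $\phi_s$ into its real and imaginary parts, each of which solves the same real equation with the same boundary conditions (\ref{16}); hence I assume $\phi_s$ real-valued. I treat the two symmetric cases ($\beta\ge0,\ c_s\ge U_{\max}$ and $\beta\le0,\ c_s\le U_{\min}$) in parallel. The point that drives everything is that in either case $U-c_s$ keeps a fixed sign on $[y_1,y_2]$: $U-c_s\le0$ in the first case and $U-c_s\ge0$ in the second.

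First I would dispose of the strict subcase, say $c_s>U_{\max}$ (the case $c_s<U_{\min}$ being identical). Here $U-c_s<0$ throughout $[y_1,y_2]$, the equation is regular, and multiplying (\ref{Sturm-Liouville equation-def-neutral mode}) by $\phi_s$ and integrating by parts using (\ref{16}) gives
\[
\int_{y_1}^{y_2}\Big(|\phi_s'|^2+\alpha_s^2|\phi_s|^2-\frac{\beta-U''}{U-c_s}|\phi_s|^2\Big)\,dy=0.
\]
Writing $-\frac{\beta-U''}{U-c_s}=\frac{U''}{U-c_s}-\frac{\beta}{U-c_s}$ and completing the square with the factor $U'/(U-c_s)$ exactly as in the proof of Lemma \ref{le23} (the boundary terms dropping out because $\phi_s(y_1)=\phi_s(y_2)=0$ and $U'/(U-c_s)$ is bounded), I arrive at
\[
\int_{y_1}^{y_2}\Big|\phi_s'-\frac{U'}{U-c_s}\phi_s\Big|^2\,dy+\int_{y_1}^{y_2}\Big(\alpha_s^2-\frac{\beta}{U-c_s}\Big)|\phi_s|^2\,dy=0.
\]
Under the sign hypotheses one has $\alpha_s^2-\frac{\beta}{U-c_s}=\alpha_s^2+\frac{\beta}{c_s-U}\ge\alpha_s^2>0$ on $(y_1,y_2)$ (since $\alpha_s>0$, $\beta\ge0$, $c_s-U>0$); both integrands are non-negative and the second carries a strictly positive coefficient, so $\phi_s\equiv0$. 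The analogous computation with $\beta\le0$, $c_s<U_{\min}$ gives the same conclusion.

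The remaining, and genuinely delicate, subcase is the endpoint $c_s=U_{\max}$ (resp. $c_s=U_{\min}$), where (\ref{Sturm-Liouville equation-def-neutral mode}) degenerates on the finite set $\{U=c_s\}$ (finite by Hypothesis \ref{hypothesis-U}). The plan is to carry out the same completing-the-square identity on each of the finitely many subintervals cut out by $\{U=c_s\}$, on each of which $U-c_s$ has a strict sign, and then sum. The crux is that every boundary contribution $\big[\phi_s\big(\phi_s'-\frac{U'}{U-c_s}\phi_s\big)\big]$ vanishes at the subinterval endpoints: at a true endpoint $y_1,y_2$ this is immediate from $\phi_s=0$, while at an interior point $z$ with $U(z)=c_s=U_{\max}$ (necessarily $U'(z)=0$) I would first use $\phi_s\in H^2\subset C^1$ and the equation itself to force $\phi_s(z)=0$: if $\phi_s(z)\ne0$ then $\frac{\beta-U''}{U-c_s}\phi_s\sim (y-z)^{-2}$ near $z$, so $\phi_s''\notin L^2_{\mathrm{loc}}$, a contradiction. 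Granting $\phi_s(z)=0$, the local expansions $U'\sim U''(z)(y-z)$, $U-c_s\sim\tfrac12 U''(z)(y-z)^2$, $\phi_s\sim\phi_s'(z)(y-z)$ show that $\frac{U'}{U-c_s}\phi_s$ stays bounded while $\phi_s\to0$, killing the boundary term, and simultaneously make $\frac{\beta}{U-c_s}|\phi_s|^2$ integrable at $z$ with the correct sign. The positivity argument of the previous paragraph then again yields $\phi_s\equiv0$.

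I expect the main obstacle to be exactly this endpoint/singular analysis: justifying that the Rayleigh identity survives the splitting at the points of $\{U=c_s\}$, that all boundary terms vanish, and that the singular integrals converge. Everything hinges on the local regularity near an interior critical point where $U=c_s$, which is precisely where the $H^2$ hypothesis is indispensable; away from such points the whole lemma collapses to the one-line positivity statement $\alpha_s^2-\beta/(U-c_s)>0$.
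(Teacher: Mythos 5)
The paper offers no proof of this lemma at all --- it is simply quoted from Kuo (1949) and Tung (1981) --- and your completed-square Rayleigh identity is precisely the classical argument from those sources, so the approach is the right one. Your treatment of the strict subcase $c_{s}>U_{\max}$ (resp.\ $c_{s}<U_{\min}$) is complete and correct: the equation is regular, the boundary term $\bigl[\tfrac{U^{\prime}}{U-c_{s}}|\phi_{s}|^{2}\bigr]$ vanishes because $\phi_{s}(y_{1})=\phi_{s}(y_{2})=0$ and the coefficient is bounded, and $\alpha_{s}^{2}-\beta/(U-c_{s})\geq\alpha_{s}^{2}>0$ forces $\phi_{s}\equiv0$.

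The endpoint subcase $c_{s}=U_{\max}$ (resp.\ $U_{\min}$) contains two genuine gaps. First, you invoke Hypothesis \ref{hypothesis-U} to conclude that $\{U=c_{s}\}$ is finite, but that hypothesis only concerns $c\in(U_{\min},U_{\max})$ and, more importantly, is not among the assumptions of this lemma, which requires only $U\in C^{2}$; for a general $C^{2}$ profile the level set $\{U=U_{\max}\}$ can be infinite (even a whole subinterval), and your decomposition into finitely many subintervals is then unavailable. Second, your local analysis at an interior point $z$ with $U(z)=U_{\max}$ rests on the expansions $U^{\prime}\sim U^{\prime\prime}(z)(y-z)$ and $U-c_{s}\sim\tfrac12U^{\prime\prime}(z)(y-z)^{2}$, which presuppose $U^{\prime\prime}(z)\neq0$. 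A $C^{2}$ flow can have a degenerate maximum, in which case $U^{\prime}/(U-c_{s})$ blows up faster than $|y-z|^{-1}$ and neither the boundedness of $\tfrac{U^{\prime}}{U-c_{s}}\phi_{s}$ nor the integrability of $\tfrac{\beta}{c_{s}-U}|\phi_{s}|^{2}$ follows from $\phi_{s}\sim\phi_{s}^{\prime}(z)(y-z)$. Two remarks that would tighten the argument: for $\beta>0$ one has $\beta-U^{\prime\prime}(z)\geq\beta>0$ at an interior maximum, so the requirement $\tfrac{\beta-U^{\prime\prime}}{U-c_{s}}\phi_{s}\in L^{2}$ (forced by $\phi_{s}\in H^{2}$ and the equation) excludes not only $\phi_{s}(z)\neq0$ but also $\phi_{s}^{\prime}(z)\neq0$, giving $\phi_{s}=O(|y-z|^{3/2})$ from $H^{2}\subset C^{1,1/2}$; and one can avoid splitting the possibly non-integrable term $\tfrac{\beta}{U-c_{s}}|\phi_{s}|^{2}$ off by keeping $\tfrac{\beta-U^{\prime\prime}}{U-c_{s}}|\phi_{s}|^{2}=(-\phi_{s}^{\prime\prime}+\alpha_{s}^{2}\phi_{s})\bar{\phi}_{s}\in L^{1}$ intact and only completing the square after truncating away $\varepsilon$-neighborhoods of the singular set. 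As written, the endpoint case is not a complete proof for the class of flows the lemma is stated for, although your argument does go through verbatim for the nondegenerate situations actually used later in the paper (class $\mathcal{K}^{+}$ and the Sinus flow).
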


\section{Hamiltonian formulation, index formula and instability criteria}

\label{Hamiltonian formulation index formula and instability criteria}

In this section, we write the linearized fluid equation for flows in class
$\mathcal{K}^{+}$ in a Hamiltonian form and derive an instability index
formula to be used later to find the stability condition. Also, we compute the
associated energy quadratic forms for unstable modes and neutral limiting
modes. Then we establish an important relation between signs of the energy
quadratic form for non-resonant neutral modes and the graph of eigenvalues of
the Sturm-Liouville operators (\ref{defn-L_c}). Combining above, we give a new
approach to study the instability of shear flows in class $\mathcal{K}^{+}$.

\subsection{Hamiltonian formulation and instability index formula}

\label{Hamiltonian formulation and index formula}

In this subsection, we write the linearized equation (\ref{13}) as a
Hamiltonian system and use the index theory developed in \cite{Lin2017} to
derive an instability index formula for flows in class $\mathcal{K}^{+}$. Fix
$\beta\in(\min U^{\prime\prime},\max U^{\prime\prime})$. In the traveling
frame $(x-U_{\beta}t,y,t)$, the linearized equation (\ref{13}) becomes
\begin{equation}
\partial_{t}\omega+(U-U_{\beta})\partial_{x}\omega-(\beta-U^{\prime\prime
})\partial_{x}\psi=0. \label{linearized-Euler-shear-frame}%
\end{equation}
Note that the change of coordinates $(x,y,t)\rightarrow(x-U_{\beta}t,y,t)$
does not affect the stability of the shear flows. Recall that for flows in
class $\mathcal{K}^{+}$, $K_{\beta}=\frac{\beta-U^{\prime\prime}}{U-U_{\beta}%
}>0.$ Let the $x$ period be $2\pi/\alpha$ for some $\alpha>0$. Define the
non-shear space on the periodic channel $S_{2\pi/\alpha}\times\left[
y_{1},y_{2}\right]  $ by
\begin{equation}
X=\left\{  \omega=\sum_{k\in\mathbf{Z},\ k\neq0}e^{ik\alpha x}\omega
_{k}\left(  y\right)  ,\ \Vert\omega\Vert_{X}^{2}=\Vert\frac{1}{\sqrt
{K_{\beta}}}\omega\Vert_{L^{2}}^{2}<\infty\right\}  . \label{non-shear space}%
\end{equation}
Clearly, $X=L^{2}$. The equation (\ref{linearized-Euler-shear-frame}) can be
written in a Hamiltonian form
\begin{equation}
\omega_{t}=-(\beta-U^{\prime\prime})\partial_{x}\left(  {\omega}/{K_{\beta}%
}-\psi\right)  =JL\omega, \label{linearized Euler-Hamiltonian}%
\end{equation}
where
\begin{equation}
J=-(\beta-U^{\prime\prime})\partial_{x}:X^{\ast}\rightarrow X,\ \ \ \ L={1}%
/{K_{\beta}}-\left(  -\Delta\right)  ^{-1}:X\rightarrow X^{\ast},
\label{defn-J-L}%
\end{equation}
are anti-self-adjoint and self-adjoint, respectively. Denote $n^{-}\left(
L\right)  $ $\left(  n^{0}\left(  L\right)  \right)  \ $to be the number of
negative (zero) directions of $L$ on $X$. Define the operator%
\begin{equation}
A_{0}=-\Delta-K_{\beta}:H^{2}\rightarrow L^{2} \label{defn-A0}%
\end{equation}
and
\begin{equation}
\tilde{L}_{0}=-\frac{d^{2}}{dy^{2}}-K_{\beta}:H^{2}\left(  y_{1},y_{2}\right)
\rightarrow L^{2}\left(  y_{1},y_{2}\right)  , \label{defn-L0}%
\end{equation}
with the Dirichlet boundary conditions. Then by Lemma 11.3 in \cite{Lin2017},
we have
\[
n^{0}\left(  L\right)  =n^{0}\left(  A_{0}\right)  =2\sum_{l\geq1}n^{0}\left(
\tilde{L}_{0}+l^{2}\alpha^{2}\right)  ,\;\;n^{-}\left(  L\right)
=n^{-}\left(  A_{0}\right)  =2\sum_{l\geq1}n^{-}\left(  \tilde{L}_{0}%
+l^{2}\alpha^{2}\right)  .
\]
If $n^{-}(\tilde{L}_{0})\neq0$, let $-\alpha_{\max}^{2}$ be the principal
eigenvalue of $\tilde{L}_{0}$ and $\phi_{0}$ be the eigenfunction. When
$\tilde{L}_{0}\geq0$, let $\alpha_{\max}=0$. Then by the above relations, when
$\alpha\geq\alpha_{\max},$ $L$ is non-negative and the stability holds.

Below, we consider the case when $\alpha<\alpha_{\max}$. Let $Y=L_{\frac
{1}{K_{\beta}}}^{2}\left(  y_{1},y_{2}\right)  $. The space $X$ has an
invariant decomposition $X=\bigoplus_{l\in\mathbf{Z},\ l\neq0}X^{l}$, where
\begin{equation}
\ \ X^{l}=\left\{  e^{i\alpha lx}\omega_{l}\left(  y\right)  ,\ \omega_{l}\in
Y\right\}  . \label{defn-X-l}%
\end{equation}
The linearized equation can be studied on each $X^{l}$ separately. To simplify
notations, we only consider $l=\pm1$ below. On the subspace
\[
X_{\alpha}:=\left\{  e^{i\alpha x}\omega\left(  y\right)  ,\ \omega\in
Y\right\}  ,
\]
the operator $JL$ is reduced to the ODE operator $J_{\alpha}L_{\alpha}$ acting
on the weighted space $Y,$ where
\begin{equation}
J_{\alpha}=-i\alpha(\beta-U^{\prime\prime}),\ \ \ \ L_{\alpha}=\frac
{1}{K_{\beta}}-\left(  -\frac{d^{2}}{dy^{2}}+\alpha^{2}\right)  ^{-1}.
\label{defn-J-L-alpha}%
\end{equation}
By the same proof of Lemma 11.3 in \cite{Lin2017}, we have
\[
n^{-}\left(  L_{\alpha}\right)  =n^{-}\left(  \tilde{L}_{0}+\alpha^{2}\right)
,\ n^{0}\left(  L_{\alpha}\right)  =n^{0}\left(  \tilde{L}_{0}+\alpha
^{2}\right)  .
\]

Since $J_{\alpha}$ is not a real operator on $Y$, we define the invariant
subspace
\begin{align*}
X^{\alpha} =X_{\alpha}\oplus X_{-\alpha} =\left\{  \cos\left(  \alpha
x\right)  \omega_{1}\left(  y\right)  +\sin\left(  \alpha x\right)  \omega
_{2}\left(  y\right)  ,\ \omega_{1},\omega_{2}\in Y \right\}  ,
\end{align*}
which is isomorphic to the real space $Y\times Y=( L_{\frac{1}{K_{\beta} }%
}^{2}\left(  y_{1},y_{2}\right)  ) ^{2}$. For any
\[
\omega=\cos\left(  \alpha x\right)  \omega_{1}\left(  y\right)  +\sin\left(
\alpha x\right)  \omega_{2}\left(  y\right)  \in X^{\alpha},
\]
we have
\[
JL\omega=\left(  \cos\left(  \alpha x\right)  ,\sin\left(  \alpha x\right)
\right)  J^{\alpha}L^{\alpha}\left(
\begin{array}
[c]{c}%
\omega_{1}\\
\omega_{2}%
\end{array}
\right)  ,
\]
where%
\[
J^{\alpha}=\left(
\begin{array}
[c]{cc}%
0 & -\alpha(\beta-U^{\prime\prime})\\
\alpha(\beta-U^{\prime\prime}) & 0
\end{array}
\right)  ,\ \ L^{\alpha}=\left(
\begin{array}
[c]{cc}%
L_{\alpha} & 0\\
0 & L_{\alpha}%
\end{array}
\right)  .
\]
In the above, the operator $L_{\alpha}$ is defined in (\ref{defn-J-L-alpha}).
Thus to study the spectra of $JL$ on $X^{\alpha}$, we study the spectra of
$J^{\alpha}L^{\alpha}$ on $Y\times Y$. We note that
\begin{equation}
\sigma\left(  J^{\alpha}L^{\alpha}|_{Y\times Y}\right)  =\sigma\left(
J_{\alpha}L_{\alpha}|_{Y}\right)  \cup\sigma\left(  J_{-\alpha}L_{-\alpha
}|_{Y}\right)  ,\ \label{217}%
\end{equation}
and $\sigma\left(  J_{\alpha}L_{\alpha}|_{Y}\right)  $ is the complex
conjugate of $\sigma\left(  J_{-\alpha}L_{-\alpha}|_{Y}\right)  $.

By the instability index Theorem 2.3 in \cite{Lin2017} for linear Hamiltonian
PDEs, we have
\begin{equation}
2\tilde{k}_{i}^{\leq0}+2\tilde{k}_{c}+\tilde{k}_{0}^{\leq0}+\tilde{k}%
_{r}=n^{-}(L^{\alpha})=2n^{-}(L_{\alpha}), \label{218}%
\end{equation}
where $n^{-}(L^{\alpha})$ denotes the sum of multiplicities of negative
eigenvalues of $L^{\alpha}$, $\tilde{k}_{r}$ is the sum of algebraic
multiplicities of positive eigenvalues of $J^{\alpha}L^{\alpha}$, $\tilde
{k}_{c}$ is the sum of algebraic multiplicities of eigenvalues of $J^{\alpha
}L^{\alpha}$ in the first quadrant, $\tilde{k}_{i}^{\leq0}$ is the total
number of non-positive dimensions of $\langle L^{\alpha}\cdot,\cdot\rangle$
restricted to the generalized eigenspaces of purely imaginary eigenvalues of
$J^{\alpha}L^{\alpha}$ with positive imaginary parts, and $\tilde{k}_{0}%
^{\leq0}$ is the number of non-positive dimensions of $\langle L^{\alpha}%
\cdot,\cdot\rangle$ restricted to the generalized kernel of $J^{\alpha
}L^{\alpha}$ modulo $\ker L^{\alpha}$. By the next lemma, we have $\tilde
{k}_{0}^{\leq0}=0$, from which it follows that
\begin{equation}
2\tilde{k}_{i}^{\leq0}+2\tilde{k}_{c}+\tilde{k}_{r}=2n^{-}(L_{\alpha}).
\label{218'}%
\end{equation}

\begin{lemma}
\label{lemma generalized kernel}Let $E_{0}$ be the generalized zero eigenspace
of $J^{\alpha}L^{\alpha}$. Then $E_{0}=\ker L^{\alpha}$.
\end{lemma}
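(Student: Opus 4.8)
The plan is to use the $2\times2$ block form of $J^{\alpha}L^{\alpha}$ to reduce the statement to a scalar operator on $Y$. Set $B:=(\beta-U^{\prime\prime})L_{\alpha}:Y\to Y$ with $L_{\alpha}$ as in (\ref{defn-J-L-alpha}); then
\[
J^{\alpha}L^{\alpha}=\begin{pmatrix}0 & -\alpha B\\ \alpha B & 0\end{pmatrix},\qquad (J^{\alpha}L^{\alpha})^{2}=\begin{pmatrix}-\alpha^{2}B^{2} & 0\\ 0 & -\alpha^{2}B^{2}\end{pmatrix}.
\]
Consequently $\ker(J^{\alpha}L^{\alpha})^{m}=\ker B^{m}\times\ker B^{m}$ for every $m\ge1$, so $E_{0}=E_{0}(B)\times E_{0}(B)$ with $E_{0}(B):=\bigcup_{m\ge1}\ker B^{m}$; since likewise $\ker L^{\alpha}=\ker L_{\alpha}\times\ker L_{\alpha}$, it suffices to prove the scalar identity $E_{0}(B)=\ker L_{\alpha}$ on $Y$.

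Next I would identify $\ker B$. Because $U\in\mathcal{K}^{+}$ and $\beta\in(\min U^{\prime\prime},\max U^{\prime\prime})$, we have $\beta-U^{\prime\prime}=K_{\beta}(U-U_{\beta})$ with $K_{\beta}>0$, so $\{\beta-U^{\prime\prime}=0\}=\{U=U_{\beta}\}$ is a finite set by Hypothesis \ref{hypothesis-U} (see Remark \ref{remark-hypothesis-U}). Hence multiplication by $\beta-U^{\prime\prime}$ is injective on $L^{2}$, and $B\omega=(\beta-U^{\prime\prime})L_{\alpha}\omega=0$ forces $L_{\alpha}\omega=0$; the reverse inclusion is trivial, so $\ker B=\ker L_{\alpha}$ and in particular $\ker(J^{\alpha}L^{\alpha})=\ker L^{\alpha}$. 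It remains only to exclude Jordan chains, i.e.\ to show $\ker B^{2}=\ker B$.

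Suppose $\omega\in\ker B^{2}\setminus\ker B$, so that $\eta:=B\omega=(\beta-U^{\prime\prime})g\in\ker L_{\alpha}$ is nonzero, where $g:=L_{\alpha}\omega$. Using that $L_{\alpha}$ is self-adjoint for the (unweighted) $Y^{\ast}$--$Y$ pairing and $L_{\alpha}\eta=0$, one first records
\[
0=\langle L_{\alpha}\eta,\omega\rangle=\langle\eta,g\rangle=\int_{y_{1}}^{y_{2}}(\beta-U^{\prime\prime})|g|^{2}\,dy.
\]
To exploit this, write $\eta=K_{\beta}\Phi$, where $\Phi$ is the Dirichlet eigenfunction of $\tilde{L}_{0}=-\frac{d^{2}}{dy^{2}}-K_{\beta}$ (see (\ref{defn-L0})) with eigenvalue $-\alpha^{2}$, so that $\eta/(\beta-U^{\prime\prime})=\Phi/(U-U_{\beta})$. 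Setting $\Psi:=(-\frac{d^{2}}{dy^{2}}+\alpha^{2})^{-1}\omega$ and using $L_{\alpha}\omega=K_{\beta}^{-1}(\tilde{L}_{0}+\alpha^{2})\Psi$, the equation $L_{\alpha}\omega=\Phi/(U-U_{\beta})$ is equivalent to the inhomogeneous Dirichlet problem
\[
(\tilde{L}_{0}+\alpha^{2})\Psi=\frac{(\beta-U^{\prime\prime})\,\Phi}{(U-U_{\beta})^{2}},\qquad \Psi(y_{1})=\Psi(y_{2})=0.
\]
At each critical layer $z$ (where $U(z)=U_{\beta}$, hence $\beta=U^{\prime\prime}(z)$ for a regular neutral mode) the right-hand side behaves like $C\,\Phi(z)/(y-z)$, which is not in $L^{2}$ unless $\Phi(z)=0$. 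Thus, whenever $\Phi$ does not vanish at some critical layer, the forcing is not square-integrable, no $H^{2}$ solution $\Psi$ (equivalently no $\omega\in Y$) exists, and we obtain the desired contradiction.

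The main obstacle is the borderline case in which $\Phi$ vanishes at every critical layer: then the forcing lies in $L^{2}$ and the Fredholm alternative for the self-adjoint operator $\tilde{L}_{0}+\alpha^{2}$ (whose kernel is spanned by $\Phi$) leaves exactly the solvability condition $\int_{y_{1}}^{y_{2}}(\beta-U^{\prime\prime})|g|^{2}\,dy=0$ recorded above, now with $g=\Phi/(U-U_{\beta})$ regular. I would dispose of this by substituting $\Phi=(U-U_{\beta})g$ into the eigenvalue equation for $\Phi$, which yields the Sturm--Liouville identity $-[(U-U_{\beta})^{2}g^{\prime}]^{\prime}+(U-U_{\beta})(\alpha^{2}(U-U_{\beta})-\beta)g=0$; pairing this against $g$ and combining with the solvability condition is the delicate step that should force $g\equiv0$, hence $\eta=0$, contradicting $\eta\ne0$. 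Once $\ker B^{2}=\ker B$ is known, $A^{2}=-\alpha^{2}\,\mathrm{diag}(B^{2},B^{2})$ gives $\ker B^{m}=\ker B$ for all $m$, so $E_{0}(B)=\ker L_{\alpha}$ and therefore $E_{0}=\ker L^{\alpha}$.
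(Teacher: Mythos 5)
Your block reduction to the scalar operator $B=(\beta-U^{\prime\prime})L_{\alpha}$ on $Y$, the identification $\ker B=\ker L_{\alpha}$, and the main mechanism — that a nonzero $\eta=B\omega\in\ker L_{\alpha}$ forces $g=L_{\alpha}\omega=\Phi/(U-U_{\beta})$ to have a non-square-integrable singularity at a critical layer — all match the paper's proof, which works directly with $J_{\alpha}L_{\alpha}=-i\alpha B$ on $Y$ and derives $\omega-K_{\beta}\psi=\tilde{\omega}/(-i\alpha(U-U_{\beta}))\notin L^{2}$. The genuine gap is your ``borderline case'' in which $\Phi$ vanishes at every point of $\{U=U_{\beta}\}$: you leave its resolution as ``the delicate step that should force $g\equiv0$'' without carrying it out, and as written it does not close. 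Pairing your identity $-[(U-U_{\beta})^{2}g^{\prime}]^{\prime}+(U-U_{\beta})(\alpha^{2}(U-U_{\beta})-\beta)g=0$ against $g$ gives
\[
\int_{y_{1}}^{y_{2}}(U-U_{\beta})^{2}\left(|g^{\prime}|^{2}+\alpha^{2}|g|^{2}\right)dy=\beta\int_{y_{1}}^{y_{2}}(U-U_{\beta})|g|^{2}\,dy,
\]
whereas your Fredholm solvability condition reads $\int_{y_{1}}^{y_{2}}K_{\beta}(U-U_{\beta})|g|^{2}\,dy=0$. These two integrals coincide only when $K_{\beta}$ is constant (e.g.\ the Sinus flow), so for a general flow in class $\mathcal{K}^{+}$ the combination you propose does not determine the sign of the right-hand side and the argument stalls.

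The repair — and what the paper actually does — is to observe that this borderline case is vacuous. Your $\Phi$ is a nontrivial Dirichlet solution of $-\Phi^{\prime\prime}+\alpha^{2}\Phi-K_{\beta}\Phi=0$, i.e.\ a neutral mode with $c=U_{\beta}$, and Lemma \ref{le23} (applied as in the proof of Theorem \ref{th-neutral modes}, with Lemma \ref{le24} to propagate vanishing across a critical point) shows that such a $\Phi$ cannot vanish at every point of $\{U=U_{\beta}\}$ without vanishing identically. Hence $\Phi(z)\neq0$ at some critical layer $z$, so $g=\Phi/(U-U_{\beta})\notin L^{2}$ near $z$, contradicting $g=L_{\alpha}\omega\in L^{2}$ (which holds for every $\omega\in Y$ since $K_{\beta}$ is bounded above and below). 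With that one citation your Fredholm discussion and the solvability identity become unnecessary, and the rest of your argument goes through.
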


\begin{proof}
It suffices to show that the generalized zero eigenspace of $J_{\alpha
}L_{\alpha}$ on $Y$ coincides with $\ker L_{\alpha}$. Suppose there exists
$\omega\in Y$ such that
\begin{equation}
J_{\alpha}L_{\alpha}\omega=-i\alpha(U-U_{\beta})\left(  \omega-K_{\beta}
\psi\right)  =\tilde{\omega}\in\ker L_{\alpha}. \label{generalized kernel}%
\end{equation}
Let $\tilde{\psi}=\left(  -\frac{d^{2}}{dy^{2}}+\alpha^{2}\right)  ^{-1}%
\tilde{\omega}$. Then $-\tilde{\psi}^{\prime\prime2}\tilde{\psi}-K_{\beta}
\tilde{\psi}=0. $ Using the fact $\beta\in(\min U^{\prime\prime},\max
U^{\prime\prime})$ and by Lemma \ref{le23}, $\tilde{\psi}$ is not all zero on
the set $\left\{  U=U_{\beta}\right\}  $, which implies the same for
$\tilde{\omega}=K_{\beta} \tilde{\psi}$. Thus (\ref{generalized kernel})
gives
\[
\omega-K_{\beta} \psi=\frac{\tilde{\omega}}{-i\alpha(U-U_{\beta})}\notin
L^{2}\left(  y_{1},y_{2}\right)  \text{. }%
\]
This contradiction shows that the generalized kernel of $J_{\alpha}L_{\alpha}$
on $Y$ is the same as $\ker L_{\alpha}$.
\end{proof}

Now we derive the index formula for $J_{\alpha}L_{\alpha}$ on $Y$. Let
${k}_{r}$ be the sum of algebraic multiplicities of positive eigenvalues of
${J}_{\alpha}{L}_{\alpha}$, ${k}_{c}$ be the sum of algebraic multiplicities
of eigenvalues of ${J}_{\alpha}{L}_{\alpha}$ in the first and the forth
quadrants, ${k}_{i}^{\leq0}$ be the total number of non-positive dimensions of
$\langle{L}_{\alpha}\cdot,\cdot\rangle$ restricted to the generalized
eigenspaces of nonzero purely imaginary eigenvalues of ${J}_{\alpha}%
{L}_{\alpha}$. By (\ref{217}), we have the following relation
\begin{equation}
2k_{i}^{\leq0}=2\tilde{k}_{i}^{\leq0},\;\;2k_{c}=2\tilde{k}_{c},\;\;2k_{r}%
=\tilde{k}_{r}. \label{219}%
\end{equation}

Combining (\ref{218'}) and (\ref{219}), we get the following index formula for
${J}_{\alpha}{L}_{\alpha}$.

\begin{theorem}
\label{thm-index} Let $U$ be in class $\mathcal{K}^{+}$ and $\beta\in(\min
U^{\prime\prime},\max U^{\prime\prime})$. Then the following index formula
holds for the operator ${J}_{\alpha}{L}_{\alpha}$ on $Y$:
\begin{equation}
k_{c}+k_{r}+k_{i}^{\leq0}=n^{-}(L_{\alpha}). \label{index-formula}%
\end{equation}

\end{theorem}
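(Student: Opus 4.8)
The plan is to obtain the index formula \eqref{index-formula} for the reduced operator $J_\alpha L_\alpha$ on $Y$ purely by algebraic bookkeeping, combining the two relations already in hand. First I would recall the master index identity \eqref{218'}, obtained from the general Hamiltonian index theorem of \cite{Lin2017} applied to the real operator $J^\alpha L^\alpha$ on $Y\times Y$, together with Lemma \ref{lemma generalized kernel} which kills the generalized-kernel contribution $\tilde k_0^{\leq0}=0$. This gives
\[
2\tilde k_i^{\leq0}+2\tilde k_c+\tilde k_r=2n^{-}(L_\alpha).
\]
The remaining task is to rewrite the tilde-quantities (spectral data of $J^\alpha L^\alpha$) in terms of the untilded quantities (spectral data of $J_\alpha L_\alpha$), which is exactly the content of the conversion relations \eqref{219}.

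The key structural input is the spectral splitting \eqref{217}, namely $\sigma(J^\alpha L^\alpha)=\sigma(J_\alpha L_\alpha)\cup\sigma(J_{-\alpha}L_{-\alpha})$, together with the stated fact that $\sigma(J_\alpha L_\alpha)$ is the complex conjugate of $\sigma(J_{-\alpha}L_{-\alpha})$. I would use this conjugation symmetry to match up eigenvalues of $J^\alpha L^\alpha$ with those of $J_\alpha L_\alpha$ and trace through each term in \eqref{218'}. A positive (real) eigenvalue of $J_\alpha L_\alpha$ is its own conjugate, so it is counted once in $\sigma(J_\alpha L_\alpha)$ and once again in $\sigma(J_{-\alpha}L_{-\alpha})$; hence each such eigenvalue contributes twice to $\tilde k_r$, giving $\tilde k_r=2k_r$. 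By contrast, an eigenvalue of $J_\alpha L_\alpha$ in the first quadrant has its conjugate in the fourth quadrant lying in $\sigma(J_{-\alpha}L_{-\alpha})$; since $\tilde k_c$ counts only the first quadrant of $J^\alpha L^\alpha$ while $k_c$ counts first and fourth quadrants of $J_\alpha L_\alpha$, the two complex-conjugate blocks are accounted for symmetrically and $\tilde k_c=k_c$. The same conjugation pairing of purely imaginary eigenvalues with positive imaginary part against their conjugates with negative imaginary part, together with the invariance of the signature $\langle L_\alpha\cdot,\cdot\rangle$ under this symmetry, yields $\tilde k_i^{\leq0}=k_i^{\leq0}$. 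These three identities are precisely \eqref{219}.

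Substituting \eqref{219} into \eqref{218'}, the factors of two cancel uniformly: $2\tilde k_i^{\leq0}=2k_i^{\leq0}$, $2\tilde k_c=2k_c$, and $\tilde k_r=2k_r$, so \eqref{218'} becomes $2k_i^{\leq0}+2k_c+2k_r=2n^{-}(L_\alpha)$, which after dividing by two is exactly \eqref{index-formula}. I expect the only genuinely delicate point to be the careful verification of the conjugation symmetry at the level of \emph{generalized} eigenspaces and the signature form $\langle L_\alpha\cdot,\cdot\rangle$ — one must check that complex conjugation of eigenvectors intertwines $J_\alpha L_\alpha$ with $J_{-\alpha}L_{-\alpha}$ and preserves the relevant non-positive dimension counts, so that the bookkeeping in \eqref{219} is legitimate rather than merely formal. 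Everything else reduces to the arithmetic of matching conjugate pairs already encoded in \eqref{217}.
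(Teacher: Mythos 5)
Your proposal is correct and follows essentially the same route as the paper: the paper's proof of Theorem \ref{thm-index} consists precisely of combining (\ref{218'}) with the conversion relations (\ref{219}), which the paper asserts directly from the spectral splitting (\ref{217}) and the conjugation symmetry between $\sigma(J_{\alpha}L_{\alpha})$ and $\sigma(J_{-\alpha}L_{-\alpha})$. Your additional bookkeeping justifying why $\tilde{k}_r=2k_r$ while $\tilde{k}_c=k_c$ and $\tilde{k}_i^{\leq0}=k_i^{\leq0}$ (real eigenvalues are self-conjugate and hence doubled, whereas the complex and purely imaginary counts pair first-quadrant against fourth-quadrant and upper against lower half-plane) is exactly the content the paper leaves implicit in (\ref{219}).
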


From the index formula (\ref{index-formula}), the stability of shear flows is
reduced to determine $k_{i}^{\leq0}$. This corresponds to consider neutral
modes in $H^{2}$ with the wave speed $c_{s}\neq U_{\beta}$.

\begin{remark}
When $\beta=0$, it can be shown that $k_{i}^{\leq0}=0$ and the index formula
(\ref{index-formula}) reduces to $k_{c}+k_{r}=n^{-}(L_{\alpha})$ (see
\cite{lin-xu-2017}). When $\beta\neq0$, in general we have $k_{i}^{\leq0}%
\neq0$ as seen from Sinus flow in Section \ref{section-sinus}.
\end{remark}

\subsection{Computation of the quadratic form $\langle{L}_{\alpha}\cdot
,\cdot\rangle$}

\label{Computation of the quadratic form}

Firstly, we compute the quadratic form $\langle L_{\alpha}\cdot,\cdot\rangle$
for unstable modes and neutral limiting modes.

\begin{lemma}
\label{le-L-quadratic form} Let $(c,\alpha,\beta,\phi)$ be a solution of
Rayleigh-Kuo equation (\ref{15})-(\ref{16}) with $\phi\in H^{2}$. Then

(i)
\begin{equation}
\langle L_{\alpha}\omega,\omega\rangle=(c-U_{\beta})\int_{y_{1}}^{y_{2}}%
{\frac{(\beta-U^{\prime\prime})}{|U-c|^{2}}}|\phi|^{2}%
dy,\label{identity-L-quadratic}%
\end{equation}
where $\omega=-\phi^{\prime\prime}+\alpha^{2}\phi$.

(ii) If $(c,\alpha,\beta,\phi)$ is an unstable mode, then $\langle L_{\alpha
}\omega,\omega\rangle=0$.

(iii) If $(c,\alpha,\beta,\phi)$ is a regular or non-resonant neutral limiting
mode, then $\langle L_{\alpha}\omega,\omega\rangle=0$. If $(c,\alpha
,\beta,\phi)$ is a singular neutral limiting mode, then $\langle L_{\alpha
}\omega,\omega\rangle\leq0$.
\end{lemma}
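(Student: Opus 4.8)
The plan is to reduce all three parts to the explicit action of $L_\alpha$ on an eigenfunction, and then to combine the integral identities of Lemma \ref{le21} with a weak lower–semicontinuity argument for the limiting modes. For part (i) I would first compute $L_\alpha\omega$ explicitly. Since $\omega=-\phi''+\alpha^2\phi=(-\frac{d^2}{dy^2}+\alpha^2)\phi$ with $\phi(y_1)=\phi(y_2)=0$, we have $(-\frac{d^2}{dy^2}+\alpha^2)^{-1}\omega=\phi$, so by (\ref{defn-J-L-alpha})
\[
L_\alpha\omega=\frac{\omega}{K_\beta}-\phi .
\]
Inserting the Rayleigh--Kuo relation $\omega=\frac{\beta-U''}{U-c}\phi$ and the identity $\beta-U''=K_\beta(U-U_\beta)$ turns the first term into $\frac{\omega}{K_\beta}=\frac{U-U_\beta}{U-c}\phi$, whence $L_\alpha\omega=\frac{c-U_\beta}{U-c}\phi$. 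Pairing with $\bar\omega=\frac{\beta-U''}{U-\bar c}\bar\phi$ in the (unweighted) $L^2$ duality gives exactly (\ref{identity-L-quadratic}). In parallel I would record the equivalent real form, obtained from $\int(L_\alpha\omega)\bar\omega\,dy=\int\frac{|\omega|^2}{K_\beta}\,dy-\int\phi\bar\omega\,dy$ by one integration by parts,
\[
\langle L_\alpha\omega,\omega\rangle=\int_{y_1}^{y_2}\frac{|\omega|^2}{K_\beta}\,dy-\int_{y_1}^{y_2}\bigl(|\phi'|^2+\alpha^2|\phi|^2\bigr)\,dy ,
\]
the boundary terms dropping because $\phi(y_1)=\phi(y_2)=0$; this manifestly real expression is the one suited to the limiting argument in (iii).

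Part (ii) is then immediate: for an unstable mode $c_i>0$, the identity (\ref{integral-identity-2}) of Lemma \ref{le21} says the integral on the right of (\ref{identity-L-quadratic}) vanishes, so $\langle L_\alpha\omega,\omega\rangle=(c-U_\beta)\cdot 0=0$. For part (iii) I would fix the convergence setup exactly as in the proof of Lemma \ref{le26}: by Definition \ref{defn limiting neutral} there is a sequence of unstable modes $(c_k,\alpha_k,\beta,\phi_k)$ with $\|\phi_k\|_{L^2}=1$, and the uniform $H^2$ bound of Lemma \ref{le22} lets us pass to a subsequence with $\phi_k\rightharpoonup\phi_s$ in $H^2$, $\phi_k\to\phi_s$ in $C^1$, $\alpha_k\to\alpha_s$ and $c_k\to c_s$. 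In the regular case $c_s=U_\beta$ (type (i) of Theorem \ref{th-neutral limiting modes}) the equation gives $\omega_s=K_\beta\phi_s$, so $L_{\alpha_s}\omega_s=\frac{\omega_s}{K_\beta}-\phi_s=0$ and the form vanishes with no singular integral to evaluate. In the non-resonant case $c_s\notin{\rm Ran}\,(U)$ (type (iv)) the denominators $|U-c_k|$ are bounded below uniformly in $k$, so the integrand in (\ref{integral-identity-2}) converges uniformly; passing to the limit yields $\int_{y_1}^{y_2}\frac{\beta-U''}{|U-c_s|^2}|\phi_s|^2\,dy=0$, and then (i) gives $\langle L_{\alpha_s}\omega_s,\omega_s\rangle=0$.

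The singular case (types (ii)--(iii) of Theorem \ref{th-neutral limiting modes}) is the main obstacle, and here I would abandon (\ref{identity-L-quadratic}) in favour of the real form from (i). For each $k$, part (ii) gives
\[
\int_{y_1}^{y_2}\frac{|\omega_k|^2}{K_\beta}\,dy=\int_{y_1}^{y_2}\bigl(|\phi_k'|^2+\alpha_k^2|\phi_k|^2\bigr)\,dy .
\]
Since $\phi_k\rightharpoonup\phi_s$ in $H^2$ and $\phi_k\to\phi_s$ in $C^1$, one has $\omega_k\rightharpoonup\omega_s$ weakly in $L^2$, while the right-hand side converges to $\int(|\phi_s'|^2+\alpha_s^2|\phi_s|^2)\,dy$. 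Because $\omega\mapsto\int\frac{|\omega|^2}{K_\beta}\,dy$ is a weighted $L^2$ norm ($K_\beta$ is bounded above and below for flows in class $\mathcal{K}^{+}$), it is weakly lower semicontinuous, so
\[
\langle L_{\alpha_s}\omega_s,\omega_s\rangle=\int_{y_1}^{y_2}\frac{|\omega_s|^2}{K_\beta}\,dy-\int_{y_1}^{y_2}\bigl(|\phi_s'|^2+\alpha_s^2|\phi_s|^2\bigr)\,dy\le\liminf_{k\to\infty}\langle L_{\alpha_k}\omega_k,\omega_k\rangle=0 .
\]
The delicate points I expect to verify are the weak $L^2$ convergence $\omega_k\rightharpoonup\omega_s$ and the strong $C^1$ convergence of the Dirichlet part across the critical layer $\{U=c_s\}$; the inequality, rather than equality, is produced precisely by the possible concentration of $\int\frac{|\omega_k|^2}{K_\beta}\,dy$ near the singular set, which is exactly what the lower semicontinuity quantifies.
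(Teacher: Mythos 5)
Your proposal is correct and follows essentially the same route as the paper: the algebraic identity $L_\alpha\omega=\frac{\omega}{K_\beta}-\phi=\frac{(c-U_\beta)\omega}{\beta-U''}$ for (i), the vanishing integral (\ref{integral-identity-2}) for (ii), and for (iii) a direct passage to the limit in the non-resonant case together with weak lower semicontinuity of $\int|\omega_k|^2/K_\beta\,dy$ (with $\omega_k\rightharpoonup\omega_s$ in $L^2$ and strong $C^1$ convergence of the Dirichlet part) in the singular case. Your treatment of the regular case via $\omega_s=K_\beta\phi_s\in\ker L_{\alpha_s}$ is in fact slightly cleaner than the paper's appeal to (i), since it avoids the indeterminate product $(c-U_\beta)\cdot\int\frac{\beta-U''}{|U-U_\beta|^2}|\phi|^2\,dy$ when the integral is singular.
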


\begin{proof}
We first show (\ref{identity-L-quadratic}). From (\ref{15}), we get
$(U-c)\omega=(\beta-U^{\prime\prime})\phi.$ Therefore%
\[
{\frac{\omega}{K_{\beta}}}-\phi={\frac{\left(  U-U_{\beta}\right)  \omega
}{\beta-U^{\prime\prime}}-\frac{\left(  U-c\right)  \omega}{\beta
-U^{\prime\prime}}=\frac{(c-U_{\beta})\omega}{\beta-U^{\prime\prime}}},
\]
and
\[
\langle L_{\alpha}\omega,\omega\rangle=\int_{y_{1}}^{y_{2}}\left(
{\frac{\omega}{K_{\beta}}}-\phi\right)  \bar{\omega}\ dy=\int_{y_{1}}^{y_{2}%
}{\frac{(c-U_{\beta})}{\beta-U^{\prime\prime}}}|\omega|^{2}dy=(c-U_{\beta
})\int_{y_{1}}^{y_{2}}{\frac{(\beta-U^{\prime\prime})}{|U-c|^{2}}}|\phi
|^{2}dy.
\]

The conclusion (ii) follows from (i) and Lemma \ref{le21}.

Next, we show conclusions in (iii). For a regular neutral limiting mode
$(c,\alpha,\beta,\phi)$, $\langle L_{\alpha}\omega,\omega\rangle=0$ by (i) and
the fact that $c=U_{\beta}$.

Let $\{(c_{k},\alpha_{k},\beta,\phi_{k})\}$ be a sequence of unstable modes
converging to a neutral limiting mode $(c,\alpha,\beta,\phi)$ in the sense of
Definition \ref{defn limiting neutral}. By Lemma \ref{le22}, $\Vert\phi
_{k}\Vert_{H^{2}}\leq C$ for some $C>0$ independent of $k$. Thus up to a
subsequence, $\phi_{k}\rightarrow\phi$ in $C^{1}([y_{1},y_{2}])$. When
$(c,\alpha,\beta,\phi)$ is non-resonant, by noting that $c\notin {\rm Ran}(U)$ we
have
\[
\langle L_{\alpha}\omega,\omega\rangle=(c-U_{\beta})\int_{y_{1}}^{y_{2}}%
{\frac{(\beta-U^{\prime\prime})}{|U-c|^{2}}}|\phi|^{2}dy=\lim
\limits_{k\rightarrow\infty}(c_{k}-U_{\beta})\int_{y_{1}}^{y_{2}}{\frac
{(\beta-U^{\prime\prime})}{|U-c_{k}|^{2}}}|\phi_{k}|^{2}dy=0.
\]
When $(c,\alpha,\beta,\phi)$ is singular, using the uniform bound of
$\left\Vert \omega_{k}\right\Vert _{L^{2}}$ with $\omega_{k}=-\phi_{k}%
^{\prime\prime}+\alpha_{k}^{2}\phi_{k}$, we have, up to a subsequence,
$\omega_{k}\rightharpoonup\omega$ in $L^{2}$, and thus
\begin{align*}
\langle L_{\alpha}\omega,\omega\rangle &  =\int_{y_{1}}^{y_{2}}\left(
{{\omega}/{K_{\beta}}}-\phi\right)  \bar{\omega}\ dy=\int_{y_{1}}^{y_{2}%
}\left[  {{|\omega|^{2}}/{K_{\beta}}}-(\left\vert \phi^{\prime}\right\vert
^{2}+\alpha^{2}\left\vert \phi\right\vert ^{2})\right]  \ dy\\
&  \leq\lim\limits_{k\rightarrow\infty}\int_{y_{1}}^{y_{2}}\left[
{{|\omega_{k}|^{2}}/{K_{\beta}}}-(\left\vert \phi_{k}^{\prime}\right\vert
^{2}+\alpha_{k}^{2}|\phi_{k}|^{2})\right]  \ dy=\lim\limits_{k\rightarrow
\infty}\langle L_{\alpha_{k}}\omega_{k},\omega_{k}\rangle=0.
\end{align*}
This completes the proof of the lemma.
\end{proof}

Next, we consider non-resonant neutral modes, which naturally correspond to
the following regular Sturm-Liouville operators. For $\beta\in\mathbf{R}$ and
$c\in\lbrack-\infty,U_{\min})\cup(U_{\max},+\infty]$, we define the operator
on $L^{2}\left(  y_{1},y_{2}\right)  $:
\begin{align}
\mathcal{L}_{\beta,c} &  =-\frac{d^{2}}{dy^{2}}-{\frac{\beta-U^{\prime\prime}%
}{U-c}},\label{defn-L_c}\\
D(\mathcal{L}_{\beta,c}) &  =\{\phi\in L^{2}(y_{1},y_{2}):\phi,\phi^{\prime
}\in AC([y_{1},y_{2}]),\mathcal{L}_{\beta,c}\phi\in L^{2}(y_{1},y_{2}%
),\phi(y_{1})=\phi(y_{2})=0\},\nonumber
\end{align}
where $AC([y_{1},y_{2}])$ is the space of absolutely continuous functions on
$[y_{1},y_{2}]$. For $c=U_{\min}$ or $U_{\max}$, $\mathcal{L}_{\beta,c}$ is
defined as (\ref{defn-L_c}) with $AC([y_{1},y_{2}])$ replaced by
$AC_{{\rm loc}}([y_{1},y_{2}]\setminus U^{-1}(c))$.

To determine the sign of quadratic form for non-resonant neutral modes, we
need to compute the derivative of the $n$-th eigenvalue of $\mathcal{L}%
_{\beta,c}$ with respect to $\beta$ and $c$ separately.

\begin{lemma}
\label{le28}

For $\beta\in\mathbf{R}$ and $c \in(-\infty, U _{\min})\cup(U _{\max},
+\infty)$, let $\lambda_{n} = \lambda_{n} (\beta, c)$ (here $n \geq1$) be the
$n$-th eigenvalue of $\mathcal{L}_{\beta,c}$, and $\phi_{n} ^{(\beta, c)}$ be
the corresponding eigenfunction with $\|\phi_{n} ^{(\beta, c)}\|_{L^{2}}=1$.
Then $\lambda_{n} (\beta, c)$ has partial derivatives
%
\begin{align}
{\frac{\partial\lambda_{n}}{\partial\beta}} ( \beta, c)  &  =-\int_{y_{1}%
}^{y_{2}}{\frac{1}{U-c}}|\phi_{n} ^{(\beta, c)}| ^{2}\ dy,\label{221}\\
{\frac{\partial\lambda_{n}}{\partial c}} ( \beta, c)  &  =-\int_{y_{1}}%
^{y_{2}}{\frac{\beta-U^{\prime\prime}}{(U-c)^{2}}}|\phi_{n} ^{(\beta, c)}%
|^{2}\ dy. \label{223}%
\end{align}

\end{lemma}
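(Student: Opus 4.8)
The plan is to derive both \eqref{221} and \eqref{223} simultaneously from the first-variation (Feynman--Hellmann) principle for a simple eigenvalue of a self-adjoint operator. Write $q_{\beta,c}(y):=-\frac{\beta-U''(y)}{U(y)-c}$, so that $\mathcal L_{\beta,c}=-\frac{d^2}{dy^2}+q_{\beta,c}$ with Dirichlet data. Since $c\in(-\infty,U_{\min})\cup(U_{\max},+\infty)$, the factor $U-c$ is bounded away from $0$ on $[y_1,y_2]$, and as $U\in C^3$ the potential $q_{\beta,c}$ is bounded and $C^1$. Thus $\mathcal L_{\beta,c}$ is a \emph{regular} Sturm--Liouville operator, self-adjoint on $L^2(y_1,y_2)$, with discrete and \emph{simple} spectrum $\lambda_1<\lambda_2<\cdots$; in particular each normalized eigenfunction $\phi_n^{(\beta,c)}$ may be taken real, so that $|\phi_n^{(\beta,c)}|^2=(\phi_n^{(\beta,c)})^2$.

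The key preliminary step is the differentiability of $\lambda_n$ and $\phi_n^{(\beta,c)}$ in the parameters. The family $\mathcal L_{\beta,c}$ depends on $(\beta,c)$ only through the bounded multiplication operator $q_{\beta,c}$, which is affine in $\beta$ and real-analytic in $c$ (since $c$ stays off $[U_{\min},U_{\max}]$). By analytic perturbation theory for self-adjoint operators, the simple eigenvalue $\lambda_n(\beta,c)$ and a corresponding normalized eigenvector depend real-analytically on $(\beta,c)$, with the eigenvector derivative again lying in the domain $H^2\cap H^1_0(y_1,y_2)$. I would then treat both derivatives uniformly, writing $s$ for either $\beta$ or $c$ and $\dot{(\,\cdot\,)}=\partial_s$.

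Differentiating $-\phi_n''+q_{\beta,c}\phi_n=\lambda_n\phi_n$ in $s$ gives $-\dot\phi_n''+\dot q\,\phi_n+q_{\beta,c}\dot\phi_n=\dot\lambda_n\phi_n+\lambda_n\dot\phi_n$. Multiplying by $\phi_n$, integrating over $(y_1,y_2)$, and integrating the $-\dot\phi_n''\phi_n$ term by parts twice, all boundary contributions vanish because both $\phi_n$ and $\dot\phi_n$ satisfy the Dirichlet conditions at $y_1,y_2$. Grouping the $\dot\phi_n$ terms by means of $-\phi_n''+q_{\beta,c}\phi_n=\lambda_n\phi_n$ and using $\|\phi_n\|_{L^2}=1$, the two copies of $\lambda_n\int_{y_1}^{y_2}\dot\phi_n\phi_n\,dy$ cancel, leaving
\[
\frac{\partial\lambda_n}{\partial s}(\beta,c)=\int_{y_1}^{y_2}(\partial_s q_{\beta,c})\,|\phi_n^{(\beta,c)}|^2\,dy.
\]
Substituting the elementary derivatives $\partial_\beta q_{\beta,c}=-\frac{1}{U-c}$ and $\partial_c q_{\beta,c}=-\frac{\beta-U''}{(U-c)^2}$ then yields \eqref{221} and \eqref{223}, respectively.

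The only genuinely technical point is this differentiability of the normalized eigenfunction in $(\beta,c)$, together with the membership $\dot\phi_n\in H^2\cap H^1_0$ needed to justify the double integration by parts and the vanishing of the boundary terms; this is exactly what the analytic perturbation theory for the simple eigenvalues of the regular operator $\mathcal L_{\beta,c}$ provides. Once it is granted, the rest is a routine first-variation computation requiring no delicate estimates, since $c$ remains bounded away from the range of $U$.
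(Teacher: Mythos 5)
Your argument is correct, but it takes a genuinely different route from the paper. You invoke analytic perturbation theory for the self-adjoint holomorphic family $\mathcal{L}_{\beta,c}$ (affine in $\beta$, analytic in $c$ through the bounded potential), differentiate the eigenvalue equation, and run the Feynman--Hellmann computation, which requires differentiability of the normalized eigenfunction in the parameters and membership of $\dot\phi_n$ in $H^2\cap H^1_0$ to kill the boundary terms. The paper instead uses a difference-quotient argument: it writes the eigenvalue equations at two nearby parameter values $\beta$ and $\tilde\beta$, cross-multiplies by the two eigenfunctions, subtracts, and integrates; Green's identity cancels the second-derivative terms, leaving
\[
\frac{\lambda_n(\beta,c)-\lambda_n(\tilde\beta,c)}{\beta-\tilde\beta}\int_{y_1}^{y_2}\phi\tilde\phi\,dy=-\int_{y_1}^{y_2}\frac{1}{U-c}\,\phi\tilde\phi\,dy,
\]
after which one only needs continuity of $\lambda_n$ in the parameter (cited from Kong--Wu--Zettl) and $L^2$-continuity of the normalized eigenfunctions to pass to the limit; the case of $\partial_c\lambda_n$ is identical. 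The paper's route is more elementary --- it never differentiates the eigenfunction and so avoids the one "genuinely technical point" you flag --- while your route, once Kato's machinery is granted, delivers the stronger conclusion that $\lambda_n$ and the eigenvector are real-analytic in $(\beta,c)$, which is more than the lemma asserts. Both hinge on the same two structural facts: the operator is a regular Sturm--Liouville operator for $c\notin[U_{\min},U_{\max}]$, so its Dirichlet eigenvalues are simple, and the boundary terms in the integration by parts vanish.
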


\begin{proof}
We first show that \eqref{221} holds. By Theorem 2.1 in \cite{Kong1999}, for a
fixed $c$, $\lambda_{n}$ is continuous as a function of $\beta\in\mathbf{R}$.
For any $\beta,\tilde{\beta}\in\mathbf{R}$, $\phi=\phi_{n}^{(\beta,c)}$ and
$\tilde{\phi}=\phi_{n}^{(\tilde{\beta},c)}$ satisfy
\[
-\phi^{\prime\prime}-{\frac{{\beta}-U^{\prime\prime}}{U-c}}\phi=\lambda
_{n}(\beta,c)\phi,\;\;-\tilde{\phi}^{\prime\prime}-{\frac{{\tilde{\beta}%
}-U^{\prime\prime}}{U-c}}\tilde{\phi}=\lambda_{n}(\tilde{\beta},c)\tilde{\phi
},
\]
with $\phi(y_{1})=\phi(y_{2})=\tilde{\phi}(y_{1})=\tilde{\phi}(y_{2})=0$.
Thus
\[
{\frac{\lambda_{n}(\beta,c)-\lambda_{n}(\tilde{\beta},c)}{\beta-\tilde{\beta}%
}}\int_{y_{1}}^{y_{2}}\phi\tilde{\phi}dy=-\int_{y_{1}}^{y_{2}}{\frac{1}{U-c}%
}\phi\tilde{\phi}dy.
\]
Taking the limit $\tilde{\beta}\rightarrow\beta$ in the above, we prove
(\ref{221}).

The formula (\ref{223}) can be proved in a similar way and we skip the details.
\end{proof}

The following is a straightforward consequence of Lemma \ref{le28}.

\begin{corollary}
\label{co21} (i) For fixed $c_{0}\in(-\infty,U_{\min})$, $\lambda_{n}%
(\beta,c_{0})$ is strictly decreasing for $\beta\in\mathbf{R}$.

(ii) For fixed $c_{0}\in(U_{\max},+\infty)$, $\lambda_{n}(\beta,c_{0})$ is
strictly increasing for $\beta\in\mathbf{R}$.

(iii) For fixed $\beta_{0}\in(-\infty,U_{\min}^{\prime\prime}]$, $\lambda
_{n}(\beta_{0},c)$ is strictly increasing for $c\in(-\infty,U_{\min})$ and
$c\in(U_{\max},+\infty)$, respectively.

(iv) For fixed $\beta_{0}\in\lbrack U_{\max}^{\prime\prime},+\infty)$,
$\lambda_{n}(\beta_{0},c)$ is strictly decreasing for $c\in(-\infty,U_{\min})$
and $c\in(U_{\max},+\infty)$, respectively.

\end{corollary}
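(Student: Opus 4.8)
The plan is to read off each of the four monotonicity statements directly from the derivative formulas \eqref{221} and \eqref{223} of Lemma \ref{le28}, since for $c$ outside $\mathrm{Ran}\,(U)$ the weight functions appearing there have a definite sign on the whole interval. The only structural input needed is that $\phi_{n}^{(\beta,c)}$, being a nontrivial solution of a second order linear ODE, cannot vanish together with its derivative at any point; hence its zeros are isolated and $|\phi_{n}^{(\beta,c)}|^{2}>0$ almost everywhere on $(y_{1},y_{2})$.

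For parts (i) and (ii) I would fix $c_{0}$ and differentiate in $\beta$ via \eqref{221}. If $c_{0}\in(-\infty,U_{\min})$ then $U-c_{0}>0$ on $[y_{1},y_{2}]$, so the integrand $\frac{1}{U-c_{0}}|\phi_{n}^{(\beta,c_{0})}|^{2}$ is positive on a set of full measure and $\frac{\partial\lambda_{n}}{\partial\beta}(\beta,c_{0})<0$, giving the strict decrease in (i). If instead $c_{0}\in(U_{\max},+\infty)$ then $U-c_{0}<0$, the same integrand is negative, and $\frac{\partial\lambda_{n}}{\partial\beta}(\beta,c_{0})>0$, which is (ii). Here strictness is automatic, since $\frac{1}{U-c_{0}}$ neither changes sign nor vanishes.

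For parts (iii) and (iv) I would fix $\beta_{0}$ and differentiate in $c$ via \eqref{223}, now tracking the sign of $\beta_{0}-U^{\prime\prime}$. Because $(U-c)^{2}>0$ for $c\notin\mathrm{Ran}\,(U)$, the sign of $\frac{\partial\lambda_{n}}{\partial c}$ is opposite to that of $\int_{y_{1}}^{y_{2}}\frac{\beta_{0}-U^{\prime\prime}}{(U-c)^{2}}|\phi_{n}^{(\beta_{0},c)}|^{2}\,dy$. If $\beta_{0}\le U_{\min}^{\prime\prime}$ then $\beta_{0}-U^{\prime\prime}\le0$ pointwise, so this integral is $\le 0$ and $\frac{\partial\lambda_{n}}{\partial c}\ge0$ on each of $(-\infty,U_{\min})$ and $(U_{\max},+\infty)$; symmetrically, $\beta_{0}\ge U_{\max}^{\prime\prime}$ forces $\frac{\partial\lambda_{n}}{\partial c}\le0$. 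This produces the two monotonicity directions asserted in (iii) and (iv).

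The single point needing care, and the main (mild) obstacle, is upgrading the weak inequalities in (iii)--(iv) to \emph{strict} monotonicity, since unlike in (i)--(ii) the factor $\beta_{0}-U^{\prime\prime}$ may vanish at the endpoints $\beta_{0}=U_{\min}^{\prime\prime}$ or $\beta_{0}=U_{\max}^{\prime\prime}$. The integral $\int_{y_{1}}^{y_{2}}\frac{\beta_{0}-U^{\prime\prime}}{(U-c)^{2}}|\phi_{n}^{(\beta_{0},c)}|^{2}\,dy$ is zero only if $\beta_{0}-U^{\prime\prime}\equiv0$ a.e.\ on the support of $|\phi_{n}^{(\beta_{0},c)}|^{2}$, hence a.e.\ on $(y_{1},y_{2})$ since the eigenfunction has only isolated zeros. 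Working in the standing regime $\min U^{\prime\prime}<\max U^{\prime\prime}$, the continuous function $\beta_{0}-U^{\prime\prime}$ is not identically zero and is strictly negative (resp.\ positive) on a nonempty open subset, so the integral is strictly negative (resp.\ positive) and the monotonicity is strict. This disposes of all four cases.
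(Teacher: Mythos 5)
Your proof is correct and follows the same route as the paper, which records this corollary as a straightforward consequence of Lemma \ref{le28} without further detail: one reads the sign of $\partial_{\beta}\lambda_{n}$ and $\partial_{c}\lambda_{n}$ directly from the definite sign of $1/(U-c_{0})$ (resp.\ of $\beta_{0}-U^{\prime\prime}$) in the integrands of \eqref{221} and \eqref{223}. Your closing paragraph upgrading the weak inequalities to strict monotonicity at the endpoint values $\beta_{0}=U_{\min}^{\prime\prime}$ or $\beta_{0}=U_{\max}^{\prime\prime}$ (using that the eigenfunction has only isolated zeros and that $U^{\prime\prime}$ is non-constant in the regime of interest) is a genuine refinement that the paper leaves implicit.
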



Now we can determine the sign of $\langle L_{\alpha}\cdot,\cdot\rangle$ for a
non-resonant neutral mode by combining (\ref{identity-L-quadratic}) and
(\ref{223}).

\begin{theorem}
\label{non-resonant quadratic form} Let $(c,\alpha,\beta,\phi)$ be a
non-resonant neutral mode. Then $\alpha^{2}=-\lambda_{n_{0}}(\beta,c)>0$ for
some $n_{0}\geq1$ and
\begin{equation}
\langle L_{\alpha}\omega_{\alpha},\omega_{\alpha}\rangle=-(c-U_{\beta}%
)\frac{\partial\lambda_{n_{0}}}{\partial c}(\beta
,c)\label{equality-L-form-derivative}%
\end{equation}
where $\omega_{\alpha}=-\phi^{\prime\prime}+\alpha^{2}\phi$.

\end{theorem}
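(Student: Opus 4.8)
The plan is to read off the result as a direct combination of the quadratic-form identity in Lemma~\ref{le-L-quadratic form}(i) and the eigenvalue-derivative formula (\ref{223}) of Lemma~\ref{le28}, once $\phi$ has been correctly identified as a normalized eigenfunction of the regular operator $\mathcal{L}_{\beta,c}$ from (\ref{defn-L_c}).

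First I would establish the spectral statement $\alpha^{2}=-\lambda_{n_{0}}(\beta,c)$. Since the mode is non-resonant, $c\notin {\rm Ran}\,(U)$, so $U-c$ is bounded away from $0$ on $[y_{1},y_{2}]$ and the coefficient $(\beta-U^{\prime\prime})/(U-c)$ is continuous; hence the Rayleigh--Kuo equation (\ref{15}) is a regular Sturm--Liouville problem and any solution $\phi$ satisfying (\ref{16}) lies in $C^{2}([y_{1},y_{2}])\subset H^{2}$. Rewriting (\ref{15}) as $\mathcal{L}_{\beta,c}\phi=-\alpha^{2}\phi$ together with the Dirichlet conditions (\ref{16}) exhibits $-\alpha^{2}$ as an eigenvalue of $\mathcal{L}_{\beta,c}$, so $-\alpha^{2}=\lambda_{n_{0}}(\beta,c)$ for some $n_{0}\geq 1$ and therefore $\alpha^{2}=-\lambda_{n_{0}}(\beta,c)>0$. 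Because the eigenvalues of a regular Dirichlet Sturm--Liouville problem are simple, after normalizing $\|\phi\|_{L^{2}}=1$ I may take $\phi=\phi_{n_{0}}^{(\beta,c)}$ (up to an irrelevant sign).

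Next I would apply Lemma~\ref{le-L-quadratic form}(i) to the mode, which gives
\[
\langle L_{\alpha}\omega_{\alpha},\omega_{\alpha}\rangle=(c-U_{\beta})\int_{y_{1}}^{y_{2}}\frac{\beta-U^{\prime\prime}}{|U-c|^{2}}|\phi|^{2}\,dy.
\]
Since $c$ is real and lies outside ${\rm Ran}\,(U)$, one has $|U-c|^{2}=(U-c)^{2}$ pointwise, so the integral here is exactly the one appearing in formula (\ref{223}) with $\phi=\phi_{n_{0}}^{(\beta,c)}$. Substituting $\int_{y_{1}}^{y_{2}}\frac{\beta-U^{\prime\prime}}{(U-c)^{2}}|\phi|^{2}\,dy=-\partial_{c}\lambda_{n_{0}}(\beta,c)$ then yields $\langle L_{\alpha}\omega_{\alpha},\omega_{\alpha}\rangle=-(c-U_{\beta})\,\partial_{c}\lambda_{n_{0}}(\beta,c)$, which is precisely (\ref{equality-L-form-derivative}).

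I expect no genuine obstacle in this argument; the only points requiring care are the normalization convention and the step $|U-c|^{2}=(U-c)^{2}$. The normalization matters because the left-hand side of (\ref{equality-L-form-derivative}) scales quadratically in $\phi$ while the right-hand side (defined through the unit-norm eigenfunction in Lemma~\ref{le28}) does not, so the identity is meaningful only after fixing $\|\phi\|_{L^{2}}=1$; and the replacement of $|U-c|^{2}$ by $(U-c)^{2}$ is legitimate exactly because the non-resonant hypothesis forces $c$ to be real and off the range of $U$, which is also what makes $\mathcal{L}_{\beta,c}$ a regular operator with well-defined differentiable eigenvalues in the first place.
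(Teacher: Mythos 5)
Your proposal is correct and follows exactly the route the paper intends: the theorem is stated immediately after the sentence announcing that it follows by combining the identity (\ref{identity-L-quadratic}) of Lemma \ref{le-L-quadratic form}(i) with the derivative formula (\ref{223}) of Lemma \ref{le28}, which is precisely your argument. Your explicit remarks on the unit normalization of $\phi$ and on $|U-c|^{2}=(U-c)^{2}$ are the right points of care and are consistent with (indeed, slightly more careful than) the paper's implicit treatment.
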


\subsection{Stability criteria}

\label{subsection-stability-index}

In this subsection, we give a new method to study the instability of a shear
flow $U$ in class $\mathcal{K}^{+}$. Fix $\beta\in\left(  \min U^{\prime
\prime},\max U^{\prime\prime}\right)  $ and $\alpha>0$. We determine the
barotropic instability of the shear flow $U$ in the following steps.

First, recall that $n^{-}\left(  L_{\alpha}\right)  =n^{-}(\tilde{L}%
_{0}+\alpha^{2})$, where $\tilde{L}_{0}$ is defined in (\ref{defn-L0}). By
(\ref{index-formula}), linear stability at the wave number $\alpha\ $is
equivalent to the condition $n^{-}\left(  L_{\alpha}\right)  =k_{i}^{\leq0}$.
To determine $k_{i}^{\leq0}$, we need to study the neutral modes in $H^{2}$.
By Theorem \ref{th-neutral modes}, the neutral wave speed $c$ must be one of
the following four types: (i) $U_{\beta}$; (ii) $U\left(  y_{1}\right)  $ or
$U\left(  y_{2}\right)  $; (iii) critical values of $U$; (iv) outside
${\rm Ran}\,(U)$. Since $c=U_{\beta}$ corresponds to the zero eigenvalue of
$J_{\alpha}L_{\alpha}$ (defined in (\ref{defn-J-L-alpha})), it has no
contribution to $k_{i}^{\leq0}$. To find neutral modes of types (ii) and
(iii), we need to solve a (possibly) singular eigenvalue problem for the
operator $\mathcal{L}_{\beta,c}$ defined in (\ref{defn-L_c}) with $c$ to be
$U\left(  y_{1}\right)  ,U\left(  y_{2}\right)  $ or a critical value of $U$.
For such $c$, if $-\alpha^{2}$ is a negative eigenvalue of $\mathcal{L}%
_{\beta,c}$ with the eigenfunction $\phi\in H^{2}(y_{1},y_{2})$, then
$\lambda=-i\alpha(c-U_{\beta})$ is a nonzero and purely imaginary eigenvalue
of $J_{\alpha}L_{\alpha}$ with the eigenfunction $\omega=-\phi^{\prime\prime
}+\alpha^{2}\phi\in L^{2}(y_{1},y_{2})$. Denote $k_{i}^{-}\left(
\lambda\right)  $ $(k_{i}^{\leq0}(\lambda))$ to be the number of negative
(non-positive) dimensions of $\left\langle L_{\alpha}\cdot,\cdot\right\rangle
$ restricted to the generalized eigenspace of $\lambda$ for $J_{\alpha
}L_{\alpha}$. Then when $\langle L_{\alpha}\omega,\omega\rangle\neq0$, noting
that $\lambda$ is purely imaginary, we infer that $\lambda$ is a simple
eigenvalue of $J_{\alpha}L_{\alpha}$ and
\[
k_{i}^{\leq0}\left(  \lambda\right)  =k_{i}^{-}\left(  \lambda\right)
=\left\{
\begin{array}
[c]{cc}%
1 & \text{if }\langle L_{\alpha}\omega,\omega\rangle<0,\;\\
0 & \text{if }\langle L_{\alpha}\omega,\omega\rangle>0.
\end{array}
\right.
\]
When $\langle L_{\alpha}\omega,\omega\rangle=0$, we have $k_{i}^{\leq0}\left(
\lambda\right)  \geq1$ and $\lambda$ might be a multiple eigenvalue of
$J_{\alpha}L_{\alpha}.$

For the case (iv) of non-resonant neutral modes, $\mathcal{L}_{\beta,c}$ is a
regular Sturm-Liouville operator but $c\notin {\rm Ran}\,\left(  U\right)  $ is not
given explicitly. By the definition of $\lambda_{n}$ in Lemma \ref{le28}, we
obtain that for any given $\alpha>0$, the number of non-resonant neutral modes
is exactly the number of solutions of $\lambda_{n}\left(  \beta,c\right)
=-\alpha^{2}$ for all $n\geq1$. Let $c^{\ast}$ be a solution of $\lambda
_{n_{0}}\left(  \beta,c\right)  =-\alpha^{2}$ for some $n_{0}\geq1$. Then
$\lambda_{n_{0}}\left(  \beta,
c^{\ast}\right)  <0$ is the $n_{0}$-th eigenvalue of $\mathcal{L}%
_{\beta,c^{\ast}}$ with the eigenfunction $\phi^{\ast}$, and correspondingly,
$\lambda^{\ast}=-i\alpha(c^{\ast}-U_{\beta})$ is a nonzero and purely
imaginary eigenvalue of $J_{\alpha}L_{\alpha}$ with the eigenfunction
$\omega^{\ast}=-\phi^{\ast{\prime\prime}}+\alpha^{2}\phi^{\ast}$. If
$\partial_{c}\lambda_{n_{0}}(\beta,c^{\ast})\neq0$, then by
(\ref{equality-L-form-derivative}),
\[
\langle L_{\alpha}\omega^{\ast},\omega^{\ast}\rangle=-(c^{\ast}-U_{\beta
})\partial_{c}\lambda_{n_{0}}(\beta,c^{\ast})\neq0,
\]
which implies that $\lambda^{\ast}$ is a simple eigenvalue of $J_{\alpha
}L_{\alpha}$ with
\[
k_{i}^{-}\left(  \lambda^{\ast}\right)  =\left\{
\begin{array}
[c]{cc}%
1 & \text{if }(c^{\ast}-U_{\beta})\partial_{c}\lambda_{n_{0}}(\beta,c^{\ast
})>0\;\\
0 & \text{if }(c^{\ast}-U_{\beta})\partial_{c}\lambda_{n_{0}}(\beta,c^{\ast
})<0.
\end{array}
\right.
\]
If $\partial_{c}\lambda_{n_{0}}(\beta,c^{\ast})=0$, then $\langle L_{\alpha
}\omega^{\ast},\omega^{\ast}\rangle=0$ and $\lambda^{\ast}$ might be a
multiple eigenvalue of $J_{\alpha}L_{\alpha}$. In this case, we have
$k_{i}^{\leq0}\left(  \lambda^{\ast}\right)  \geq1$. Note that by Lemma
\ref{le-L-quadratic form}, only points with $\partial_{c}\lambda_{n_{0}}%
(\beta,c^{\ast})=0$ could be a neutral limiting mode, i.e., possibly be the
boundary for stability/instability.

\begin{remark}
\label{rm-upper bound}For fixed $\beta$, suppose the operator $\tilde{L}_{0}$
has at least one negative eigenvalue and recall that   the lowest
one is denoted by $-\alpha_{\max}^{2}<0$. By (\ref{H1 bound}) and Lemma \ref{lemma bifurcation regular neutral},
$\alpha_{\max}>0$ gives the upper bound for the unstable wave numbers in the
sense that linear stability holds when $\alpha\geq\alpha_{\max}$ and there
exist unstable modes for $\alpha$ slightly less than $\alpha_{\max}$. When
$\beta=0$, it was shown in \cite{Lin2003} that $\left(  0,\alpha_{\max
}\right)  $ is exactly the interval of unstable wave numbers. When $\beta
\neq0$, the situation becomes more subtle as seen from the study of Sinus flow
in the next section. In particular, when $\beta>0\ $there is always a set of
stable wave numbers in $\left(  0,\alpha_{\max}\right)  $.
\end{remark}

\section{Sharp stability criteria for the Sinus flow}

\label{section-sinus}

In this section, we consider the barotropic instability of the Sinus flow
\[
U(y)={({1+\cos(\pi y)})/{2}},\ \ \ \ y\in\lbrack-1,1].
\]
We will use the approach outlined in Subsection
\ref{subsection-stability-index} to determine the sharp stability boundary for
the Sinus flow in the parameter space $\left(  \alpha,\beta\right)  $. Our
results correct the stability boundary given in the classical references
\cite{Kuo1974, Pedlosky1987}. Moreover, the new stability boundary is
confirmed by more accurate numerical results.

\subsection{ Sharp stability boundary}

\label{Sharp stability transition}

In this subsection, we determine the stability boundary for the Sinus flow. In
particular, we show that the lower part of stability boundary  is given by the
supremum  of wave numbers of non-resonant neutral modes. Here, the supremum is set to be zero if there are no non-resonant neutral modes.

Since for any $\beta\in\mathbf{R}$,
\[
K_{\beta}={\frac{\beta-U^{\prime\prime}}{U-({{1}/{2}}-{{\beta}/{\pi^{2}}})}%
}=\pi^{2},
\]
the Sinus flow belongs to class $\mathcal{K}^{+}$ with $U_{\beta}={{1}/{2}%
}-{{\beta}/{\pi^{2}}}.$ Rayleigh-Kuo criterion ensures that a necessary
condition for instability is $\beta\in(-{{\pi^{2}}/{2}},{{\pi^{2}}/{2}})$. Fix
$\alpha>0$. The linearized equation around the Sinus flow is written in the
Hamiltonian form
\[
\partial_{t}\omega=J_{\alpha}L_{\alpha}\omega,\ \omega\in L^{2}[-1,1],\
\]
where
\begin{equation}
J_{\alpha}=-i\alpha(\beta-U^{\prime\prime}),\;\;L_{\alpha}={{1}/{\pi^{2}}%
}-(-{{d^{2}}/{dy^{2}}}+\alpha^{2})^{-1}. \label{defn-J-L-sinus}%
\end{equation}
Clearly,
\[
\sigma(L_{\alpha})=\{{{1}/{\pi^{2}}}-{{1}/({{{k^{2}\pi^{2}}/{4}}+\alpha^{2}}%
)},\;\;k=1,2,\cdots\}.
\]

{By Theorem \ref{thm-index}, we get the following instability index formula
for the Sinus flow. }

\begin{theorem}
\label{th31} Consider the Sinus flow and $\beta\in\mathbf{R}$. For any
$\alpha\in\lbrack\sqrt{3}\pi/2,+\infty)$, $L_{\alpha}$ is non-negative and the
flow is linearly stable for perturbations of period ${2\pi}/{\alpha}$. For any
$\alpha\in(0,\sqrt{3}\pi/2)$, the index formula
\begin{equation}
k_{c}+k_{r}+k_{i}^{\leq0}=1 \label{index-formula-sinus}%
\end{equation}
is satisfied for the eigenvalues of $J_{\alpha}L_{\alpha}$, where $J_{\alpha}$
and $L_{\alpha}$ are defined in (\ref{defn-J-L-sinus}).
\end{theorem}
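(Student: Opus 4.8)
The plan is to apply the general index formula from Theorem~\ref{thm-index} to the specific operators $J_\alpha, L_\alpha$ of the Sinus flow, so the entire proof reduces to computing the spectrum of $L_\alpha$ explicitly. Since the Sinus flow lies in class $\mathcal{K}^+$ with $K_\beta = \pi^2$ constant, the auxiliary operator $\tilde L_0 = -\frac{d^2}{dy^2} - \pi^2$ on $[-1,1]$ with Dirichlet boundary conditions has eigenvalues $\frac{k^2\pi^2}{4} - \pi^2$ for $k \geq 1$ with eigenfunctions $\sin(k\pi(y+1)/2)$. The key spectral computation is therefore elementary: $n^-(\tilde L_0 + \alpha^2)$ counts how many of the numbers $\frac{k^2\pi^2}{4} - \pi^2 + \alpha^2$ are negative.

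First I would record $\sigma(L_\alpha) = \{\frac{1}{\pi^2} - (\frac{k^2\pi^2}{4} + \alpha^2)^{-1} : k \geq 1\}$, which is already displayed in the excerpt, and observe that the $k$-th eigenvalue is negative precisely when $\frac{k^2\pi^2}{4} + \alpha^2 < \pi^2$, i.e. when $\alpha^2 < \pi^2(1 - k^2/4)$. For $k=1$ this reads $\alpha^2 < \frac{3\pi^2}{4}$, i.e. $\alpha < \sqrt{3}\pi/2$; for $k=2$ the right-hand side is $0$, so no eigenvalue with $k \geq 2$ is ever negative for $\alpha > 0$. Hence $n^-(L_\alpha) = 1$ when $0 < \alpha < \sqrt{3}\pi/2$ and $n^-(L_\alpha) = 0$ when $\alpha \geq \sqrt{3}\pi/2$. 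In the latter range $L_\alpha \geq 0$, and since $L_\alpha$ is self-adjoint with a positive quadratic form, $J_\alpha L_\alpha$ has no unstable spectrum by the standard Hamiltonian argument (e.g. $k_c + k_r = 0$ forces $\sigma(J_\alpha L_\alpha) \subset i\mathbf{R}$), giving linear stability; this establishes the first assertion of the theorem.

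For the second assertion, with $0 < \alpha < \sqrt{3}\pi/2$ we have shown $n^-(L_\alpha) = 1$, so the index formula~\eqref{index-formula} of Theorem~\ref{thm-index} specializes immediately to $k_c + k_r + k_i^{\leq 0} = 1$, which is~\eqref{index-formula-sinus}. This is essentially a direct substitution, so the only genuine content is justifying that Theorem~\ref{thm-index} applies, namely that $\beta \in (\min U'', \max U'')$; for the Sinus flow $U'' = -\frac{\pi^2}{2}\cos(\pi y)$ ranges over $[-\frac{\pi^2}{2}, \frac{\pi^2}{2}]$, so the interior condition $\beta \in (-\frac{\pi^2}{2}, \frac{\pi^2}{2})$ is exactly the Rayleigh--Kuo necessary range already noted before the theorem, and outside this range the flow is stable a fortiori. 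I expect the main (though still minor) obstacle to be bookkeeping at the boundary value $\alpha = \sqrt{3}\pi/2$: one must confirm that the $k=1$ eigenvalue of $L_\alpha$ is exactly zero there rather than negative, so that $n^-(L_\alpha) = 0$ and stability holds at the endpoint, consistent with including $\sqrt{3}\pi/2$ in the stable interval $[\sqrt{3}\pi/2, +\infty)$.
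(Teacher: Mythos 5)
Your approach is the same as the paper's: the result is obtained by computing $\sigma(L_\alpha)$ explicitly, reading off $n^-(L_\alpha)=1$ for $\alpha\in(0,\sqrt{3}\pi/2)$ and $n^-(L_\alpha)=0$ for $\alpha\geq\sqrt{3}\pi/2$, and substituting into the index formula (\ref{index-formula}) of Theorem \ref{thm-index}. The eigenvalue count, the endpoint check at $\alpha=\sqrt{3}\pi/2$ (where the $k=1$ eigenvalue of $L_\alpha$ is exactly zero), and the stability conclusion from $L_\alpha\geq0$ are all correct.

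There is, however, one genuine gap in how you handle $|\beta|\geq\pi^2/2$. The theorem asserts the index formula $k_c+k_r+k_i^{\leq0}=1$ for \emph{every} $\beta\in\mathbf{R}$ and every $\alpha\in(0,\sqrt{3}\pi/2)$, whereas Theorem \ref{thm-index} is stated only for $\beta\in(\min U'',\max U'')=(-\pi^2/2,\pi^2/2)$. You dispose of the complement by saying the flow is "stable a fortiori" there, but stability only gives $k_c+k_r=0$; it does not establish the index identity, which in that regime carries the nontrivial content $k_i^{\leq0}=1$ (the existence of a purely imaginary eigenvalue of $J_\alpha L_\alpha$ with non-positive $\langle L_\alpha\cdot,\cdot\rangle$-signature, i.e.\ a non-resonant neutral mode). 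This is exactly what the remark following Theorem \ref{th31} extracts from the formula, so it cannot be waved away. The correct justification is that for the Sinus flow $K_\beta\equiv\pi^2>0$ and $U_\beta=1/2-\beta/\pi^2$ are defined for all $\beta\in\mathbf{R}$, so the Hamiltonian formulation of Subsection \ref{Hamiltonian formulation and index formula} and the proof of Theorem \ref{thm-index} (which use only that $K_\beta$ is positive and bounded, not that $\beta\in\mathrm{Ran}(U'')$) apply verbatim for every real $\beta$. With that observation inserted, your argument is complete.
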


\begin{remark}
By Rayleigh-Kuo criterion, the Sinus flow is linearly stable for any
$\alpha>0\ $when $\left\vert \beta\right\vert \geq{{\pi^{2}}/{2}}$. In this
case, when $\alpha\in(0,\sqrt{3}\pi/2)$, the index formula
(\ref{index-formula-sinus}) implies that $k_{i}^{\leq0}=1$, that is, there
exists one nonzero and purely imaginary eigenvalue of $J_{\alpha}L_{\alpha}%
\ $with non-positive signature. This corresponds to a $H^{2}\ $neutral mode
with non-positive signature, which by results in Section 4.2, must be of
non-resonant type.
\end{remark}

When $\beta\in\left(  -{{\pi^{2}}/{2,\pi^{2}}/{2}}\right)  $ and $\alpha
\in(0,\sqrt{3}\pi/2)$, the index formula (\ref{index-formula-sinus}) implies
that $k_{i}^{\leq0}=1$ and linear stability holds if and only if $k_{i}%
^{\leq0}=1$. Thus the study of linear stability is reduced to count $H^{2}%
\ $neutral modes $\left(  c,\alpha,\beta,\phi\right)  $ with $\left\langle
L_{\alpha}\omega_{\alpha},\omega_{\alpha}\right\rangle \leq0$, where
$\omega_{\alpha}=-\phi^{\prime\prime}+\alpha^{2}\phi$. By Theorem
\ref{th-neutral modes}, the possible wave speeds of $H^{2}\ $neutral modes
are: (i) $c=U_{\beta}=1/2-\beta/\pi^{2}$; (ii) $c=0$; (iii) $c=1$ and (iv)
$c\in(-\infty,0)\cup(1,\infty)$. The regular neutral mode with $c=U_{\beta}$
corresponds to zero eigenvalue of $J_{\alpha}L_{\alpha}$ and has no
contribution to $k_{i}^{\leq0}$. For Sinus flow, there are no singular neutral
modes for $c\ $in the interior of ${\rm Ran}\,(U)=\left[  0,1\right]  $, that is,
$c\in\left(  0,1\right)  $. This simplifies our discussion below. The neutral
modes are solutions of the eigenvalue problem
\begin{equation}
-\phi^{\prime\prime}-\frac{\beta-U^{\prime\prime}}{U-c}\phi=\lambda
\phi,\;\;\phi(\pm1)=0\label{eq44}%
\end{equation}
with $c\in(-\infty,0]\cup\lbrack1,\infty)$. The corresponding operators
$\mathcal{L}_{\beta,c}$ are defined in (\ref{defn-L_c}). We present the
following spectral properties for (\ref{eq44}) and leave the proofs to the
next two subsections:

(1) The spectral points of (\ref{eq44}) with $c=0,1$ or $\pm\infty$ are
computed explicitly. In particular, they are bounded from below and are all
discrete eigenvalues. See Proposition \ref{pro-41}.

(2) Spectral continuity at the boundary: Recall that $\lambda_{n}(\beta,c)$
denotes the $n$-th eigenvalue of (\ref{eq44}).
There are two types of spectral continuity according to the boundary points.
The first one is that $\lambda_{n}(\beta,c)\rightarrow n^{2}\pi^{2}/4$ as
$c\rightarrow\pm\infty$. This type is generic for any flow $U\in C^{2}%
([y_{1},y_{2}])$. See Lemma \ref{lem-cinfty} and Remark
\ref{general-infty-limits}. The second type is that $\lambda_{n}(\beta,\cdot)$
is left continuous at $0$ for $\beta\in\left(  0,{\pi^{2}}/{2}\right)  $ and
right continuous at $1$ for $\beta\in\left(  -{\pi^{2}}/{2},0\right]  $. See
Proposition \ref{prop-singular limit}.

Assuming these spectral properties, we give a simple characterization of the
stability boundary for Sinus flow. Similar results can be obtained for other
flows $U\left(  y\right)  \ $in class $\mathcal{K^{+}}$with $n^{-}\left(
\tilde{L}_{0}\right)  =1$, where $\tilde{L}_{0}$ is defined in (\ref{defn-L0}).

\begin{lemma}
\label{2nd eigenvalue positive} Let $\beta\in\left[  -{\pi^{2}}/{2},{\pi^{2}%
}/{2}\right]  $. Then $\lambda_{n}(\beta,c)>(n^{2}/4-1)\pi^{2}$ for any
$n\geq1$ and any $c\in(-\infty,0)\cup(1,+\infty)$. Here, $\lambda_{n}%
(\beta,c)$ is the $n$-th eigenvalue of the operator $\mathcal{L}_{\beta,c}$.
\end{lemma}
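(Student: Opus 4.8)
The plan is to exploit the defining identity of the Sinus flow, namely $K_\beta=(\beta-U'')/(U-U_\beta)=\pi^2$, which rewrites as $\beta-U''=\pi^2(U-U_\beta)$ with $U_\beta=1/2-\beta/\pi^2$. First I would record that the hypothesis $\beta\in[-\pi^2/2,\pi^2/2]$ is precisely equivalent to $U_\beta\in[0,1]={\rm Ran}\,(U)$. Substituting the identity, the potential of $\mathcal{L}_{\beta,c}$ becomes
\[
-\frac{\beta-U''}{U-c}=-\pi^2\,\frac{U-U_\beta}{U-c}=-\pi^2-\pi^2\,\frac{c-U_\beta}{U-c},
\]
so that $\mathcal{L}_{\beta,c}=-\frac{d^2}{dy^2}-\pi^2+W(y)$ with $W(y):=-\pi^2(c-U_\beta)/(U(y)-c)$. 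Since $c\in(-\infty,0)\cup(1,+\infty)$ lies strictly outside $[0,1]$, the operator $\mathcal{L}_{\beta,c}$ is a \emph{regular} Sturm--Liouville operator (as in Lemma \ref{le28}), with discrete eigenvalues bounded below and an orthonormal eigenbasis, so the variational characterization applies.

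Next I would verify that $W(y)>0$ for every $y\in[-1,1]$. Because $U_\beta\in[0,1]$ and $c\notin[0,1]$, the constant $c-U_\beta$ is nonzero, and $U(y)-c\neq0$ for all $y$; moreover both have constant sign. Concretely, if $c<0$ then $c-U_\beta<0$ and $U-c>0$, while if $c>1$ then $c-U_\beta>0$ and $U-c<0$. In either case the quotient $(c-U_\beta)/(U-c)$ is strictly negative, hence $W>0$ throughout $[-1,1]$. In particular, the potential of $\mathcal{L}_{\beta,c}$ exceeds $-\pi^2$ pointwise and strictly.

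Finally I would compare $\mathcal{L}_{\beta,c}$ with the constant-coefficient operator $\mathcal{L}_0:=-\frac{d^2}{dy^2}-\pi^2$ on $L^2(-1,1)$ with Dirichlet conditions. Its $n$-th eigenvalue is $n^2\pi^2/4-\pi^2=(n^2/4-1)\pi^2$, since the Dirichlet eigenfunctions of $-d^2/dy^2$ on $(-1,1)$ are $\sin(n\pi(y+1)/2)$ with eigenvalues $n^2\pi^2/4$. Let $S_n$ be the span of the first $n$ eigenfunctions of $\mathcal{L}_{\beta,c}$, and let $\phi_*\in S_n$ with $\|\phi_*\|_{L^2}=1$ maximize $\langle\mathcal{L}_0\cdot,\cdot\rangle$ over $S_n$. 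Testing the min-max for $\mathcal{L}_0$ against the subspace $S_n$ gives
\[
\lambda_n(\mathcal{L}_0)\leq\langle\mathcal{L}_0\phi_*,\phi_*\rangle=\langle\mathcal{L}_{\beta,c}\phi_*,\phi_*\rangle-\int_{-1}^{1}W\,|\phi_*|^2\,dy<\langle\mathcal{L}_{\beta,c}\phi_*,\phi_*\rangle\leq\lambda_n(\beta,c),
\]
where the strict inequality uses $W>0$ together with $\phi_*\not\equiv0$, and the last inequality holds because $S_n$ is exactly the span of the first $n$ eigenfunctions of $\mathcal{L}_{\beta,c}$. This yields $\lambda_n(\beta,c)>(n^2/4-1)\pi^2$ for all $n\geq1$.

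The computations are elementary once $K_\beta=\pi^2$ is invoked; the only delicate point is the passage from the pointwise strict bound $W>0$ to a strict inequality of eigenvalues. I expect this to be the main (if minor) obstacle: a naive use of monotonicity of the min-max yields only a non-strict inequality, so the argument must test the min-max of $\mathcal{L}_0$ against the genuine eigenspace $S_n$ of $\mathcal{L}_{\beta,c}$ and exploit that $\int_{-1}^1 W|\phi_*|^2\,dy>0$ on any nonzero element of $S_n$.
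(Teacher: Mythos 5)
Your argument is correct, and it ultimately rests on the same comparison as the paper's proof, but you execute it by a different mechanism. The paper observes that for $\beta'=(1/2-c)\pi^{2}$ one has $c=U_{\beta'}$, so $\mathcal{L}_{\beta',c}=-\frac{d^{2}}{dy^{2}}-\pi^{2}$ with eigenvalues $(n^{2}/4-1)\pi^{2}$, and then invokes the strict monotonicity of $\lambda_{n}(\cdot,c)$ in $\beta$ (Corollary \ref{co21}, itself a consequence of the derivative formula in Lemma \ref{le28}) to compare $\lambda_{n}(\beta,c)$ with $\lambda_{n}(\beta',c)$, using that $\beta<\beta'$ when $c<0$ and $\beta>\beta'$ when $c>1$. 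Your reference operator $\mathcal{L}_{0}=-\frac{d^{2}}{dy^{2}}-\pi^{2}$ is exactly this $\mathcal{L}_{\beta',c}$, and your potential $W=-\pi^{2}(c-U_{\beta})/(U-c)$ is precisely the difference $-(\beta-\beta')/(U-c)$ of the two potentials; the sign analysis is the same in both routes. What you do differently is to bypass the parametrized family and the differentiability of $\lambda_{n}$ in $\beta$ altogether, obtaining the strict eigenvalue inequality directly from Courant--Fischer applied to the span of the first $n$ eigenfunctions of $\mathcal{L}_{\beta,c}$ together with the pointwise bound $W\geq\min W>0$. This is marginally more self-contained (no appeal to Lemma \ref{le28} or Corollary \ref{co21}), at the cost of redoing by hand a strictness argument the paper already packaged as monotonicity; both proofs are sound.
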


\begin{proof}
If $c\in(-\infty,0)$, then $(1/2-c)\pi^{2}>\beta$, and by Corollary \ref{co21}
(i) we have $\lambda_{n}(\beta,c)>\lambda_{n}((1/2-c)\pi^{2},c)=(n^{2}%
/4-1)\pi^{2}$. If $c\in(1,+\infty)$, then $(1/2-c)\pi^{2}<\beta$, and by
Corollary \ref{co21} (ii) we have $\lambda_{n}(\beta,c)>\lambda_{n}%
((1/2-c)\pi^{2},c)=(n^{2}/4-1)\pi^{2}$.
\end{proof}

The above lemma implies that when $n\geq2$, $\lambda_{n}(\beta,c)>0$ and there
are no non-resonant neutral modes associated with such eigenvalues. Then by
Proposition \ref{pro-41}, there are no singular neutral modes
associated with $\lambda_{n}(\beta,1)$ and $\lambda_{n}(\beta,0)$ for $n\geq
2$. Moreover, for Sinus flow there are no neutral modes for $c\in\left(
0,1\right)  $. Therefore, to count the index $k_{i}^{\leq0}$, we only need to
study the first eigenvalue $\lambda_{1}(\beta,c)$ for $c\in(-\infty
,0]\cup\lbrack1,+\infty)$. For convenience, we denote $\lambda_{\beta
}(c)=\lambda_{1}({\beta},c)$.

\begin{proposition}
\label{lower transition} Consider Sinus flow and $\beta\in(-\frac{\pi^{2}}%
{2},\frac{\pi^{2}}{2})$. The lower bound of unstable wave numbers is given by
$\Lambda_{\beta}=\sup_{c\notin(0,1)}\lambda_{\beta}^{-}$, where $\lambda
_{\beta}^{-}(c)=\max\{-\lambda_{\beta}(c),0\}$ is the negative part of
$\lambda_{\beta}(c)$. More precisely, we have $\Lambda_{\beta}<3\pi^{2}/4$ and
\end{proposition}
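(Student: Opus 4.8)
The plan is to read off stability directly from the index formula (\ref{index-formula-sinus}) together with the signature identity (\ref{equality-L-form-derivative}) and the global shape of the graph $c\mapsto\lambda_\beta(c)$ on $(-\infty,0]\cup[1,+\infty)$. First I would record the reduction already prepared in this subsection: for $\beta\in(-\pi^2/2,\pi^2/2)$ and $\alpha\in(0,\sqrt3\pi/2)$, Theorem \ref{th31} gives $k_c+k_r+k_i^{\leq0}=1$, so $k_i^{\leq0}\in\{0,1\}$, and the flow is stable at the wave number $\alpha$ iff $k_i^{\leq0}=1$ and unstable iff $k_i^{\leq0}=0$. By Lemma \ref{2nd eigenvalue positive} only $\lambda_\beta=\lambda_1$ can yield a neutral mode (since $\lambda_n>0$ for $n\geq2$ forbids $\alpha^2=-\lambda_n>0$); there are no singular neutral modes for $c\in(0,1)$, and the regular mode $c=U_\beta$ contributes nothing to $k_i^{\leq0}$. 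Hence $k_i^{\leq0}$ is governed entirely by the solutions of $\lambda_\beta(c)=-\alpha^2$ with $c\in(-\infty,0]\cup[1,+\infty)$.

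Next I would establish the strict bound $\Lambda_\beta<3\pi^2/4$. Lemma \ref{2nd eigenvalue positive} with $n=1$ gives $\lambda_\beta(c)>-3\pi^2/4$ for every $c\notin(0,1)$, hence $\lambda_\beta^-(c)<3\pi^2/4$ pointwise. Since $\lambda_\beta(c)\to\pi^2/4>0$ as $c\to\pm\infty$ (Lemma \ref{lem-cinfty}) and $\lambda_\beta$ is continuous on each branch, with the one-sided continuity at the endpoints $c=0,1$ provided by Proposition \ref{prop-singular limit}, the infimum $-\Lambda_\beta=\inf_{c\notin(0,1)}\lambda_\beta(c)$ is attained at a finite point. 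Evaluating the strict inequality there gives $\Lambda_\beta<3\pi^2/4$, so the candidate unstable band $\{\alpha:\Lambda_\beta<\alpha^2<3\pi^2/4\}$ is nonempty.

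The core of the proof is then the signature count against the level $-\alpha^2$. For a non-resonant mode at $c$ with $\alpha^2=-\lambda_\beta(c)$, identity (\ref{equality-L-form-derivative}) gives $\langle L_\alpha\omega_\alpha,\omega_\alpha\rangle=-(c-U_\beta)\partial_c\lambda_\beta(c)$, and, as explained in Subsection \ref{subsection-stability-index}, the mode contributes $k_i^{-}=1$ precisely when $(c-U_\beta)\partial_c\lambda_\beta(c)>0$. Because $U_\beta=1/2-\beta/\pi^2\in(0,1)$, we have $c-U_\beta<0$ on $(-\infty,0)$ and $c-U_\beta>0$ on $(1,+\infty)$. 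If $\Lambda_\beta<\alpha^2<3\pi^2/4$, then $-\alpha^2<-\Lambda_\beta=\min\lambda_\beta$, so $\lambda_\beta(c)=-\alpha^2$ has no solution and there is no endpoint singular mode; thus $k_i^{\leq0}=0$ and the flow is unstable. If $\alpha^2<\Lambda_\beta$, then $-\alpha^2>\min\lambda_\beta$ while $\lambda_\beta\to\pi^2/4>0$ at $\pm\infty$, so on the branch carrying the minimum the graph meets the level $-\alpha^2$. Choosing the extreme crossing (the leftmost crossing $c_1<0$ on the left branch, or the rightmost crossing $c_1>1$ on the right branch) gives $\partial_c\lambda_\beta(c_1)\leq0$ and $\partial_c\lambda_\beta(c_1)\geq0$ respectively; in both cases $(c_1-U_\beta)\partial_c\lambda_\beta(c_1)\geq0$, so $\langle L_\alpha\omega,\omega\rangle\leq0$ and $k_i^{\leq0}\geq1$. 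With the index bound $k_i^{\leq0}\leq1$ this forces $k_i^{\leq0}=1$ and stability. (If the infimum is attained at an endpoint, the corresponding singular mode satisfies $\langle L_\alpha\omega,\omega\rangle\leq0$ by Lemma \ref{le-L-quadratic form}(iii), giving the same conclusion.) Assembling the three ranges, the unstable wave numbers are exactly those with $\Lambda_\beta<\alpha^2<3\pi^2/4$, i.e. the band $\sqrt{\Lambda_\beta}<\alpha<\sqrt3\pi/2$, so $\Lambda_\beta$ is the (squared) lower bound of the unstable wave numbers.

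I expect the main obstacle to be the sign bookkeeping at the extreme crossing for every $\alpha^2<\Lambda_\beta$: one must guarantee that the branch attaining the global minimum is entered with the slope of the correct sign, and that no competing crossing forces a second non-positive direction (which the index bound $k_i^{\leq0}\le1$ ultimately rules out). This is exactly where the monotone/single-humped structure of $\lambda_\beta^-$ and the explicit endpoint values from Proposition \ref{pro-41}, together with the continuity of Proposition \ref{prop-singular limit}, are needed. A secondary technical point is locating where $\inf_c\lambda_\beta$ is attained in order to upgrade the pointwise bound to the strict inequality $\Lambda_\beta<3\pi^2/4$.
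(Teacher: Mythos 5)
Your proposal is correct and follows essentially the same route as the paper: instability for $\alpha^{2}>\Lambda_{\beta}$ from the absence of $H^{2}$ neutral modes (so $k_{i}^{\leq0}=0$), and stability for $\alpha^{2}\leq\Lambda_{\beta}$ by locating an extreme crossing of $\lambda_{\beta}(c)=-\alpha^{2}$ and checking its non-positive signature via (\ref{equality-L-form-derivative}), which is exactly the paper's choice of the minimal solution $c_{1}$. The only small differences are that the paper first invokes Lemma \ref{lemma neutral c range} to restrict to the single branch $(-\infty,0]$ or $[1,+\infty)$ according to the sign of $\beta$, and treats the endpoint case $c_{1}=0$ (or $1$) through the one-sided derivative $\partial_{c}\lambda_{\beta}(0^{-})\leq0$ rather than through Lemma \ref{le-L-quadratic form}(iii), which strictly speaking concerns neutral \emph{limiting} modes.
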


(i) for $\alpha^{2}\in(\Lambda_{\beta},3\pi^{2}/4)$, $k_{i}^{\leq0}=0$ (linear instability);

(ii) for $\alpha^{2}\in(0,\Lambda_{\beta}]$, $k_{i}^{\leq0}=1$ (linear stability).

\begin{proof}
First, we show $\Lambda_{\beta}<3\pi^{2}/4$ when $\beta\in(-\frac{\pi^{2}}%
{2},\frac{\pi^{2}}{2})$. Indeed, $\lambda_{\beta}^-(c)<3\pi^{2}/4$ is true for
$c\in(-\infty,0)\cup(1,+\infty)$ by Lemma \ref{2nd eigenvalue positive} and
for $c=0,1,\pm\infty $ by Proposition \ref{pro-41}.

Proof of (i): For any $\alpha^{2}\in(\Lambda_{\beta},3\pi^{2}/4)$, we have
$\lambda_{\beta}(c)\geq-\Lambda_{\beta}>-\alpha^{2}$, and thus there are no
$H^{2}\ $neutral modes with the wave number $\alpha$. This implies that
$k_{i}^{\leq0}=0$ and the linear instability follows from the index formula
(\ref{index-formula-sinus}).

Proof of (ii): We assume $\Lambda_{\beta}>0$ since the conclusion is trivial
when $\Lambda_{\beta}=0$. By Lemma \ref{lemma neutral c range}, $\Lambda
_{\beta}=\sup_{c\in(-\infty,0]}\lambda_{\beta}^{-}$ for $\beta>0$, and
$\Lambda_{\beta}=\sup_{c\in\lbrack1,\infty)}\lambda_{\beta}^{-}$ for $\beta
<0$. For fixed $\beta\in\left(  0,\frac{\pi^{2}}{2}\right)  $, there exists
$c^{\ast}\in(-\infty,0]$ such that $\lambda_{\beta}(c^{\ast})=-\Lambda_{\beta
}<0.$ Note that $\lim_{c\to-\infty}\lambda_{\beta}(c)=\pi^{2}/4>0$ by Lemma
\ref{lem-cinfty} and $\lambda_{\beta}\left(  c\right)  $ is continuous on
$(-\infty,0]$. For any $\alpha^{2}\in(0,\Lambda_{\beta}]$, let $c_{1}%
\in(-\infty,0]$ be  the minimal solution of $\lambda_{\beta}(c)=-\alpha^{2}$.
Then $\partial_{c}\lambda_{\beta}\left(  c_{1}\right) \leq0$, which
implies that $k_{i}^{\leq0}\geq1$ by (\ref{equality-L-form-derivative}). This
implies that $k_{i}^{\leq0}=1$ and the linear stability by
(\ref{index-formula-sinus}). The case when $\beta\in\left(  -\frac{\pi^{2}}%
{2},0\right)  $ can be treated similarly.
\end{proof}

\begin{remark}
\label{c-star-unique-and-beta-sign} By the index formula
(\ref{index-formula-sinus}) and the above proof, the lower bound of unstable
wave numbers $\Lambda_{\beta}=\sup_{c\notin(0,1)}\lambda_{\beta}^{-}$ is
achieved at exactly one point $c^{\ast}\in(-\infty,0]$ for $\beta\in\left(
0,\frac{\pi^{2}}{2}\right)  $ and $c^{\ast}\in\lbrack1,\infty)$ for $\beta
\in\left(  -\frac{\pi^{2}}{2},0\right)  $. Moreover, for each $\alpha^{2}%
\in(0,\Lambda_{\beta}]$, when $\beta\in(0,\frac{\pi^{2}}{2})$, there exists
exactly one point $c_{1}\in(-\infty,0]$ satisfying $\lambda_{\beta}^{-}\left(
c_{1}\right)  =\alpha^{2}$ and $\partial_{c}\lambda_{\beta}^{-}\left(
c_{1}\right)  \geq0$; when $\beta\in(-\frac{\pi^{2}}{2},0)$, there exists
exactly one point $c_{1}\in\lbrack1,+\infty)$ satisfying $\lambda_{\beta}%
^{-}\left(  c_{1}\right)  =\alpha^{2}$ and $\partial_{c}\lambda_{\beta}%
^{-}\left(  c_{1}\right)  \leq0$. Thus, when $\beta>0$, the function
$\lambda_{\beta}^{-}\left(  c\right)  $ $\left(  c\in(-\infty,0]\right)  \ $is
either  increasing with $c^{\ast}=0$ or single humped at $c^{\ast}%
\in\left(  -\infty,0\right)  $ (i.e. increasing in $\left(  -\infty,c^{\ast
}\right)  $ and decreasing in $\left(  c^{\ast},0\right)  $). Similarly, when
$\beta<0$, the function $\lambda_{\beta}^{-}\left(  c\right)  $ $\left(
c\in\lbrack1,+\infty)\right)  \ $is either decreasing with $c^{\ast
}=1$ or single humped at $c^{\ast}\in\left(  1,+\infty\right)  $. These two
cases (monotone or single humped) are determined by the information of
$\lambda_{\beta}^{-}\left(  c\right)  $ at the boundary points $c=0$ or
$1\ $as in the following remark.
\end{remark}

\begin{remark}
\label{rm-42} Assume $\Lambda_{\beta}>0$. When $\beta\in\left(  -\frac{\pi
^{2}}{2},0\right)  $, we divide it into three cases:

\textbf{Case 1.} $\lambda_{\beta}^{-}(1)=0$. In this case, $\lambda_{\beta
}^{-}\left(  c\right)  $ is single humped and $\Lambda_{\beta}=\sup
_{c\in\lbrack1,+\infty)}\lambda_{\beta}^{-}\left(  c\right)  $ is achieved in
the interior $(1,\infty)$.

\textbf{Case 2.} $\lambda_{\beta}^{-}(1)>0$ and $\lambda_{\beta}^{-^{\prime}%
}(1^{+})\leq0$. Then $\lambda_{\beta}^{-}\left(  c\right)  $ is
decreasing in $[1,+\infty)\ $and $\Lambda_{\beta}=\lambda_{\beta}^{-}(1)$.

\textbf{Case 3. }$\lambda_{\beta}^{-}(1)>0$ and $\lambda_{\beta}^{-^{\prime}%
}(1^{+})>0$. Then $\lambda_{\beta}^{-}\left(  c\right)  $ is single humped and
$\Lambda_{\beta}=\sup_{c\in\lbrack1,+\infty)}$ $\lambda_{\beta}^{-}\left(
c\right)  >\lambda_{\beta}^{-}(1)$ is achieved in the interior $(1,\infty)$.

Similarly, when $\beta\in\left(  0,\frac{\pi^{2}}{2}\right)  $, we have three
cases:

\textbf{Case 1.} $\lambda_{\beta}^{-}(0)=0$. Then $\lambda_{\beta}^{-}\left(
c\right)  $ is single humped and $\Lambda_{\beta}=\sup_{c\in(-\infty
,0]}\lambda_{\beta}^{-}\left(  c\right)  $ is achieved in the interior
$(-\infty,0)$.

\textbf{Case 2.} $\lambda_{\beta}^{-}(0)>0$ and $\lambda_{\beta}^{-^{\prime}%
}(0^{-})\geq0$. Then $\lambda_{\beta}^{-}\left(  c\right)  $ is
increasing in $(-\infty,0]\ $and $\Lambda_{\beta}=\lambda_{\beta}^{-}(0)$.

\textbf{Case 3. }$\lambda_{\beta}^{-}(0)>0$ and $\lambda_{\beta}^{-^{\prime}%
}(0^{-})<0$. Then $\lambda_{\beta}^{-}\left(  c\right)  $ is single humped and
$\Lambda_{\beta}=\sup_{c\in(-\infty,0]}$ $\lambda_{\beta}^{-}\left(  c\right)
>\lambda_{\beta}^{-}(0)$ is achieved in the interior $(-\infty,0)$.
\end{remark}

The computation of $\lambda_{\beta}^{-}$ $\ $and $\lambda_{\beta}^{-^{\prime}%
}$ at endpoints $0,1$ is done in Corollary \ref{value-lambda-} and
Proposition \ref{derivative}. Then by Proposition \ref{lower transition} and
Remarks \ref{c-star-unique-and-beta-sign}--\ref{rm-42}, we are in a position
to give the sharp stability boundary for Sinus flow.

\begin{theorem}
\label{transition} Let $\beta\in(-\frac{\pi^{2}}{2},\frac{\pi^{2}}{2})$. Then
the Sinus flow is linearly unstable if and only if $\alpha^{2}\in
(\Lambda_{\beta},3\pi^{2}/4)$. The lower bound $\Lambda_{\beta}$ for unstable
wave numbers is described as follows: there exist $\beta_{-}\in\left(
-\frac{\pi^{2}}{2},0\right)  $ and $\beta_{+}=\frac{\sqrt{3}-1}{4}\pi^{2}%
\in\left(  0,\frac{\pi^{2}}{2}\right)  $ such that

\begin{enumerate}
\item for $\beta\in(-\frac{\pi^{2}}{2},\beta_{-})$, $\Lambda_{\beta}%
=\lambda_{\beta}^{-}(c^{\ast})>0$ for some $c^{\ast}\in(1,\infty)$;

\item for $\beta\in[\beta_{-}, 0]$, $\Lambda_{\beta}= 0$;

\item for $\beta\in(0,\beta_{+}]$,%
\[
\Lambda_{\beta}=\lambda_{\beta}^{-}(0)=\pi^{2}\left(  1-\left(  \sqrt
{-\frac{\beta}{\pi^{2}}+\frac{9}{16}}+\frac{1}{4}\right)  ^{2}\right)  >0;
\]

\item for $\beta\in(\beta_{+},\frac{\pi^{2}}{2})$, $\Lambda_{\beta}%
=\lambda_{\beta}^{-}(c^{\ast})>\lambda_{\beta}^{-}(0)>0$ for some $c^{\ast}%
\in(-\infty,0)$.
\end{enumerate}
\end{theorem}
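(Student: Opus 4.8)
The plan is to assemble the statement from the reduction already established, so that the instability window requires essentially no new work and all the content lies in evaluating $\Lambda_\beta$. By Proposition \ref{lower transition}, the Sinus flow is linearly unstable exactly when $\alpha^2\in(\Lambda_\beta,3\pi^2/4)$, with $\Lambda_\beta=\sup_{c\notin(0,1)}\lambda_\beta^-(c)<3\pi^2/4$ attained at a single $c^\ast$; combining this with the stability for $\alpha\ge\sqrt3\pi/2$ from Theorem \ref{th31} gives the asserted ``if and only if'' immediately. By Lemma \ref{lemma neutral c range} the supremum defining $\Lambda_\beta$ runs only over $c\in(-\infty,0]$ when $\beta>0$ and only over $c\in[1,\infty)$ when $\beta<0$, so I would treat the two sign regimes separately. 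On each half-line Remark \ref{c-star-unique-and-beta-sign} says $\lambda_\beta^-$ is either monotone (supremum at the finite endpoint) or single-humped (supremum at an interior $c^\ast$), and Remark \ref{rm-42} shows which alternative occurs is decided by the endpoint value $\lambda_\beta^-$ and its one-sided derivative. Thus the skeleton is: feed the endpoint data from Corollary \ref{value-lambda-} and Proposition \ref{derivative} into the trichotomy of Remark \ref{rm-42} and read off $\Lambda_\beta$.

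For $\beta\in(0,\pi^2/2)$ I would use the explicit evaluation
\[
\lambda_\beta^-(0)=\pi^2\Big(1-\big(\sqrt{-\frac{\beta}{\pi^2}+\frac{9}{16}}+\frac14\big)^2\Big)
\]
from Corollary \ref{value-lambda-}, which is strictly positive on $(0,\pi^2/2)$; hence Case 1 of Remark \ref{rm-42} is excluded and only the dichotomy between the increasing profile (Case 2, $\Lambda_\beta=\lambda_\beta^-(0)$) and the single-humped profile (Case 3, $\Lambda_\beta>\lambda_\beta^-(0)$) remains. This dichotomy is governed by the sign of $\lambda_\beta^{-\prime}(0^-)$ from Proposition \ref{derivative}, and I would define $\beta_+$ as its unique zero; solving the resulting algebraic condition yields $\beta_+=\frac{\sqrt3-1}{4}\pi^2$, at which $\sqrt{-\beta_+/\pi^2+9/16}+\frac14=\sqrt3/2$ and so $\lambda_{\beta_+}^-(0)=\pi^2/4$. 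For $\beta\in(0,\beta_+]$ the derivative is $\ge0$ (Case 2) and $\Lambda_\beta=\lambda_\beta^-(0)$ with the displayed formula; for $\beta\in(\beta_+,\pi^2/2)$ it is negative (Case 3), giving $\Lambda_\beta=\lambda_\beta^-(c^\ast)>\lambda_\beta^-(0)>0$ with $c^\ast\in(-\infty,0)$. This settles items (3) and (4).

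For $\beta\in(-\pi^2/2,0)$ the analysis is on $[1,\infty)$, where $\lambda_\beta(c)\to\pi^2/4>0$ as $c\to\infty$ by Lemma \ref{lem-cinfty}. The decisive question is whether $\lambda_\beta$ ever dips below zero: if it stays non-negative then $\lambda_\beta^-\equiv0$ and $\Lambda_\beta=0$, whereas if it goes negative then, using the $c=1$ data from Corollary \ref{value-lambda-} and Proposition \ref{derivative} to rule out the monotone-decreasing Case 2, the profile is single-humped with an interior maximizer $c^\ast\in(1,\infty)$ and $\Lambda_\beta=\lambda_\beta^-(c^\ast)>0$. To locate the transition I would exploit the strict monotonicity of $\lambda_\beta(c)$ in $\beta$ on $(1,\infty)$ from Corollary \ref{co21}(ii) (equivalently the sign of $\partial_\beta\lambda_1$ in (\ref{221})): the quantity $\min_{c\in[1,\infty)}\lambda_\beta(c)$ is continuous and non-decreasing in $\beta$, is $\ge0$ at $\beta=0$ (no non-resonant modes when $\beta=0$), and is negative for $\beta$ near $-\pi^2/2$; hence it vanishes on a single sub-interval whose left endpoint is the desired $\beta_-\in(-\pi^2/2,0)$. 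This gives $\Lambda_\beta=0$ for $\beta\in[\beta_-,0]$ (item (2)) and $\Lambda_\beta=\lambda_\beta^-(c^\ast)>0$, $c^\ast\in(1,\infty)$, for $\beta\in(-\pi^2/2,\beta_-)$ (item (1)).

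The \textbf{main obstacle} is the $\beta<0$ regime, specifically establishing that $\beta_-$ is well defined and that the profile there is genuinely single-humped rather than monotone. Unlike $\beta_+$, which comes from a closed-form derivative condition, $\beta_-$ is only implicit, so I must show $\min_{c\in[1,\infty)}\lambda_\beta(c)$ crosses zero exactly once; this rests on the monotonicity in $\beta$ of Corollary \ref{co21} together with a quantitative input showing the minimum is strictly negative near $\beta=-\pi^2/2$, which is where the hypergeometric evaluation of Corollary \ref{value-lambda-} and the one-sided derivative of Proposition \ref{derivative} at $c=1$ do the real work. Everything else---the instability window, the single attained maximizer, and the monotone versus single-humped classification---is a direct application of Proposition \ref{lower transition} and Remarks \ref{c-star-unique-and-beta-sign}--\ref{rm-42}.
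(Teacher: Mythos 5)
Your proposal follows the paper's proof almost step for step: reduce to evaluating $\Lambda_\beta$ via Proposition \ref{lower transition} and Theorem \ref{th31}, split by the sign of $\beta$ using Lemma \ref{lemma neutral c range}, and feed the endpoint data of Corollary \ref{value-lambda-} and Proposition \ref{derivative} into the trichotomy of Remark \ref{rm-42}; the treatment of $\beta_+$ and of items (3)--(4) is exactly the paper's.

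There is one point in the $\beta<0$ regime where your stated inputs fall short of the conclusion. To place $\beta_-$ strictly inside $(-\frac{\pi^2}{2},0)$ you must show that $\inf_{c\in[1,\infty)}\lambda_\beta(c)\geq 0$ persists for some $\beta<0$, and your input ``$\min_{c\in[1,\infty)}\lambda_0(c)\geq 0$ since there are no non-resonant modes at $\beta=0$'' only yields $\beta_-\leq 0$. The paper closes this by proving the \emph{strict} inequality $\inf_{c\in[1,\infty)}\lambda_0(c)>0$: it uses $\lambda_0(c)>0$ for every $c\in[1,\infty]$ (from (4.8) in \cite{Tung1981} together with Proposition \ref{pro-41}) and, crucially, the continuity of $\lambda_0$ on the compactified interval $[1,\infty]$ supplied by Proposition \ref{prop-singular limit} at $c=1^+$ and Lemma \ref{lem-cinfty} at $c=\infty$, so that the infimum is attained and positive; continuity in $\beta$ then forces $\beta_-<0$. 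Relatedly, your appeal to Proposition \ref{derivative} for ``the $c=1$ data'' is a misattribution: that proposition only computes $\lambda_\beta'(0^-)$. For item (1) no derivative at $c=1$ is needed at all --- Corollary \ref{value-lambda-} gives $\lambda_\beta^-(1)=0$ for $\beta>-\frac{\pi^2}{2}$, which places you directly in Case 1 of Remark \ref{rm-42} (single hump with interior maximizer), exactly as the paper argues.
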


The following Figure \ref{fig:plot-thm42} illustrates these cases.

\begin{figure}[th]
\centering
\begin{tabular}
[c]{cc}%
\includegraphics[width=2.7in]{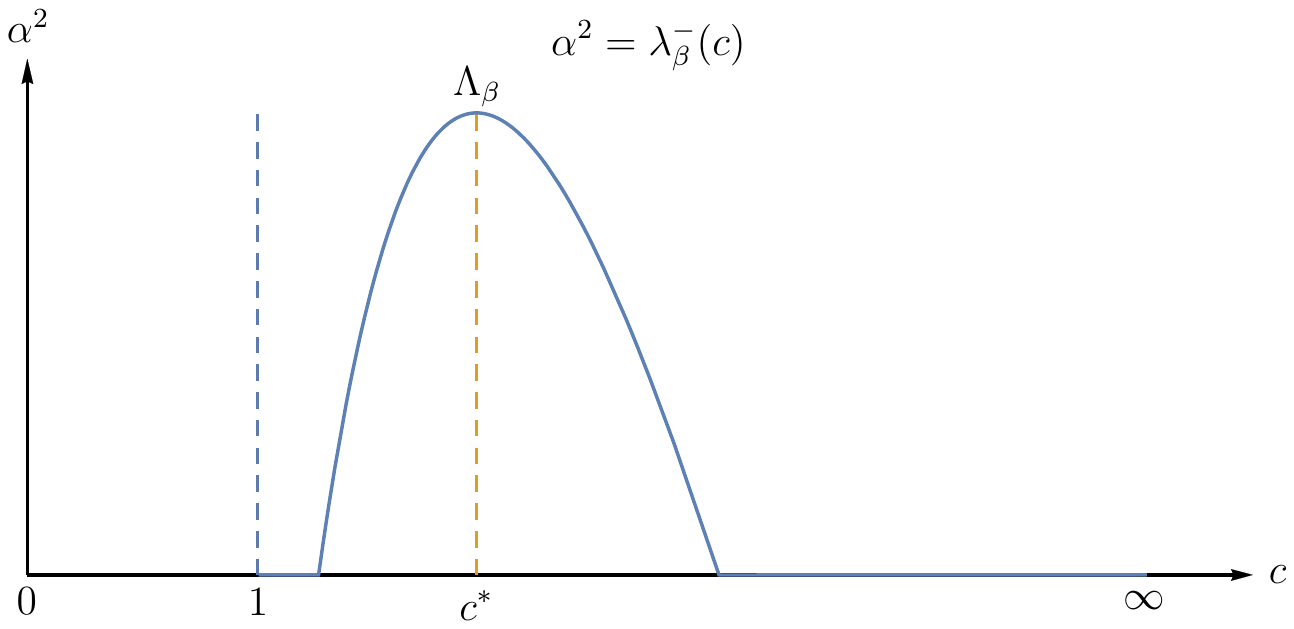} &
\includegraphics[width=2.7in]{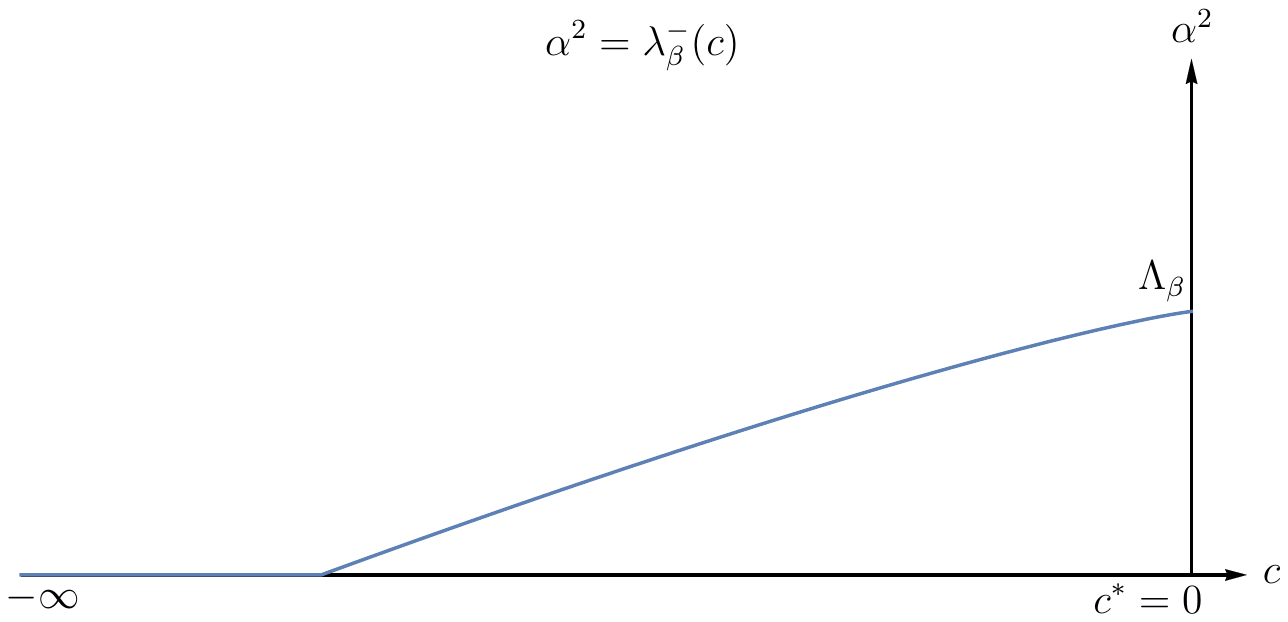}\\
(1) Case $\beta\in(-\frac{\pi^{2}}{2}, \beta_{-})$ & (2) Case $\beta\in(0,
\beta_{+}]$\\
\includegraphics[width=2.7in]{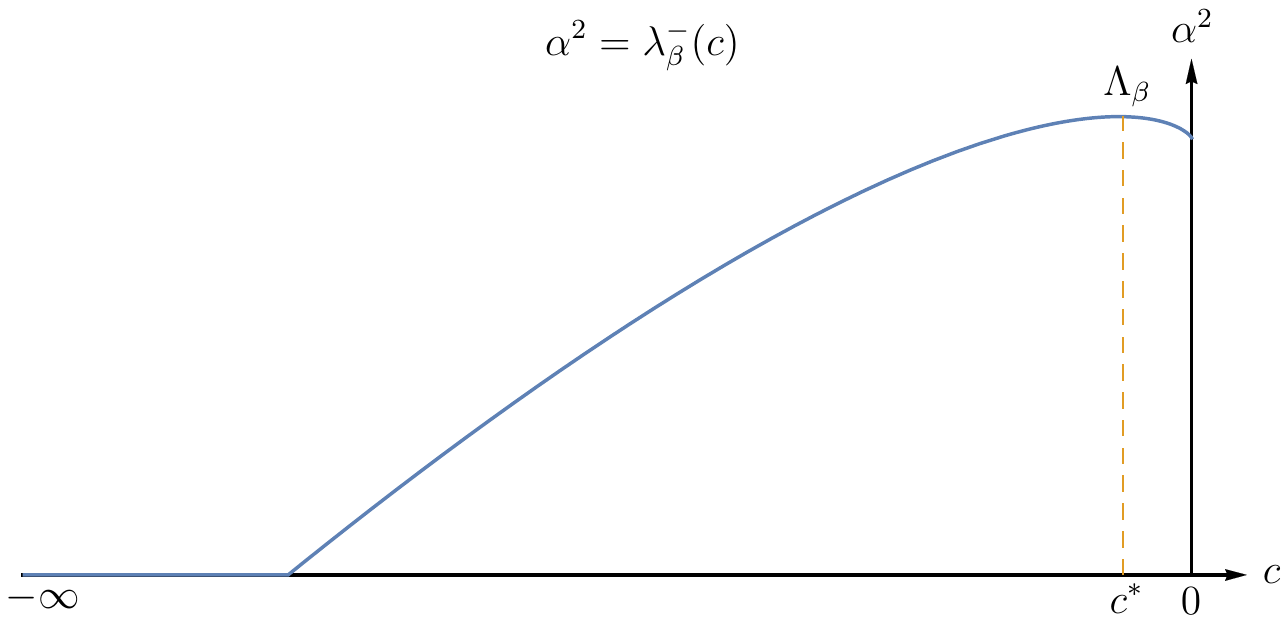} &
\includegraphics[width=2.7in]{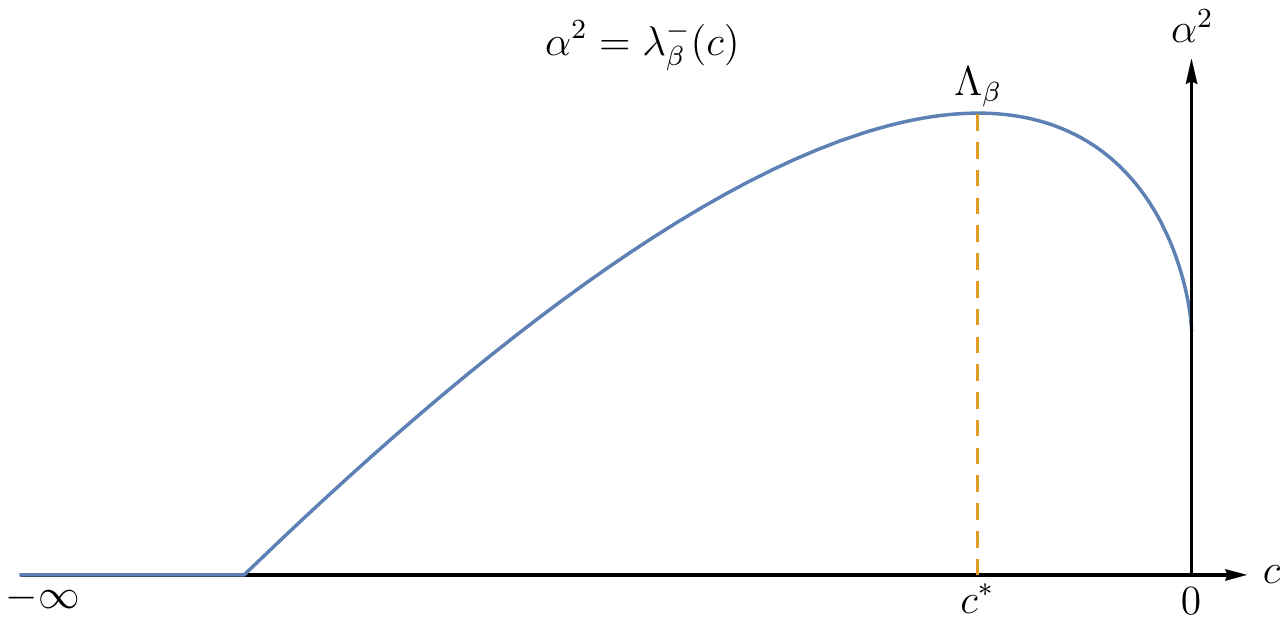}\\
(3) Case $\beta\in(\beta_{+}, \frac{5\pi^{2}}{16})$ with $\lambda_{\beta
}^{-\prime}(0^{-})$ finite & (4) Case $\beta\in[\frac{5\pi^{2}}{16}, \frac
{\pi^{2}}{2})$ with $\lambda_{\beta}^{-\prime}(0^{-})$ infinite
\end{tabular}
\caption{Graph of $\lambda_{\beta}^-(c)$ for several fixed $\beta$}%
\label{fig:plot-thm42}%
\end{figure}

\begin{proof}
The upper bound $3\pi^{2}/4$ for unstable wave numbers is given in Theorem
\ref{th31}. Below we determine the lower bound $\Lambda_{\beta}$. First,
consider $\beta\in(0,\frac{\pi^{2}}{2})$. By Corollary \ref{value-lambda-},
$\lambda_{\beta}^{-}(0)>0$. By Proposition \ref{derivative}, $\lambda_{\beta
}^{-\prime}(0^{-})=-\lambda_{\beta}^{\prime}(0^{-})\geq0$ for $\beta
\in(0,\beta_{+}]$, and $\lambda_{\beta}^{-\prime}(0^{-})=-\lambda_{\beta
}^{\prime}(0^{-})<0$ for $\beta\in(\beta_{+},\frac{\pi^{2}}{2})$. Then
conclusions 3 and 4 follow from Remark \ref{rm-42}. For $\beta=0$,
$\Lambda_{0}=0$ since the interval of unstable wave numbers is $\left(
0,\sqrt{3}\pi/2\right)  $ (\cite{Lin2003}).

For $\beta\in(-{\frac{\pi^{2}}{2}},0)$, we first determine whether
$\Lambda_{\beta}>0$ or not. To this end, we consider $\inf_{c\in
\lbrack1,\infty)}\lambda_{\beta}(c)$ as a function of $\beta$ and obverse that
it is non-decreasing in $\beta\in\lbrack-{\frac{\pi^{2}}{2}},0].$ In fact,
$\lambda_{\beta}(c)$ is strictly increasing in $\beta$ for any given
$c\in\lbrack1,\infty)$ by Corollary \ref{co21} and Proposition \ref{pro-41}
(4). Furthermore,
\[
\inf_{c\in\lbrack1,\infty)}\lambda_{-\frac{\pi^{2}}{2}}(c)=\lambda_{-\frac
{\pi^{2}}{2}}\left(  1\right)  =-\frac{3}{4}\pi^{2}<0,\;\;\;\;\inf
_{c\in\lbrack1,\infty)}\lambda_{0}(c)>0.
\]
Here, the second inequality is true because $\lambda_{0}(c)>0$ for any
$c\in\lbrack1,\infty]$ by (4.8) in \cite{Tung1981} and Proposition
\ref{pro-41}, and $\lambda_{0}$ is continuous on $c\in\lbrack1,\infty]$ by
Lemma \ref{lem-cinfty} and Proposition \ref{prop-singular limit}. Hence by
continuity, there exists $\beta_{-}\in\left(  -\frac{\pi^{2}}{2},0\right)  $
such that
\[
\Lambda_{\beta}=\sup_{c\in\lbrack1,\infty)}\lambda_{\beta}^{-}(c)=-\min
\{0,\inf_{c\in\lbrack1,\infty)}\lambda_{\beta}(c)\}>0
\]
if $\beta\in(-{\frac{\pi^{2}}{2}},\beta_{-})$ and $\Lambda_{\beta}=0$ if
$\beta\in\left[  \beta_{-},0\right]  $. This proves conclusion 2. By Corollary
\ref{value-lambda-}, $\lambda_{\beta}^{-}(1)=0$, thus conclusion 1 follows
from Remark \ref{rm-42}.
\end{proof}

\begin{remark}
\label{numerical beta-} By numerical calculation, $\beta_{-}\approx
-0.41224\pi^{2}\approx-4.06867$.
\end{remark}

\begin{remark}
\label{boundary-strictly decreasing} The lower bound $\Lambda_{\beta}$ is
strictly decreasing to $\beta\in(-\frac{\pi^{2}}{2},\beta_{-})$. In fact, for
any $-\frac{\pi^{2}}{2}<\beta_{2}<\beta_{1}<\beta_{-}$, there exists $c_{1}%
\in(1,\infty)$ such that $\Lambda_{\beta_{1}}=-\lambda_{\beta_{1}}(c_{1})>0$,
and by Corollary \ref{co21} (ii), we have $\lambda_{\beta_{2}}(c_{1}%
)<\lambda_{\beta_{1}}(c_{1})$. This gives $\Lambda_{\beta_{2}}>\Lambda
_{\beta_{1}}$.
\end{remark}

\begin{remark}
\label{rm-general} Consider a general class $\mathcal{K^{+}}$ flow $U\left(
y\right)  $. For $\beta\in(\min U^{\prime\prime},\max U^{\prime\prime})$, we
define $\sqrt{\Lambda_{\beta}}\geq0$ to be the supremum  of wave numbers for neutral
modes with non-positive $L_{\alpha}\ $signature. Equivalently, $\Lambda
_{\beta}$ is the maximum of $\sup_{c\notin {\rm Ran}\,(U)}\lambda_{\beta}^{-}(c)$
(defined as in the Sinus flow) and the negative part of eigenvalues of
$-\frac{d^{2}}{dy^{2}}-\frac{\beta-U^{\prime\prime}}{U-c}$ ($c=U\left(
y_{1}\right)  ,U\left(  y_{2}\right)  $ or a critical value of $U$) with
non-positive signature. Then $\sqrt{\Lambda_{\beta}}<\alpha_{\max}$ (defined in
Subsection 3.1) and there is linear instability for $\alpha
\in\left(  \sqrt{\Lambda_{\beta}},\alpha_{\max}\right)  $. Indeed, $\sqrt{\Lambda_{\beta
}}\geq\alpha_{\max}$ would imply that $k_{i}^{\leq0}\geq1\ $for $\alpha
=\sqrt{\Lambda_{\beta}}\ $which is a contradiction to the index formula
(\ref{index-formula}) and the fact that $n^{-}\left(  L_{\alpha}\right)  =0$
for $\alpha\geq\alpha_{\max}$. The linear instability again follows from the
index formula since when $\alpha\in\left(  \sqrt{\Lambda_{\beta}},\alpha_{\max
}\right)  $,  we have $k_{i}^{\leq0}=0$ and $n^{-}\left(  L_{\alpha}\right)
>0$.

Moreover, the interval $\left(  \sqrt{\Lambda_{\beta}},\alpha_{\max}\right)  $ gives
the sharp range of unstable wave numbers if the flow shares the properties of
Sinus flow. More precisely, this is true for flows satisfying: i)
$n^{-}\left(  \tilde{L}_{0}\right)  =1$ ($\tilde{L}_{0}$ defined by
(\ref{defn-L0})); ii) the singular neutral modes only exist with $c$ to be the
endpoints of ${\rm Ran}\,(U)$; iii) $E(\mathcal{L}_{\beta,U_{\max}})$ and $E(\mathcal{L}_{\beta,U_{\min}})$ are bounded from below, where
$E(\mathcal{L}_{\beta,U_{\min}})$ denotes  the set of all the eigenvalues of  $\mathcal{L}_{\beta,U_{\min}}$; iv)
the weak continuity of the principal eigenvalues holds in the sense that
$\lim_{c\rightarrow U_{\min}^{-}}\lambda_{\beta}(c)=\inf E(\mathcal{L}%
_{\beta,U_{\min}})$ and $\lim_{c\rightarrow U_{\max}^{+}}\lambda_{\beta
}(c)=\inf E(\mathcal{L}_{\beta,U_{\max}})$. The last condition is not
required if $\inf E(\mathcal{L}_{\beta,U_{\min}})>0$ and $\liminf
_{c\rightarrow U_{\min}^{-}}\lambda_{\beta}(c)>0$, and similarly for $U_{\max
}$.
\end{remark}

\subsection{Eigenvalue computation}

\label{Eigenvalue computation}

In this subsection we explicitly solve all the eigenvalues of \eqref{eq44}
with $c=U_{\beta},0,1,\pm\infty$. Then we compute the derivative of
$\lambda_{\beta}$ when $\lambda_{\beta}^{-}>0$.

\begin{figure}[th]
\centering
\includegraphics[width=2.9in]{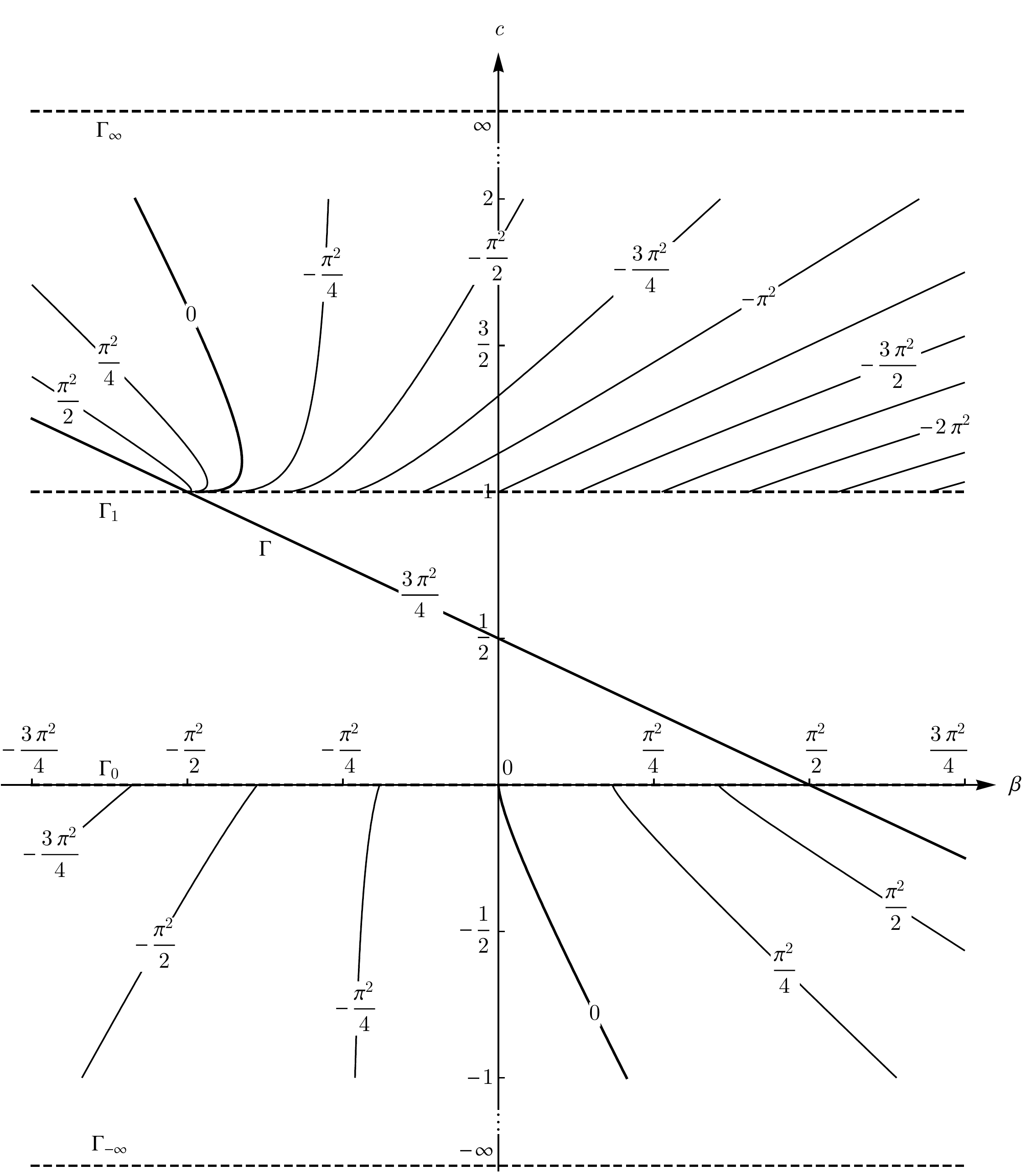} \caption{Contour plot for
$-\lambda_{1} (\beta, c)$}%
\label{fig:contour}%
\end{figure}

To see how $\lambda_{n}$'s behave as a function of $\beta$ and $c$, we first
present the numerical contour plots in Figure $\ref{fig:contour}$ for
different values of $-\lambda_{1}$. The abscissa axis is $\beta$, and the
vertical axis is $c$. The numbers on the contours are values of $-\lambda_{1}%
$. Several important lines divide the $\beta$-$c$ plane into pieces.

\begin{itemize}
\item The slant $\Gamma$ represents the line $c = U _{\beta}$ (regular neutral
wave speed).

\item The dotted line $\Gamma_{0}$ represents $c = 0$ and $\Gamma_{1}$
represents $c = 1$ (singular neutral wave speeds).

\item We denote the open region above $\Gamma$ and $\Gamma_{1}$ by $K _{1}$,
and denote the open region below $\Gamma$ and $\Gamma_{0}$ by $K _{0}$. We add
two dotted lines $\Gamma_{\pm\infty}$ at infinite far away for $c \to\pm
\infty$.
\end{itemize}

We now compute the spectrum of $\mathcal{L}_{\beta,c}$ with $(\beta,c)$ on the
boundary of $K_{0}$ and $K_{1}$.

\begin{proposition}
\label{pro-41} Let $(\beta,c)\in\Gamma\cup\Gamma_{0}\cup\Gamma_{1}\cup
\Gamma_{\pm\infty}$. Then  all the eigenvalues of $\sigma(\mathcal{L}_{\beta,c})$  are listed as follows.

{\rm(1)} For $\beta\in\mathbf{R}$ $(\text{on}\ \Gamma)$, $\lambda_{n} (\beta,U
_{\beta})= \left(  {\frac{n ^{2} }{4 }}- 1 \right)  \pi^{2}$ and $\phi
_{n}^{(\beta,U _{\beta})}(y)=\sin({\frac{n\pi}{2}}(y+1))$, $n\geq1$.

{\rm(2)} For $\beta\in\mathbf{R}$ $(\text{on}\ \Gamma_{\pm\infty})$, $\lambda
_{n}(\beta,\pm\infty)= {\frac{n ^{2} }{4}} \pi^{2}$ and $\phi_{n}^{(\beta
,\pm\infty)}(y)=\sin({\frac{n\pi}{2}}(y+1))$, $n\geq1$.

{\rm(3)} For $\beta< {\frac{9\pi^{2} }{16}}$ $(\text{on}\ \Gamma_{0})$,
$\lambda_{n}(\beta,0) = \left(  \left(  \gamma+ \frac{n-1}{2} \right)  ^{2} -
1 \right)  \pi^{2}$ and $\phi_{n}^{(\beta,0)}(y)=\cos^{2\gamma}({\frac{\pi}%
{2}}y)$ $P_{n-1}(\sin({\frac{\pi}{2}}y))$, $n\geq1$, where $\gamma= \frac
{1}{4}+ \sqrt{-\frac{\beta}{\pi^{2}} + \frac{9}{16}}$ and $P_{n-1}(\cdot)$ is
a polynomial with order $n-1$.

{\rm(4)} For $\beta> -{\frac{9\pi^{2} }{16}}$ $(\text{on}\ \Gamma_{1})$,
$\lambda_{n} (\beta,1)= \left(  \left(  \tilde\gamma-{\frac{1}{2}} +
\lceil\frac{n}{2} \rceil\right)  ^{2} - 1 \right)  \pi^{2}$ and $\phi
_{n}^{(\beta,1)}(y) = \text{sign}(y)$ $|\sin({\frac{\pi}{2}}y)|^{2\tilde
\gamma}$ $P_{n-1}(\cos({\frac{\pi}{2}}y))$ when $n$ is even; $\phi_{n}%
^{(\beta,1)}(y) = $ $|\sin({\frac{\pi}{2}}y)|^{2\tilde\gamma}$ $P_{n}%
(\cos({\frac{\pi}{2}}y)) $ when $n$ is odd, $n\geq1$, where $\tilde\gamma=
\frac{1}{4}+ \sqrt{\frac{\beta}{\pi^{2}} + \frac{9}{16}}$.\\
Here $\phi_{n}^{(\beta,c)}$ is, up to a nonzero constant, the corresponding
eigenfunction of $\lambda_{n} (\beta,c)$. Consequently, the set of all the eigenvalues of $\sigma(\mathcal{L}_{\beta,c})$ is bounded from below. Moreover, we have

{\rm(5)} $\sigma_e(\mathcal{L}_{\beta,0})=\emptyset$ if $\beta\in(-\infty,{5\pi^2\over 16}]$; $\sigma_e(\mathcal{L}_{\beta,1})=\emptyset$ if $\beta\in[-{5\pi^2\over16},+\infty)$.
\end{proposition}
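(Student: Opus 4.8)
The plan is to exploit the special structure of the Sinus flow to write $\mathcal{L}_{\beta,c}$ explicitly on each boundary curve and reduce its eigenvalue problem to a classical P\"oschl--Teller/Gegenbauer problem. Throughout I set $\theta=\pi y/2\in(-\pi/2,\pi/2)$, so that $d^2/dy^2=(\pi^2/4)\,d^2/d\theta^2$, and use the flow identities $U=\cos^2\theta$, $U-1=-\sin^2\theta$ and $\beta-U''=\pi^2(U-U_\beta)$ with $U_\beta=1/2-\beta/\pi^2$. Parts (1)--(2) are then immediate: on $\Gamma$ the quotient $(\beta-U'')/(U-U_\beta)$ equals $\pi^2$, so $\mathcal{L}_{\beta,U_\beta}=-d^2/dy^2-\pi^2$, while as $c\to\pm\infty$ the zeroth order term vanishes and $\mathcal{L}_{\beta,\pm\infty}=-d^2/dy^2$; in both cases one reads off the Dirichlet sine basis $\sin(\tfrac{n\pi}{2}(y+1))$ and the stated eigenvalues.

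For (3) I would write $\mathcal{L}_{\beta,0}=\tfrac{\pi^2}{4}\big[-d^2/d\theta^2+2\gamma(2\gamma-1)/\cos^2\theta\big]-\pi^2$, the coupling constant being fixed by the indicial equation $p(p-1)=2-4\beta/\pi^2$ at $\theta=\pm\pi/2$, whose larger root is $p=2\gamma$ with $\gamma$ as stated. The substitution $\phi=\cos^{2\gamma}\theta\,w(\sin\theta)$ turns the equation into a Gegenbauer equation for $w$; its polynomial solutions $w=P_{n-1}$ are precisely the $L^2$ solutions meeting the Dirichlet condition, and the Gegenbauer eigenparameter yields $\lambda_n=((\gamma+\tfrac{n-1}{2})^2-1)\pi^2$. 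Completeness of the Gegenbauer polynomials in the relevant weighted $L^2$ space guarantees these exhaust the spectrum. Part (4) is the same computation with $U-1=-\sin^2\theta$, giving $\mathcal{L}_{\beta,1}=\tfrac{\pi^2}{4}\big[-d^2/d\theta^2+2\tilde\gamma(2\tilde\gamma-1)/\sin^2\theta\big]-\pi^2$, where $2\tilde\gamma$ is the larger root of $p(p-1)=2+4\beta/\pi^2$ at the interior point $\theta=0$. The new feature is that the singularity is interior, so I would split into even and odd parts in $\theta$, solve on $(0,\pi/2)$ via $\phi=|\sin\theta|^{2\tilde\gamma}w(\cos\theta)$, and reassemble: the even and odd extensions produce the two families distinguished by the factor $\operatorname{sign}(y)$, with eigenvalues interleaving as $((\tilde\gamma-\tfrac12+\lceil n/2\rceil)^2-1)\pi^2$.

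Once (1)--(4) are in place, the boundedness-below clause is automatic since each listed sequence satisfies $\lambda_n\uparrow+\infty$ with a finite minimum $\lambda_1$. For (5) I would run a limit-point/limit-circle analysis at the singular points. The second indicial solution behaves like $|\cdot|^{\,1-2\gamma}$ at $\pm1$ (resp. $|\cdot|^{\,1-2\tilde\gamma}$ at $0$), and this fails to lie in $L^2$ exactly when $2\gamma\ge 3/2$ (resp. $2\tilde\gamma\ge 3/2$), i.e. when $\beta\le 5\pi^2/16$ (resp. $\beta\ge-5\pi^2/16$). In that regime the singular point is limit-point, so $\mathcal{L}_{\beta,c}$ is self-adjoint; since moreover the potential tends to $+\infty$ there, the resolvent is compact and the spectrum is purely discrete, whence $\sigma_e(\mathcal{L}_{\beta,c})=\emptyset$.

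The main obstacle I anticipate is the interior-singularity case $c=1$: correctly decoupling the even and odd sectors across $\theta=0$, matching the $\lceil n/2\rceil$ indexing together with the $\operatorname{sign}(y)$ eigenfunctions, and---in both $c=0$ and $c=1$---arguing completeness so that the explicit polynomial solutions genuinely exhaust the spectrum rather than leaving room for the second, non-$L^2$ indicial branch. The remaining computations (the Gegenbauer reduction and the indicial exponents) are routine once the trigonometric substitutions are set up.
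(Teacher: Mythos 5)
Your plan is correct and all the key computations check out (the coupling constants $2\gamma(2\gamma-1)=2-4\beta/\pi^2$ and $2\tilde\gamma(2\tilde\gamma-1)=2+4\beta/\pi^2$, the limit-point threshold $\gamma,\tilde\gamma\geq 3/4$ matching $\beta\leq 5\pi^2/16$ resp.\ $\beta\geq-5\pi^2/16$, and the doubling of eigenvalues at $c=1$ from the interior singularity), but your route is packaged differently from the paper's. The paper substitutes $z=\cos^2(\pi y/2)$ (resp.\ $\sin^2$) and $\psi=z^{\gamma}G(z)$ to land on Euler's hypergeometric equation; the quantization then comes from the Gauss connection formula at $z=1$ --- for $c=0$ this is the requirement that $\phi$ be smooth across the \emph{regular interior} point $y=0$ (the constant and $|y|$ terms in the expansion cannot both be nonzero, forcing a Gamma-function pole), and for $c=1$ it is ${}_2F_1(\tilde\gamma-r,\tilde\gamma+r;\tfrac12+2\tilde\gamma;1)=0$; exhaustiveness is obtained by explicitly showing the second solution $\psi_2$ is inadmissible. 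Your Gegenbauer/P\"oschl--Teller reduction is the terminating case of the same hypergeometric family, so the eigenvalues drop out of the polynomial ansatz, but you then owe the completeness step: you should record that $w\mapsto\cos^{2\gamma}(\pi y/2)\,w(\sin(\pi y/2))$ is (up to a constant) a unitary from $L^2((-1,1),(1-s^2)^{2\gamma-1/2}ds)$ onto $L^2(-1,1)$, that the explicit eigenfunctions lie in the domain of the particular self-adjoint realization $\mathcal{L}_{\beta,c}$ fixed by the Dirichlet/limit-point conditions in (\ref{defn-L_c}), and that a self-adjoint operator with a complete orthogonal eigenbasis and $\lambda_n\uparrow\infty$ has no further spectrum --- this replaces the paper's direct elimination of the second branch and is where the limit-circle window (e.g.\ $\gamma\in(1/4,3/4)$) requires the most care. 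For (5) your argument (limit point at the singular ends plus $V\to+\infty$ there and $V$ bounded below, hence compact form-domain embedding and discrete spectrum) is a valid and arguably more standard alternative to the paper's oscillation-theoretic characterization of $\inf\sigma_e$ via Remark 10.8.1 of \cite{Zettl2005}; both hinge on the same limit-point verification.
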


\begin{proof}
The computation of  eigenvalues in (1) and (2) is straightforward.
\eqref{eq44} is singular when $c=0$ and $c=1$. We  solve all the
eigenvalues in (3) and (4) by transforming \eqref{eq44} into two
hypergeometric equations as follows.



\noindent\textbf{Case $c=0$.} The equation \eqref{eq44} becomes
\begin{equation}
-\phi^{\prime\prime}-\frac{\beta+\frac{\pi^{2}}{2}\cos(\pi y)}{\cos^{2}\left(
\frac{\pi y}{2}\right)  }\phi=\lambda\phi,\;\;\phi(\pm1)=0.\label{eqn-0}%
\end{equation}
We make a change of variable for $y\in(0,1)$. Set $z=\cos^{2}\left(  \frac{\pi
y}{2}\right)  \in(0,1)$ so $y=\frac{2\arccos\sqrt{z}}{\pi}$. Define
\[
\psi(z):=\phi\left(  \frac{2\arccos\sqrt{z}}{\pi}\right)  =\phi(y).
\]
Then the equation for $\psi$ is
\begin{equation}
\frac{\pi^{2}}{2}\left[  \left(  -2z+2z^{2}\right)  \psi^{\prime\prime
}(z)+(-1+2z)\psi^{\prime}(z)\right]  -\frac{\beta+\frac{\pi^{2}}{2}\left(
2z-1\right)  }{z}\psi(z)=\lambda\psi(z).\label{eqn-z}%
\end{equation}
Suppose $\psi(z)=z^{\gamma}G(z)$ for some $G(z)$ where $\gamma$ is a constant
to be determined. Then the equation for $G(z)$ is
\begin{align}
z(1-z)G^{\prime\prime}(z) &  +\left[  \frac{1}{2}+2\gamma-\left(
1+2\gamma\right)  z\right]  G^{\prime}(z)\nonumber\\
&  -\left[  \frac{-\frac{\beta}{\pi^{2}}+\frac{1}{2}-\gamma\left(
\gamma-\frac{1}{2}\right)  }{z}+\gamma^{2}-\left(  1+\frac{\lambda}{\pi^{2}%
}\right)  \right]  G(z)=0.\label{G}%
\end{align}
Set $\gamma=\frac{1}{4}+\sqrt{-\frac{\beta}{\pi^{2}}+\frac{9}{16}}$, so for
$\beta<{9\pi^{2}\over16}$, $\gamma>\frac{1}{4}$. Denote $r=\sqrt{1+\frac{\lambda}%
{\pi^{2}}}$, which could be purely imaginary, but non-negative. (\ref{G}) is
simplified to
\begin{equation}
z(1-z)G^{\prime\prime}(z)+\left[  \frac{1}{2}+2\gamma-\left(  1+2\gamma
\right)  z\right]  G^{\prime}(z)-\left(  \gamma+r\right)  \left(
\gamma-r\right)  G(z)=0.\label{hypergeo}%
\end{equation}
It is the Euler's hypergeometric differential equation
\[
z(1-z)G^{\prime\prime}(z)+\left[  c-(a+b+1)z\right]  G^{\prime}(z)-abG(z)=0
\]
for $a=\gamma-r,b=\gamma+r,c=\frac{1}{2}+2\gamma$. A nontrivial solution of
(\ref{hypergeo}) is
\begin{align}\label{G1Z}
G_{1}(z)=\ _{2}F_{1}\left(  \gamma-r,\gamma+r;\frac{1}{2}+2\gamma;z\right)  .
\end{align}
Here
\begin{align}
_{2}F_{1}(a,b;c;z) &  =\sum_{n=0}^{\infty}{\frac{(a)_{n}(b)_{n}}{(c)_{n}}%
}{\frac{z^{n}}{n!}},\label{F}\\
(q)_{n}=1\ \text{if}\ n &  =0,\ (q)_{n}=q(q+1)\cdots(q+n-1)\ \text{if}%
\ n>0.\label{qn}%
\end{align}
$_{2}F_{1}$ is analytic in $z\in(0,1)$ if we choose the branch cut to be
$\{z>1\}$. The corresponding solution of (\ref{eqn-z}) is
\begin{align}\label{psi1z}
\psi_{1}(z)=z^{\gamma}\ _{2}F_{1}\left(  \gamma-r,\gamma+r;\frac{1}{2}%
+2\gamma;z\right).
\end{align}
The other linearly independent solution to $\psi_{1}$ is
\[
\psi_{2}(z)=\psi_{1}(z)\int_{{\frac{1}{2}}}^{z}{{{\psi_{1}^{-2}(s)}}}%
e^{-\int_{{\frac{1}{2}}}^{s}{\frac{1}{t}}\cdot{\frac{1-2t}{2(1-t)}}dt}ds.
\]
Direct computation deduces $\lim\limits_{z\rightarrow0^{-}}|\psi
_{2}(z)|=\infty,$ while $\lim\limits_{z\rightarrow0^{-}}\psi_{1}(z)=0$.

Therefore the only possible solution to (\ref{eqn-0}) is
\[
\phi(y)=%
\begin{cases}
c_{1}\psi_{1}\left(  \cos^{2}\left(  \frac{\pi y}{2}\right)  \right)   &
y\in(0,1),\\
c_{2}\psi_{1}\left(  \cos^{2}\left(  \frac{\pi y}{2}\right)  \right)   &
y\in(-1,0).
\end{cases}
\]
Series expansion near $y=0$ gives
\[
\psi_{1}\left(  \cos^{2}\left(  \frac{\pi y}{2}\right)  \right)  =\frac
{\pi^{1/2}\Gamma\left(  2\gamma+\frac{1}{2}\right)  }{\Gamma\left(
\gamma-r+\frac{1}{2}\right)  \Gamma\left(  \gamma+r+\frac{1}{2}\right)
}-\frac{\pi^{3/2}\Gamma\left(  2\gamma+\frac{1}{2}\right)  }{\Gamma\left(
\gamma-r\right)  \Gamma\left(  \gamma+r\right)  }|y|+O\left(  y^{2}\right)  .
\]
Since (\ref{eqn-0}) is regular and $\phi$ is smooth at $y=0$, we infer that
the constant term and $|y|$ term cannot be non-zero simultaneously. Note that
$\gamma>\frac{1}{4}$ is real, and $r$ is non-negative or purely imaginary.
Since $\Gamma(z)$ only has poles at non-positive integers, either
$\gamma-r+\frac{1}{2}$ or $\gamma-r$ equals to a non-positive integer, so
$r=\gamma+\frac{n-1}{2}>\frac{1}{4}$ for a positive integer $n$. Therefore,
$\lambda_{n}(\beta,0)=\pi^{2}(r^{2}-1)=\pi^{2}\left(  \left(  \gamma
+\frac{n-1}{2}\right)  ^{2}-1\right)  $. Inserting (\ref{F})--(\ref{qn}) into
(\ref{G1Z}), direct computation gives the $\phi_{n}^{(\beta,0)}%
=\cos^{2\gamma}({\frac{\pi}{2}}y)P_{n-1}(\sin({\frac{\pi}{2}}y))$.


\noindent\textbf{Case $c=1$.} \eqref{eq44} is
\begin{align}\label{c=1explicit}
-\phi^{\prime\prime}-\frac{\beta+\frac{\pi^{2}}{2}\cos(\pi y)}{-\sin
^{2}\left(  \frac{\pi y}{2}\right)  }\phi=\lambda\phi,\;\;\phi(\pm1)=0.
\end{align}
We make a change of variable for $y\in(0,1)$. Set $z=\sin^{2}\left(  \frac{\pi
y}{2}\right)  \in(0,1)$ so $y=\frac{2\arcsin\sqrt{z}}{\pi}$. Define
\[
\psi(z):=\phi\left(  \frac{2\arcsin\sqrt{z}}{\pi}\right)  =\phi(y).
\]
Then the equation for $\psi$ is
\[
\frac{\pi^{2}}{2}\left[  \left(  -2z+2z^{2}\right)  \psi^{\prime\prime
}(z)+(-1+2z)\psi^{\prime}(z)\right]  -\frac{-\beta+\frac{\pi^{2}}{2}\left(
2z-1\right)  }{z}\psi(z)=\lambda\psi(z).
\]
This is almost the same as equation (\ref{eqn-z}), only with $-\beta$ in the
position of $\beta$. For $-\beta<{9\pi^{2}\over16}$, we set $\tilde{\gamma}=\frac
{1}{4}+\sqrt{\frac{\beta}{\pi^{2}}+\frac{9}{16}}>\frac{1}{4}$ and again denote
$r=\sqrt{1+\frac{\lambda}{\pi^{2}}}$. The solution to (\ref{c=1explicit}) is
\[
\phi(y)=%
\begin{cases}
c_{1}\psi_{1}\left(  \sin^{2}\left(  \frac{\pi y}{2}\right)  \right)  , &
y\in(0,1),\\
c_{2}\psi_{1}\left(  \sin^{2}\left(  \frac{\pi y}{2}\right)  \right)  , &
y\in(-1,0),
\end{cases}
\]
where $\psi_{1}(z)$ is defined as (\ref{psi1z}) with $\gamma$
replaced by $\tilde{\gamma}$.

To satisfy the boundary condition, $\phi(1) = c _{1} \psi(1) = 0$, plugging in
$y = 1$ we have
\begin{align*}
\psi_{1} \left(  \sin^{2} \left(  \frac{\pi}{2}\right)  \right)  = \psi_{1}
(1) = \ _{2} F _{1} \left(  \tilde\gamma- r, \tilde\gamma+ r; \frac{1}{2} +
2\tilde\gamma; 1 \right)  = \frac{\pi^{1/2} \Gamma\left(  2 \tilde\gamma
+\frac{1}{2}\right)  }{\Gamma\left(  \tilde\gamma-r+\frac{1}{2}\right)
\Gamma\left(  \tilde\gamma+r+\frac{1}{2}\right)  }.
\end{align*}
Therefore non-trivial eigenfunction exists if and only if $\tilde\gamma- r +
\frac{1}{2}$ equals to a non-positive integer $-m$, so $r = \tilde\gamma+
\frac{1}{2} + m > {3\over4}$ for a non-negative integer $m$. However, since $\psi_{1}
\left(  \sin^{2} \left(  \frac{\pi y}{2}\right)  \right)  = O(y ^{2
\tilde\gamma}) = o(y^{1\over2})$ as $y \to0$, there are two linearly independent
eigenfunctions in $H^{1}$, that is, $\phi(y) = \psi_{1} \left(  \sin^{2}
\left(  \frac{\pi y}{2}\right)  \right)  $ and $\phi(y) = \mathrm{sign} (y)
\psi_{1} \left(  \sin^{2} \left(  \frac{\pi y}{2}\right)  \right)  $.
Therefore, $\lambda_{n} (\beta,1)= \pi^{2} (r ^{2} - 1) = \pi^{2} \left(
\left(  \tilde\gamma- \frac{1}{2} + \lceil\frac{n}{2} \rceil\right)  ^{2} - 1
\right)  $.

Finally, we show that $\sigma_{e}(\mathcal{L}_{\beta,0})=\emptyset$ if $\beta\in(-\infty,{{5\pi^{2}}\over{16}%
}]$. Since the two
linearly independent solutions of (\ref{eqn-0}) with $\lambda_{1}(\beta
,0)=\pi^{2}\left(  \gamma^{2}-1\right)  $ are
\begin{equation}
\phi_{0}(y):=\phi_{1}^{(\beta,0)}(y)=\cos^{2\gamma}\left(  {{\pi y}/{2}%
}\right)  ,\;\;\phi_{1}(y)=\cos^{2\gamma}\left(  {{\pi y}/{2}}\right)
\int_{0}^{y}{{\cos^{-4\gamma}({{\pi s}/{2}})}}ds.\label{defn-phi-0-1}%
\end{equation}
Then $\phi_{0}\in L^{2}(-1,1)$, and $\phi_{1}\notin L^{2}(-1,1)$. The eigenvalue problem (\ref{eqn-0}) is
in the limit point cases at $\pm1$.
In the limit point cases, we get by Remark 10.8.1 in \cite{Zettl2005} that the
starting point of the essential spectrum, i.e. $\sigma_{0}=\inf\sigma
_{e}(\mathcal{L}_{\beta,0})$, is exactly the oscillation point of
(\ref{eqn-0}). More precisely, (\ref{eqn-0}) is non-oscillatory for
$\lambda<\sigma_{0}$ and (\ref{eqn-0}) is oscillatory for $\lambda>\sigma_{0}%
$. Since $\lambda_{n}(\beta,0)\rightarrow\infty$ as $n\rightarrow\infty$ and
$\phi_{n}^{(\beta,0)}=\cos^{2\gamma}({\frac{\pi}{2}}y)P_{n-1}(\sin({\frac{\pi
}{2}}y))$ has finite zeros on $(-1,1)$, we get $\sigma_{0}=\infty$ and thus
$\sigma_{e}(\mathcal{L}_{\beta,0})=\emptyset$ for $\beta\in(-\infty,{{5\pi
^{2}}\over{16}}].$ The proof of $\sigma_e(\mathcal{L}_{\beta,1})=\emptyset$ when $\beta\in[-{5\pi^2\over16},+\infty)$ is similar by
considering the half interval $y \in(0,1)$.
\end{proof}


Proposition \ref{pro-41} indicates that for $c\in\lbrack0,1]$, there exist
exactly two families of neutral modes. One is a family of regular neutral
modes:
\[
c=U_{\beta}={{1}/{2}}-{{\beta}/{\pi^{2}}},\;\alpha={{\sqrt{3}\pi}/{2}}%
,\;\beta\in\mathbf{R},\;\phi(y)=\cos({{\pi y}/{2}}),
\]
the other is a curve of singular neutral modes (SNM curve) for $c=0$ and
$\beta\in\left(  0,{{\pi^{2}}/{2}}\right)  $:
\begin{equation}
c=0,\;\alpha=\pi\sqrt{1-\gamma^{2}},\;\beta=\pi^{2}(-\gamma^{2}+{{\gamma}/{2}%
}+{{1}/{2}}),\;\phi_{0}(y)=\cos^{2\gamma}\left(  {{\pi y}/{2}}\right)
,\label{37}%
\end{equation}
with $\gamma\in\left(  {{1}/{2}},1\right)  $ defined in Proposition
\ref{pro-41}. They were also found by Kuo \cite{Kuo1974}.

\begin{remark}
\label{Kuo1} Based on numerical results, Kuo in \cite{Kuo1974} claimed that
the above SNM curve (\ref{37}) is the lower stability boundary for $\beta
\in\left(  0,{{\pi^{2}}/{2}}\right)  $. More precisely, the Sinus flow is
linearly unstable when $\alpha\in(\pi\sqrt{1-\gamma^{2}},\sqrt{3}\pi/2)$ and
it is linearly stable when $\alpha\in(0,\pi\sqrt{1-\gamma^{2}}]$. The same
stability picture for the Sinus flow also appeared in \cite{Pedlosky1987}. By
Lemma \ref{le26}, any neutral limiting solution must lie in $H^{2}(-1,1)$.
However, $\phi_{0},\phi_{1}\notin H^{2}(-1,1)$ and $\lim_{y\rightarrow\pm
1}|\phi_{1}(y)|=\infty$ when $\beta\in\lbrack{{5\pi^{2}}/{16}},{{\pi^{2}}/{2}%
})$, where $\phi_{0},\phi_{1}$ are defined in (\ref{defn-phi-0-1}). This
implies that (\ref{37}) cannot be a neutral limiting mode, and therefore,
Kuo's claim on the stability boundary is incorrect, at least for the case
$\beta\in\lbrack{{5\pi^{2}}/{16}},{{\pi^{2}}/{2}})$.
\end{remark}

Recall that $\lambda_{\beta}( c)=\lambda_{1}({\beta},c)$ and $\lambda_{\beta}
^{-}( c)= \max\{-\lambda_{\beta}( c), 0\}$. By Proposition \ref{pro-41}, we
get the value of $\lambda_{\beta}^{-} $ on the boundaries.

\begin{corollary}
\label{value-lambda-} The value of $\lambda_{\beta}^{-}$ at the points
$c=0,1,\pm\infty$ is given as follows.

\begin{enumerate}
\item $\lambda_{\beta}^{-} (1) =0$ for $\beta>-\frac{\pi^{2}}{2}$, and
$\lambda_{\beta}^{-} (\pm\infty) = 0$ for $\beta\in\mathbf{R}$.

\item $\lambda_{\beta}^{-} (0) = \pi^{2} \left(  1-\left(  \sqrt{-\frac{\beta
}{\pi^{2}} + \frac{9}{16}} +\frac{1}{4} \right)  ^{2} \right)  > 0$ for
$0<\beta<\frac{\pi^{2}}{2}$, while $\lambda_{\beta}^{-} (0) = 0$ for
$\beta\leq0$.
\end{enumerate}
\end{corollary}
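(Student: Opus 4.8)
The plan is to read off everything directly from Proposition \ref{pro-41}, since $\lambda_\beta(c)=\lambda_1(\beta,c)$ and the corollary only concerns the first eigenvalue at the four boundary locations $c=0,1,\pm\infty$. In each case I would substitute $n=1$ into the explicit formula for $\lambda_n(\beta,c)$ provided by that proposition, determine the sign of the resulting quantity, and then take the negative part $\lambda_\beta^-(c)=\max\{-\lambda_\beta(c),0\}$. No new analysis is needed beyond Proposition \ref{pro-41}; the content is entirely a sign/monotonicity bookkeeping exercise in the single parameter $\beta$.

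For the two easy locations: at $c=\pm\infty$, Proposition \ref{pro-41}(2) gives $\lambda_1(\beta,\pm\infty)=\pi^2/4>0$ for every $\beta\in\mathbf{R}$, so immediately $\lambda_\beta^-(\pm\infty)=0$. At $c=1$, Proposition \ref{pro-41}(4) applies whenever $\beta>-9\pi^2/16$, and with $n=1$ (so $\lceil n/2\rceil=1$) it yields
\[
\lambda_1(\beta,1)=\pi^2\!\left(\left(\tilde\gamma+\tfrac12\right)^2-1\right),\qquad \tilde\gamma=\tfrac14+\sqrt{\tfrac{\beta}{\pi^2}+\tfrac{9}{16}}.
\]
I would then observe that $\left(\tilde\gamma+\tfrac12\right)^2-1\ge 0$ is equivalent to $\tilde\gamma\ge\tfrac12$, which in turn is equivalent to $\sqrt{\beta/\pi^2+9/16}\ge\tfrac14$, i.e. $\beta\ge-\pi^2/2$. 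Hence for $\beta>-\pi^2/2$ one has $\lambda_1(\beta,1)\ge 0$ and therefore $\lambda_\beta^-(1)=0$ (with equality $\lambda_1=0$ exactly at the endpoint $\beta=-\pi^2/2$, where $\tilde\gamma=\tfrac12$).

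The one case requiring a genuine (if still short) sign discussion is $c=0$. Proposition \ref{pro-41}(3) is valid for $\beta<9\pi^2/16$, in particular for all $\beta<\pi^2/2$, and with $n=1$ gives $\lambda_1(\beta,0)=\pi^2(\gamma^2-1)$ with $\gamma=\tfrac14+\sqrt{-\beta/\pi^2+9/16}$. Since $\gamma>\tfrac14>0$ is real, the negative part is $\lambda_\beta^-(0)=\pi^2\max\{1-\gamma^2,0\}$, so everything hinges on comparing $\gamma$ with $1$. I would note that $\gamma<1$ is equivalent to $\sqrt{-\beta/\pi^2+9/16}<\tfrac34$, i.e. to $-\beta/\pi^2<0$, i.e. to $\beta>0$; conversely $\beta\le 0$ forces $\gamma\ge\tfrac14+\tfrac34=1$. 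Thus for $0<\beta<\pi^2/2$ we get $\lambda_\beta^-(0)=\pi^2(1-\gamma^2)>0$, which is exactly the stated closed-form expression once $\gamma$ is written out, while for $\beta\le 0$ we get $\lambda_\beta^-(0)=0$.

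The main point to handle carefully is not any analytic difficulty but the matching of parameter ranges and boundary behaviour: I must check that the hypotheses $\beta<9\pi^2/16$ (for $c=0$) and $\beta>-9\pi^2/16$ (for $c=1$) of Proposition \ref{pro-41} indeed cover the ranges asserted in the corollary, and that the strict/non-strict inequalities are placed correctly at the threshold values $\beta=0$ and $\beta=-\pi^2/2$. Since $\gamma$ is strictly decreasing and $\tilde\gamma$ strictly increasing in $\beta$, the sign changes are monotone and occur precisely at these thresholds, so the boundary cases are pinned down unambiguously. This completes the verification of both items of the corollary.
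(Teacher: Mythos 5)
Your proposal is correct and is exactly the paper's route: the paper gives no separate argument for this corollary beyond the remark that it follows from Proposition \ref{pro-41}, and your substitution of $n=1$ into parts (2), (3), (4) of that proposition together with the sign analysis of $\gamma$ versus $1$ and $\tilde\gamma$ versus $\tfrac12$ is the intended bookkeeping. The parameter-range checks ($\beta\le 0<\tfrac{9\pi^2}{16}$ for $c=0$ and $-\tfrac{\pi^2}{2}>-\tfrac{9\pi^2}{16}$ for $c=1$) are handled correctly.
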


Since $\lambda_{\beta}^{-} (0) > 0$ for $\beta\in(0,\frac{\pi^{2}}{2}) $, we
need to compute the left derivative of $\lambda_{\beta}$ at $0$ according to
Remark \ref{rm-42}. Here we assume the left continuity of $\lambda_{\beta}$ at
$0$, which will be verified in Proposition \ref{prop-singular limit}.

\begin{proposition}
\label{derivative}
For $\beta\in(0,\frac{\pi^{2}}{2})$, we have
\[
{\frac{\partial\lambda_{1}}{\partial c}}(\beta,0^{-})=\lambda_{\beta}^{\prime}(0^-)=\frac{\pi^{2}\gamma(\gamma-1)\left(  \gamma^{2}-\frac{3}%
{4}\right)  }{\left(  \gamma-\frac{1}{4}\right)  \left(  \gamma-\frac{3}%
{4}\right)  }%
\begin{cases}
\leq0 & \frac{\sqrt{3}}{2}\leq\gamma<1\Leftrightarrow0<\beta\leq\frac{\sqrt
{3}-1}{4}\pi^{2},\\
>0 & \frac{3}{4}<\gamma<\frac{\sqrt{3}}{2}\Leftrightarrow\frac{\sqrt{3}-1}%
{4}\pi^{2}<\beta<\frac{5}{16}\pi^{2},
\end{cases}
\]
and $\lambda_{\beta}^{\prime}(0^-)=+\infty$ when $\frac{1}{2}<\gamma\leq\frac
{3}{4}\Leftrightarrow\frac{{}}{{}}\frac{5}{16}\pi^{2}\leq\beta<\frac{1}{2}%
\pi^{2}$, where $\gamma$ is defined in Proposition \ref{pro-41}.
\end{proposition}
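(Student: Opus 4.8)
The plan is to obtain $\lambda_\beta'(0^-)$ as the limit, as $c\uparrow 0$, of the regular derivative formula \eqref{223}, and then to evaluate the resulting integral explicitly using the eigenfunction found in Proposition \ref{pro-41}. For $c\in(-\infty,0)$ the operator $\mathcal{L}_{\beta,c}$ is regular, so Lemma \ref{le28} gives
\[
\lambda_\beta'(c)=-\int_{-1}^1\frac{\beta-U''}{(U-c)^2}\,|\widehat\phi_c|^2\,dy,
\]
where $\widehat\phi_c=\phi_1^{(\beta,c)}$ is the first eigenfunction normalized in $L^2(-1,1)$. Granting the left continuity of $\lambda_\beta$ at $0$ (Proposition \ref{prop-singular limit}), together with the convergence $\widehat\phi_c\to\widehat\phi_0$ coming from the same singular-limit analysis, where $\widehat\phi_0$ is the normalization of the explicit $L^2$ solution $\phi_1^{(\beta,0)}(y)=\cos^{2\gamma}(\tfrac{\pi}{2}y)$ of Proposition \ref{pro-41}, I would pass to the limit to reduce the problem to computing
\[
\lambda_\beta'(0^-)=-\frac{\displaystyle\int_{-1}^1\frac{\beta-U''}{U^2}\cos^{4\gamma}(\tfrac{\pi}{2}y)\,dy}{\displaystyle\int_{-1}^1\cos^{4\gamma}(\tfrac{\pi}{2}y)\,dy}.
\]
The identification of this limit with the one-sided derivative is via the mean value theorem: since $\lambda_\beta$ is continuous up to $0$ and $\lambda_\beta'$ has a limit in $\mathbf R\cup\{+\infty\}$ as $c\uparrow0$, the left derivative exists and equals that limit.

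For the evaluation, recall $U=\cos^2(\tfrac{\pi}{2}y)$ and $\beta-U''=\beta+\tfrac{\pi^2}{2}\cos(\pi y)=(\beta-\tfrac{\pi^2}{2})+\pi^2\cos^2(\tfrac{\pi}{2}y)$, so the numerator integrand equals $(\beta-\tfrac{\pi^2}{2})\cos^{4\gamma-4}(\tfrac{\pi}{2}y)+\pi^2\cos^{4\gamma-2}(\tfrac{\pi}{2}y)$. Each of the three integrals is a Wallis integral: with $t=\tfrac{\pi}{2}y$ one has $\int_{-1}^1\cos^{p}(\tfrac{\pi}{2}y)\,dy=\tfrac{2}{\sqrt\pi}\,\Gamma(\tfrac{p+1}{2})/\Gamma(\tfrac{p}{2}+1)$, valid for $p>-1$. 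The first integral has $p=4\gamma-4$ and is finite precisely when $\gamma>\tfrac34$; using $\beta-\tfrac{\pi^2}{2}=-\pi^2\gamma(\gamma-\tfrac12)$ and telescoping the Gamma ratios by $\Gamma(x+1)=x\Gamma(x)$, the Beta factors collapse and, after cancelling the common factor $(\gamma-\tfrac12)$, I expect to land exactly on
\[
\lambda_\beta'(0^-)=\frac{\pi^2\gamma(\gamma-1)\big(\gamma^2-\tfrac34\big)}{(\gamma-\tfrac14)(\gamma-\tfrac34)}.
\]
When $\gamma\le\tfrac34$ (that is, $\tfrac{5}{16}\pi^2\le\beta<\tfrac{\pi^2}{2}$) the numerator integral diverges; since $\beta-U''=\beta-\tfrac{\pi^2}{2}<0$ near $y=\pm1$ while $\cos^{4\gamma-4}(\tfrac{\pi}{2}y)$ is non-integrable there, the integral tends to $-\infty$, whence $\lambda_\beta'(0^-)=+\infty$. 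To make this rigorous I would split $[-1,1]$ into a fixed interior piece, where dominated convergence applies, and two endpoint neighbourhoods where $-(\beta-U'')/(U-c)^2$ is positive and increasing as $c\uparrow0$, so that Fatou's lemma forces the limit to be $+\infty$.

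Finally, the sign for $\gamma\in(\tfrac34,1)$ is read off from the displayed rational function: the denominator $(\gamma-\tfrac14)(\gamma-\tfrac34)$ and the factor $\gamma$ are positive, $\gamma-1<0$, and $\gamma^2-\tfrac34$ vanishes at $\gamma=\tfrac{\sqrt3}{2}$, being negative for $\gamma<\tfrac{\sqrt3}{2}$ and positive for $\gamma>\tfrac{\sqrt3}{2}$. Hence $\lambda_\beta'(0^-)>0$ on $(\tfrac34,\tfrac{\sqrt3}{2})$ and $\lambda_\beta'(0^-)\le0$ on $[\tfrac{\sqrt3}{2},1)$; translating through $\gamma=\tfrac14+\sqrt{9/16-\beta/\pi^2}$ produces the thresholds $\beta=\tfrac{\sqrt3-1}{4}\pi^2$ and $\beta=\tfrac{5}{16}\pi^2$ in the statement. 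The main obstacle is the limit step, namely justifying that $\lim_{c\uparrow0}\lambda_\beta'(c)$ equals the integral against the singular-endpoint eigenfunction $\cos^{2\gamma}(\tfrac{\pi}{2}y)$ \emph{uniformly enough near $y=\pm1$} to drive the divergent case to $+\infty$; this rests on the quantitative convergence of the eigenpairs $(\lambda_\beta(c),\widehat\phi_c)$ as $c\uparrow0$ supplied by the singular-limit estimates of Proposition \ref{prop-singular limit}, everything else being the routine Gamma-function bookkeeping indicated above.
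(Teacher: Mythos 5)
Your proposal is correct and follows essentially the same route as the paper: apply the derivative formula \eqref{223} with the explicit endpoint eigenfunction $\cos^{2\gamma}(\tfrac{\pi}{2}y)$ and evaluate the resulting Wallis integrals $C_{4\gamma-4},C_{4\gamma-2},C_{4\gamma}$ by Gamma-function telescoping, with the $+\infty$ case coming from the divergence of $C_{4\gamma-4}$ for $\gamma\le\tfrac34$. The only difference is that you make explicit the passage $\lim_{c\uparrow0}\lambda_\beta'(c)=\lambda_\beta'(0^-)$ (via the mean value theorem, Fatou near the endpoints, and the eigenpair convergence from Proposition \ref{prop-singular limit}), a step the paper's proof leaves implicit by directly evaluating \eqref{223} at $c=0$.
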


\begin{proof}

The eigenfunction for $\lambda_{\beta}(0)$ is $\phi(y) = \frac{1}{\sqrt{C
_{4\gamma}}} \cos^{2\gamma} \left(  \frac{\pi y}{2} \right)  $, where
\begin{align*}
C _{s} = \int_{-1} ^{1} \cos^{s} \left(  \frac{\pi y}{2} \right)  dy =
\begin{cases}
\frac{2}{\sqrt{\pi}} \frac{\Gamma\left(  \frac{s + 1}{2} \right)  }%
{\Gamma\left(  \frac{s}{2} + 1 \right)  } & s > -1,\\
+\infty & s \leq-1,
\end{cases}
\end{align*}
by Beta function $B (p, q) = \frac{\Gamma(p)\Gamma(q)}{\Gamma(p + q)} = 2
\int_{0} ^{\frac{\pi}{2}} \sin^{2p - 1} (x) \cos^{2q - 1}( x ) \;\mathrm{d}
x$, $p, q > 0$. Therefore
\begin{align*}
\lambda_{\beta}^{\prime}(0^-) = -\int_{-1}^{1} {\frac{\beta-U^{\prime\prime}}{U
^{2}}} {{\phi}^{2}}\ dy  &  = -\frac{1}{C _{4\gamma}}\int_{-1}^{1} {\left(
\beta-{\pi^{2}}\left(  \cos^{2} \left(  \frac{\pi y}{2}\right)  - \frac{1}%
{2}\right)  \right)  } \cos^{4\gamma-4} \left(  \frac{\pi y}{2} \right)  dy\\
&  = \frac{1}{C _{4\gamma}} \left[  \left(  -\beta+ \frac{\pi^{2}}{2} \right)
C _{4\gamma- 4} - \pi^{2} C _{4\gamma- 2}\right]  .
\end{align*}
If ${\frac{1}{2}}<\gamma\leq\frac{3}{4}$, then $C _{4\gamma- 4} = +\infty$ and
$\lambda_{\beta}^{\prime}(0^-)= +\infty$. If $\gamma> \frac{3}{4}$, then
\begin{align*}
C _{4\gamma} = \frac{2}{\sqrt{\pi}} \frac{\Gamma\left(  2\gamma+ \frac{1}{2}
\right)  }{\Gamma\left(  2\gamma+ 1 \right)  } = \frac{2}{\sqrt{\pi}}
\frac{\Gamma\left(  2\gamma- \frac{1}{2} \right)  }{\Gamma\left(
2\gamma\right)  } \frac{2\gamma- \frac{1}{2}}{2\gamma} = \frac{2\gamma-
\frac{1}{2}}{2\gamma} C _{4\gamma- 2} = \frac{2\gamma- \frac{3}{2}}{2\gamma-
1} \frac{2\gamma- \frac{1}{2}}{2\gamma} C _{4\gamma- 4}.
\end{align*}
Therefore
\begin{align}
\lambda_{\beta}^{\prime}(0^-)= \frac{(2\gamma- 1)(2\gamma)}{\left(  2\gamma-
\frac{1}{2}\right)  \left(  2\gamma- \frac{3}{2}\right)  } \pi^{2} \left(
\gamma^{2} - \frac{1}{2}\gamma- \frac{2\gamma- \frac{3}{2}}{2\gamma-1}
\right)  = \frac{\pi^{2} \gamma(\gamma- 1)\left(  \gamma^{2} - \frac{3}%
{4}\right)  }{\left(  \gamma- \frac{1}{4}\right)  \left(  \gamma- \frac{3}%
{4}\right)  } .\nonumber
\end{align}

\end{proof}

\begin{remark}
\label{Kuo2} Proposition \ref{derivative} may seem to be counter-intuitive, as
Figure \ref{fig:contour} indicates that the partial derivative $\lambda_{\beta}^{\prime}(0^-)={\frac{\partial\lambda_{1}}{\partial c}}(\beta,0^{-})$
should be negative for all $\beta>0$, different from what we claimed in the
last three plots of Figure \ref{fig:plot-thm42}. However, if we zoom in near
the $\beta$-axis, numerical results will be consistent with Proposition
\ref{derivative}. Near $c=0$ and $\beta=0.1\pi^{2}$, $0.25\pi^{2}$ and
$0.4\pi^{2}$, we have the following contour plots. \begin{figure}[th]
\centering
\includegraphics[scale=0.4]{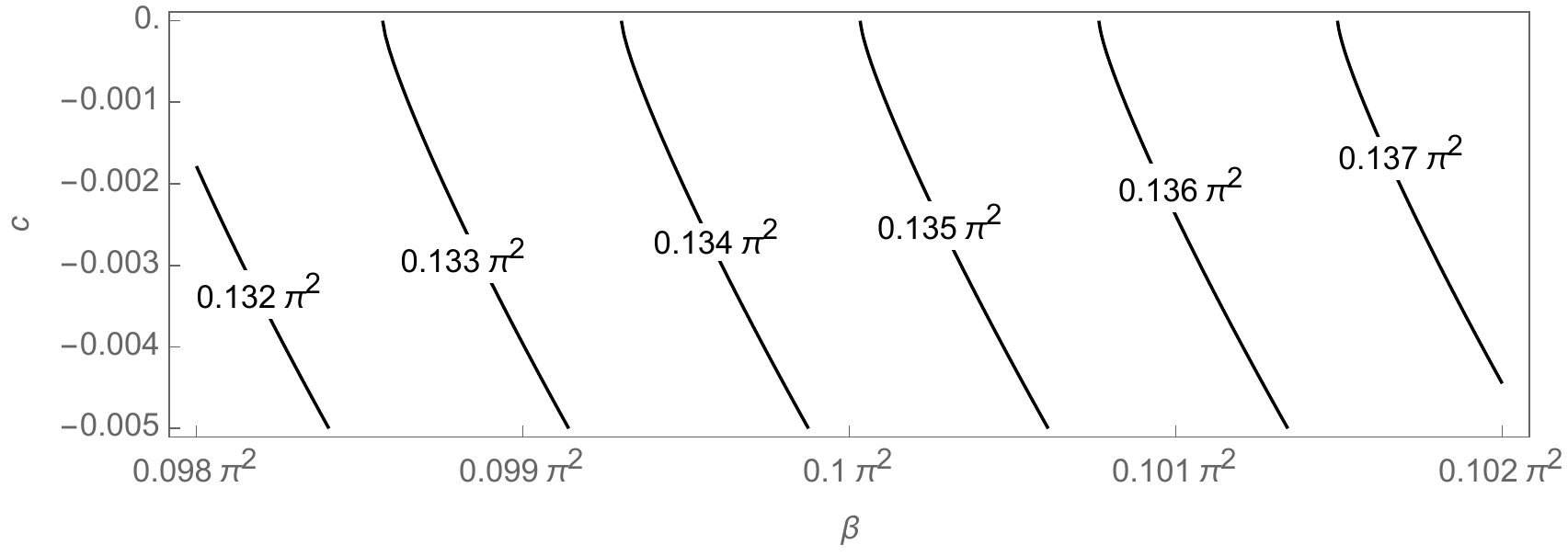} \includegraphics[scale=0.4]{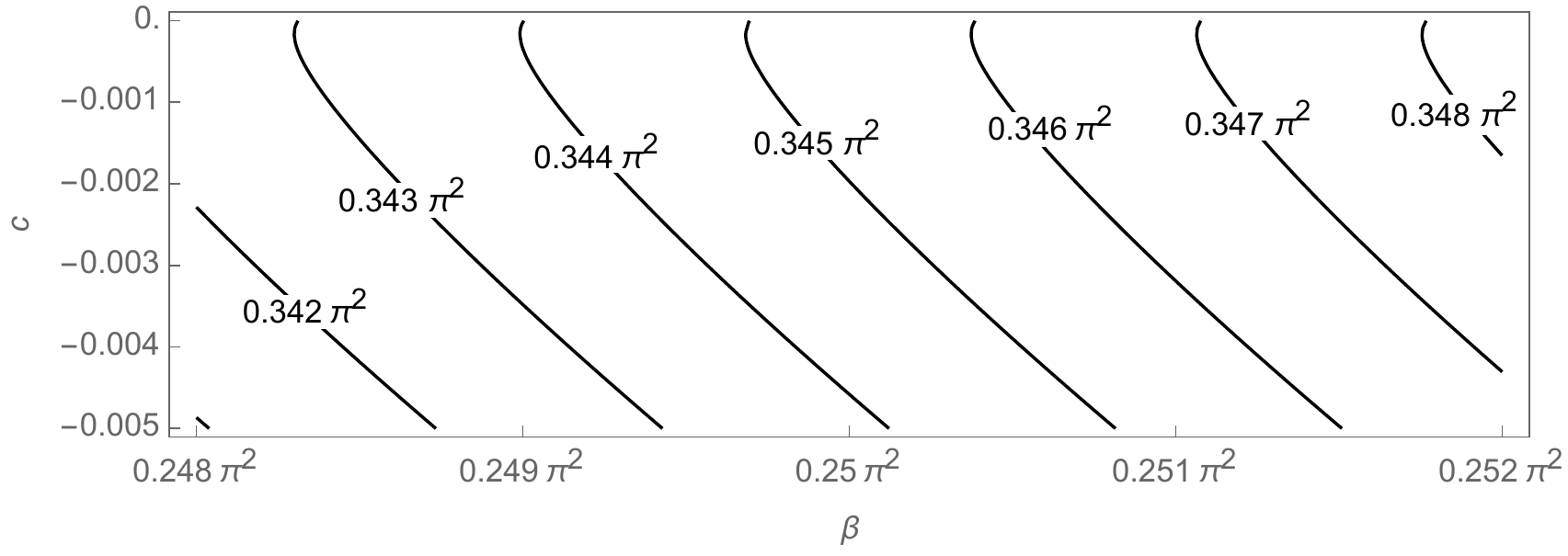}
\includegraphics[scale=0.4]{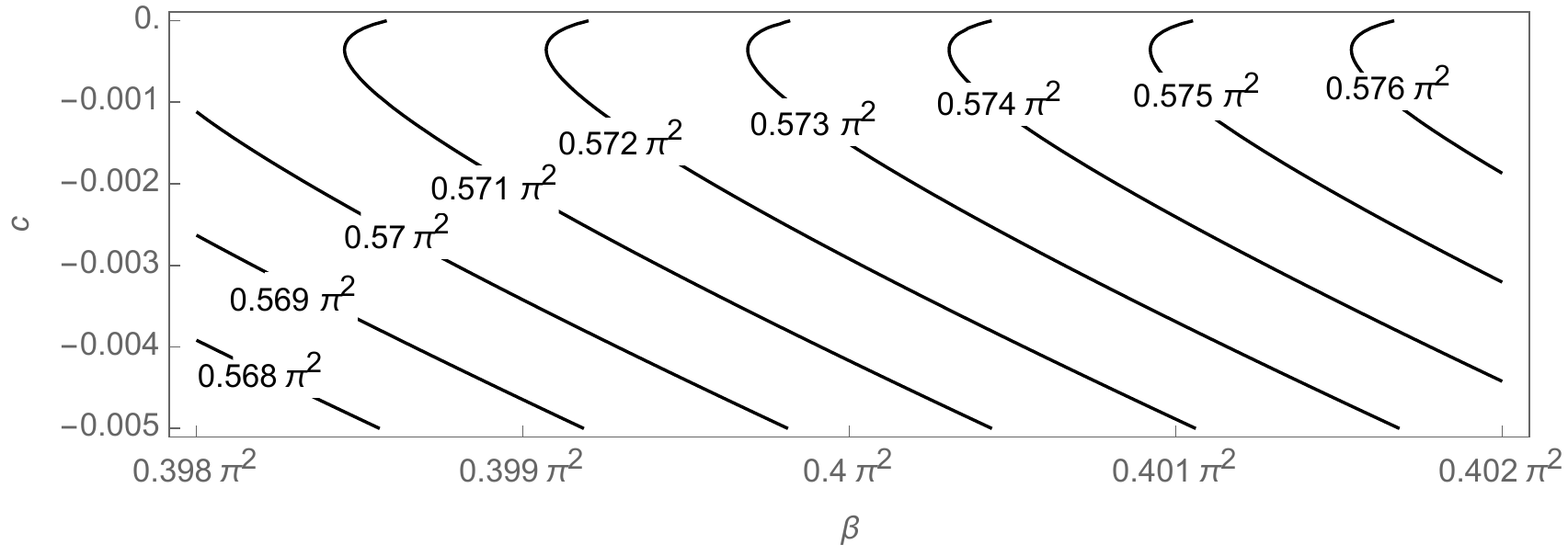}\caption{Contour plot for $-\lambda
_{1}(\beta,c)$ near $c=0$}%
\end{figure}

\noindent It can be seen that for $\beta$ near $0.25\pi^{2}\in\left(
\frac{\sqrt{3}-1}{4}\pi^{2},\frac{5}{16}\pi^{2}\right)  $, $\frac
{\partial\lambda_{1}}{\partial c}$ changes sign when $c$ is very close to $0$.
For $\beta$ near $0.4\pi^{2}\in\left(  \frac{5}{16}\pi^{2},\frac{1}{2}\pi
^{2}\right)  $, contours are tangent to $\beta$ axis, which indicates
$\frac{\partial\lambda_{1}}{\partial c}=\infty$ as $c\rightarrow0^{-}$.
Therefore, for $\beta>\frac{\sqrt{3}-1}{4}\pi^{2}$, $\lambda_{\beta}^{-}$ does
not attain its supremum at $c=0$, but at some $c^{\ast}<0$ which is really
close to $0$ (with about a distance smaller than $0.001$ based on the
observation from these plots). This may be the reason why Kuo failed to find
the correct lower stability boundary (\cite{Kuo1974}) for $\beta>\frac
{\sqrt{3}-1}{4}\pi^{2}$.

From the theoretical perspective, we compute the quadratic form $\langle
L_{\alpha}\omega_{\alpha},\omega_{\alpha}\rangle$ for the neutral modes
(\ref{37}), where $\omega_{\alpha}=-\phi^{\prime\prime}+\alpha^{2}\phi$. By
Lemma \ref{le-L-quadratic form} (i), direct computation shows
\[
\langle L_{\alpha}\omega_{\alpha},\omega_{\alpha}\rangle=-U_{\beta}\int%
_{-1}^{1}{\frac{\beta-U^{\prime\prime}}{U^{2}}}|\phi_{0}|^{2}dy=U_{\beta
}\lambda_{\beta}^{\prime}(0^-)
\begin{cases}
\leq0 & 0 < \beta\leq\frac{\sqrt{3} - 1}{4} \pi^{2},\\
> 0 & \frac{\sqrt{3} - 1}{4} \pi^{2} < \beta< \frac{5}{16} \pi^{2},\\
= +\infty & \frac{5}{16} \pi^{2} \leq\beta< \frac{1}{2} \pi^{2}.
\end{cases}
\]
Hence, Lemma \ref{le-L-quadratic form} (iii) also ensures that (\ref{37}) is
not a neutral limiting mode when $\beta\in({{(\sqrt{3}-1)\pi^{2}}/{4}}%
,{{\pi^{2}}/{2}})$. In particular, for $\beta\in({{(\sqrt{3}-1)\pi^{2}}/{4}%
},{5\pi^{2}}/{16})$, the SNM curve (\ref{37}) is not part of the stability
boundary even if the singular neutral modes are in $H^{2}$.
\end{remark}

\subsection{Spectrum Continuity}

\label{subsection-spectrum-continuity} In this subsection, we show the
continuity of the $n$-th eigenvalue of \eqref{eq44} up to the boundaries
$c=0,1,\pm\infty$.

First, we consider $c = \pm\infty$.

\begin{lemma}
\label{lem-cinfty} Let $\beta\in\mathbf{R}$. $\lim_{c \to\pm\infty}
\lambda_{n}(\beta,c) = n ^{2} \pi^{2} / 4$.
\end{lemma}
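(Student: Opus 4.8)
The plan is to exploit the fact that, as $c\to\pm\infty$, the coefficient $c$ stays far from ${\rm Ran}\,(U)=[0,1]$, so the potential in $\mathcal{L}_{\beta,c}$ simply vanishes uniformly and no boundary singularity arises. Thus $\mathcal{L}_{\beta,c}$ converges, as a bounded perturbation, to the free Dirichlet operator $-\frac{d^2}{dy^2}$ on $[-1,1]$, whose $n$-th eigenvalue is $n^2\pi^2/4$. Accordingly I would reduce the claim to a uniform estimate on the potential together with the Courant--Fischer min-max principle.

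First I would record the uniform decay of the potential $V_c(y):=\frac{\beta-U''(y)}{U(y)-c}$. Since $U\in C^2([-1,1])$, both $U$ and $U''$ are bounded, and for $|c|>\|U\|_{L^\infty}$ we have $|U(y)-c|\geq |c|-\|U\|_{L^\infty}$, whence
\[
\|V_c\|_{L^\infty}\leq\frac{\|\beta-U''\|_{L^\infty}}{|c|-\|U\|_{L^\infty}}\longrightarrow 0,\qquad |c|\to\infty.
\]
In particular, for $|c|$ large the potential $V_c$ is continuous and bounded, so $\mathcal{L}_{\beta,c}$ is a regular Sturm--Liouville operator with form domain $H^1_0(-1,1)$ and discrete spectrum bounded from below.

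Next I would compare quadratic forms. For any real $\phi\in H^1_0(-1,1)$ with $\|\phi\|_{L^2}=1$,
\[
\left|\int_{-1}^{1}\big(|\phi'|^2-V_c|\phi|^2\big)\,dy-\int_{-1}^{1}|\phi'|^2\,dy\right|=\left|\int_{-1}^{1}V_c|\phi|^2\,dy\right|\leq\|V_c\|_{L^\infty}.
\]
Because this bound is uniform over the unit sphere of $H^1_0(-1,1)$, taking the min-max over all $n$-dimensional subspaces shows that $\lambda_n(\beta,c)$, the $n$-th eigenvalue of $\mathcal{L}_{\beta,c}$, and $\mu_n$, the $n$-th Dirichlet eigenvalue of $-\frac{d^2}{dy^2}$ on $[-1,1]$, satisfy $|\lambda_n(\beta,c)-\mu_n|\leq\|V_c\|_{L^\infty}$.

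Finally, the $n$-th Dirichlet eigenvalue of $-\frac{d^2}{dy^2}$ on the interval $[-1,1]$ of length $2$ is $\mu_n=n^2\pi^2/4$, with eigenfunction $\sin(\frac{n\pi}{2}(y+1))$ (consistent with Proposition \ref{pro-41}(2)). Combining this with the previous estimate and letting $|c|\to\infty$ yields $\lambda_n(\beta,c)\to n^2\pi^2/4$, as desired. I expect no serious obstacle here: the only point needing care is the legitimacy of the bounded-perturbation/min-max setup, and this is immediate precisely because $c\to\pm\infty$ keeps $c$ outside ${\rm Ran}\,(U)$, so---unlike the genuinely delicate singular limits $c\to0^-$ and $c\to1^+$ treated in Proposition \ref{prop-singular limit}---this is the easy ``generic'' case already anticipated in Remark \ref{general-infty-limits}.
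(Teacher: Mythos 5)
Your proof is correct, and it takes a genuinely more self-contained route than the paper. The paper disposes of this lemma in one line by invoking Theorem 2.1 of \cite{Kong1999} (continuous dependence of the $n$-th Sturm--Liouville eigenvalue on the potential in the $L^1$ topology) together with Proposition \ref{pro-41} (2), observing only that $\left\Vert (\beta-U'')/(U-c)\right\Vert_{L^1(-1,1)}\to 0$ as $c\to\pm\infty$. You instead prove the convergence from scratch via Courant--Fischer: the uniform bound $\Vert V_c\Vert_{L^\infty}\leq \Vert\beta-U''\Vert_{L^\infty}/(|c|-\Vert U\Vert_{L^\infty})$ gives a form perturbation of size $\Vert V_c\Vert_{L^\infty}$ on the unit sphere of $L^2$, hence $|\lambda_n(\beta,c)-n^2\pi^2/4|\leq\Vert V_c\Vert_{L^\infty}=O(1/|c|)$. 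What the paper's approach buys is brevity and the weaker hypothesis of $L^1$-smallness of the potential (which is all that is needed in principle); what yours buys is independence from the external citation plus an explicit quantitative rate of convergence, at the cost of the (here harmless) stronger $L^\infty$ control, which is indeed available since $c$ stays away from ${\rm Ran}\,(U)$. Both arguments equally cover the generalization to arbitrary $U\in C^2([y_1,y_2])$ noted in Remark \ref{general-infty-limits}.
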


\begin{proof}
By Theorem 2.1 in \cite{Kong1999} and Proposition \ref{pro-41} (2), the
conclusion follows from
\begin{align*}
\label{potential-L1}\left\|  {(\beta-U^{\prime\prime})/(U-c)}\right\|
_{L^{1}(-1,1)}\to0,\;\text{as}\;\;c\rightarrow\pm\infty.
\end{align*}

\end{proof}

\begin{remark}
\label{general-infty-limits} Clearly, $\lim_{c\to\pm\infty}\lambda_{n}%
(\beta,c) = n ^{2} \pi^{2} /(y_{2}-y_{1})^{2}$ for a general flow $U\in
C^{2}([y_{1},y_{2}])$.
\end{remark}

\begin{remark}
One should identify $\Gamma_{\infty}$ with $\Gamma_{-\infty}$ to have a better
understanding about the change of eigenvalues since they correspond to the
same regular Sturm-Liouville problem. For instance, one can take inversion
$\tilde{c}=\frac{1}{c-1/2}$ so that the domain $c\notin(0,1)$ becomes
$\tilde{c}\in\lbrack-2,2]$. The contour plot will look like the following.
\begin{figure}[th]
\centering
\includegraphics[width=3in]{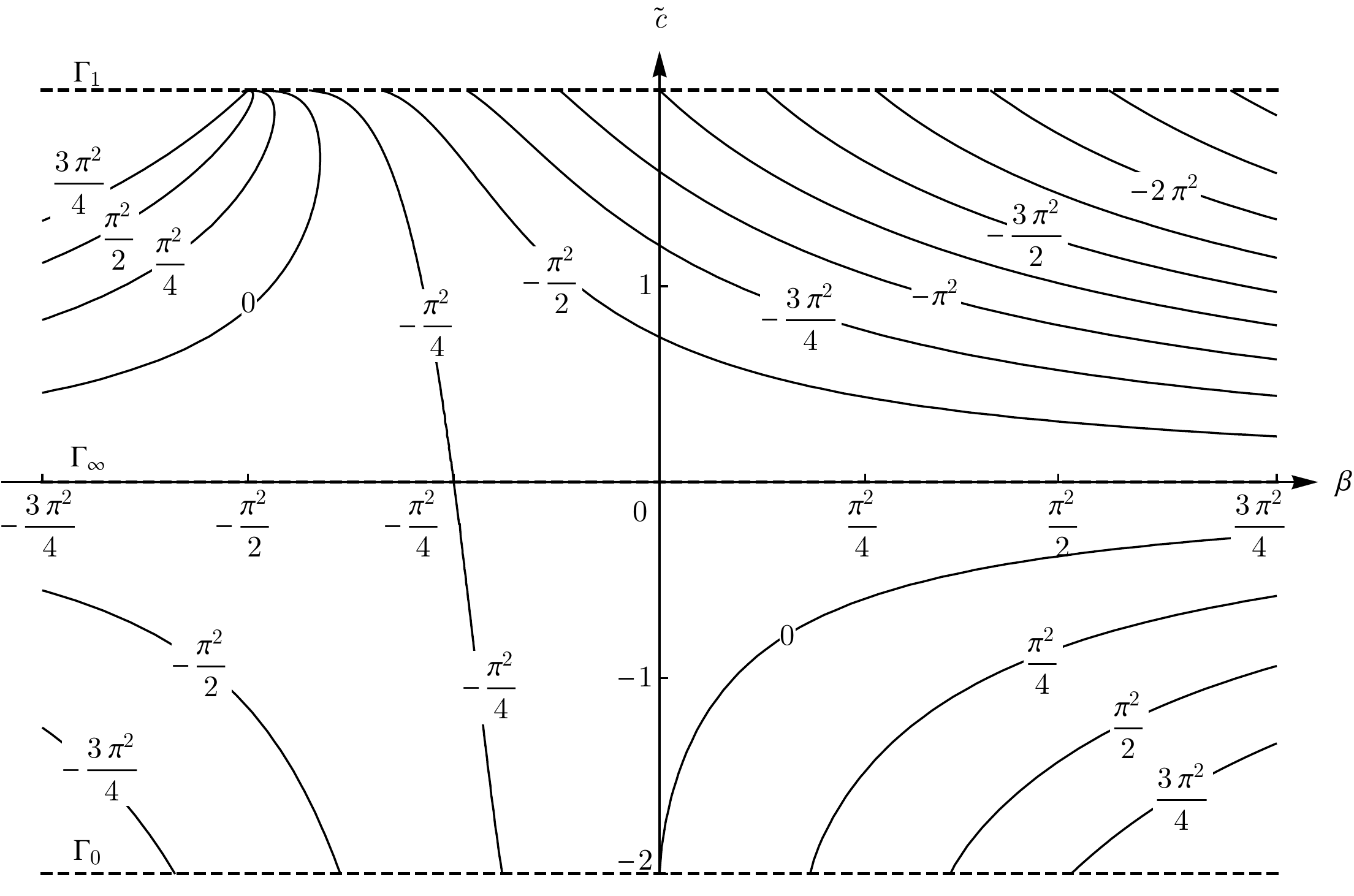} \caption{Contour plot for
$-\lambda_{1}(\beta,\tilde{c})$}%
\label{fig:ctilde}%
\end{figure}


\end{remark}

Next, we consider the finite endpoints $c=0,1$. Our method is based on regular
approximations of singular Sturm-Liouville problems.

Let $T$ be a self-adjoint operator in a Hilbert space $H$. Recall that for any
closable operator $S$ such that $\bar{S}=T$, its domain $D(S)$ is called a
core of $T$. The sequence of self-adjoint operators $\{T_{j}\}_{j=1}^{\infty}$
is said to be spectral included for $T$, if for any $\lambda\in\sigma(T)$,
there exists a sequence $\{\lambda(T_{j})\}_{j=1}^{\infty}$ with
$\lambda(T_{j})\in\sigma(T_{j})$ $(j\geq1)$ such that $\lim
\limits_{j\rightarrow\infty}\lambda(T_{j})=\lambda$.

\begin{proposition}
\label{prop-singular limit}(i) Let $\beta\in(0,{\frac{\pi^{2}}{2}})$. Then for
any $n\geq1$,
\begin{equation}
\lambda_{n}(\beta,c)\rightarrow\lambda_{n}(\beta,0),\;\;as\;\;c\rightarrow
0^{-}. \label{323}%
\end{equation}

(ii) Let $\beta\in(-{\frac{\pi^{2}}{2}},0]$. Then for any $n\geq1$,
\begin{align*}
\lambda_{n}(\beta,c)\rightarrow\lambda_{n}(\beta,1),\;\;as\;\;c\rightarrow
1^{+}.
\end{align*}

\end{proposition}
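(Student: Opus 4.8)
The plan is to treat $\{\mathcal{L}_{\beta,c}\}_{c<0}$ as a family of \emph{regular} Sturm--Liouville operators on $[-1,1]$ that approximate the \emph{singular} operator $\mathcal{L}_{\beta,0}$ as $c\to 0^-$, and to invoke the theory of regular approximation of singular Sturm--Liouville problems. First I would record the structural facts. For $c<0$ we have $U-c\ge U>0$ on $(-1,1)$ with $U-c$ bounded away from $0$, so the potential $(\beta-U'')/(U-c)$ is bounded and $\mathcal{L}_{\beta,c}$ is regular and self-adjoint with compact resolvent, and its eigenvalues depend continuously on $c\in(-\infty,0)$ by Theorem~2.1 in \cite{Kong1999}. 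At $c=0$ the potential acquires an inverse-square singularity at $y=\pm1$, since $U(\pm1)=0$ with $U(y)\sim\tfrac{\pi^2}{4}(1\mp y)^2$, so $\mathcal{L}_{\beta,0}$ is singular at both endpoints. Part (ii) need not be treated separately: the substitution $z=\sin^2(\pi y/2)$ from the proof of Proposition~\ref{pro-41} shows the $c=1$ problem is the $c=0$ problem with $\beta$ replaced by $-\beta$, so (ii) for $\beta\in(-\pi^2/2,0]$ is exactly (i) for $-\beta\in[0,\pi^2/2)$, and it suffices to prove (i).

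I would split the convergence into an inclusion (no eigenvalue is missed) and an exactness (no eigenvalue is lost or spuriously created) statement. For \textbf{spectral inclusion} I would establish $\mathcal{L}_{\beta,c}\to\mathcal{L}_{\beta,0}$ in the strong resolvent sense as $c\to 0^-$. The coefficients converge locally uniformly on $(-1,1)$, since $(\beta-U'')/(U-c)\to(\beta-U'')/U$ uniformly on every compact subset away from $\pm1$; the only subtlety is the mismatch between the Dirichlet conditions carried by the approximants and the boundary behaviour of the limit. This is handled by the Sturm--Liouville regular-approximation theory (cf. \cite{Zettl2005}): in the limit-point case at $\pm1$ the endpoint conditions of the approximants wash out, while in the limit-circle case the Dirichlet truncation selects, in the limit, the principal (subdominant) solution $\cos^{2\gamma}(\pi y/2)$, i.e. the realization used to define $\mathcal{L}_{\beta,0}$ in \eqref{defn-L_c}. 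Strong resolvent convergence then gives that every $\lambda_n(\beta,0)\in\sigma(\mathcal{L}_{\beta,0})$ is the limit of a sequence of eigenvalues of $\mathcal{L}_{\beta,c}$.

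To upgrade inclusion to the precise indexed convergence \eqref{323} I would prove a two-sided estimate. The upper bound $\limsup_{c\to0^-}\lambda_n(\beta,c)\le\lambda_n(\beta,0)$ follows from the Courant--Fischer min-max principle applied on $V_n=\operatorname{span}\{\phi_1^{(\beta,0)},\dots,\phi_n^{(\beta,0)}\}$, using the explicit eigenfunctions $\phi_k^{(\beta,0)}=\cos^{2\gamma}(\tfrac{\pi}{2}y)\,P_{k-1}(\sin\tfrac{\pi}{2}y)$ from Proposition~\ref{pro-41} as trial functions: each lies in $H^1_0(-1,1)$ because $\gamma>\tfrac14$, and since $|(\beta-U'')/(U-c)|\le|(\beta-U'')/U|$ with pointwise convergence, dominated convergence makes the Rayleigh quotients on $V_n$ converge to $\lambda_n(\beta,0)$. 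For the matching lower bound I would use compactness. By Lemma~\ref{2nd eigenvalue positive} the approximants satisfy the uniform bound $\mathcal{L}_{\beta,c}\ge-\tfrac34\pi^2$, and the negative part of the potential stays bounded (it is supported where $\beta-U''>0$, i.e. away from $\pm1$, where $U$ is bounded below), so the $L^2$-normalized eigenfunctions $\phi_n^{(\beta,c)}$ are bounded in $H^1_0(-1,1)$. Extracting a weak $H^1$/strong $L^2$ limit along any sequence $c_j\to0^-$ with $\lambda_n(\beta,c_j)\to\mu$ produces a nontrivial $L^2$ solution of \eqref{eqn-0} with eigenvalue $\mu$, so $\mu\in\sigma(\mathcal{L}_{\beta,0})$; a nodal-counting argument (the ordering by number of zeros is preserved under the locally uniform coefficient convergence) then forces $\mu=\lambda_n(\beta,0)$.

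The step I expect to be the crux is the endpoint analysis underlying both the strong resolvent convergence and the exclusion of spectral pollution. Concretely, I must verify that the Dirichlet conditions of the regular approximants converge to the correct self-adjoint realization of the singular limit in the limit-circle regime $\beta\in(5\pi^2/16,\pi^2/2)$, where $\phi_1\in L^2$ (cf. Remark~\ref{Kuo1}), and that no eigenvalue of $\mathcal{L}_{\beta,c}$ either escapes below $\lambda_1(\beta,0)$ or fails to converge because mass leaks toward $y=\pm1$. The uniform lower bound of Lemma~\ref{2nd eigenvalue positive} together with the locally uniform coefficient convergence are the tools that control this; the delicate point is ensuring that the weak limit of the eigenfunctions retains unit $L^2$ mass (no concentration at the singular endpoints), and it is exactly here that the quadratic vanishing $U(y)\sim\tfrac{\pi^2}{4}(1\mp y)^2$ and the exponent $\gamma>\tfrac14$ must be used quantitatively.
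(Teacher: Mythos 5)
There is a genuine gap, and it is the very first step: the claimed reduction of (ii) to (i). The substitution $z=\sin^{2}(\pi y/2)$ (equivalently $y\mapsto 1-y$) does identify the \emph{half-interval} problem on $(0,1)$ at $c=1$ with the half-interval problem at $c=0$ for $-\beta$, but it does not identify the two \emph{full-interval} problems on $(-1,1)$, because the singular set sits in different places: for $c=0$ it is the two endpoints $y=\pm1$, where the Dirichlet conditions live, while for $c=1$ it is the interior point $y=0$, where no boundary condition is imposed. This is not a cosmetic difference. By Proposition \ref{pro-41}, the $c=0$ spectrum $\lambda_{n}(\beta,0)=((\gamma+\tfrac{n-1}{2})^{2}-1)\pi^{2}$ is simple, whereas the $c=1$ spectrum satisfies $\lambda_{2k-1}(\beta,1)=\lambda_{2k}(\beta,1)$: the interior singularity decouples $(-1,1)$ into two half-intervals and every eigenvalue doubles. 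Accordingly, statement (ii) asserts that two distinct simple eigenvalues of the regular problems for $c>1$ (one from the family of eigenfunctions odd about $y=0$, one from the even family) merge into the same limit as $c\rightarrow1^{+}$; nothing of the sort happens in (i). The paper therefore cannot and does not deduce (ii) from (i): it reduces only the antisymmetric (even-indexed) eigenvalues to the Dirichlet half-interval problem, proves convergence there, and then handles the symmetric (odd-indexed) family by a separate compactness-and-trapping argument. Your proposal, as written, proves nothing about (ii).

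For part (i) your ingredients are essentially the right ones, but two steps are asserted rather than proved, and they are precisely the steps where the work lies. First, the core/strong-resolvent-convergence argument via compactly supported functions is only valid in the limit-point regime $\beta\in(0,5\pi^{2}/16]$; for $\beta\in(5\pi^{2}/16,\pi^{2}/2)$ the endpoints are limit-circle, compactly supported functions are not a core of the self-adjoint realization singled out by $\phi(\pm1)=0$, and "the Dirichlet truncation selects the principal solution" is exactly what must be shown, not cited. Second, the identification $\mu=\lambda_{n}(\beta,0)$ via "nodal counting preserved under locally uniform coefficient convergence" is not justified: zeros of $\phi_{n}^{(\beta,c)}$ can a priori migrate to the singular endpoints as $c\rightarrow0^{-}$. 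Both issues can be closed without nodal counting by the trapping the paper uses: Lemma \ref{2nd eigenvalue positive} gives $\lambda_{n}(\beta,c)>(\tfrac{n^{2}}{4}-1)\pi^{2}$, while $\lambda_{n-1}(\beta,0)<(\tfrac{n^{2}}{4}-1)\pi^{2}<\lambda_{n}(\beta,0)<(\tfrac{(n+1)^{2}}{4}-1)\pi^{2}$ since $\tfrac12<\gamma<1$, so any subsequential limit $\mu$, once shown to be an eigenvalue of $\mathcal{L}_{\beta,0}$ (your $H^{1}$ compactness step, where the feared loss of $L^{2}$ mass is in fact ruled out for free by the compact embedding $H^{1}(-1,1)\hookrightarrow C^{0}([-1,1])$), is squeezed into a window containing only $\lambda_{n}(\beta,0)$; combined with your min-max upper bound this yields (i). You should replace the nodal-counting claim by this squeeze and then redo (ii) from scratch along the lines above.
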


\begin{proof}
First, we prove (i). We begin to show that the limit (\ref{323}) holds for
$\beta\in(0,{{5\pi^{2}}/{16}}]$. Define
\[
\mathcal{L}_{\beta,\min}^{\prime}\phi=\mathcal{L}_{\beta,0}\phi,D(\mathcal{L}%
_{\beta,\min}^{\prime})=\{\phi\in D(\mathcal{L}_{\beta,0}):\phi
\;\text{has\ compact\ support\ in}\;(-1,1)\}.
\]
We denote the closure of $\mathcal{L}_{\beta,\min}^{\prime}$ by $\mathcal{L}%
_{\beta,\min}$. Since (\ref{eqn-0}) is in the limit point cases at $\pm1$, we
infer from Theorem 10.4.1 and Remark 10.4.2 that ${\mathcal{L}_{\beta,\min}%
}=\mathcal{L}_{\beta,0}$ and it is a self-adjoint operator on $L^{2}(-1,1)$.
Then $D(\mathcal{L}_{\beta,\min}^{\prime})$ is a core of $\mathcal{L}%
_{\beta,0}$. It is obvious that $D(\mathcal{L}_{\beta,\min}^{\prime})\subset
D(\mathcal{L}_{\beta,c})$ for any $c<0$, where $\mathcal{L}_{\beta,c}$ is
defined in (\ref{defn-L_c}). Furthermore, for any $\phi\in D(\mathcal{L}%
_{\beta,\min}^{\prime})$, by setting $\mathrm{supp}(\phi)=[a,b]\subset(-1,1)$
we get
\begin{align*}
\Vert\mathcal{L}_{\beta,c}\phi-\mathcal{L}_{\beta,0}\phi\Vert_{L^{2}(-1,1)} &
=\left\Vert {\frac{\beta-U^{\prime\prime}}{U-c}}\phi-{\frac{\beta
-U^{\prime\prime}}{U}}\phi\right\Vert _{L^{2}(a,b)}=\left\Vert {\frac
{(\beta-U^{\prime\prime})c}{U(U-c)}}\phi\right\Vert _{L^{2}(a,b)}\\
&  =\int_{a}^{b}{\frac{\left(  \beta-U^{\prime\prime}\right)  ^{2}c^{2}}%
{U^{2}(U-c)^{2}}}\phi^{2}\ dy\rightarrow0,\;\;as\;\;c\rightarrow0^{-}.
\end{align*}
Thus, by Theorem VIII 25 (a) in \cite{Reed1972} or Theorem 9.16 (i) in
\cite{Weidmann1980} we have $\{\mathcal{L}_{\beta,c},c<0\}$ is strongly
resolvent convergent to $\mathcal{L}_{\beta,0}$ in $L^{2}(-1,1)$.
Then it follows from Theorem VIII 24 (a) in \cite{Reed1972} that
$\{\mathcal{L}_{\beta,c},\ c<0\}$ is spectral included for $\mathcal{L}%
_{\beta,0}$. We then show that (\ref{323}) holds by induction. Note that
$\lambda_{n}(\beta,0)\in((n^{2}/4-1)\pi^{2},((n+1)^{2}/4-1)\pi^{2})$. Since
$\lambda_{1}(\beta,0)<0$ and $\lambda_{2}(\beta,c)>0$ for all $c<0$ by Lemma
\ref{2nd eigenvalue positive}, we have $\lim_{c\rightarrow0^{-}}\lambda
_{1}(\beta,c)=\lambda_{1}(\beta,0)$. Suppose $\lim_{c\rightarrow0^{-}}%
\lambda_{n}(\beta,c)=\lambda_{n}(\beta,0)$. Since $\lambda_{n}(\beta
,0)\in((n^{2}/4-1)\pi^{2},((n+1)^{2}/4-1)\pi^{2})$ and $\lambda_{n+2}%
(\beta,c)>((n+2)^{2}/4-1)\pi^{2}$ for all $c<0$ by Lemma
\ref{2nd eigenvalue positive}, we have $\lim_{c\rightarrow0^{-}}\lambda
_{n+1}(\beta,c)=\lambda_{n+1}(\beta,0)$.

Next, we show that (\ref{323}) holds for $\beta\in({5\pi^{2}}/{16},{\pi^{2}%
}/{2})$. The above conclusion, Corollary \ref{co21} (i) and Lemma
\ref{2nd eigenvalue positive} ensure that for any given $n\geq1$, there exists
$\delta>0$ such that $\lambda_{n}(\beta,c)\in((n^{2}/4-1)\pi^{2}%
,((n+1)^{2}/4-1)\pi^{2})$ for any $c\in(-\delta,0)$.

Let $c\in(-\delta,0)$, $\phi_{n,c}:=\phi_{n}^{(\beta,c)}$ and recall that
$\|\phi_{n,c}\|_{L^{2}}=1$. We get by integration by parts that
\begin{align}
\label{325}\int_{-1}^{1} \left\vert \phi_{n,c}^{\prime}\right\vert ^{2} dy  &
=\int_{-1}^{1}\left[  {\frac{\beta-U^{\prime\prime}}{U-c}}+\lambda_{n}%
(\beta,c)\right]  |\phi_{n,c}|^{2}dy\\
&  =\pi^{2}\int_{-1}^{1}\left[  {\frac{U-c+c-U_{\beta}}{U-c}}+\frac
{\lambda_{n}(\beta,c)}{\pi^{2}}\right]  |\phi_{n,c}|^{2\ }dy\leq
{\frac{(n+1)^{2}\pi^{2}}{4}},\nonumber
\end{align}
since $U-c>0$ on $y\in(-1,1)$ and $c-U_{\beta}=c-({{1}/{2}}-{{\beta}/{\pi^{2}%
}})<0$. Hence we get $\Vert\phi_{n,c}\Vert_{H^{1}}^{2}\leq{(n+1)^{2}\pi^{2}%
/4}+1$. Therefore, up to a subsequence, we have $\phi_{n,c}\rightharpoonup
\tilde\phi_{n,0}$ in $H^{1}$ and $\phi_{n,c}\rightarrow\tilde\phi_{n,0}\ $in
$C^{0}(\left[  -1,1\right]  ) $ for some $\tilde\phi_{n,0}\in H^{1}$.
Moreover, $\left\Vert \tilde\phi_{n,0}\right\Vert _{L^{2}}=1$ and $\tilde
\phi_{n,0}\left(  \pm1\right)  =0$. Up to a subsequence, let
\[
\lim_{c\rightarrow0^{-}} \lambda_{n}(\beta,c)=\tilde\lambda_{n}(\beta
,0)\in[(n^{2}/4-1)\pi^{2},((n+1)^{2}/4-1)\pi^{2}].
\]
We claim that $\tilde\phi_{n,0}$ solves
\begin{equation}
-\phi^{\prime\prime}-{\frac{\beta-U^{\prime\prime}}{U}}\phi=\tilde\lambda
_{n}(\beta,0)\phi\;\;\text{on}\;(-1,1),\;\; \label{eqn-phi-0}%
\end{equation}
with $\phi(\pm1)=0$. Assuming this is true, then $\tilde\lambda_{n}%
(\beta,0)=\lambda_{n}(\beta,0)$, which is the unique eigenvalue in
$[(n^{2}/4-1)\pi^{2},((n+1)^{2}/4-1)\pi^{2}]$. This proves (\ref{323}).

It remains to show that $\tilde{\phi}_{n,0}$ satisfies (\ref{eqn-phi-0}). Take
any closed interval $[a,b]\in(-1,1)$. There exists $\delta_{0}>0$ such that
$\left\vert U-c\right\vert \geq\delta_{0}\ $ on $\left[  a,b\right]  $ for any
$c\in(-\delta,0)$. Since $\phi_{n,c}$ solves the regular equation
(\ref{eqn-0}) on $[a,b]$, we get a uniform bound for $\Vert\phi_{n,c}%
\Vert_{H^{3}[a,b]}$. Thus, up to a subsequence, $\phi_{n,c}\rightarrow
\tilde{\phi}_{n,0}$ in $C^{2}([a,b])$. Taking the limit $c\rightarrow0^{-}$ in
the equation \eqref{eq44}, we deduce that $\tilde{\phi}_{n,0}$ solves the
equation (\ref{eqn-phi-0}) on $\left[  a,b\right]  \,$\ and also on $(-1,1)$
since $[a,b]\subset(-1,1)$ is arbitrary. This finishes the proof of (i).

Now, we prove (ii). For any $c\geq1$, we obverse that the $n$-th eigenvalue
$\mu_{n}(\beta,c)$ of
\[
-\psi^{\prime\prime}-\frac{\beta-U^{\prime\prime}}{U-c}\psi=\mu_{n}%
(\beta,c)\psi,\;\;\psi(0)=\psi(1)=0
\]
with eigenfunction $\psi_{n,c}$ is exactly the $2n$-th eigenvalue
$\lambda_{2n}(\beta,c)$ of (\ref{eq44}) with eigenfunction $\phi_{n,c}$, which
is defined by $\phi_{n,c}(y)=\psi_{n,c}(y)$ when $y\in\lbrack0,1)$ and
$\phi_{n,c}(y)=-\psi_{n,c}(-y)$ when $y\in(-1,0)$. Noticing that $\mu
_{n}(\beta,1)\in(((2n)^{2}/4-1)\pi^{2},((2n+1)^{2}/4-1)\pi^{2}]$ and $\mu
_{n}(\beta,c)>((2n)^{2}/4-1)\pi^{2}$ for all $c>1$, and similar to the proof
of (\ref{323}), we get $\mu_{n}(\beta,c)\rightarrow\mu_{n}(\beta,1)$ as
$c\rightarrow1^{+}$, which gives $\lambda_{2n}(\beta,c)\rightarrow\lambda
_{2n}(\beta,1)$ as $c\rightarrow1^{+}$. This, together with Lemma
\ref{2nd eigenvalue positive}, yields that for any given $n\geq1$, there exist
$\kappa,\nu>0$ such that $((2n-1)^{2}/4-1)\pi^{2}<\lambda_{2n-1}%
(\beta,c)<((2n+1)^{2}/4-1)\pi^{2}+\kappa$ for all $c\in(1,1+\nu)$. Using this
bound for $\lambda_{2n-1}(\beta,c)$ and similar to the proof of (\ref{325}),
we get a uniform bound for $\left\Vert \phi_{{2n-1},c}\right\Vert _{H_{1}}$,
$c\in(1,1+\nu)$. Thus there exists $\tilde{\phi}_{{2n-1},1}\in H^{1}(-1,1)$
such that, up to a subsequence, $\phi_{{2n-1},c}\rightarrow\tilde{\phi
}_{{2n-1},1}$ in $C^{0}([-1,1])$, $\Vert\tilde{\phi}_{{2n-1},1}\Vert_{L^{2}%
}=1$ and $\tilde{\phi}_{{2n-1},1}\left(  \pm1\right)  =0.$ Up to a
subsequence, let
\[
\tilde{\lambda}_{2n-1}(\beta,1)=\lim_{c\rightarrow1^{+}}\lambda_{2n-1}%
(\beta,c)\in\lbrack((2n-1)^{2}/4-1)\pi^{2},((2n+1)^{2}/4-1)\pi^{2}%
+\kappa]\text{.}%
\]
Then as in the proof that $\tilde{\phi}_{n,0}$ solves (\ref{eqn-phi-0}), we
get $\tilde{\phi}_{{2n-1},1}$ solves
\[
-\phi^{\prime\prime}-{\frac{\beta-U^{\prime\prime}}{U-1}}\phi=\tilde{\lambda
}_{2n-1}(\beta,1)\phi,\;\text{on }\left(  -1,0\right)  \cup\left(  0,1\right)
\]
with $\phi(\pm1)=0$ and $\phi\left(  0\right)  $ to be finite. We observe
$\lambda_{2n-1}(\beta,1)=\lambda_{2n}(\beta,1)$ are the only eigenvalues in
the interval $[((2n-1)^{2}/4-1)\pi^{2},((2n+2)^{2}/4-1)\pi^{2}]$. Therefore,
$\tilde{\lambda}_{2n-1}(\beta,1)=\lambda_{2n-1}(\beta,1)$. This finishes the
proof of the lemma.
\end{proof}

\subsection{Existence of unstable mode with zero wave number}

\label{Unstable mode with zero wave number}

In this subsection, we show the existence of an unstable mode with zero wave
number for any $\beta\in(\beta_{-},0)$.

\begin{proposition}
\label{prop zero wave number} For any $\beta\in(\beta_{-},0)$, there exists an
unstable mode with $\alpha=0$.
\end{proposition}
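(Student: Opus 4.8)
The plan is to recast the zero–wave–number problem as a complex eigenvalue problem and to produce an unstable speed by bifurcating from the degenerate neutral limiting mode at the transition parameter $\beta_-$. An unstable mode with $\alpha=0$ is, by definition, a complex $c$ with $\operatorname{Im}c>0$ for which $-\phi''=\frac{\beta-U''}{U-c}\phi$ on $(-1,1)$ has a nontrivial solution vanishing at $\pm1$; equivalently, $0$ lies in the spectrum of the operator $\mathcal{L}_{\beta,c}$ in (\ref{defn-L_c}), i.e. $\lambda_1(\beta,c)=0$ for the analytic continuation of the principal eigenvalue to complex $c$. I therefore regard $\lambda_1(\beta,\cdot)$ as an analytic function near the exterior $\mathbf{R}\setminus[0,1]$ inside $\mathbf{C}$ and look for a zero in the open upper half plane.

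First I would analyze the transition point. By Theorem \ref{transition}, for $\beta\in(-\tfrac{\pi^2}{2},\beta_-)$ the quantity $\Lambda_\beta=\sup_{c\in[1,\infty)}\lambda_\beta^-(c)$ is positive and attained in the interior $(1,\infty)$, while $\Lambda_{\beta_-}=0$; hence there is a point $c^*\in(1,\infty)$ with $\lambda_1(\beta_-,c^*)=0$ and $\partial_c\lambda_1(\beta_-,c^*)=0$, a \emph{degenerate} real neutral mode. Since $c^*>1=U_{\max}$ is a regular point, $\mathcal{L}_{\beta,c}$ is a regular Sturm--Liouville operator depending analytically on $c$, so $\lambda_1$ extends analytically to complex $c$ near $c^*$. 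The two nondegeneracy inputs are $\partial_\beta\lambda_1(\beta_-,c^*)>0$ (Corollary \ref{co21}(ii)) and $\partial_c^2\lambda_1(\beta_-,c^*)>0$ (as $c^*$ is a strict local minimum of $\lambda_1(\beta_-,\cdot)$, checkable from the monotonicity in Corollary \ref{co21} and Proposition \ref{pro-41}). Expanding $\lambda_1(\beta,c)=0$ at $(\beta_-,c^*)$ gives
\[
(c-c^*)^2=-\frac{2\,\partial_\beta\lambda_1}{\partial_c^2\lambda_1}\,(\beta-\beta_-)+o(\beta-\beta_-),
\]
whose right side is negative for $\beta>\beta_-$; thus for $\beta$ slightly above $\beta_-$ there is a solution $c(\beta)=c^*+i\sqrt{\tfrac{2\partial_\beta\lambda_1}{\partial_c^2\lambda_1}(\beta-\beta_-)}+o(\sqrt{\beta-\beta_-})$ with $\operatorname{Im}c(\beta)>0$, which is exactly an $\alpha=0$ unstable mode.

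Next I would continue $c(\beta)$ to all of $(\beta_-,0)$. As long as $\partial_c\lambda_1\neq0$ the implicit function theorem propagates the root, and it cannot leave the open upper half plane: it cannot reach a real point of $\mathbf{R}\setminus[0,1]$, where $\lambda_1(\beta,c)>0$ strictly for $\beta\in(\beta_-,0)$ (for $c\ge1$ by $\lambda_1(\beta,c)>\lambda_1(\beta_-,c)\ge0$ via Corollary \ref{co21}(ii), for $c=0,1$ by Proposition \ref{pro-41}, consistent with $\Lambda_\beta=0$), nor escape to infinity since $\lambda_1(\beta,c)\to\pi^2/4\neq0$ as $|c|\to\infty$ (Lemma \ref{lem-cinfty}). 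The one delicate possibility, and the main obstacle, is that $c(\beta)$ approaches the real segment $(0,1)$ with $\operatorname{Im}c(\beta)\to0$, where $\mathcal{L}_{\beta,c}$ becomes singular; here the analytic–continuation picture must be supplemented by critical-layer estimates, using the strict monotonicity of $\lambda_1$ in $\beta$ and the vanishing identity (\ref{integral-identity-2}) to keep the imaginary part bounded away from $0$ on compact $\beta$–subintervals.

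Finally, as an independent check I would pass to the limit $\alpha\to0^+$ in the unstable modes guaranteed by Proposition \ref{lower transition} for every $\alpha\in(0,\sqrt3\pi/2)$ (since $\Lambda_\beta=0$ forces $k_i^{\leq0}=0$ in (\ref{index-formula-sinus})). The uniform bound $\int|\phi_\alpha'|^2\le\pi^2$ from (\ref{H1 bound}), together with $\operatorname{Re}c_\alpha\ge U_{\min}=0$ (Lemma \ref{lemma unstable modes c range}) and a bound on $|c_\alpha|$ obtained by pairing (\ref{15}) with $\bar\phi_\alpha$ and invoking the Dirichlet Poincar\'e inequality $\int|\phi_\alpha'|^2\ge\tfrac{\pi^2}{4}\int|\phi_\alpha|^2$ (so $|c_\alpha|\to\infty$ is impossible), yields a limit $(\phi_0,0,\beta,c_0)$ solving the $\alpha=0$ equation. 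By the positivity of $\lambda_1$ on $\mathbf{R}\setminus(0,1)$ and the singularity analysis of Theorem \ref{th-neutral modes}, the only admissible real value is $c_0=U_\beta$; ruling out this spurious regular neutral mode is precisely the same critical-layer difficulty identified above, which is why I regard that estimate as the crux of the proof.
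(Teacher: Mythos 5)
Your proposal is incomplete: both of your routes reduce to the same unproven step, which you yourself flag as "the crux" and leave open. In the continuation route, you must prevent the complex root $c(\beta)$ from collapsing onto the segment $[0,1]$; in the limiting route (which is in fact the paper's own strategy), you must rule out the possibility that the unstable speeds $c_{\alpha}$ converge to the regular neutral speed $U_{\beta}$ with $\operatorname{Im}c_{\alpha}\rightarrow0^{+}$ as $\alpha\rightarrow0^{+}$. This is not a technicality one can defer: as $\beta\rightarrow0^{-}$ the $\alpha=0$ unstable mode must degenerate (there is none at $\beta=0$), and it does so precisely by $c$ approaching $U_{\beta}\in(0,1)$, so the critical-layer behaviour is genuinely present and controlling it is the entire content of the proposition. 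Your reduction of the problem to "the only admissible real limit is $c_{0}=U_{\beta}$" is correct (via Theorem \ref{th-neutral limiting modes}, Lemma \ref{lemma neutral c range}, and the positivity of $\lambda_{1}(\beta,\cdot)$ off $[0,1]$ for $\beta\in(\beta_{-},0)$), but without excluding that one remaining case you have not produced an unstable mode.

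The paper closes exactly this gap with a sign argument rather than an estimate. Supposing $c_{k_{j}}\rightarrow U_{\beta}$ and $c_{k_{j}}^{i}\rightarrow0^{+}$, the uniform $H^{2}$ bound of Lemma \ref{le22} gives $\phi_{k_{j}}\rightarrow\phi_{0}$ in $C^{1}$, where $\phi_{0}(y)=\sin(\pi y)$ solves the limiting equation $-\phi_{0}^{\prime\prime}-\pi^{2}\phi_{0}=0$. Pairing the Rayleigh--Kuo equation for $\phi_{k_{j}}$ against $\phi_{0}$ and subtracting yields the identity
\begin{equation*}
c_{k_{j}}-U_{\beta}=\frac{\alpha_{k_{j}}^{2}\int_{-1}^{1}\phi_{k_{j}}\phi_{0}\,dy}{\pi^{2}\int_{-1}^{1}\frac{1}{U-c_{k_{j}}}\phi_{k_{j}}\phi_{0}\,dy},
\end{equation*}
and the Plemelj-type limit of the denominator (formulas (86) and (88) of \cite{Lin2003}) is $\mathcal{P}\int_{-1}^{1}\frac{\phi_{0}^{2}}{U-U_{\beta}}dy+i\pi\sum_{l}\left(|U^{\prime}|^{-1}\phi_{0}^{2}\right)|_{y=a_{l}}$, whose imaginary part is strictly positive because $\phi_{0}$ does not vanish at the critical points $a_{l}$. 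Since the numerator tends to the positive real number $\alpha_{k_{j}}^{2}\int\phi_{0}^{2}$, the right-hand side has negative imaginary part for large $j$, while the left-hand side has imaginary part $c_{k_{j}}^{i}>0$ --- a contradiction. This gives a uniform lower bound $c_{k}^{i}\geq\delta>0$, after which the $\alpha_{k}\rightarrow0^{+}$ limit is regular (no critical layer) and produces the $\alpha=0$ unstable mode by standard compactness. You should incorporate this (or an equivalent) argument; without it, neither of your two routes constitutes a proof. As a secondary point, in your bifurcation step the strict positivity of $\partial_{c}^{2}\lambda_{1}(\beta_{-},c^{\ast})$ is asserted but not verified, and the global tracking of a single analytic branch $\lambda_{1}(\beta,\cdot)$ of a non-self-adjoint family through possible eigenvalue collisions would also need justification; both issues disappear in the paper's approach.
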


\begin{proof}
By Theorem \ref{transition}, there exists a sequence of unstable modes
$\{(c_{k},\alpha_{k},\beta,\phi_{k})\}$ with $\left\Vert \phi_{k}\right\Vert
_{L^{2}}=1,\ c_{k}^{i}=\operatorname{Im}c_{k}>0$ and $\alpha_{k}%
\rightarrow0^{+}$. We claim that $\{c_{k}^{i}\}$ has a lower bound $\delta>0$.
Suppose otherwise, there exists a subsequence $\{(c_{k_{j}},\alpha_{k_{j}%
},\beta,\phi_{k_{j}})\}$ such that $\alpha_{k_{j}}\rightarrow0^{+},\ c_{k_{j}%
}^{r}=\text{Re}\ c_{k_{j}}\rightarrow c_{s},\ c_{k_{j}}^{i}\rightarrow0^{+}$
for some $c_{s}\in\mathbf{R}\cup{\pm\infty}$. By Proposition \ref{pro-41} (2),
$\{ c_{k_{j}}\} $ is bounded and thus $c_{s}\in\mathbf{R}$. By Lemma
\ref{le22}, there is a uniform $H^{2}$ bound for the unstable solutions $\{
\phi_{k_{j}}\}$. Thus, there exists $\phi_{0}\in H^{2}(-1,1)$ such that
$\phi_{k_{j}}\rightarrow\phi_{0}$ in $C^{1}([-1,1])$ and $\left\Vert \phi
_{0}\right\Vert _{L^{2}}=1$. Since $\beta\in(\beta_{-},0)$, the only choice
for $c_{s}$ is $c_{s}=U_{\beta}$. Noting that
\begin{equation}
-\phi_{k_{j}}^{\prime\prime}+\alpha_{k_{j}}^{2}\phi_{k_{j}}-{\frac
{\beta-U^{\prime\prime}}{U-c_{k_{j}}}}\phi_{k_{j}}=0,\;\;\phi_{k_{j}}(\pm1)=0,
\label{334}%
\end{equation}
by passing to the limit $j\rightarrow\infty$ in (\ref{334}), we have
\begin{equation}
-\phi_{0}^{\prime\prime}-{\frac{\beta-U^{\prime\prime}}{U-U_{\beta}}}\phi
_{0}=-\phi_{0}^{\prime\prime}-\pi^{2}\phi_{0}=0,\;\;\phi_{0}(\pm1)=0.
\label{335}%
\end{equation}
It is clear that $\phi_{0}(y)=\sin(\pi y)$, $y\in\lbrack-1,1]$. It follows
from (\ref{334})--(\ref{335}) that
\begin{align}
\label{336}{\frac{\alpha_{k_{j}}^{2}}{c_{k_{j}}-U_{\beta}}}\int_{-1}^{1}%
\phi_{k_{j}}\phi_{0}dy  &  =\int_{-1}^{1}{\frac{\beta-U^{\prime\prime}%
}{(U-c_{k_{j}})(U-U_{\beta})}}\phi_{k_{j}}\phi_{0}dy =\pi^{2}\int_{-1}%
^{1}{\frac{1}{U-c_{k_{j}}}}\phi_{k_{j}}\phi_{0}dy.
\end{align}
Note that
\begin{equation}
\lim\limits_{j\rightarrow\infty}\int_{-1}^{1}\phi_{k_{j}}\phi_{0}dy=\int%
_{-1}^{1}\phi_{0}^{2}dy, \label{337}%
\end{equation}
and
\begin{align}
\label{338} &  \lim\limits_{j\rightarrow\infty}\int_{-1}^{1}{\frac
{1}{U-c_{k_{j}}}}\phi_{k_{j}}\phi_{0}dy = \lim\limits_{j\rightarrow\infty
}\left(  \int_{-1}^{1}{\frac{(U-c_{k_{j}}^{r})\phi_{k_{j}}\phi_{0}%
}{(U-c_{k_{j}}^{r})^{2}+{c_{k_{j}}^{i^{2}}}}}dy+i\int_{-1}^{1}{\frac{c_{k_{j}%
}^{i}\phi_{k_{j}}\phi_{0}}{(U-c_{k_{j}}^{r})^{2}+c_{k_{j}}^{i^{2}}}}dy\right)
\\
=  &  \mathcal{P}\int_{-1}^{1}{\frac{\phi_{0}^{2}}{U-U_{\beta}}}dy+i\pi
\sum_{l=1}^{2}\left(  |U^{\prime}|^{-1}\phi_{0}^{2}\right)  \big|_{y=a_{l}%
},\nonumber
\end{align}
by using (86) and (88) in \cite{Lin2003}, where $\mathcal{P}\int_{-1}^{1}$
denotes the Cauchy principal part and $a_{1},a_{2}$ are the points such that
$U(a_{1})=U(a_{2})=U_{\beta}$. Taking the imaginary part of (\ref{336}), we
have
\begin{equation}
\mathrm{\operatorname{Im}}{\frac{\alpha_{k_{j}}^{2}\int_{-1}^{1}\phi_{k_{j}%
}\phi_{0}\ dy}{\pi^{2}\int_{-1}^{1}{\frac{1}{U-c_{k_{j}}}}\phi_{k_{j}}\phi
_{0}\ dy}}=c_{k_{j}}^{i}. \label{339}%
\end{equation}
Then for sufficiently large $k$, by (\ref{337})--(\ref{338}), the LHS of
(\ref{339}) is negative, while the RHS of (\ref{339}) is positive. This
contradiction shows that $\{c_{k}^{i}\}$ has a lower bound $\delta>0$.

Now we show the existence of an unstable mode with $\alpha=0$, by taking the
limit of the sequence of unstable modes $\{(c_{k},\alpha_{k},\beta,\phi
_{k})\}$. Since $\{c_{k}\}$ is bounded, there exists $c_{0}\in\mathbf{C}$ with
$\operatorname{Im}c_{0}\geq\delta$ such that, up to a subsequence,
$c_{k}\rightarrow c_{0}$. Since $\phi_{k}$ solves (\ref{334}) with $k_{j}$
replaced by $k$ and $\{\left\vert U(y)-c_{k}\right\vert :y\in[-1,1]\} $ has a
uniform lower bound $\delta>0$ for all $k\geq1$, we therefore have a uniform
bound of $\left\Vert \phi_{k}\right\Vert _{H^{3}\left(  -1,1\right)  }$. Up to
a subsequence, let $\phi_{k}\rightarrow\tilde\phi_{0}$ in $C^{2}\left(  [
-1,1\right]  ) $. Then $\tilde\phi_{0}$ solves the equation
\[
-\tilde\phi_{0}^{\prime\prime}+{\frac{\beta-U^{\prime\prime}}{U-c_{0}}}%
\tilde\phi_{0}=0,\;\;\text{on }\left(  -1,1\right)
\]
with $\tilde\phi_{0}(\pm1)=0$. Thus $(c_{0},0,\beta,\tilde\phi_{0})$ is an
unstable mode. The proof of this proposition is finished.
\end{proof}

\section{Bifurcation of nontrivial steady solutions}

\label{section bifurcation}

In this section, we prove the bifurcation of non-parallel steady flows near
the shear flow $\left(  U\left(  y\right)  ,0\right)  $ if there exists a
non-resonant neutral mode.

\begin{proposition}
\label{prop-bifurcation} Consider a shear flow $U\in C^{3}\left(  [
-1,1]\right)  $ and fix $\beta\in\mathbf{R}$. Suppose there is a non-resonant
neutral mode $\left(  c_{0},\alpha_{0},\beta,\phi_{0}\right)  $ satisfying
(\ref{15})--(\ref{16}) with $c_{0}>U_{\max}$ or $c_{0}<U_{\min}$, and
$\alpha_{0}>0$. Then there exists $\varepsilon_{0}>0$ such that for each
$0<\varepsilon<\varepsilon_{0}$, there exists a traveling wave solution
$\vec{u}_{\varepsilon}\left(  x-c_{0}t,y\right)  =\left(  u_{\varepsilon
}\left(  x-c_{0}t,y\right)  ,v_{\varepsilon}\left(  x-c_{0}t,y\right)
\right)  $ to the equation (\ref{11}) with boundary condition (\ref{bc-Euler})
which has minimal period $T_{\varepsilon}$ in $x$,
\[
\left\Vert \omega_{\varepsilon}\left(  x,y\right)  -\omega_{0}\left(
y\right)  \right\Vert _{H^{2}\left(  0,T_{\varepsilon}\right)  \times\left(
-1,1\right)  }=\varepsilon,\ \ \omega_{\varepsilon}=\operatorname{curl}\vec
{u}_{\varepsilon},\ \omega_{0}=-U^{\prime}\left(  y\right)  ,
\]
and $T_{\varepsilon}\rightarrow{2\pi}/{\alpha_{0}}$ when$\ \varepsilon
\rightarrow0$. Moreover, $u_{\varepsilon}\left(  x,y\right)  \neq0$ and
$v_{\varepsilon}$ is not identically zero.
\end{proposition}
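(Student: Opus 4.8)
The plan is to construct the traveling waves via a Lyapunov–Schmidt reduction around the steady equation in the moving frame. In the frame $x' = x - c_0 t$, a traveling wave with speed $c_0$ solves the steady vorticity equation $(\vec u \cdot \nabla)\omega + \beta v = 0$ with $\vec u = (u + U - c_0, v)$ modified appropriately; equivalently, writing the total stream function as $\Psi = \psi_0(y) + \psi$ with $\psi_0' = -(U - c_0)$, one seeks $2\pi/\alpha_0$-periodic (approximately) solutions of the semilinear elliptic problem $-\Delta\psi - \beta y = F(\psi)$ obtained by expressing vorticity as a function of the stream function. First I would recast the steady problem so that the linearization at the shear flow is exactly the operator appearing in the Rayleigh–Kuo equation (\ref{15})--(\ref{16}), so that the non-resonant neutral mode $(c_0,\alpha_0,\beta,\phi_0)$ furnishes a nontrivial kernel element $\phi_0(y)\cos(\alpha_0 x)$ of the linearized operator on the space of functions with period $2\pi/\alpha_0$, even and in $x$.

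The key structural point is that $c_0 \notin \mathrm{Ran}(U)$, so $U - c_0$ is bounded away from zero on $[-1,1]$; hence the Rayleigh–Kuo operator $L_{c_0} = -\partial_{yy} + \alpha_0^2 - \frac{\beta - U''}{U-c_0}$ is a \emph{regular} Sturm–Liouville operator with no singularity. This is exactly what makes the bifurcation tractable: the relevant linearized map is Fredholm of index zero on the appropriate Sobolev spaces, and one-dimensionality of its kernel (the neutral mode is the $n_0$-th simple Dirichlet eigenfunction, by Theorem \ref{non-resonant quadratic form} and the simplicity of Sturm–Liouville eigenvalues) allows a standard Crandall–Rabinowitz bifurcation. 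I would set up the problem on the space of functions that are even and $T$-periodic in $x$, vanishing on $y = \pm 1$, and treat the period $T$ (equivalently the wave number $\alpha$, with $\alpha_0$ a reference) as the bifurcation parameter; the amplitude $\varepsilon$ then parametrizes the nontrivial branch. The steps in order: (i) write the nonlinear steady operator $\Phi(\psi,\alpha)$ whose zero set gives traveling waves, verify $\Phi(0,\alpha_0)=0$ and the requisite $C^1$ (indeed $C^2$, using $U\in C^3$) smoothness; (ii) compute $D_\psi\Phi(0,\alpha_0)$ and identify its one-dimensional kernel spanned by $\phi_0(y)\cos(\alpha_0 x)$ using that $-\alpha_0^2 = \lambda_{n_0}(\beta,c_0)$; (iii) check the transversality (Crandall–Rabinowitz) condition, namely that the mixed derivative $D_\alpha D_\psi\Phi(0,\alpha_0)$ applied to the kernel element is not in the range of $D_\psi\Phi(0,\alpha_0)$; (iv) apply the local bifurcation theorem to obtain a smooth branch $(\psi_\varepsilon,\alpha(\varepsilon))$ with $\psi_\varepsilon = \varepsilon\,\phi_0(y)\cos(\alpha_0 x) + o(\varepsilon)$ and $\alpha(0)=\alpha_0$, hence $T_\varepsilon = 2\pi/\alpha(\varepsilon) \to 2\pi/\alpha_0$; (v) translate back to $\vec u_\varepsilon$, normalize the $H^2$ vorticity perturbation to equal $\varepsilon$ by rescaling the amplitude parameter, and read off the period and regularity.

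I expect the main obstacle to be step (iii), the transversality condition, because it is precisely the analogue of requiring $\partial_c\lambda_{n_0}(\beta,c_0)\neq 0$ (or more precisely a nondegeneracy of the dispersion relation $\alpha^2 = -\lambda_{n_0}(\beta,c)$ in $\alpha$ at fixed $c_0$). Here, however, one varies $\alpha$ at the \emph{fixed} speed $c_0$, and the derivative of the eigenvalue $\lambda_{n_0}(\beta,c)$ in $\alpha^2$ is trivially $-1$ since $\alpha^2$ enters $L_\alpha$ additively; concretely, $D_\alpha D_\psi\Phi(0,\alpha_0)[\phi_0\cos(\alpha_0 x)]$ reduces to $2\alpha_0\,\phi_0\cos(\alpha_0 x)$ plus lower-order terms, whose pairing with the kernel element is $2\alpha_0\|\phi_0\|_{L^2}^2\neq 0$, so transversality holds automatically. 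The remaining care is in choosing function spaces so that $D_\psi\Phi$ is Fredholm and the nonlinearity is smooth; the boundedness of $(U-c_0)^{-1}$ away from the range of $U$ is what guarantees this. Finally, to obtain $u_\varepsilon\not\equiv 0$ and $v_\varepsilon$ not identically zero, I would argue that $v_\varepsilon = \partial_x\psi_\varepsilon$ has leading term $-\varepsilon\alpha_0\phi_0(y)\sin(\alpha_0 x)$, which is nontrivial for small $\varepsilon>0$ since $\phi_0\not\equiv 0$, and that the flow is genuinely non-parallel because the solution retains nontrivial $x$-dependence to leading order.
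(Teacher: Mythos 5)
Your proposal follows essentially the same route as the paper: recast the traveling wave as a steady state in the frame moving with speed $c_{0}$, use the monotonicity of the background stream function $\psi_{0}$ (guaranteed by $c_{0}\notin{\rm Ran}\,(U)$) to express the vorticity as a $C^{2}$ function of the stream function, and run a Crandall--Rabinowitz bifurcation in the wave number with one-dimensional kernel spanned by $\phi_{0}(y)\cos(\alpha_{0}x)$ and transversality coming from the additive/multiplicative appearance of $\alpha^{2}$, exactly as in the paper's rescaled formulation. One small correction: the conclusion $u_{\varepsilon}(x,y)\neq 0$ is the pointwise statement that the horizontal velocity never vanishes, which in your setup follows from $u_{\varepsilon}=U-c_{0}+O(\varepsilon)$ together with $|U-c_{0}|$ being bounded below, rather than the weaker non-parallelism assertion you give at the end.
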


\begin{proof}
We assume $c_{0}>U_{\max}$ and the case $c_{0}<U_{\min}$ is similar. The proof
is similar to that of Lemma 1 in \cite{lin-zeng-couette}, we give it here for
completeness. From the vorticity equation (\ref{vorticity-eqn}), it can be
seen that $\vec{u}\left(  x-c_{0}t,y\right)  $ is a solution of (\ref{11}) if
and only if
\[
\frac{\partial\left(  \omega+\beta y,\psi-c_{0}y\right)  }{\partial\left(
x,y\right)  }=0
\]
and $\psi$ takes constant values on $\left\{  y=\pm1\right\}  $, where
$\omega$ and $\psi$ are the vorticity and stream function corresponding to
$\vec{u}$, respectively. Let $\psi_{0}$ be a stream function associated with
the shear flow $\left(  U-c_{0},0\right)  $, i.e., $\psi_{0}^{\prime}\left(
y\right)  =U\left(  y\right)  -c_{0}$. Since $U-c_{0}<0$, $\psi_{0}$ is
decreasing on $(-1,1)$. Therefore we can define a function $f_{0}\in
C^{2}({\rm Ran}\,(\psi_{0}))$ such that
\begin{equation}
f_{0}\left(  \psi_{0}\left(  y\right)  \right)  =\omega_{0}\left(  y\right)
+\beta y=-\psi_{0}^{\prime\prime}\left(  y\right)  +\beta y.
\label{eqn-f-psi-0}%
\end{equation}
Thus
\[
f_{0}^{\prime}\left(  \psi_{0}\left(  y\right)  \right)  =\frac{\beta
-U^{\prime\prime}\left(  y\right)  }{U\left(  y\right)  -c_{0}}=:\mathcal{K}%
_{c_{0}}\left(  y\right)  .
\]
Then we extend $f_{0}$ to $f\in C_{0}^{2}\left(  \mathbf{R}\right)  $ such
that $f=f_{0}$ on ${\rm Ran}\,(\psi_{0})$. We construct steady solutions near
$\left(  U-c_{0},0\right)  $ by solving the elliptic equation
\[
-\Delta\psi+\beta y=f\left(  \psi\right)  ,
\]
where $\psi\left(  x,y\right)  $ is the stream function and $\left(
u,v\right)  =\left(  \psi_{y},-\psi_{x}\right)  $ is the steady velocity. Let
$\xi=\alpha x,\ \psi\left(  x,y\right)  =\tilde{\psi}\left(  \xi,y\right)  ,$
where $\tilde{\psi}\left(  \xi,y\right)  $ is $2\pi$-periodic in $\xi.$ We use
$\alpha^{2}$ as the bifurcation parameter. The equation for $\tilde{\psi
}\left(  \xi,y\right)  $ becomes
\begin{equation}
-\alpha^{2}\frac{\partial^{2}\tilde{\psi}}{\partial\xi^{2}}-\frac{\partial
^{2}\tilde{\psi}}{\partial y^{2}}+\beta y-f(\tilde{\psi})=0,
\label{eqn-psi-tilde}%
\end{equation}
with the boundary conditions that $\tilde{\psi}$ takes constant values on
$\left\{  y=\pm1\right\}  $. Define the perturbation of the stream function
by
\[
\phi\left(  \xi,y\right)  =\tilde{\psi}\left(  \xi,y\right)  -\psi_{0}\left(
y\right)  .
\]
Then using (\ref{eqn-f-psi-0}), we reduce the equation (\ref{eqn-psi-tilde})
to
\begin{equation}
-\alpha^{2}\frac{\partial^{2}\phi}{\partial\xi^{2}}-\frac{\partial^{2}\phi
}{\partial y^{2}}-\left(  f(\phi+\psi_{0})-f\left(  \psi_{0}\right)  \right)
=0. \label{eqn-phi-traveling}%
\end{equation}
Define the spaces%
\begin{align*}
B= \{\phi(\xi,y)\in H^{3}([0,2\pi]\times\lbrack-1,1]):\text{ }\phi(\xi
,\pm1)=0,\, 2\pi\text{-periodic and even in }\xi\}
\end{align*}
and
\[
C=\left\{  \phi(\xi,y)\in H^{1}([0,2\pi]\times\lbrack-1,1]):\text{ }%
2\pi\text{-periodic and even in }\xi\right\}  .
\]
Consider the mapping
\[
F\ :B\times\mathbf{R}^{+}\rightarrow C
\]
defined by
\[
F(\phi,\alpha^{2})=-\alpha^{2}\frac{\partial^{2}\phi}{\partial\xi^{2}}%
-\frac{\partial^{2}\phi}{\partial y^{2}}-\left(  f(\phi+\psi_{0})-f\left(
\psi_{0}\right)  \right)  .
\]
We study the bifurcation near the trivial solution $\phi=0$ of the equation
$F(\phi,\alpha^{2})=0$ in $B$, whose solutions give steady flows with
$x$-period $\frac{2\pi}{\alpha}$.\ The linearized operator of $F$
around$\ \left(  0,\alpha_{0}^{2}\right)  $ has the form
\[
\mathcal{G}:=F_{\phi}(0,\alpha_{0}^{2})=-\alpha_{0}^{2}\frac{\partial^{2}%
}{\partial\xi^{2}}-\frac{\partial^{2}}{\partial y^{2}}-f^{\prime}(\psi
_{0})=-\alpha_{0}^{2}\frac{\partial^{2}}{\partial\xi^{2}}-\frac{\partial^{2}%
}{\partial y^{2}}-\mathcal{K}_{c_{0}}.
\]
By our assumption, the operator $-\frac{\partial^{2}}{\partial y^{2}%
}-\mathcal{K}_{c_{0}}:H_{0}^{1}\cap H^{2}\left(  -1,1\right)  \rightarrow
L^{2}(-1,1)$ has a negative eigenvalue $-\alpha_{0}^{2}$ with the
eigenfunction $\phi_{0}$. Therefore, the kernel of $\mathcal{G}:$
$\ B\rightarrow C\ $is given by
\[
\ker(\mathcal{G})=\left\{  \phi_{0}(y)\cos\xi\right\}  .
\]
In particular, the dimension of $\ker$({$\mathcal{G}$}) is $1$. Since
{$\mathcal{G}$} is self-adjoint, $\phi_{0}(y)\cos\xi\not \in {\rm Ran}\,($%
{$\mathcal{G}$}$)$. Note that $\partial_{\alpha^{2}}\partial_{\phi}%
F(\phi,\alpha^{2})$ is continuous and
\[
\partial_{\alpha^{2}}\partial_{\phi}F(0,\alpha_{0}^{2})\left(  \phi_{0}%
(y)\cos\xi\right)  =-\frac{\partial^{2}}{\partial\xi^{2}}\left[  \phi
_{0}(y)\cos\xi\right]  =\phi_{0}(y)\cos\xi\not \in {\rm Ran}\,({\mathcal{G}}).
\]
By the Crandall-Rabinowitz local bifurcation theorem \cite{CR71}, there exists
a local bifurcating curve $\left(  \phi_{\gamma},\alpha^{2}(\gamma)\right)  $
of $F(\phi,\alpha^{2})=0$, which intersects the trivial curve $\left(
0,\alpha^{2}\right)  $ at $\alpha^{2}=\alpha_{0}^{2}$, such that
\[
\phi_{\gamma}(\xi,y)=\gamma\phi_{0}(y)\cos\xi+o(\gamma),
\]
$\alpha^{2}(\gamma)$ is a continuous function of $\gamma$, and $\alpha
^{2}(0)=\alpha_{0}^{2}$. So the stream functions of the perturbed steady flows
in $(\xi,y)\ $coordinates take the form
\begin{equation}
\psi_{\gamma}(\xi,y)=\psi_{0}\left(  y\right)  +\gamma\phi_{0}(y)\cos
\xi+o(\gamma). \label{cats-eye}%
\end{equation}
Let the velocity $\vec{u}_{\gamma}=\left(  u_{\gamma},v_{\gamma}\right)
=\left(  \partial_{y}\psi_{\gamma},-\partial_{x}\psi_{\gamma}\right)  $. Then
\[
u_{\gamma}=U\left(  y\right)  -c_{0}+\gamma\phi_{0}^{\prime}(y)\cos
\xi+o(1)\neq0
\]
when $\gamma$ is small.
\end{proof}

By adjusting the traveling speed, we can construct traveling waves near the
Sinus flow with the period ${2\pi}/{\alpha_{0}}$.

\begin{theorem}
\label{thm-bifurcation-non-resonant}Consider the Sinus flow. Then there exists
at least one non-resonant neutral mode $\left(  c_{0},\alpha_{0},\beta
,\phi_{0}\right)  \ $in the following stable cases:

(1) $\left\vert \beta\right\vert >{\pi^{2}}/{2}$ and $0<\alpha_{0}\leq
{\sqrt{3}\pi}/{2};$

(2) $\beta\in(-{\pi^{2}}/{2},\beta_{-})\cup(0,{\pi^{2}}/{2})$ and
$0<\alpha_{0}<\sqrt{\Lambda_{\beta}}.$ \newline In these two cases, there
exists $\varepsilon_{0}>0$ such that for each $0<\varepsilon<\varepsilon_{0}$,
there exists a traveling wave solution $\vec{u}_{\varepsilon}\left(
x-c_{\varepsilon}t,y\right)  =\left(  u_{\varepsilon}\left(  x-c_{\varepsilon
}t,y\right)  ,v_{\varepsilon}\left(  x-c_{\varepsilon}t,y\right)  \right)  $
to the equation (\ref{11}) with boundary condition (\ref{bc-Euler}) which has
minimal period $T_{0}=\frac{2\pi}{\alpha_{0}}$ in $x$,
\[
\left\Vert \omega_{\varepsilon}\left(  x,y\right)  -\omega_{0}\left(
y\right)  \right\Vert _{H^{2}\left(  0,T_{0}\right)  \times\left(
-1,1\right)  }=\varepsilon, \ \ \omega_{\varepsilon}=\operatorname{curl}%
\vec{u}_{\varepsilon},\ \omega_{0}=-U^{\prime}\left(  y\right)  ,
\]
with $c_{\varepsilon}\rightarrow c_{0}$ when $\varepsilon\rightarrow0$.
Moreover, $u_{\varepsilon}\left(  x,y\right)  \neq0$ and $v_{\varepsilon}$ is
not identically zero.
\end{theorem}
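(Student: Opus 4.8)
\emph{Overview.} The plan is to prove the theorem in two stages. First I would exhibit, in each stable regime (1) and (2), a non-resonant neutral mode $(c_0,\alpha_0,\beta,\phi_0)$ with $c_0\notin[0,1]$ and with the transversality property $\partial_c\lambda_1(\beta,c_0)\neq0$. Second, I would run the same elliptic bifurcation as in Proposition \ref{prop-bifurcation}, except that the \emph{traveling speed} $c$—not the wave number—serves as the bifurcation parameter. Keeping $\alpha=\alpha_0$ fixed then forces every bifurcating wave to have the prescribed period $T_0=2\pi/\alpha_0$, while the speed $c_\varepsilon$ is free to drift to $c_0$; this realizes the ``adjusting the traveling speed'' mechanism. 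Apart from the transversality condition, which is automatic in Proposition \ref{prop-bifurcation} but genuine here, the construction and the nondegeneracy of the velocity field transfer essentially verbatim.

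\emph{Stage 1: existence and transversality.} In case (2), where $\beta\in(0,\pi^2/2)\cup(-\pi^2/2,\beta_-)$ gives $\Lambda_\beta>0$ (Theorem \ref{transition}), the mode is read off from the stability analysis: for each $\alpha_0^2\in(0,\Lambda_\beta)$, Proposition \ref{lower transition} and Remark \ref{c-star-unique-and-beta-sign} furnish a unique $c_0\notin[0,1]$ (with $c_0<0$ if $\beta>0$ and $c_0>1$ if $\beta<0$, by Lemma \ref{lemma neutral c range}) solving $\lambda_\beta^-(c_0)=\alpha_0^2$; because $\alpha_0^2<\Lambda_\beta$ is \emph{strict}, $c_0$ lies on the strictly monotone part of the graph of $\lambda_\beta$ (not at the unique maximizer), so $\partial_c\lambda_1(\beta,c_0)\neq0$. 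In case (1), where $|\beta|>\pi^2/2$, the Remark after Theorem \ref{th31} gives $k_i^{\leq0}=1$ for every $\alpha_0\in(0,\sqrt3\pi/2)$, whose $H^2$ neutral mode must be non-resonant; here $c_0<0$ for $\beta>0$ and $c_0>1$ for $\beta<0$ (Lemma \ref{lemma neutral c range}), and transversality is immediate from the \emph{strict monotonicity} of $\lambda_1(\beta,\cdot)$ on $(-\infty,0)$ (for $\beta>\pi^2/2$) and on $(1,\infty)$ (for $\beta<-\pi^2/2$), recorded in Corollary \ref{co21}(iii)--(iv); this monotonicity, together with $\lambda_1\to\pi^2/4$ as $|c|\to\infty$ (Lemma \ref{lem-cinfty}), also yields existence and uniqueness of $c_0$ directly. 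At the endpoint $\alpha_0=\sqrt3\pi/2$ the mode is the explicit one $c_0=U_\beta=\tfrac12-\beta/\pi^2$, $\phi_0=\cos(\pi y/2)$ (Proposition \ref{pro-41}(1)), which is non-resonant precisely because $|\beta|>\pi^2/2$ forces $U_\beta\notin[0,1]$.

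\emph{Stage 2: fixed-period bifurcation.} As in Proposition \ref{prop-bifurcation}, $\vec u(x-ct,y)$ solves (\ref{11})--(\ref{bc-Euler}) iff $\partial(\omega+\beta y,\psi-cy)/\partial(x,y)=0$ with $\psi$ constant on $y=\pm1$. Since $c_0\notin[0,1]$, $U-c$ keeps a fixed sign for $c$ near $c_0$, so $\psi_0^c{}'=U-c$ is monotone and the profile function $f_c$ with $f_c'(\psi_0^c)=\mathcal K_c:=(\beta-U'')/(U-c)$ is well defined; I would extend it to $f_c\in C_0^2(\mathbf R)$ depending $C^1$-smoothly on $c$. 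Fixing $\xi=\alpha_0x$ and setting $\phi=\tilde\psi-\psi_0^c$, I define $F(\phi,c)=-\alpha_0^2\partial_\xi^2\phi-\partial_y^2\phi-(f_c(\phi+\psi_0^c)-f_c(\psi_0^c))$ on the spaces $B,C$ of Proposition \ref{prop-bifurcation}, with trivial branch $F(0,c)\equiv0$. The linearization $\mathcal G_c=-\alpha_0^2\partial_\xi^2-\partial_y^2-\mathcal K_c$ has, at $c=c_0$ and for $\alpha_0<\sqrt3\pi/2$, the one-dimensional kernel $\{\phi_0(y)\cos\xi\}$: expanding $\phi=\sum_k a_k(y)\cos(k\xi)$ reduces the kernel equation to $\mathcal L_{\beta,c_0}a_k=-\alpha_0^2k^2a_k$, and since $-\alpha_0^2=\lambda_1(\beta,c_0)$ is the simple lowest eigenvalue while $\lambda_2(\beta,c_0)>0$ (by Lemma \ref{2nd eigenvalue positive} in case (2), and by Corollary \ref{co21} together with $\lambda_2(\beta,U_\beta)=0$ in case (1)), only $k=1$ contributes. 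Self-adjointness gives $\phi_0\cos\xi\notin\mathrm{Ran}(\mathcal G_{c_0})$, and by Lemma \ref{le28} the $L^2$-projection of $\partial_cF_\phi(0,c_0)(\phi_0\cos\xi)=-\tfrac{\beta-U''}{(U-c_0)^2}\phi_0\cos\xi$ onto $\ker\mathcal G_{c_0}$ is proportional to $\partial_c\lambda_1(\beta,c_0)\neq0$, so transversality holds. The Crandall--Rabinowitz theorem \cite{CR71} then produces a curve $(\phi_\gamma,c(\gamma))$, $c(0)=c_0$, $\phi_\gamma=\gamma\phi_0(y)\cos\xi+o(\gamma)$, i.e.\ traveling waves of fixed period $T_0=2\pi/\alpha_0$ with $c_\varepsilon=c(\gamma)\to c_0$; the velocity is nondegenerate ($u_\gamma=U-c(\gamma)+\gamma\phi_0'\cos\xi+o(1)\neq0$ since $U-c_0$ has fixed sign, and $v_\gamma\not\equiv0$), and the monotone reparametrization $\gamma\mapsto\varepsilon=\|\omega_\varepsilon-\omega_0\|_{H^2}\sim C|\gamma|$ matches the stated normalization.

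\emph{Main obstacle.} The chief difficulty is the endpoint $\alpha_0=\sqrt3\pi/2$ of case (1): there the non-resonant mode is forced to be $c_0=U_\beta$, at which $\mathcal L_{\beta,U_\beta}=-\tfrac{d^2}{dy^2}-\pi^2$ has the \emph{additional} Dirichlet eigenvalue $0$ with eigenfunction $\sin(\pi y)$, a marginal, $\xi$-independent, antisymmetric shear mode. Hence $\ker\mathcal G_{c_0}$ is two-dimensional and the simple Crandall--Rabinowitz theorem does not apply. Since the extra direction $\sin(\pi y)$ is tangent to the one-parameter family of nearby \emph{shear} steady states $U+\mathrm{const}\cdot\cos(\pi y)+\cdots$ (every shear flow being steady), I expect this to be removable by a Lyapunov--Schmidt reduction that quotients out that family, leaving a one-dimensional bifurcation along $\phi_0\cos\xi$; the $SO(2)$-equivariance in $\xi$ should also help, since the wanted direction carries a nontrivial $\xi$-frequency while $\sin(\pi y)$ is $\xi$-independent. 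A secondary, purely technical point is securing the joint smoothness of $(\phi,c)\mapsto F(\phi,c)$ through a $c$-smooth choice of $f_c$, which is harmless because $|U-c|$ stays bounded below uniformly for $c$ near $c_0$.
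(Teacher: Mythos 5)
Your Stage 1 (existence of the non-resonant mode) matches the paper's: continuity/monotonicity of $\lambda_1(\beta,\cdot)$ together with the endpoint values $\lambda_\beta(U_\beta)=-3\pi^2/4$, $\lambda_\beta(\pm\infty)=\pi^2/4$ in case (1), and Theorem \ref{transition} in case (2). Your Stage 2, however, takes a genuinely different route. The paper keeps Proposition \ref{prop-bifurcation} exactly as stated --- bifurcation in $\alpha^2$ at each \emph{fixed} speed $c$ in a small interval $I$ around $c_0$, where the Crandall--Rabinowitz transversality is automatic ($\phi_0\cos\xi\notin\mathrm{Ran}(\mathcal{G})$) --- and recovers the prescribed period a posteriori: since $\alpha(c)=\sqrt{-\lambda_\beta(c)}$ is strictly monotone on $I$ and $2\pi/T_{c,r}\to\alpha(c)$ as $r\to0$, the intermediate value theorem in $c$ yields $c^\ast(r)$ with $T_{c^\ast(r),r}=2\pi/\alpha_0$. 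That argument only needs $\alpha(c)$ to take values on both sides of $\alpha_0$, i.e.\ strict monotonicity of $\lambda_\beta$ as an order relation near $c_0$. Your route --- bifurcating in $c$ at fixed $\alpha_0$ --- instead requires the pointwise transversality $\partial_c\lambda_1(\beta,c_0)\neq0$, a strictly stronger condition.

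That is where the gap is. In case (2) you justify $\partial_c\lambda_1(\beta,c_0)\neq0$ by saying $c_0$ ``lies on the strictly monotone part of the graph, not at the unique maximizer.'' But strict monotonicity of $\lambda_\beta^-$ on a branch does not exclude isolated interior points where the derivative vanishes (a horizontal inflection), and nothing in the paper rules this out: Remark \ref{c-star-unique-and-beta-sign} and the index formula (\ref{index-formula-sinus}) only give $\partial_c\lambda_\beta^-\geq0$ on the increasing branch (note the paper deliberately writes $\partial_c\lambda_\beta(c_1)\leq 0$, not $<0$, in the proof of Proposition \ref{lower transition}), and $\partial_c\lambda_1(\beta,c_0)=0$ at the unique non-positive-signature mode is perfectly consistent with $k_i^{\leq0}=1$. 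Since $\beta-U''$ changes sign when $|\beta|<\pi^2/2$, formula (\ref{223}) gives no sign either. So in case (2) your Crandall--Rabinowitz application may fail at precisely the $c_0$ you need; you must either prove the strict inequality (which would be genuinely new information) or fall back on the paper's mechanism of bifurcating in $\alpha^2$ and then tuning $c$ to hit the period. In case (1) your transversality is fine ($\beta-U''$ has a fixed sign, so (\ref{223}) is strictly signed), but the endpoint $\alpha_0=\sqrt{3}\pi/2$, where $c_0=U_\beta$ and $\ker\mathcal{G}_{c_0}$ acquires the extra $\xi$-independent direction $\sin(\pi y)$, is only conjectured away by an unexecuted Lyapunov--Schmidt reduction; as written, that part of the statement is not established by your argument.
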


\begin{proof}
First, we show the existence of non-resonant neutral modes in the two cases.
For case (1), recall that $\lambda_{\beta}(c)= \lambda_{1}(\beta,c)$ is the
principal eigenvalue of $(\ref{eq44})$. Note that $\lambda_{\beta}(U_{\beta
})=-{\frac{3\pi^{2}}{4}}$ and $\lambda_{\beta}(\pm\infty)={\frac{\pi^{2}}{4}}%
$. By continuity of $\lambda_{\beta}$, we have if $\beta>{\frac{\pi^{2}}{2}}$,
then $[0,{\frac{3\pi^{2}}{4}}]\subset\{-\lambda_{\beta}(c):c\in(-\infty
,U_{\beta}]\}$, and if $\beta<-{\frac{\pi^{2}}{2}}$, then $[0,{\frac{3\pi^{2}%
}{4}}]\subset\{-\lambda_{\beta}(c):c\in[U_{\beta},+\infty)\}$. Therefore,
there exists at least one non-resonant neutral mode for case (1). For case
(2), the existence of non-resonant neutral modes follows from Theorem
\ref{transition}.

Let $\left(  c_{0},\alpha_{0},\beta,\phi_{0}\right)  $ be a non-resonant
neutral mode. We consider the case $c_{0}>1$ and the case $c_{0}<0$ is
similar. Let $I\subset\left(  1,\infty\right)  $ be a small interval centered
at $c_{0}$. For each $c\in I$, $\lambda_{\beta}(c)$ is the negative eigenvalue
near $\lambda_{\beta}\left(  c_{0}\right)  =-\alpha_{0}^{2}\ $of the operator
$\mathcal{L}_{\beta,c} $. Let $\alpha\left(  c\right)  =\sqrt{-\lambda_{\beta
}(c)}$. By Corollary \ref{co21} and Theorem \ref{transition}, if we choose
$\left\vert I\right\vert $ to be small enough, then $\alpha\left(  c\right)  $
is strictly monotone on $I$. Assume that $\alpha\left(  c\right)  $ is
increasing on $I$. Let $c_{1}$ and $c_{2}$ in $I$ such that $c_{1}<c_{0}%
<c_{2}$. Then
\begin{equation}
\alpha\left(  c_{1}\right)  <\alpha_{0}<\alpha\left(  c_{2}\right)  .
\label{monotone-eigenvalue-c}%
\end{equation}
By Proposition \ref{prop-bifurcation}, for any $c\in\left(  c_{1}%
,c_{2}\right)  $, there exists local bifurcation of non-parallel traveling
wave solutions of the equation (\ref{11}) with boundary condition
(\ref{bc-Euler}), near the shear flow $\left(  U,0\right)  $. More precisely,
we can find $r_{0}>0$ (independent of $c\in\left(  c_{1},c_{2}\right)  $) such
that for any $0<r<r_{0}$, there exists a nontrivial traveling wave solution
\[
\vec{u}_{c,r}\left(  x-ct,y\right)  =\left(  u_{c,r}\left(  x-ct,y\right)
,v_{c,r}\left(  x-ct,y\right)  \right)
\]
with vorticity $\omega_{c,r}$ which has minimum $x$-period $T_{c,r}$ and
\[
\left\Vert \omega_{c,r}-\omega_{0}\right\Vert _{H^{2}\left(  0,T_{c,r}\right)
\times\left(  -1,1\right)  }=r.
\]
Moreover,
\[
{2\pi}/{T_{c,r}}\rightarrow\alpha\left(  c\right)  \text{ when }%
r\rightarrow0.
\]
By (\ref{monotone-eigenvalue-c}), when $r_{0}$ is chosen to be small enough,
\[
T_{c_{2},r}<{2\pi}/{\alpha_{0}}<T_{c_{1},r}\;\text{ for\ any\ }r\in(0,r_{0}).
\]
Since $T_{c,r}$ is continuous to $c,$ for each $r\in(0,r_{0})$, there exists
$c^{\ast}\left(  r\right)  \in\left(  c_{1},c_{2}\right)  \,$ such that
$T_{c^{\ast}\left(  r\right)  ,r}={2\pi}/{\alpha_{0}}$. Then the traveling
wave solution
\[
\vec{u}_{r}\left(  x-c^{\ast}\left(  r\right)  t,y\right)  :=\left(
u_{c^{\ast}\left(  r\right)  ,r}\left(  x-c^{\ast}\left(  r\right)
t,y\right)  ,v_{c^{\ast}\left(  r\right)  ,r}\left(  x-c^{\ast}\left(
r\right)  t,y\right)  \right)
\]
with the vorticity $\omega_{r}:=\omega_{c^{\ast}\left(  r\right)  ,r}$ is a
nontrivial steady solution of (\ref{11}) satisfying boundary condition
(\ref{bc-Euler}), with minimal $x$-period ${2\pi}/{\alpha_{0}}$ and
\[
\left\Vert \omega_{r}-\omega_{0}\right\Vert _{H^{2}\left(  0,\frac{2\pi
}{\alpha_{0}}\right)  \times\left(  -1,1\right)  }=r.
\]
This finishes the proof of the theorem.
\end{proof}

\begin{remark}
The non-resonant neutral mode does not exist when there is no Coriolis effects
(i.e. $\beta=0$). The traveling waves constructed above are thus purely due to
the Coriolis forces, with traveling speeds beyond the range of the basic flow.
Their existence suggests that the long time dynamics near the shear flows is
much richer. This indicates that the addition of Coriolis effects can
significantly change the dynamics of fluids.
\end{remark}

\section{Linear inviscid damping}

\label{section-damping}

In this section, we prove the linear inviscid damping using the Hamiltonian
structures of the linearized equation (\ref{linearized Euler-Hamiltonian}).
First, we show that for the Sinus flow, when $\alpha^{2}>{3\pi^{2}}/{4}$ and
$\left\vert \beta\right\vert \leq{\pi^{2}}/{2}$, there are no neutral modes in
$H^{2}$.

\begin{lemma}
\label{le-no-neutral} Consider the Sinus flow and fix any $\beta\in
\lbrack-{{\pi^{2}}/{2}},{{\pi^{2}}/{2}}]$.

(i) When $\alpha^{2}>{{3\pi^{2}}/{4}}$, there exist no neutral modes in
$H^{2}$.

(ii) When $\alpha^{2}={{3\pi^{2}}/{4}}$, $(c=U_{\beta},\alpha,\beta,\phi_{0})$
with $\phi_{0}(y)=\cos({{\pi y}/{2}})$ is the only neutral mode in $H^{2}$.
\end{lemma}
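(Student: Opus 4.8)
The plan is to enumerate the finitely many admissible wave speeds using the classification in Theorem \ref{th-neutral modes} and then dispose of each one using the explicit Sturm--Liouville spectra recorded in Proposition \ref{pro-41} together with the lower bounds of Lemma \ref{2nd eigenvalue positive} and the exclusion result Lemma \ref{lemma neutral c range}. Since $U(\pm1)=0$, $U(0)=1$, and the only critical values of the Sinus flow are $0$ and $1$, Theorem \ref{th-neutral modes} forces the wave speed $c$ of any $H^2$ neutral mode to be one of $c=U_\beta$, $c=0$, $c=1$, or $c\notin[0,1]$. I would treat these four possibilities in turn, in each case reducing the existence of a neutral mode at wave number $\alpha$ to the requirement that $-\alpha^2$ be an eigenvalue of the corresponding operator $\mathcal{L}_{\beta,c}$.

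For the regular speed $c=U_\beta$ I would use $K_\beta\equiv\pi^2$ to rewrite \eqref{eq44} as $-\phi''=(\pi^2-\alpha^2)\phi$ with $\phi(\pm1)=0$; equivalently, by Proposition \ref{pro-41}(1), one needs $-\alpha^2=\lambda_n(\beta,U_\beta)=(n^2/4-1)\pi^2$, i.e. $\alpha^2=(1-n^2/4)\pi^2$, which is $\le 3\pi^2/4$ with equality exactly at $n=1$, eigenfunction $\sin(\tfrac{\pi}{2}(y+1))=\cos(\tfrac{\pi y}{2})$. Thus $c=U_\beta$ contributes nothing when $\alpha^2>3\pi^2/4$ and contributes precisely the stated mode when $\alpha^2=3\pi^2/4$. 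For the endpoint/critical speeds, Lemma \ref{lemma neutral c range} immediately excludes $c=0$ when $\beta\le0$ and $c=1$ when $\beta\ge0$; in the complementary ranges I read off the principal eigenvalue from Proposition \ref{pro-41}(3)--(4). For $\beta\in(0,\pi^2/2]$ one has $\gamma\in[\tfrac12,1)$ and $-\lambda_1(\beta,0)=(1-\gamma^2)\pi^2\le 3\pi^2/4$, with equality only at $\gamma=\tfrac12$ (that is $\beta=\pi^2/2$); for $\beta\in[-\pi^2/2,0)$ one has $\tilde\gamma\in[\tfrac12,1)$ and $-\lambda_1(\beta,1)=\bigl(1-(\tilde\gamma+\tfrac12)^2\bigr)\pi^2\le0$. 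By Lemma \ref{2nd eigenvalue positive} the higher eigenvalues $\lambda_n(\beta,0),\lambda_n(\beta,1)$ with $n\ge2$ are positive, so only $\lambda_1$ can possibly produce a neutral mode. Finally, for the non-resonant range $c\notin[0,1]$, Lemma \ref{2nd eigenvalue positive} gives $\lambda_n(\beta,c)>(n^2/4-1)\pi^2$ for every $n$, hence $-\lambda_1(\beta,c)<3\pi^2/4$ strictly and $-\lambda_n(\beta,c)<0$ for $n\ge2$; so no non-resonant neutral mode reaches $\alpha^2\ge3\pi^2/4$.

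Assembling the four cases proves (i): when $\alpha^2>3\pi^2/4$ none of $c=U_\beta,0,1$ nor $c\notin[0,1]$ admits a solution, so there is no $H^2$ neutral mode; and for (ii), at $\alpha^2=3\pi^2/4$ only $c=U_\beta$ survives, yielding the single mode $\phi_0=\cos(\tfrac{\pi y}{2})$. The step I expect to require the most care is the two degenerate boundary values $\beta=\pm\pi^2/2$, where $U_\beta$ coincides with an endpoint value ($U_\beta=0$ at $\beta=\pi^2/2$, $U_\beta=1$ at $\beta=-\pi^2/2$). There the ``singular'' operators $\mathcal{L}_{\beta,0}$, $\mathcal{L}_{\beta,1}$ are in fact regular, since $\beta-U''$ vanishes to the same order as $U-c$ and $\tfrac{\beta-U''}{U-c}\equiv\pi^2$, and the formulas of Proposition \ref{pro-41}(3)--(4) degenerate; I would verify that in these cases the ``$c=0$'' or ``$c=1$'' mode is literally the regular mode $\cos(\tfrac{\pi y}{2})\in H^2$, so that no spurious extra mode appears and the uniqueness in (ii) is preserved. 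A secondary, routine check is that any $H^2$ solution of \eqref{eq44} with $\phi(\pm1)=0$ genuinely lies in the domain of $\mathcal{L}_{\beta,c}$ and is therefore captured by the eigenvalue lists of Proposition \ref{pro-41}; this is precisely where the limit-point property and the absence of essential spectrum from Proposition \ref{pro-41}(5) are needed.
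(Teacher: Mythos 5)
Your proposal is correct and follows essentially the same route as the paper's proof: both enumerate the four admissible wave speeds $c=U_\beta$, $c=0$, $c=1$, $c\notin[0,1]$ and read off the spectra from Proposition \ref{pro-41} together with Lemma \ref{2nd eigenvalue positive}, and your explicit treatment of the degenerate endpoints $\beta=\pm\pi^2/2$ (where $U_\beta$ coincides with $0$ or $1$ and the singular mode degenerates to $\cos(\pi y/2)$) is a detail the paper leaves implicit. The only small correction is that the non-negativity of $\lambda_n(\beta,0)$ and $\lambda_n(\beta,1)$ for $n\ge2$ should be drawn from the explicit formulas in Proposition \ref{pro-41}(3)--(4) rather than from Lemma \ref{2nd eigenvalue positive}, which is stated only for $c\in(-\infty,0)\cup(1,+\infty)$.
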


\begin{proof}
If $c=U_{\beta}$, then it follows from Proposition \ref{pro-41} (1) that there
are no neutral modes in $H^{2}$ for $\alpha^{2}>{{3\pi^{2}}/{4}}$ and exactly
one neutral solution $\phi_{0}\in H^{2}$ for $\alpha^{2}={{3\pi^{2}}/{4}}$. If
$c\in\lbrack0,1]$ and $c\neq U_{\beta}$, then the only neutral mode lies in
the SNM curve (\ref{37}), with wave number $\alpha^{2}<{{3\pi^{2}}/{4}}$. If
$c\notin\lbrack0,1]$, then by Lemma \ref{2nd eigenvalue positive}, there are
no neutral modes for $\alpha^{2}\geq{{3\pi^{2}}/{4}}$.
\end{proof}

The above lemma implies that there are no purely imaginary eigenvalues of the
linearized Euler operator $JL$ defined in (\ref{linearized Euler-Hamiltonian})
when $\alpha^{2}>{{3\pi^{2}}/{4}}$. This implies the following inviscid
damping of the velocity fields.

\begin{theorem}
\label{thm damping stable}Consider the linearized equation
(\ref{linearized-Euler-shear-frame}) with $U$ to be the Sinus flow.

(i) For any $\alpha^{2}\in({{3\pi^{2}}/{4}},+\infty)$ and $\beta\in
\lbrack-{{\pi^{2}}/{2}},{{\pi^{2}}/{2}}]$, we have
\[
\frac{1}{T}\int_{0}^{T}\left\Vert \vec{u}\left(  t\right)  \right\Vert
_{L^{2}}^{2}dt\rightarrow0,\ \text{when }T\rightarrow\infty,
\]
for any solution $\omega(t) =\operatorname{curl}\vec{u}(t) \ $of
(\ref{linearized-Euler-shear-frame}) with $\omega\left(  0\right)  $ in the
non-shear space $X$ defined in (\ref{non-shear space}).

(ii) For $\alpha^{2}={{3\pi^{2}}/{4}}$ and $\beta\in\lbrack-{{\pi^{2}}/{2}%
},{{\pi^{2}}/{2}}]$, we have
\[
\frac{1}{T}\int_{0}^{T}\left\Vert \vec{u}_{1}\left(  t\right)  \right\Vert
_{L^{2}}^{2}dt\rightarrow0,\ \text{when }T\rightarrow\infty,
\]
where $\vec{u}_{1}\left(  t\right)  $ is the velocity corresponding to the
vorticity $\left(  I-P_{1}\right)  \omega\left(  t\right)  $ with
$\omega\left(  0\right)  \in X$. Here, $P_{1}$ is the projection of $X$ to
\[
\ker L=span\left\{  e^{\pm i\frac{\sqrt{3}\pi}{2}x}\cos\left(  {\pi y}%
/{2}\right)  \right\}  .
\]

\end{theorem}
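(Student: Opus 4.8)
The plan is to read off the decay directly from the Hamiltonian structure $\partial_t\omega=JL\omega$ of (\ref{linearized Euler-Hamiltonian})--(\ref{defn-J-L}), exploiting that for the Sinus flow $K_\beta\equiv\pi^2$, so that $L=\frac1{\pi^2}-(-\Delta)^{-1}$ and $J=-(\beta-U'')\partial_x$. First I would pin down the spectral picture of $L$ on the non-shear space $X$ of (\ref{non-shear space}): decomposing into the $x$-Fourier blocks $X^l$ ($l\ge1$), the operator $L$ acts as $L_{l\alpha}=\frac1{\pi^2}-(-\frac{d^2}{dy^2}+l^2\alpha^2)^{-1}$, whose spectrum $\{\frac1{\pi^2}-\frac1{k^2\pi^2/4+l^2\alpha^2}\}_{k\ge1}$ is bounded below by $\frac1{\pi^2}-\frac1{\pi^2/4+l^2\alpha^2}$. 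For $\alpha^2>3\pi^2/4$ this lower bound is positive uniformly in $l\ge1$, so $L$ is positive definite with bounded inverse on $X$. I would then equip $X$ with the equivalent inner product $\langle L\cdot,\cdot\rangle$; since $J$ is anti-self-adjoint and $L$ self-adjoint and positive, $A:=JL$ is skew-adjoint for this inner product and generates a unitary group $e^{tA}$ (cf. the Hamiltonian semigroup theory in \cite{Lin2017}).

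The decay in (i) then follows from a RAGE-type argument once I verify that $A$ has purely continuous spectrum. Its spectrum lies on the imaginary axis, and a purely imaginary eigenvalue $\lambda=-il\alpha(c-U_\beta)$ of $J_{l\alpha}L_{l\alpha}$ forces $c\in\mathbf R$ together with an eigenfunction $\omega\in L^2$, so that $\phi=(-\Delta)^{-1}\omega\in H^2$ is an $H^2$ neutral mode with wave number $l\alpha$. Since $(l\alpha)^2\ge\alpha^2>3\pi^2/4$, Lemma \ref{le-no-neutral}(i) excludes these for every $l\ge1$, whence $A$ has no point spectrum. Writing the velocity energy as $\|\vec u\|_{L^2}^2=\langle(-\Delta)^{-1}\omega,\omega\rangle=\langle L^{-1}(-\Delta)^{-1}\omega,\omega\rangle_L$, the operator $L^{-1}(-\Delta)^{-1}$ is compact and self-adjoint on $(X,\langle L\cdot,\cdot\rangle)$, because $(-\Delta)^{-1}$ is compact on the non-shear space and $L^{-1}$ is bounded. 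The RAGE theorem for the unitary group $e^{tA}$ with empty point spectrum then yields the Ces\`aro decay $\frac1T\int_0^T\|\vec u(t)\|_{L^2}^2\,dt\to0$, which is exactly (i).

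For (ii), with $\alpha^2=3\pi^2/4$ only the block $l=1$ degenerates: $L_\alpha\ge0$ has the one-dimensional kernel $\mathrm{span}\{\cos(\pi y/2)\}$, while $L_{l\alpha}>0$ for $l\ge2$, so $\ker L=\mathrm{span}\{e^{\pm i\sqrt3\pi x/2}\cos(\pi y/2)\}$. Because $J$ is injective on $X$, $\ker(JL)=\ker L$, and by Lemma \ref{lemma generalized kernel} the generalized kernel coincides with $\ker L$, so $0$ is a semisimple eigenvalue of $JL$; Lemma \ref{le-no-neutral}(ii) shows it is the only eigenvalue on the imaginary axis. I would split $\omega(0)=z_0+w_0$ along the spectral (Riesz) projection for the eigenvalue $0$, with $z_0\in\ker L$ a genuine steady state ($e^{tJL}z_0=z_0$) and $w_0$ in the complementary invariant subspace $W=\mathrm{Ran}(I-P_1^{\mathrm{Riesz}})$. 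Since the orthogonal projection $P_1$ in the statement also has range $\ker L$, one has $(I-P_1)z_0=0$, hence $(I-P_1)\omega(t)=(I-P_1)e^{tJL}w_0$, reducing (ii) to the decay of the velocity of the $W$-component. On $W$ the operator $JL$ has no kernel and no imaginary eigenvalues, $L$ restricts to a positive definite form, and the part-(i) argument (equivalent $L$-inner product plus RAGE with the compact observable built from $(I-P_1)(-\Delta)^{-1}(I-P_1)$) applies.

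The hard part is the bookkeeping in (ii): one must check that $L$ is not merely positive but \emph{coercive} on the oblique invariant complement $W$, which uses the spectral gap of $L_\alpha$ above its simple zero eigenvalue together with the positive angle between the two closed complementary subspaces $W$ and $\ker L$, and one must represent the observable coming from the orthogonal $P_1$ as a genuine self-adjoint compact operator in the $L$-inner product on $W$ so that RAGE is applicable there. Establishing this coercivity and reconciling the orthogonal projection of the statement with the Riesz-invariant decomposition used in the dynamics is where the argument needs care; the remainder is a routine transcription of the positive-definite case (i).
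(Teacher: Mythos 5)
Your part (i) is essentially the paper's argument: positivity of $L$ for $\alpha^{2}>3\pi^{2}/4$, the equivalent inner product $\langle L\cdot,\cdot\rangle$ making $JL$ anti-self-adjoint, exclusion of imaginary eigenvalues via Lemma \ref{le-no-neutral}, and RAGE applied to a compact observable (the paper uses $B\omega=\nabla^{\perp}(-\Delta)^{-1}\omega$ rather than your quadratic-form version, but these are interchangeable). That half is fine.

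Part (ii) as you propose it has a genuine gap: there is no Riesz projection associated with the eigenvalue $0$ of $JL$. As the paper's own proof of (i) records, $JL$ is a compact perturbation of multiplication by $-i\alpha l(U-U_{\beta})$ on each Fourier block, so by Weyl's theorem its essential spectrum on the block $l=\pm1$ is the segment $\{-i\alpha l(c-U_{\beta}):c\in[0,1]\}$, and since $U_{\beta}\in[0,1]$ for $|\beta|\le\pi^{2}/2$, the point $0$ lies \emph{inside} this segment. Thus $0$ is an embedded eigenvalue, not an isolated spectral point, and the contour integral defining $P_{1}^{\mathrm{Riesz}}$ does not exist; nor is there a spectral theorem for $JL$ on all of $X$ to fall back on, because $\langle L\cdot,\cdot\rangle$ is degenerate precisely on $\ker L$. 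The subsequent difficulties you flag (coercivity of $L$ on an oblique complement, the angle between $W$ and $\ker L$) are artifacts of this unworkable decomposition. The paper's route avoids all of it: since $LP_{1}=0$, one has $JLP_{1}=0$, so the $L^{2}$-orthogonal projection of the solution satisfies the autonomous equation $\partial_{t}\bigl((I-P_{1})\omega\bigr)=A_{1}\bigl((I-P_{1})\omega\bigr)$ with $A_{1}=(I-P_{1})JL|_{X_{1}}$ on $X_{1}=(I-P_{1})X$; there $L|_{X_{1}}$ is coercive because $0$ is an isolated eigenvalue of the self-adjoint operator $L$ (no angle estimate needed), $A_{1}$ is anti-self-adjoint in $\langle L\cdot,\cdot\rangle$, its kernel is trivial by the argument of Lemma \ref{lemma generalized kernel}, and nonzero imaginary eigenvalues are excluded by Lemma \ref{le-no-neutral}(ii), so RAGE applies exactly as in (i). To repair your write-up, replace the Riesz decomposition by this observation that $JLP_{1}=0$ makes the orthogonally projected dynamics autonomous and Hamiltonian.
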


\begin{proof}
The solution of the linearized equation (\ref{linearized-Euler-shear-frame})
is written as $\omega\left(  t\right)  =e^{tJL}\omega\left(  0\right)  $,
where
\begin{equation}
J=-(\beta-U^{\prime\prime})\partial_{x},\ L={1}/{\pi^{2}}-\left(
-\Delta\right)  ^{-1} \label{defn-J-L-sinus-X}%
\end{equation}
as in (\ref{defn-J-L}). First, we note that when $\alpha^{2}>{{3\pi^{2}}/{4}}%
$, $L\ $is positive on $X$. As a consequence, $\left[  \cdot,\cdot\right]
=\left\langle L\cdot,\cdot\right\rangle $ defines an equivalent inner product
on $X$ with the $L^{2}$ inner product. For any $\omega_{1},\omega_{2}\in X$,
we have
\[
\left\langle LJL\omega_{1},\omega_{2}\right\rangle =\left\langle JL\omega
_{1},L\omega_{2}\right\rangle =-\left\langle L\omega_{1},JL\omega
_{2}\right\rangle ,
\]
and thus $JL$ is anti-self-adjoint on $\left(  X,\left[  \cdot,\cdot\right]
\right)  $. Therefore, the spectrum of $JL$ on $\left(  X,\left[  \cdot
,\cdot\right]  \right)  $ is on the imaginary axis. Since the operator $JL$ is
a compact perturbation of $-(U-U_{\beta})\partial_{x}$, whose spectrum is
clearly the whole imaginary axis, it follows from Weyl's Theorem that the
continuous spectrum of $JL$ is also the whole imaginary axis. Moreover, by
Lemma \ref{le-no-neutral}, $JL$ has no embedded eigenvalues on the imaginary
axis. Applying the RAGE theorem (\cite{CFKS}) to $e^{tJL}$, we have
\[
\frac{1}{T}\int_{0}^{T}\left\Vert B\omega\left(  t\right)  \right\Vert
_{L^{2}}^{2}dt\rightarrow0,\ \text{when }T\rightarrow\infty
\]
for any compact operator $B$ on $L^{2}\left(  S_{2\pi/\alpha}\times\left[
y_{1},y_{2}\right]  \right)  $ and for any solution $\omega\left(  t\right)
\ $of (\ref{linearized-Euler-shear-frame}) with $\omega\left(  0\right)  \in
X$. The conclusion (i) follows by choosing
\[
B\omega=\nabla^{\perp}\left(  -\Delta\right)  ^{-1}\omega=\vec{u},
\]
that is, the mapping operator from vorticity to velocity.

To prove (ii), we define $X_{1}=\left(  I-P_{1}\right)  X$. Then $L|_{X_{1}%
}>0$ and $A_{1}=\left(  I-P_{1}\right)  JL|_{X_{1}}$ is anti-self-adjoint on
$\left(  X_{1},\left[  \cdot,\cdot\right]  \right)  $. The operator $A_{1}$
has no nonzero purely imaginary eigenvalues. Moreover, the proof of Lemma
\ref{lemma generalized kernel} implies that $\ker A_{1}=\left\{  0\right\}  $.
Therefore, $A_{1}$ has purely continuous spectrum in the imaginary axis. The
conclusion again follows from the RAGE theorem to $e^{tA_{1}}$ on $X_{1}$.
\end{proof}

Next, we consider the inviscid damping for the unstable case. By Theorem
\ref{transition}, there exist exactly one unstable mode and no neutral mode in
$H^{2}$ when $\beta\in(-{{\pi^{2}}/{2}},{{\pi^{2}}/{2}})$ and $\alpha^{2}%
\in({\Lambda_{\beta}},{{3\pi^{2}}/{4}})$. As in the stable case, we consider
the linearized equation (\ref{linearized-Euler-shear-frame}) written as
Hamiltonian form $\partial_{t}\omega=JL\omega\ $in the non-shear space $X$,
where $J$ and $L$ are defined in (\ref{defn-J-L-sinus-X}). The space $X$ is
defined in (\ref{non-shear space}) with $\alpha$ to be an unstable wave number
in this case.

Denote $E^{s}\left(  E^{u}\right)  \subset X$ to be the stable (unstable)
eigenspace of $JL$. Then by Corollary 6.1 in \cite{Lin2017}, $L|_{E^{s}\oplus
E^{u}}$ is non-degenerate and
\begin{equation}
n^{-}\left(  L|_{E^{s}\oplus E^{u}}\right)  =\dim E^{s}=\dim E^{u}.
\label{negative-index-unstable}%
\end{equation}
Define the center space $E^{c}$ to be the orthogonal (in the inner product
$\left[  \cdot,\cdot\right]  $) complement of $E^{s}\oplus E^{u}$ in $X$, that
is,
\begin{equation}
E^{c}=\left\{  \omega\in X\ |\ \left\langle L\omega,\omega_{1}\right\rangle
=0,\ \forall\;\omega_{1}\in E^{s}\oplus E^{u}\right\}  . \label{defn-E-c}%
\end{equation}
Then we get the following results.

\begin{lemma}
\label{lemma-positive-center} Consider the Sinus flow and let $\alpha$ be an
unstable wave number. Then the decomposition $X=E^{s}\oplus E^{c}\oplus E^{u}$
is invariant under $JL$. Moreover, we have

(i)
\begin{equation}
\dim E^{s}=\dim E^{u}=n^{-}\left(  L\right)  \text{. } \label{negative-index}%
\end{equation}

(ii) $n^{-}\left(  L|_{E^{c}}\right)  =0$ and as a consequence, $L|_{E^{c}%
/\ker L}>0$.

(iii) The operator $JL|_{E^{c}}$ has no nonzero purely imaginary eigenvalues.
\end{lemma}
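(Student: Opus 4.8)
The plan is to reduce all three assertions to a single fact, namely that $JL$ possesses no nonzero purely imaginary eigenvalue on $X$, and then to feed this into the general exponential-trichotomy structure for linear Hamiltonian PDEs in \cite{Lin2017} that is already invoked in (\ref{negative-index-unstable}). Throughout I use the invariant Fourier splitting $X=\bigoplus_{l\neq0}X^{l}$ under $JL$, on each of which $JL$ reduces to the ODE operator $J_{l\alpha}L_{l\alpha}$ obtained from (\ref{defn-J-L-alpha}) by replacing $\alpha$ with $l\alpha$; since $\alpha$ is an unstable wave number, Rayleigh--Kuo forces $\beta\in(-\pi^{2}/2,\pi^{2}/2)$, so both Theorem \ref{transition} and Lemma \ref{le-no-neutral} are available.

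The central step is (iii). Suppose $\lambda=-il\alpha(c-U_{\beta})\neq0$ were a purely imaginary eigenvalue of $J_{l\alpha}L_{l\alpha}$ with eigenfunction $\omega\in Y$. Then $c\in\mathbf{R}$ and $c\neq U_{\beta}$, and writing $\phi=(-d^{2}/dy^{2}+l^{2}\alpha^{2})^{-1}\omega\in H^{2}$, a direct computation as in (\ref{generalized kernel}), using $\beta-U''=\pi^{2}(U-U_{\beta})$, shows that $(U-c)(-\phi''+l^{2}\alpha^{2}\phi)=(\beta-U'')\phi$, i.e.\ $(c,l\alpha,\beta,\phi)$ is an $H^{2}$ neutral mode with $c\neq U_{\beta}$. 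I now rule this out for every $l\geq1$. Because $\alpha^{2}>\Lambda_{\beta}$ (Theorem \ref{transition}) and $l\geq1$, each harmonic obeys $(l\alpha)^{2}\geq\alpha^{2}>\Lambda_{\beta}$, so it never lands in the stable band $(0,\Lambda_{\beta}]$; hence either $(l\alpha)^{2}\in(\Lambda_{\beta},3\pi^{2}/4)$, where Theorem \ref{transition} gives no $H^{2}$ neutral mode at the unstable wave number $l\alpha$, or $(l\alpha)^{2}\geq3\pi^{2}/4$, where Lemma \ref{le-no-neutral} shows the only possible $H^{2}$ neutral mode has $c=U_{\beta}$. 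In both cases there is no $H^{2}$ neutral mode with $c\neq U_{\beta}$, so $J_{l\alpha}L_{l\alpha}$ has no nonzero purely imaginary eigenvalue; this is exactly (iii).

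With (iii) in hand the remaining parts are quick. By the same monotonicity, the per-harmonic index $k_{i}^{\leq0}$ vanishes for every $l$: when $(l\alpha)^{2}\in(\Lambda_{\beta},3\pi^{2}/4)$ this is Proposition \ref{lower transition}(i), and when $(l\alpha)^{2}\geq3\pi^{2}/4$ one has $n^{-}(L_{l\alpha})=0$ by Theorem \ref{th31}, so $L_{l\alpha}\geq0$ and $k_{i}^{\leq0}=0$ trivially. By the structure of the decomposition in \cite{Lin2017}, $L|_{E^{c}}$ is positive definite modulo $\ker L$ away from the generalized eigenspaces of nonzero purely imaginary eigenvalues, whence $n^{-}(L|_{E^{c}})=k_{i}^{\leq0}=0$; this proves (ii), and $L|_{E^{c}/\ker L}>0$ follows. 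For (i), the subspaces $E^{s},E^{u}$ are the (finite-dimensional) spectral subspaces of $JL$ for the eigenvalues of negative, resp.\ positive, real part, hence invariant, and $E^{c}$ defined by (\ref{defn-E-c}) is invariant because for $\omega\in E^{c}$, $\omega_{1}\in E^{s}\oplus E^{u}$ the Hamiltonian antisymmetry yields $\langle L(JL\omega),\omega_{1}\rangle=-\langle L\omega,JL\omega_{1}\rangle=0$, using $JL\omega_{1}\in E^{s}\oplus E^{u}$; this gives the invariance of the splitting. Since $L|_{E^{s}\oplus E^{u}}$ is non-degenerate by (\ref{negative-index-unstable}), the splitting $X=(E^{s}\oplus E^{u})\oplus E^{c}$ is $L$-orthogonal and $L$ is block diagonal, so
\begin{equation}
n^{-}(L)=n^{-}(L|_{E^{s}\oplus E^{u}})+n^{-}(L|_{E^{c}})=\dim E^{u}=\dim E^{s},\nonumber
\end{equation}
which is (i).

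The step I expect to be the main obstacle is the passage from the single chosen $\alpha$ to all of its harmonics $l\alpha$. It works only because harmonics move strictly rightward ($l\geq1$) while $\alpha^{2}$ already sits above $\Lambda_{\beta}$, and because Theorem \ref{transition} describes instability---and thus the absence of $H^{2}$ neutral modes---on the \emph{entire} interval $(\Lambda_{\beta},3\pi^{2}/4)$ rather than at $\alpha$ alone; were some harmonic to fall in the stable band $(0,\Lambda_{\beta}]$, it would carry a neutral mode of non-positive signature and destroy both (ii) and (iii). A related point of care is that (iii) must exclude \emph{all} nonzero imaginary eigenvalues, including any of strictly positive signature that the count $k_{i}^{\leq0}$ would not see; this is precisely why I argue through the neutral-mode correspondence rather than through the index formula. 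The remaining verifications---the eigenfunction identity reducing an imaginary eigenvalue to the Rayleigh--Kuo equation (\ref{15}), and the block-diagonalization of $L$---are routine given Lemma \ref{lemma generalized kernel} and (\ref{negative-index-unstable}).
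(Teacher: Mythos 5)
Your proposal is correct and rests on exactly the same ingredients as the paper's proof: the Fourier decomposition of $JL$ into the harmonics $J_{l\alpha}L_{l\alpha}$, the absence of $H^{2}$ neutral modes with $c\neq U_{\beta}$ for $(l\alpha)^{2}>\Lambda_{\beta}$ (Theorem \ref{transition} together with Lemma \ref{le-no-neutral}), the index formula, and the non-degeneracy statement (\ref{negative-index-unstable}). The only difference is the direction of deduction between (i) and (ii) — the paper counts unstable modes per harmonic to get (i) and obtains (ii) by subtraction, while you get (ii) from $k_{i}^{\leq0}=0$ and then (i) by $L$-block-diagonalization — which is an immaterial reordering.
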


\begin{proof}
The invariance of the decomposition follows from the invariance of
$\left\langle L\cdot,\cdot\right\rangle $ under $JL$. To prove
(\ref{negative-index}), we note that $JL$ can be decomposed as the operators
$J_{l\alpha}L_{l\alpha}$ on the spaces $X^{l}$ (defined in (\ref{defn-X-l}))
with the wave number $\alpha l,\ $where $0\neq l\in\mathbf{Z}$. Then
\[
\dim E^{s}=\dim E^{u}=\sum_{l}k_{u}^{l},
\]
where $k_{u}^{l}$ is the number of unstable modes for $J_{l\alpha}L_{l\alpha}%
$. For each $l$, when $\left\vert \alpha l\right\vert $ is an unstable wave
number, there is exactly one unstable mode, and thus we have $k_{u}%
^{l}=1=n^{-}\left(  L_{l\alpha}\right)  $. If $\left\vert \alpha l\right\vert
\geq\frac{3\pi^{2}}{4}$, then we also have $k_{u}^{l}=0=n^{-}\left(
L_{l\alpha}\right)  $. Therefore
\[
\dim E^{s}=\dim E^{u}=\sum_{l}k_{u}^{l}=\sum_{l}n^{-}\left(  L_{l\alpha
}\right)  =n^{-}\left(  L\right)
\]
and (\ref{negative-index}) is proved.

To show (ii), noting that by the definition of $E^{c}$,
(\ref{negative-index-unstable}) and (\ref{negative-index}), we have
\[
n^{-}\left(  L|_{E^{c}}\right)  =n^{-}\left(  L\right)  -n^{-}\left(
L_{E^{s}\oplus E^{u}}\right)  =0,
\]
and thus $L|_{E^{c}/\ker L}>0$.

For each $l$, by Theorem \ref{transition} and Lemma \ref{le-no-neutral}, the
operator $J_{l\alpha}L_{l\alpha}\,\ $has no neutral modes except for
$c=U_{\beta}$ when $\left\vert \alpha l\right\vert ={\sqrt{3}\pi}/{2}$, which
corresponds to nontrivial $\ker L_{l\alpha}$ and $\ker L$. Thus the property
(iii) follows.
\end{proof}

Since $E^{c}$ is invariant under $JL$, we can restrict the linearized equation
(\ref{linearized-Euler-shear-frame}) on $E^{c}$. The linear inviscid damping
still holds true for initial data in $E^{c}$. By the same proof of Theorem
\ref{thm damping stable}, we have the following.

\begin{theorem}
Consider the linearized equation (\ref{linearized-Euler-shear-frame}) with $U$
to be the Sinus flow. \label{thm-damping-center}Let $\alpha$ be an unstable
wave number.

(i) If $\left\vert \alpha l\right\vert \neq{\sqrt{3}\pi}/{2}$ for any
$l\in\mathbf{Z}$, then
\[
\frac{1}{T}\int_{0}^{T}\left\Vert u\left(  t\right)  \right\Vert _{L^{2}}%
^{2}dt\rightarrow0,\ \text{when }T\rightarrow\infty,
\]
for any solution $\omega\left(  t\right)  \ $of
(\ref{linearized-Euler-shear-frame}) with $\omega\left(  0\right)  \in E^{c}$.
Here, $E^{c}$ is the center space defined in (\ref{defn-E-c}).

(ii) If $\left\vert \alpha l\right\vert ={\sqrt{3}\pi}/{2}$ for some
$l\in\mathbf{Z}$, then
\[
\frac{1}{T}\int_{0}^{T}\left\Vert u_{1}\left(  t\right)  \right\Vert _{L^{2}%
}^{2}dt\rightarrow0,\ \text{when }T\rightarrow\infty,
\]
where $u_{1}\left(  t\right)  $ is the velocity corresponding to the vorticity
$\left(  I-P_{1}\right)  \omega\left(  t\right)  $ with $\omega\left(
0\right)  \in E^{c}$.
\end{theorem}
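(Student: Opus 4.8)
The plan is to reproduce the RAGE-theorem argument of Theorem \ref{thm damping stable}, now applied to the restriction of $JL$ to the center space $E^{c}$ (and, in case (ii), to $(I-P_{1})E^{c}$). Indeed, all the structural spectral facts have already been secured in Lemma \ref{lemma-positive-center}: the decomposition $X=E^{s}\oplus E^{c}\oplus E^{u}$ is $JL$-invariant, $n^{-}(L|_{E^{c}})=0$ so that $L|_{E^{c}/\ker L}>0$, and $JL|_{E^{c}}$ has no nonzero purely imaginary eigenvalues. What remains is to assemble these into the time-averaged decay, noting as before that the velocity observable $B\omega=\nabla^{\perp}(-\Delta)^{-1}\omega=\vec{u}$ is a compact operator on $L^{2}$.

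For case (i), the hypothesis $|\alpha l|\neq\sqrt{3}\pi/2$ for all $l\in\mathbf{Z}$ forces $\ker L=\{0\}$, since $\ker L$ is spanned by the regular neutral modes with wave numbers $\pm\sqrt{3}\pi/2$ (Theorem \ref{thm damping stable}). Hence Lemma \ref{lemma-positive-center}(ii) upgrades to $L|_{E^{c}}>0$, and $[\cdot,\cdot]=\langle L\cdot,\cdot\rangle$ is an inner product on $E^{c}$ equivalent to the $L^{2}$ one. Using that $J$ is anti-self-adjoint and $L$ self-adjoint, for $\omega_{1},\omega_{2}\in E^{c}$ one has $[JL\omega_{1},\omega_{2}]=\langle JL\omega_{1},L\omega_{2}\rangle=-[\omega_{1},JL\omega_{2}]$, so $JL|_{E^{c}}$ is anti-self-adjoint in $(E^{c},[\cdot,\cdot])$ and its spectrum lies on the imaginary axis. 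Because $\dim(E^{s}\oplus E^{u})=2n^{-}(L)<\infty$ by (\ref{negative-index}), the subspace $E^{c}$ has finite codimension in $X$, so $JL|_{E^{c}}$ has the same essential spectrum as $JL=-(U-U_{\beta})\partial_{x}+\text{compact}$, which by Weyl's theorem is the whole imaginary axis. By Lemma \ref{lemma-positive-center}(iii) there are no nonzero purely imaginary eigenvalues, and the kernel is trivial since $\ker(JL|_{E^{c}})=\ker L=\{0\}$ by the argument of Lemma \ref{lemma generalized kernel}. Thus $JL|_{E^{c}}$ has purely continuous spectrum on the imaginary axis, and the RAGE theorem applied to $e^{tJL}|_{E^{c}}$ with $B=\vec{u}$ yields the claimed decay.

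For case (ii), $\ker L=\mathrm{span}\{e^{\pm i\sqrt{3}\pi x/2}\cos(\pi y/2)\}\neq\{0\}$, and these modes are non-decaying (they lie in $\ker JL$), so they must be projected out first. Setting $E^{c}_{1}=(I-P_{1})E^{c}$ and $A_{1}=(I-P_{1})JL|_{E^{c}_{1}}$, exactly as in the proof of Theorem \ref{thm damping stable}(ii), I would check that $L|_{E^{c}_{1}}>0$, that $A_{1}$ is anti-self-adjoint in $(E^{c}_{1},[\cdot,\cdot])$ with no nonzero purely imaginary eigenvalues (Lemma \ref{lemma-positive-center}(iii)), and that $\ker A_{1}=\{0\}$ (Lemma \ref{lemma generalized kernel}). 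Hence $A_{1}$ has purely continuous spectrum on the imaginary axis, and the RAGE theorem applied to $e^{tA_{1}}$ gives the decay of $u_{1}(t)$, the velocity of $(I-P_{1})\omega(t)$.

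The step demanding the most care is not any of the spectral inputs, which are delivered by Lemma \ref{lemma-positive-center}, but the verification that the essential spectrum of the restricted operator still fills the entire imaginary axis. This hinges on $E^{c}$ having finite codimension, i.e. on there being only finitely many unstable harmonics $|\alpha l|$, each contributing a one-dimensional stable and unstable space via (\ref{negative-index}); once that is in place, the finite-codimension stability of the essential spectrum together with the RAGE theorem completes the argument mechanically.
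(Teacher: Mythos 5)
Your proposal is correct and follows essentially the same route as the paper, which simply invokes Lemma \ref{lemma-positive-center} and says "by the same proof of Theorem \ref{thm damping stable}": restrict $JL$ to the invariant center space, use $L|_{E^{c}/\ker L}>0$ to make $JL|_{E^{c}}$ anti-self-adjoint in the $\langle L\cdot,\cdot\rangle$ inner product, rule out point spectrum via Lemma \ref{lemma-positive-center}(iii) and Lemma \ref{lemma generalized kernel}, and apply the RAGE theorem with the compact observable $\omega\mapsto\vec{u}$. Your extra observations — that $\ker L=\{0\}$ exactly when no harmonic hits $\sqrt{3}\pi/2$, and that the essential spectrum survives restriction to the finite-codimension subspace $E^{c}$ — are accurate fillings-in of details the paper leaves implicit.
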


\begin{remark}
For general flows $U$ in class $\mathcal{K}^{+}$, when there are no nonzero
imaginary eigenvalues for the linearized operator $JL$ (defined in
(\ref{linearized Euler-Hamiltonian})), the linear inviscid damping can be
shown as in Theorems \ref{thm damping stable}--\ref{thm-damping-center}, for
$\omega\left(  0\right)  \in L^{2}$.

When $\beta=0$, the nonexistence of nonzero imaginary eigenvalues and, as a
consequence, the linear damping is true for flows in class $\mathcal{K}^{+}$
(see \cite{lin-xu-2017}). Recently, when $\beta=0$, under the assumption that
the linearized operator has no embedding eigenvalues, more explicit linear
decay estimates of the velocity were obtained for symmetric and monotone shear
flows in \cite{Wei-Zhang-Zhao2017, Wei-Zhang-Zhao1704, zillinger} with more
regular initial data (e.g. $\omega\left(  0\right)  \in H^{1}$ or $H^{2}$).

When $\beta\neq0$, under the assumption that the linearized operator has no
embedding eigenvalues, linear inviscid damping was shown for a class of
general flows, and more explicit linear decay estimates of the velocity were
obtained for monotone shear flows in \cite{Wei-Zhang-Zhu2018} with more
regular initial data.
\end{remark}

\section{Conclusions for the Sinus flow}

\label{section discussions}

In this section, we summarize our results for the Sinus flow and compare them
with the previous work in \cite{Kuo1974,Pedlosky1987}. The stability picture
obtained in Theorem \ref{transition} is shown in Figure \ref{fig:criterior}
for the parameters $\left(  \alpha,\beta\right)  $ below.

\begin{figure}[th]
\centering
\includegraphics[width=140mm]{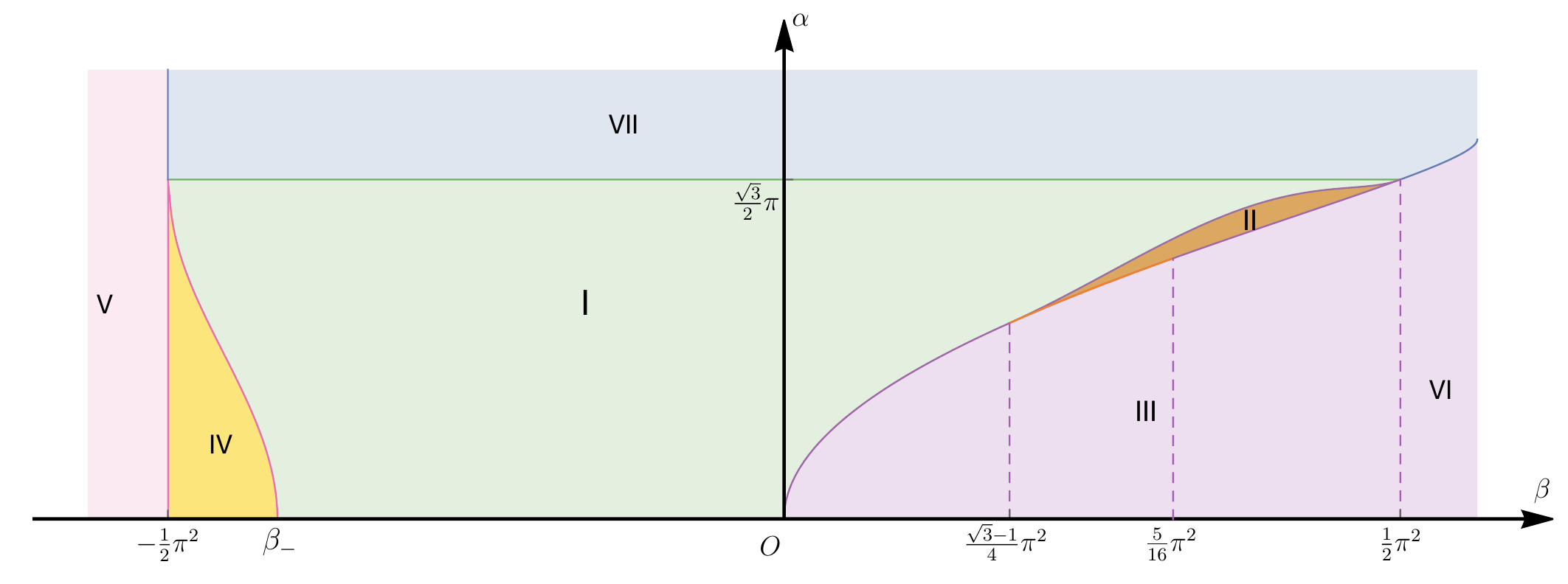} \caption{Stability picture for
Sinus flow}%
\label{fig:criterior}%
\end{figure}



In Figure \ref{fig:criterior}, the right boundary of region IV (yellow area)
is given by the curve
\[
\Gamma_{1}:\alpha=\sqrt{\Lambda_{\beta}},\ \ \ \beta\in\left(  -{{\pi^{2}}%
/{2}},\beta_{-}\right)  ,\
\]
The upper boundary of region III (purple area) is given by the curve
\[
\Gamma_{2}:\alpha\left(  \gamma\right)  =\pi\sqrt{1-\gamma^{2}},\ \ \ \beta
\left(  \gamma\right)  =\pi^{2}\left(  -\gamma^{2}+{{\gamma}/{2}}+{{1}/{2}%
}\right)  \in\left(  0,{{\pi^{2}}/{2}}\right)  ,\
\]
where $\gamma\in\left(  {\frac{1}{2}},1\right)  $ and $\left(  \alpha\left(
\gamma\right)  ,\beta\left(  \gamma\right)  \right)  \ $corresponds to the SNM
curve \ with $c=0$ (see (\ref{37})). Here, the boundary of regions III and VI
is $\alpha\in\left(  0,\frac{\sqrt{3}\pi}{2}\right)  $, $\beta=\frac{\pi^{2}%
}{2}$. The upper boundary of region II (orange area) is given by
\[
\Gamma_{3}:\alpha=\sqrt{\Lambda_{\beta}}>\pi\sqrt{1-\gamma^{2}},\ \ \beta
=\pi^{2}\left(  -\gamma^{2}+{{\gamma}/{2}}+{{1}/{2}}\right)  \in\left(
{(\sqrt{3}-1)\pi^{2}}/{4},{\pi^{2}}/{2}\right),
\]
which has been exaggerated in Figure \ref{fig:criterior} because it is too close to $\Gamma _2$.

Only region I (Green area) is the unstable domain, consisting of unstable
parameters $\left(  \alpha,\beta\right)  $ given in Theorem \ref{transition}.
In region I, there exist exactly one unstable mode and no neutral modes in
$H^{2}$. All other regions in Figure 3 are stable domains, but with different
properties on neutral modes. In region VII (blue area), there are no neutral
modes in $H^{2}$, see Lemma \ref{le-no-neutral}. In region V, there exists at
least one non-resonant neutral mode with $c>1$. In region VI, there exists at
least one non-resonant neutral mode with $c<0$. More discussion on the number
of non-resonant neutral modes in regions V and VI is under investigation. In
region II, there exist exactly two non-resonant neutral modes with $c<0$. In
region IV (yellow area), there exist exactly two non-resonant neutral modes
with $c>1$. In region III, there exists exactly one non-resonant neutral mode
with $c<0$. The dynamical behavior of the fluid equation (\ref{vorticity-eqn})
is quite different in these regions. In region VII, the linear inviscid
damping is shown for non-shear perturbations. In regions III, VI and V, the
non-resonant neutral mode generates nontrivial traveling waves with the wave
number $\alpha$. In regions II and IV, the two non-resonant neutral modes
generate two traveling waves with different speeds. Moreover, for region I,
the linear inviscid damping is true in the finite codimensional center space.
These different behavior indicates that with the addition of Coriolis effects,
the dynamics near the Sinus flow is very rich.

In the work of \cite{Kuo1974} (see Section A of Chapter VII), based on
numerical results, Kuo wrongly claimed that the stability boundary in the
rectangular domain $(\beta,\alpha^{2})\in\left(  -{{\pi^{2}}/{2},{\pi^{2}}%
/{2}}\right)  \times\left(  0,{3\pi^{2}}/{4}\right)  $ is given by the curve
$\Gamma_{2}\ $of SNMs, that is, the instability domain in \cite{Kuo1974}
consists of regions I, II, and IV. See (b) in Figure 6 of \cite{Kuo1974}. The
same stability picture can be also found in \cite{Pedlosky1987}. The reason of
incorrectness using the SNM curve $\Gamma_{2}\ $as the stability boundary
can be seen in Remarks \ref{Kuo1} and \ref{Kuo2}. Our results in Theorem
\ref{transition} correct the stability picture. More precisely, the stability
boundary in the rectangular domain is $\sqrt{\Lambda_{\beta}}$ with
$\beta\in( -{{\pi^{2}}/{2},{\pi^{2}}/{2}})$, and regions II and IV actually
lie in the stability domain. The stability boundary $\Gamma_{1}$ for
$\beta\in\left(  -{\pi^{2}}/{2},\beta_{-}\right)  $ was not detected in
\cite{Kuo1974}. Moreover, two of the curves of stability boundary in our
results, the right boundary $\Gamma_{1}\ $of region IV and upper boundary
$\Gamma_{3}\ $of region II, are not SNM curves. Instead, they consist of
non-resonant neutral modes with $c>1$ or $c<0$.

To confirm our theoretical results in Theorem \ref{transition}, we run the
numerical simulations with more accuracy for $\beta\in\left(  {\frac{{\sqrt
{3}-1}}{{4}}}\pi^{2},{\frac{1}{{2}}}{\pi^{2}}\right)  $. We find that the
difference between the $\alpha$ values in the stability boundary
$\sqrt{\Lambda_{\beta}}$ and those in the SNM curve $(\ref{37})$ is actually
very small. More precisely, as shown in the following Table \ref{tab:table},
for a fixed $\beta$, the difference between $\sqrt{\Lambda_{\beta}}$ and
$\pi\sqrt{1-\gamma^{2}}$ is as small as $10^{-5}$ to $10^{-3}$, and the phase
speed $c^{*}\in(-\infty,0)$ such that $\Lambda_{\beta}=\lambda_{\beta}%
^{-}(c^{*})$ is as small as $10^{-3}$. Such small difference partly explained
why the true stability boundary was not found by the numerical results in
\cite{Kuo1974}.

\begin{longtable}[c]{ccccc}
\caption{Difference between $\sqrt{\Lambda_{\beta}}$ and $\pi\sqrt{1-\gamma%
^{2}}$} \\
\hline \hline
$\beta$ & $\sqrt{\Lambda_{\beta}}$ & $\pi\sqrt{1-\gamma^{2}}\ $ & difference & $c^*$ \\\hline
1.80626 & 1.57080 & 1.57080 & 0 & 0\\
2.60650 & 1.90050 & 1.90050 & 0.000004894 & -0.00003\\
2.85444 & 1.99395 & 1.99394 & 0.000014579 & -0.00006\\
3.05645 & 2.06795 & 2.06792 & 0.000029048 & -0.00009\\
3.24603 & 2.13593 & 2.13588 & 0.000049360 & -0.00012\\
3.44449 & 2.20585 & 2.20577 & 0.000078511 & -0.00015\\
3.69853 & 2.29388 & 2.29376 & 0.000126720 & -0.00018\\
4.18261 & 2.45904 & 2.45882 & 0.000222321 & -0.00018\\
4.37126 & 2.52328 & 2.52304 & 0.000233368 & -0.00015\\
4.49531 & 2.56575 & 2.56554 & 0.000219151 & -0.00012\\
4.59739 & 2.60097 & 2.60078 & 0.000188895 & -0.00009\\
4.69034 & 2.63332 & 2.63318 & 0.000144032 & -0.00006\\
4.78396 & 2.66631 & 2.66623 & 0.000083277 & -0.00003\\
4.93480 & 2.72070 & 2.7207 & 0 & 0\\\hline\hline
\label{tab:table}
\end{longtable}


\bigskip

\noindent\textbf{Acknowledgements}\medskip

J. Yang and H. Zhu would like to thank School of Mathematics at Georgia
Institute of Technology for the hospitality during their visits. H. Zhu
sincerely thanks Profs. X. Hu and Y. Shi for their continuous encouragement.
Z. Lin is supported in part by NSF grants DMS-1411803 and DMS-1715201. J. Yang
is supported in part by NSF grant DMS-1411803. H. Zhu is supported in part by
NSFC (No. 11425105), PITSP (No. BX20180151), CPSF (No. 2018M630266) and
Shandong University Overseas Program.


\begin{thebibliography}{99}                                                                                               %


\bibitem {Bailey1993}P. B. Bailey, W. N. Everitt, J. Weidmann, A. Zettl,
\textit{Regular approximations of singular Sturm-Liouville problems}, Results
Math., 23 (1993), pp. 3--22.

\bibitem {Balmforth-Piccolo2001}N. J. Balmforth, C. Piccolo, \textit{The onset
of meandering in a barotropic jet}, J. Fluid Mech., 449 (2001), pp. 85--114.

\bibitem {Bateman1953}H. Bateman, \textit{Higher transcendental functions},
McGraw-Hill Book Company, Inc, 1953.

\bibitem {Burns-Maslowe-Brown2002}A. G. Burns, S. A. Maslowe, S. N. Brown,
\textit{Barotropic instability of the Bickley jet at high Reynolds numbers},
Stud. Appl. Math., 109 (2002), pp. 279--296.

\bibitem {CR71}M. Crandall, P. Rabinowitz, \textit{Bifurcation from simple
eigenvalues}, J. Funct. Anal., 8 (1971), pp. 321--340.

\bibitem {CFKS}H. L. Cycon, R. G. Froese, W. Kirsch, B. Simon,
\textit{Schr\"{o}dinger operators with application to quantum mechanics and
global geometry}. Texts and Monographs in Physics. Springer-Verlag, Berlin, 1987.

\bibitem {Dickinson-Clare1973}R. E. Dickinson, F. J. Clare, \textit{Numerical
study of the unstable modes of a hyperbolic-tangent barotropic shear flow}, J.
Atmos. Sci., 30 (1973), pp. 1035--1049.

\bibitem {Drazin-Beaumont-Coaker1982}P. G. Drazin, D. N. Beaumont, S. A.
Coaker, \textit{On Rossby waves modified by basic shear, and barotropic
instability}, J. Fluid Mech., 124 (1982), pp. 439--456.

\bibitem {Drazin-Howard1966}P. G. Drazin, L. N. Howard, \textit{Hydrodynamic
stability of parallel flow of inviscid fluid}, Adv. Appl. Mech., 9 (1966), pp. 1--89.

\bibitem {Drazin-Reid1981}P. G. Drazin, W. H. Reid, \textit{Hydrodynamic
stability}, Cambridge Monogr. Mech. Appl. Math., Cambridge University Press,
Cambridge, UK, 1981.

\bibitem {Engevik2004}L. Engevik, \textit{A note on the barotropic instability
of the Bickley jet}, J. Fluid Mech., 499 (2004), pp. 315--326.

\bibitem {Howard1961}L. N. Howard, \textit{Note on a paper of John W. Miles},
J. Fluid Mech., 10 (1961), pp. 509--512.

\bibitem {Howard1964}L. N. Howard, \textit{The number of unstable modes in
hydrodynamic stability problems}, J. M\'{e}canique, 3 (1964), pp. 433--443.

\bibitem {Howard-Drazin1964}L. N. Howard, P. G. Drazin, \textit{On instability
of a parallel flow of inviscid fluid in a rotating system with variable
Coriolis parameter}, J. Math. Phys., 43 (1964), pp. 83--99.

\bibitem {Kato1980}T. Kato, \textit{Perturbation theory for linear operators},
second edition, Springer-Verlag, Heidelberg, 1980.

\bibitem {Kong1999}Q. Kong, H. Wu, A. Zettl, \textit{Dependence of the $n$-th
Sturm-Liouville eigenvalue on the problem}, J. Differential Equations, 156
(1999), pp. 328--354.

\bibitem {Kuo1949}H. L. Kuo, \textit{Dynamic instability of two-dimensional
non-divergent flow in a barotropic atmosphere}, J. Meteor., 6 (1949), pp. 105--122.

\bibitem {Kuo1974}H. L. Kuo, \textit{Dynamics of quasi-geostrophic flows and
instability theory}, Adv. Appl. Mech., 13 (1974), pp. 247--330.

\bibitem {CLin1955}C. C. Lin, \textit{The theory of hydrodynamic stability},
Cambridge University Press, Cambridge, UK, 1955.

\bibitem {Lin2003}Z. Lin, \textit{Instability of some ideal plane flows}, SIAM
J. Math. Anal., 35 (2003), pp. 318--356.

\bibitem {Lin2005}Z. Lin, \textit{Some recent results on instability of ideal
plane flows}, Nonlinear partial differential equations and related analysis,
217--229, Contemp. Math., 371, Amer. Math. Soc., Providence, RI, 2005.

\bibitem {lin-xu-2017}Z. Lin, M. Xu, \textit{Metastability of Kolmogorov flows
and inviscid damping of shear flows}, Arch. Ration. Mech. Anal.,    231  (2019), pp. 1811--1852.

\bibitem {lin-zeng-couette}Z. Lin, C. Zeng, \textit{Inviscid dynamic
structures near Couette flow}, Arch. Ration. Mech. Anal., 200 (2011), pp. 1075--1097.

\bibitem {Lin2017}Z. Lin, C. Zeng, \textit{Instability, index theorem, and
exponential trichotomy for Linear Hamiltonian PDEs}, to appear in Mem. Amer.
Math. Soc..

\bibitem {lindzen1988}R. S. Lindzen, \textit{Instability of plane parallel
shear flow (toward a mechanistic picture of how it works)}, Pure Appl.
Geophys., 126 (1988), pp. 103--121.

\bibitem {lindzen1978}R. S. Lindzen, K. K. Tung, \textit{Wave overreflection
and shear instability}, J. Atmos. Sci., 35 (1978), pp. 1626--1632.

\bibitem {Lipps1962}F. B. Lipps, \textit{The barotropic stability of the mean
winds in the atmosphere}, J. Fluid Mech., 12 (1962), pp. 397--407.

\bibitem {Maslowe1991}S. A. Maslowe, \textit{Barotropic instability of the
Bickley jet}, J. Fluid Mech., 229 (1991), pp. 417--426.

\bibitem {Miles1964}J. W. Miles, \textit{Baroclinic instability of the zonal
wind}, Rev. Geophys., 2 (1964), pp. 155--176.

\bibitem {Pedlosky1963}J. Pedlosky, \textit{Baroclinic instability in two
layer systems}, Tellus, 15 (1963), pp. 20--25.

\bibitem {Pedlosky1964}J. Pedlosky, \textit{The stability of currents in the
atmosphere and the ocean}, Part I. J. Atmos. Sci., 21 (1964), pp. 201--219.

\bibitem {Pedlosky1987}J. Pedlosky, \textit{Geophysical fluid dynamics}, 2nd
edn. Springer, New York (1987).

\bibitem {Reed1972}M. Reed, B. Simon, \textit{Methods of modern nathematical
physics}, vol. I, Academic Press, New York, 1972.

\bibitem {SHS1993}T. H. Solomon, W. J. Holloway, H. L. Swinney, \textit{Shear
flow instabilities and Rossby waves in barotropic flow in a rotating annulus},
Phys. Fluids A, 5 (1993), pp. 1971--1982.

\bibitem {Tollmien1935}W. Tollmien, \textit{Ein Allgemeines Kriterium der
Instabitit\"{a}t laminarer Geschwindigkeitsverteilungen}, Nachr. Ges. Wiss.
G\"{o}ttingen Math. Phys., 50 (1935), pp. 79--114.

\bibitem {Tung1981}K. K. Tung, \textit{Barotropic instability of zonal flows},
J. Atmos. Sci., 38 (1981), pp. 308--321.

\bibitem {Rayleigh1880}J. W. S. Rayleigh, \textit{On the stability, or
instability, of certain fluid motions}, Proc. London Math. Soc., 9 (1880), pp. 57--70.

\bibitem {Rossby1939}C. G. Rossby, \textit{Relation between variations in the
intensity of the zonal circulation of the atmosphere and the displacements of
the semi-permanent centers of action}, J. Mar. Res., 2 (1939), pp. 38--55.

\bibitem {Wei-Zhang-Zhao2017}D. Wei, Z. Zhang, W. Zhao, \textit{Linear
Inviscid damping for a class of monotone shear flow in Sobolev spaces}, Comm.
Pure. Appl. Math., 71 (2018), 617--687.

\bibitem {Wei-Zhang-Zhao1704}D. Wei, Z. Zhang, W. Zhao, \textit{Linear
inviscid damping and vorticity depletion for shear flows}, preprint (2017).
Available at arXiv:1704.00428.

\bibitem {Wei-Zhang-Zhu2018}D. Wei, Z. Zhang, H. Zhu, \textit{Linear inviscid
damping for the $\beta$-plane equation}, preprint (2018). Available at arXiv:1809.03065.

\bibitem {Weidmann1980}J. Weidmann, \textit{Linear operators in Hilbert
spaces}, Springer-Verlag, New York, 1980.

\bibitem {Weidmann1987}J. Weidmann, \textit{Spectral theory of ordinary
differential operators}, Lecture Notes in Mathematics, Vol. 1258,
Springer-Verlag, Berlin, 1987.

\bibitem {Zettl2005}A. Zettl, \textit{Sturm-Liouville theory}, Mathematical
Surveys Monographs, vol. 121, Amer. Math. Soc., 2005.

\bibitem {zillinger}C. Zillinger, \textit{Linear inviscid damping for monotone
shear fows in a finite periodic channel, boundary effects, blow-up and
critical Sobolev regularity}, Arch. Ration. Mech. Anal., 221 (2016), pp. 1449--1509.
\end{thebibliography}
\end{document}